\documentclass{amsart}
\usepackage{mathrsfs, amsmath, graphics}
\usepackage{amssymb, comment}

\usepackage{graphicx}
\usepackage{enumerate}
\usepackage{epsfig}

\usepackage{amsfonts}

\usepackage{amssymb,amscd}
\usepackage{tikz}
\usepackage{verbatim}
\usepackage{enumerate}
\usepackage{booktabs}
\usepackage[all]{xy}
\usepackage{subfigure}

\usepackage{caption}
\usepackage{subfigure}
\usepackage{marginnote}

\newtheorem{thm}{Theorem}[section]

\newtheorem{cor}[thm]{Corollary}
\newtheorem{prop}[thm]{Proposition}

\theoremstyle{definition}
\newtheorem{defn}[thm]{Definition}

\newtheorem{question}[thm]{Question}

\theoremstyle{remark}
\newtheorem{remark}[thm]{Remark}
\usepackage{enumerate}
\numberwithin{equation}{section}

\theoremstyle{notation}

\usepackage{floatrow}
\floatsetup[table]{capposition=top}
\newfloatcommand{capbtabbox}{table}[][\FBwidth]

\usepackage{lineno}

\setcounter{tocdepth}{2}

\usepackage{graphicx}
\usepackage{mathrsfs}
\usepackage{epsfig}
\usepackage{amsmath}
\usepackage{amsfonts}
\usepackage{amssymb}
\usepackage{amssymb,amscd}
\usepackage{tikz}
\usepackage{verbatim}
\usepackage{enumerate}
\usepackage{booktabs}
\usepackage[all]{xy}


\theoremstyle{definition}

\theoremstyle{remark}
\newtheorem{rem}[thm]{Remark}
\numberwithin{equation}{section}







\title[The moduli space of the modular group]{The moduli space of the modular group in  three-dimensional complex hyperbolic geometry}

\author{Jiming Ma}
\address{School of Mathematical Sciences, Fudan University, Shanghai, 200433, P. R. China}
\email{majiming@fudan.edu.cn}

\keywords{Complex hyperbolic geomerty, modular group, fundamental  domain, 	Lagrangian inversion.}

\subjclass[2010]{20F55, 20H10, 57M60, 22E40, 51M10.}

\date{June 26, 2023}

\thanks{Jiming Ma was partially supported by  NSFC 12171092. \\
}

\begin{document}



\maketitle

\begin{abstract}
	
We study the  moduli space of discrete, faithful, type-preserving 	representations of the modular group  $\mathbf{PSL}(2,\mathbb{Z})$ into  $\mathbf{PU}(3,1)$. The entire moduli space $\mathcal{M}$ is a union of $\mathcal{M}(0,\frac{2\pi}{3},\frac{4\pi}{3})$, $\mathcal{M}(\frac{2\pi}{3},\frac{4\pi}{3},\frac{4\pi}{3})$ and some isolated points. This is the first Fuchsian group such that its $\mathbf{PU}(3,1)$-representations space  has been entirely constructed. 
Both  $\mathcal{M}(0,\frac{2\pi}{3},\frac{4\pi}{3})$ and  $\mathcal{M}(\frac{2\pi}{3},\frac{4\pi}{3},\frac{4\pi}{3})$ are  parameterized by  a square, where two opposite sides of the square correspond to representations of $\mathbf{PSL}(2,\mathbb{Z})$ into the smaller group $\mathbf{PU}(2,1)$.  
 In particular,  both  sub moduli spaces $\mathcal{M}(0,\frac{2\pi}{3},\frac{4\pi}{3} )$ and  $\mathcal{M}(\frac{2\pi}{3},\frac{4\pi}{3},\frac{4\pi}{3})$ interpolate the geometries studied in \cite{FalbelKoseleff:2002} and  \cite{Falbelparker:2003}.

\end{abstract}

\tableofcontents

\section{Introduction}\label{sec:intro}



\subsection{Motivation}\label{subsec:motivation}
Hyperbolic $n$-space ${\bf H}^{n}_{\mathbb R}$ is the unique complete simply
connected Riemannian $n$-manifold with all sectional curvatures $-1$.  Complex hyperbolic $n$-space ${\bf H}^{n}_{\mathbb C}$ is the unique complete simply
connected K\"ahler  $n$-manifold with all holomorphic  sectional curvatures $-1$.
The   holomorphic isometry group of ${\bf H}^{n}_{\mathbb C}$ is $\mathbf{PU}(n,1)$, the   orientation preserving isometry group of ${\bf H}^{n}_{\mathbb R}$ is $\mathbf{PO}(n,1)$. Besides  ${\bf H}^{n}_{\mathbb R}$ is a totally geodesic submanifold of ${\bf H}^{n}_{\mathbb C}$,  $\mathbf{PO}(n,1)$ is a natural subgroup of  $\mathbf{PU}(n,1)$.



Over the last sixty years the theory of Kleinian groups, that  is,   deformations of groups into $\mathbf{PO}(3,1)$,  has flourished because of its close connections with low dimensional topology and geometry. See Minsky's ICM talk 	\cite{Minsky:2006} for related topics and the reference.
There are also some celebrated works on deformations of groups into $\mathbf{PU}(2,1)$. See also Schwartz's ICM talk \cite{Schwartz-icm} for related topics and the reference.  But to the author's knowledge, there are very few results on deformations of groups into $\mathbf{PU}(3,1)$.

One of the most important questions in complex hyperbolic geometry is
the existence  of (infinitely many commensurable classes of)  non-arithmetic complex hyperbolic lattices  \cite{Margulis, Fisher:2021, DeligneMostow:1986, Deraux:2020, dpp:2016, dpp:2021}. This is notorious difficult comparing to its real hyperbolic counterpart	\cite{Gromov-PS:1992}. All of $\mathbf{PO}(2,1)$, $\mathbf{PO}(3,1)$  and $\mathbf{PU}(2,1)$ are subgroups of $\mathbf{PU}(3,1)$. It is reasonable that deformations of some well understood discrete groups in $\mathbf{PO}(2,1)$, $\mathbf{PO}(3,1)$ and  $\mathbf{PU}(2,1)$  into the larger group $\mathbf{PU}(3,1)$  may give   some discrete, but not faithful representations. Which in turn   have the opportunity to
give some new  ${\bf H}^3_{\mathbb C}$-lattices  as pioneered  by \cite{DeligneMostow:1986, dpp:2016, dpp:2021}.  In this paper, we study  representations of $\mathbf{PSL}(2,\mathbb{Z})$  into $\mathbf{PU}(3,1)$, with the potential motivations to construct non-arithmetic complex hyperbolic lattices and  non-trivial examples of  5-manifolds    admitting uniformizable CR-structures.

\subsection{Main results}\label{subsec:result}

 Let $\mathbf{PSL}(2,\mathbb{Z})$ be the {\it modular group}, which is isomorphic to the free product $\mathbb{Z}_{2} *\mathbb{Z}_{3}$ as an abstract group.  We fix  a presentation of $\mathbf{PSL}(2,\mathbb{Z})$:
	\begin{equation}\label{psl2z}
	\left\langle a_{0}, a_{1},a_{2} ~ \big| \begin{array}  {c} a_{2}^2= a_{1}^3=id, ~~ a_2a_1=a_0\end{array}\right\rangle.
\end{equation}
The modular group $\mathbf{PSL}(2,\mathbb{Z})$ is a discrete subgroup of $\mathbf{PSL}(2,\mathbb{R})$, the later is isomorphic to  $\mathbf{PO}(2,1)$, and $a_0$ is a parabolic element in  $\mathbf{PSL}(2,\mathbb{R})$. The quotient space of the hyperbolic plane ${\bf H}^{2}_{\mathbb R}$ by  $\mathbf{PSL}(2,\mathbb{Z})$ is the \emph{moduli surface}, which has two cone points and a cusp.  

It is well-known that  $\mathbf{PSL}(2,\mathbb{Z})$  is rigid in  $\mathbf{PO}(2,1)$ and in $\mathbf{PO}(3,1)$. But due to the remarkable works of   \cite{FalbelKoseleff:2002, Falbelparker:2003},  there are one-dimensional components of 
 discrete and faithful representations of  $\mathbf{PSL}(2,\mathbb{Z})$  into $\mathbf{PU}(2,1)$.  In this paper, we go on to  show   there are two-dimensional components of 
 discrete and faithful representations of  $\mathbf{PSL}(2,\mathbb{Z})$  into $\mathbf{PU}(3,1)$. 
 The entire moduli space $\mathcal{M}$ of  representations of  $\mathbf{PSL}(2,\mathbb{Z})$  into $\mathbf{PU}(3,1)$ is a union of $\mathcal{M}(0,\frac{2\pi}{3},\frac{4\pi}{3})$, $\mathcal{M}(\frac{2\pi}{3},\frac{4\pi}{3},\frac{4\pi}{3})$ and some isolated points.
 It is prominent that each of the sub  moduli spaces $\mathcal{M}(0,\frac{2\pi}{3},\frac{4\pi}{3})$ and  $\mathcal{M}(\frac{2\pi}{3},\frac{4\pi}{3},\frac{4\pi}{3})$ is  parameterized by a square, with two sides of the square  correspond to exactly the moduli spaces studied \cite{FalbelKoseleff:2002} and  \cite{Falbelparker:2003} respectively. So both sub moduli spaces $\mathcal{M}(0,\frac{2\pi}{3},\frac{4\pi}{3} )$ and  $\mathcal{M}(\frac{2\pi}{3},\frac{4\pi}{3},\frac{4\pi}{3})$ interpolate   the ${\bf H}^2_{\mathbb C}$-geometries of the modular group   in \cite{FalbelKoseleff:2002} and  \cite{Falbelparker:2003}  studied  twenty years ago.


More precisely, let ${\bf H}^{3}_{\mathbb C}$ be three dimensional complex hyperbolic space. For a representation $\rho:\mathbf{PSL}(2,\mathbb{Z}) \longrightarrow  \mathbf{PU}(3,1)$, we denote by $$A_{i}=\rho(a_{i})$$ for $i=0,1,2$.  Let $A_2$ be a $\pi$-rotation about a $\mathbb C$-line $\mathcal{L}$ in ${\bf H}^{3}_{\mathbb C}$, and $A_1$ be an order three  element in  $\mathbf{PU}(3,1)$ of  $(0, \frac{2\pi}{3}, \frac{4 \pi}{3})$ type or  $(\frac{2\pi}{3}, \frac{4 \pi}{3}, \frac{4 \pi}{3})$ type, see Section \ref{sec:modulispace} for more details. Then  $A_1$  leaves a complex hyperbolic plane $\mathcal{P}$  in ${\bf H}^{3}_{\mathbb C}$ invariant such that $A_1$ acts on  $\mathcal{P}$ as a  $(\frac{2\pi}{3}, \frac{4 \pi}{3})$ type elliptic element. We may consider the group $\langle A_2, A_1\rangle$ with the additional condition that $A_0=A_2A_1$ is parabolic. So we have a type-preserving  representation of $\mathbf{PSL}(2,\mathbb{Z})$ into $\mathbf{PU}(3,1)$. We assume $\mathcal{L} \cap \mathcal{P}$ is non-empty. There are some special cases:
\begin{enumerate} 
	\item If $\mathcal{L} \subset \mathcal{P}$, then $A_2$ also preserves $\mathcal{P}$ invariant. So 
$\langle A_2, A_1\rangle$ can be viewed as a group in $\mathbf{PU}(2,1)$ with one generator $A_2$ a  $\mathbb C$-reflection about a   $\mathbb C$-line in  $\mathcal{P}$, and $A_1$ is a regular element in $\mathbf{PU}(2,1)$. This is exactly the moduli space studied in \cite{Falbelparker:2003}. Where Falbel and Parker gave the complete classification of discrete and faithful representations in this moduli space. 
\item 
 If $\mathcal{L}$ intersects $ \mathcal{P}$ perpendicularly at a point, then $A_2$ also preserves $\mathcal{P}$ invariant. So 
$\langle A_2, A_1\rangle$ can be viewed as a group in $\mathbf{PU}(2,1)$ with one generator $A_2$ a  $\mathbb C$-reflection on a   point in  $\mathcal{P}$, and $A_1$ is a regular element in $\mathbf{PU}(2,1)$. This is exactly the moduli space studied in \cite{FalbelKoseleff:2002}. Where Falbel and Koseleff showed the entire moduli space  consists  of discrete and faithful representations. 
\end{enumerate}

	 We will consider the case  that there is an angle $\beta$ between $\mathcal{L}$ and  $ \mathcal{P}$. 
Our first result is 

\begin{thm}\label{thm:modularfmoduli} Let $\rho:\mathbf{PSL}(2,\mathbb{Z}) \longrightarrow  \mathbf{PU}(3,1)$ be a type-preserving,  discrete and faithful representation. Up to conjugacy in $\mathbf{PU}(3,1)$, $\rho$ lies in one of the fifteen components:
	
	\begin{itemize}
		
		\item  Thirteen of these components are points. Each of which corresponds to a rigid $\mathbb{C}$-Fuchsian representation;
		
		\item  One of the two  2-dimensional  components $\mathcal{M}(0,\frac{2\pi}{3},\frac{4\pi}{3})$ and  $\mathcal{M}(\frac{2\pi}{3},\frac{4\pi}{3},\frac{4\pi}{3})$.
	\end{itemize}
	
\end{thm}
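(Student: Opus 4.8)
\emph{Proof plan.} The plan is to convert the geometric description of the pair $(A_1,A_2)$ into explicit matrices depending on a few real parameters, to reduce type-preservation to a single algebraic condition, and then to examine discreteness and faithfulness on each resulting component. First I would normalise: choose a lift of $A_1$ to $\mathbf{U}(3,1)$ in diagonal form dictated by its type, either $(0,\frac{2\pi}{3},\frac{4\pi}{3})$ or $(\frac{2\pi}{3},\frac{4\pi}{3},\frac{4\pi}{3})$, so that its invariant complex plane $\mathcal{P}$ and its fixed point in $\mathbf{H}^3_{\mathbb{C}}$ are standard. The only remaining freedom is the location of the $\mathbb{C}$-line $\mathcal{L}$ fixed by the $\pi$-rotation $A_2$; since $\mathcal{L}\cap\mathcal{P}\neq\emptyset$, I would record $\mathcal{L}$ by the point at which it meets $\mathcal{P}$ together with the angle $\beta$ it makes with $\mathcal{P}$, and then use the centraliser of $A_1$ to absorb the residual conjugation freedom. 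This presents $A_2=A_2(\beta,t,\dots)$ explicitly and yields a low-dimensional family of candidate representations $\langle A_1,A_2\rangle$ before any relation is imposed.

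Next I would impose that $\rho$ be type-preserving, namely that $A_0=A_2A_1$ be parabolic. In $\mathbf{PU}(3,1)$ this is the requirement that the trace of a unit-determinant lift of $A_0$ lie on the locus of parabolic traces, expressed through the appropriate discriminant of its characteristic polynomial. Solving this relation should eliminate one parameter and cut the candidate family down to a two-dimensional variety for each type of $A_1$. I expect the two boundary configurations isolated in the excerpt, $\mathcal{L}\subset\mathcal{P}$ and $\mathcal{L}$ meeting $\mathcal{P}$ perpendicularly, which return the $\mathbf{PU}(2,1)$ moduli spaces of \cite{Falbelparker:2003} and \cite{FalbelKoseleff:2002}, to appear as two opposite edges of the square, with $\beta$ interpolating between them; the interior then consists of the genuinely new $\mathbf{H}^3_{\mathbb{C}}$ representations. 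This produces the two squares $\mathcal{M}(0,\frac{2\pi}{3},\frac{4\pi}{3})$ and $\mathcal{M}(\frac{2\pi}{3},\frac{4\pi}{3},\frac{4\pi}{3})$.

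I would then separate the full parabolicity variety into its components and locate its degenerate solutions. At parameter values where $\langle A_1,A_2\rangle$ stabilises a common $\mathbb{C}$-line it reduces into a copy of $\mathbf{PU}(1,1)$, and the image of $\mathbf{PSL}(2,\mathbb{Z})$ becomes $\mathbb{C}$-Fuchsian. Because the types of $A_1$ and $A_2$ in the directions transverse to this $\mathbb{C}$-line can take only finitely many values consistent with parabolicity of $A_0$, such reducible configurations are rigid and finite in number; these are the thirteen isolated points, each a rigid $\mathbb{C}$-Fuchsian representation.

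Finally comes discreteness and faithfulness, which I regard as the main obstacle. The isolated $\mathbb{C}$-Fuchsian points are discrete and faithful because there the image lies in a rank-one subgroup in which $\mathbf{PSL}(2,\mathbb{Z})$ sits as its standard Fuchsian copy. For the two two-dimensional components one must instead prove discreteness and faithfulness uniformly over the entire square. The approach I would take is to construct an explicit fundamental domain for $\langle A_1,A_2\rangle$ from the relevant bisectors and the Lagrangian inversion attached to $A_2$, and to verify the hypotheses of a Poincar\'e polyhedron theorem continuously in $(\beta,t)$, so that the combinatorial type of the domain is constant across the interior of the square and degenerates correctly onto the two $\mathbf{PU}(2,1)$ edges. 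The delicate core is to show that these hypotheses persist along the whole two-parameter family, and conversely that every parameter value outside these fifteen components yields a non-discrete or non-faithful representation, so that the d.f.\ locus is exactly the stated union.
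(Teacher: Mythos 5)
Your proposal has two genuine gaps, and they concern exactly the parts of the theorem that carry its content. The first is the origin of the number fifteen. In the paper's proof (Subsection \ref{subsection:firstpartmoduli}) this comes from an enumeration of the conjugacy types of the generator pair: since $a_2$ has order two and $a_1$ has order three, $A_2$ must be elliptic of $(\pi,\pi,\pi)$, $(0,\pi,\pi)$ or $(0,0,\pi)$ type, and $A_1$ of one of five order-three types, giving exactly $3\times 5=15$ combinations; these combinations \emph{are} the fifteen components. Thirteen of them are then shown, case by case, to force the group to preserve a complex line --- via invariant complex planes spanned by orbits of fixed points or of polar vectors, reduction to $\mathbf{PU}(2,1)$ or $\mathbf{PU}(1,1)$, and Proposition 3.1 of \cite{Falbelparker:2003} --- so each gives a single rigid $\mathbb{C}$-Fuchsian point. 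Your plan never performs this enumeration: you fix from the outset that $A_2$ is a $\pi$-rotation about a $\mathbb{C}$-line and that $A_1$ is of $(0,\frac{2\pi}{3},\frac{4\pi}{3})$ or $(\frac{2\pi}{3},\frac{4\pi}{3},\frac{4\pi}{3})$ type, so your construction can only ever produce the two squares. Your substitute explanation of the thirteen points --- reducible configurations sitting inside the two-parameter families --- is wrong: a representation in which $A_2$ is a complex reflection in a point (type $(\pi,\pi,\pi)$) or in a complex plane (type $(0,0,\pi)$), or in which $A_1$ is, say, a $\mathbb{C}$-reflection, is not conjugate to any representation occurring in the two squares, because conjugation preserves elliptic type; these thirteen points lie outside the closures of both squares.

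The second gap is that your final step misreads the statement and would in fact be attempting to prove something false. The theorem asserts only that every type-preserving discrete faithful representation lies, up to conjugacy, in one of the fifteen components; it does not assert the converse, and the paper explicitly warns that many points of the two squares are not faithful. Concretely, in $\mathcal{M}(0,\frac{2\pi}{3},\frac{4\pi}{3})$, by Theorem \ref{thm:modularmoduli2dim} the edge $\alpha=0$ contains exactly one discrete and faithful point ($\beta=\frac{\pi}{2}$), and on the edge $\beta=0$ the representation is discrete and faithful only for $\alpha\in[\arccos(\frac{1}{4}),\frac{\pi}{2}]$ by \cite{Falbelparker:2003}. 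So verifying Poincar\'e-polyhedron hypotheses ``uniformly over the entire square'' cannot succeed; discreteness on a proper subregion is the content of the separate Theorem \ref{thm:modulardiscrete} and is not needed for the present theorem. Likewise there is no complementary locus to rule out by proving non-discreteness: once the type enumeration is done, there are no type-preserving representations outside the fifteen components at all. What this theorem actually requires, and what your plan omits, is the type classification together with the rigidity arguments; your parametrization of the two families (diagonalize $A_1$, locate $\mathcal{L}$ by its intersection point and angle with $\mathcal{P}$, quotient by the centralizer, impose parabolicity of $A_0$) is a reasonable variant of the paper's Lagrangian-inversion construction with the Cartan invariant $\alpha$ and rotation angle $\beta$, but it is the easy part.
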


Here the author remarks that we do not claim every $ \rho$ in $ \mathcal{M}(0,\frac{2\pi}{3},\frac{4\pi}{3})$ or in $\mathcal{M}(\frac{2\pi}{3},\frac{4\pi}{3},\frac{4\pi}{3})$ is discrete and faithful. In fact many representations in these two components are not  faithful. To determine the discreteness of a representation  is highly interesting and  difficult. See Theorem \ref{thm:modulardiscrete} for result in this direction.
 
Some representations in  $\mathcal{M}(0,\frac{2\pi}{3},\frac{4\pi}{3})$ and  $\mathcal{M}(\frac{2\pi}{3},\frac{4\pi}{3},\frac{4\pi}{3})$ are well-understood thanks to   anterior noteworthy work. More precisely, 
\begin{thm}\label{thm:modularmoduli2dim} The sub moduli spaces $\mathcal{M}(0,\frac{2\pi}{3},\frac{4\pi}{3})$ and $\mathcal{M}(\frac{2\pi}{3},\frac{4\pi}{3},\frac{4\pi}{3})$  both are parameterized by $\alpha, \beta \in [0, \frac{\pi}{2}]$. For $(\alpha,\beta) \in [0, \frac{\pi}{2}]^2$, we denote by $\rho=\rho(\alpha,\beta)$ the corresponding representation in any of them, and $\Gamma=\Gamma(\alpha,\beta)=\rho(\mathbf{PSL}(2,\mathbb{Z}))$.
	
	\begin{enumerate}
		
		\item  [(1).] When $\beta=0$,  $\Gamma$ preserves a ${\bf H}^{2}_{\mathbb C} \hookrightarrow {\bf H}^{3}_{\mathbb C}$ invariant. These representations are precisely the ones studied in \cite{Falbelparker:2003};
		
		\item  [(2).] When $\beta=\frac{\pi}{2}$,  $\Gamma$ also preserves a ${\bf H}^{2}_{\mathbb C} \hookrightarrow {\bf H}^{3}_{\mathbb C}$ invariant.  These representations are precisely the ones studied in \cite{FalbelKoseleff:2002};
		
		\item  [(3).] When $\alpha=\frac{\pi}{2}$, $\Gamma$  preserves a ${\bf H}^{1}_{\mathbb C} \hookrightarrow {\bf H}^{3}_{\mathbb C}$  invariant. Moreover, 
		any two  representations  in this arc are the same when restrict to this invariant $\mathbb{C}$-line. In particular, any	 representation   in this arc is discrete and faithful;
		
		\item [(4).]

		\begin{itemize}

			\item   [(4a).] For $\mathcal{M}(0,\frac{2\pi}{3},\frac{4\pi}{3})$,
		when $\alpha=0$,  $\Gamma$  preserves a ${\bf H}^{3}_{\mathbb R} \hookrightarrow {\bf H}^{3}_{\mathbb C}$ invariant. A representation  in this arc is discrete and faithful if and only if $\beta=\frac{\pi}{2}$;
		\item [(4b).] For $\mathcal{M}(\frac{2\pi}{3},\frac{4\pi}{3},\frac{4\pi}{3})$, when $\alpha=0$,  $\Gamma$  does not preserves a ${\bf H}^{3}_{\mathbb R} \hookrightarrow {\bf H}^{3}_{\mathbb C}$ invariant.
		\end{itemize}
		
			\end{enumerate}
			
\end{thm}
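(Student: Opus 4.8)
The plan is to work with the explicit normal forms for $A_1$ and $A_2$ introduced in Section~\ref{sec:modulispace}: $A_1$ is a fixed order-three element of the prescribed type preserving the $\mathbb{C}$-plane $\mathcal{P}$ and acting on it as a $(\frac{2\pi}{3},\frac{4\pi}{3})$-elliptic, while $A_2$ is the $\pi$-rotation about the $\mathbb{C}$-line $\mathcal{L}$, whose position relative to $\mathcal{P}$ and to the fixed point of $A_1$ is encoded by the two angles $\alpha$ and $\beta$. First I would confirm the square parameterization: the raw configuration of $\langle A_1,A_2\rangle$ carries a small number of continuous invariants, and imposing that $A_0=A_2A_1$ be parabolic (the type-preserving condition) removes exactly one degree of freedom, leaving a two-parameter family. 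I would then check that the residual $\mathbf{PU}(3,1)$-conjugation is used up and that angle values outside $[0,\frac{\pi}{2}]$ are conjugate, via a reflection symmetry, to values inside, so that $(\alpha,\beta)$ genuinely ranges over $[0,\frac{\pi}{2}]^2$.

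For the two horizontal edges I would read the invariant subspaces directly off the matrices. When $\beta=0$ we are in special case~(1): $\mathcal{L}\subset\mathcal{P}$, so $A_2$ preserves $\mathcal{P}$ and hence both generators preserve the signature-$(2,1)$ subspace carrying $\mathcal{P}$, giving an invariant ${\bf H}^2_{\mathbb C}$; matching $A_2|_{\mathcal{P}}$ with a $\mathbb{C}$-reflection about a $\mathbb{C}$-line identifies this edge with the family of \cite{Falbelparker:2003}. When $\beta=\frac{\pi}{2}$ we are in special case~(2): $\mathcal{L}\perp\mathcal{P}$, again $A_2$ preserves $\mathcal{P}$ but now restricts to a $\mathbb{C}$-reflection in a point, which is precisely the Falbel--Koseleff configuration of \cite{FalbelKoseleff:2002}. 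In both cases I would finish by checking that the single surviving edge parameter agrees with the parameter of the respective known family.

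For the edge $\alpha=\frac{\pi}{2}$ I would exhibit a common signature-$(1,1)$ eigenspace of $A_1$ and $A_2$, producing an invariant $\mathbb{C}$-line; the crucial computation is that $\rho|_{{\bf H}^1_{\mathbb C}}$ is independent of $\beta$, after which one recognizes it as the standard embedding of $\mathbf{PSL}(2,\mathbb{Z})$ into $\mathbf{PU}(1,1)\cong\mathbf{PSL}(2,\mathbb{R})$, so faithfulness and discreteness on the line propagate to the whole group. For the edge $\alpha=0$ the dichotomy comes from the eigenvalue data of $A_1$: in type $(0,\frac{2\pi}{3},\frac{4\pi}{3})$ the tangent eigenvalues $1,\omega,\omega^{2}$ are exactly those of an order-three rotation in $\mathbf{SO}(3)$, so I would construct an antiholomorphic involution fixing both generators and conclude that $\Gamma$ stabilizes a ${\bf H}^3_{\mathbb R}$, i.e. $\rho$ lands in a copy of $\mathbf{PO}(3,1)$; the rigidity of $\mathbf{PSL}(2,\mathbb{Z})$ in $\mathbf{PO}(3,1)$ quoted in the introduction then forces a unique discrete faithful type-preserving member, and since the corner $(\alpha,\beta)=(0,\frac{\pi}{2})$ already lies on the discrete faithful Falbel--Koseleff edge, that member is the one at $\beta=\frac{\pi}{2}$ and no other $\beta$ works, giving the stated ``if and only if''. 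In type $(\frac{2\pi}{3},\frac{4\pi}{3},\frac{4\pi}{3})$ the multiset $\{\omega,\omega^{2},\omega^{2}\}$ is not invariant under complex conjugation, hence incompatible with any order-three element of $\mathbf{SO}(3)$, which I would convert into a proof that no antiholomorphic involution commutes with $A_1$, ruling out an invariant ${\bf H}^3_{\mathbb R}$.

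I expect the main obstacle to be part~(4a), specifically the negative direction: producing the single good point via the real-structure and rigidity argument is clean, but establishing that \emph{every} other $\beta$ on the arc fails to be discrete or faithful requires using that distinct moduli points remain non-conjugate after passing into $\mathbf{PO}(3,1)$, so that rigidity truly permits at most one discrete faithful member. A secondary difficulty is the negative statement~(4b): ruling out a totally real invariant subspace demands an intrinsic obstruction coming from the eigenvalue/trace data of $A_1$ under an arbitrary antiholomorphic symmetry, rather than merely checking that one candidate involution fails.
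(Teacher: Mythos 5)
Your plan for parts (1), (2), (3) and the positive half of (4a) matches the paper's proof in substance: the paper reads the invariant ${\bf H}^2_{\mathbb C}$, ${\bf H}^1_{\mathbb C}$ and ${\bf H}^3_{\mathbb R}$ directly off the block structure of the explicit matrices $A_0,A_1,A_2$, identifies the $\beta=0$ and $\beta=\frac{\pi}{2}$ edges with the matrices of \cite{Falbelparker:2003} and \cite{FalbelKoseleff:2002} (with the parameter match $\alpha=\pi-6\theta$), checks that the restriction to the invariant $\mathbb{C}$-line at $\alpha=\frac{\pi}{2}$ is independent of $\beta$, and conjugates by $V=\mathrm{diag}(1,{\rm i},{\rm i},1)$ to land in $\mathbf{PO}(3,1)$ when $\alpha=0$.

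The genuine gap is in the negative half of (4a). Your argument rests on the premise, built into your parameterization step, that every point of the square (in particular every point of the arc $\alpha=0$) gives a type-preserving representation, and you then invoke rigidity of $\mathbf{PSL}(2,\mathbb{Z})$ in $\mathbf{PO}(3,1)$ to exclude all but one $\beta$. That premise is false for $\mathcal{M}(0,\frac{2\pi}{3},\frac{4\pi}{3})$: by Proposition \ref{prop:A0para}, when $\alpha=0$ and $\beta\in[0,\frac{\pi}{2})$ the chains $\mathcal{R}_1$ and $\mathcal{R}_2$ meet, and $A_0$ is \emph{elliptic}; concretely, $VA_0V^{-1}$ has eigenvalues $1,1,-\cos(2\beta)\pm{\rm i}\sin(2\beta)$, i.e.\ $A_0$ is a rotation by $\pi-2\beta$. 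This kills your rigidity argument twice over: rigidity governs type-preserving discrete faithful representations, so it says nothing about the points $\beta\neq\frac{\pi}{2}$, which are not type-preserving; and your proposed patch (distinct moduli points remain non-conjugate in $\mathbf{PO}(3,1)$) does not touch the possibility of a discrete faithful non-type-preserving member. The ellipticity of $A_0$ is in fact the mechanism that proves the claim: if $\pi-2\beta$ is an irrational multiple of $\pi$ then $\langle A_0\rangle$ is already non-discrete, while if it is rational then $A_0$ has finite order although $a_0$ has infinite order, so $\rho$ is non-faithful; either way no $\beta\neq\frac{\pi}{2}$ yields a discrete and faithful representation. This eigenvalue computation (equivalently, Proposition \ref{prop:A0para}) is the missing ingredient, and without it your approach to (4a) would fail. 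The same misconception affects your framing of the parameterization: parabolicity of $A_0$ is not a codimension-one condition cut out of a configuration space; in the paper it holds automatically for all $\alpha>0$ (via disjointness of the $\mathbb{R}^3$-chains and Proposition \ref{prop:asymptotic}) and degenerates on part of the edge $\alpha=0$, and the square parameterization is obtained by explicit construction of $\iota_2(\alpha,\beta)=U\iota_2(\alpha,0)U^{-1}$, not by a dimension count.

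For (4b) your route is different from the paper's and is sound, provided you add one step: since $A'_1$ of type $(\frac{2\pi}{3},\frac{4\pi}{3},\frac{4\pi}{3})$ has a unique fixed point in ${\bf H}^3_{\mathbb C}$, any invariant ${\bf H}^3_{\mathbb R}$ must contain that fixed point (a finite-order isometry of ${\bf H}^3_{\mathbb R}$ has a fixed point there, which by uniqueness is the fixed point of $A'_1$), and then the derivative at that point is the complexification of a real orthogonal map, whose eigenvalues are conjugation-symmetric, contradicting the multiset $\{\omega,\omega^2,\omega^2\}$. The paper instead computes $\mathrm{tr}(A'_0)=\frac{3\sqrt{3}}{2}+\frac{{\rm i}}{2}-\sqrt{3}\cos^2(\beta)+{\rm i}\cos^2(\beta)\notin\mathbb{R}\cup{\rm i}\mathbb{R}$, which rules out conjugating the group into $\mathbf{PO}(3,1)$. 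Your argument is more conceptual and actually yields the stronger conclusion that no representation in $\mathcal{M}(\frac{2\pi}{3},\frac{4\pi}{3},\frac{4\pi}{3})$, for any $(\alpha,\beta)$, preserves a real form, whereas the paper's trace identity is a one-line verification tied to the edge $\alpha=0$.
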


This is the first Fuchsian group such that its $\mathbf{PU}(3,1)$-representations  has been entirely constructed.

The author believes that the moduli space  $\mathcal{M}(\frac{2\pi}{3},\frac{4\pi}{3},\frac{4\pi}{3})$ is even more exotic than  $\mathcal{M}(0,\frac{2\pi}{3},\frac{4\pi}{3})$. One 
superficial reason is that the matrices representations are a little complicated than  the case of  $\mathcal{M}(0,\frac{2\pi}{3},\frac{4\pi}{3})$. Another reason is that Item   (4b) in Theorem \ref{thm:modularmoduli2dim}. Even  the non-discreteness of a  representation in $\mathcal{M}(\frac{2\pi}{3},\frac{4\pi}{3},\frac{4\pi}{3})$ when $\alpha=0$ and $\beta \in (0, \frac{\pi}{2})$ is not trivial.  


 For $\mathcal{M}(0,\frac{2\pi}{3},\frac{4\pi}{3})$ or $\mathcal{M}(\frac{2\pi}{3},\frac{4\pi}{3},\frac{4\pi}{3})$, and any $(\alpha,\beta) \in (0, \frac{\pi}{2})^2$, we have a  representation of   $\mathbf{PSL}(2,\mathbb{Z})$  into  $\mathbf{PU}(3,1)$, but not into  the smaller groups  $\mathbf{PO}(3,1)$ or $\mathbf{PU}(2,1)$.   The  discreteness of it is highly interesting, and is difficult to prove in general.
By  Chuckrow's theorem in complex hyperbolic setting \cite{CooperLongThistlethwaite:2007}, for $\mathcal{M}(0,\frac{2\pi}{3},\frac{4\pi}{3})$ or $\mathcal{M}(\frac{2\pi}{3},\frac{4\pi}{3},\frac{4\pi}{3})$, the set of $(\alpha,\beta) \in [0, \frac{\pi}{2}]^2$ which   corresponding to discrete and faithful representations of $\mathbf{PSL}(2,\mathbb{Z})$  is a closed set.  Moreover, from Theorem \ref{thm:modularmoduli2dim}, in the moduli space  $\mathcal{M}(0,\frac{4\pi}{3},\frac{4\pi}{3})$, for any fixed $\beta \in (0, \frac{\pi}{2})$, when $\alpha$ degenerates from $\frac{\pi}{2}$ to $0$, the	behaviors of the corresponding representations change dramatically:  from a discrete and faithful representation tending to a non-discrete or non-faithful representation. The last discrete and faithful representation in this path is very interesting. Since it is believed there should be accidental parabolic or geometrically infinite at this representation.


Our main
 result on discreteness  of representations of   $\mathbf{PSL}(2,\mathbb{Z})$  into  $\mathbf{PU}(3,1)$ is 
\begin{thm}\label{thm:modulardiscrete} For the moduli spaces  $\mathcal{M}(0,\frac{2\pi}{3},\frac{4\pi}{3})$:
	
	\begin{enumerate}
		
		\item  \label{item:modulardiscreterightbottom} There is a neighborhood $\mathcal{N}$ of the point $(\alpha, \beta)=( \frac{\pi}{2},0)$ in the square  $[0, \frac{\pi}{2}]^2$, such that for  any $(\alpha,\beta) \in \mathcal{N}$, the corresponding representation is discrete and  faithful;
			
			\item \label{item:modulardiscreterighttop}  There is a neighborhood $\mathcal{N}$ of the point $(\alpha, \beta)=(\frac{\pi}{2},\frac{\pi}{2})$ in the square  $[0, \frac{\pi}{2}]^2$, such that for  any $(\alpha,\beta) \in \mathcal{N}$, the corresponding representation is discrete and  faithful.
		\end{enumerate} 
\end{thm}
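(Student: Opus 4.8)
The plan is to prove discreteness and faithfulness on a whole two-parameter neighborhood by upgrading the explicit geometry available along the boundary edges through each corner, and then invoking the Poincar\'e polyhedron theorem, whose hypotheses are open. First I would record the situation at the corners themselves. By Theorem~\ref{thm:modularmoduli2dim}(3), along the edge $\alpha=\frac{\pi}{2}$ the group $\Gamma(\frac{\pi}{2},\beta)$ preserves a complex line $\mathbf{H}^1_{\mathbb C}$ and acts on it as the standard Fuchsian modular group; hence it is $\mathbb C$-Fuchsian, geometrically finite, discrete and faithful, with a single cusp, the one generated by the parabolic $A_0=A_2A_1$. Thus both corner representations $\rho(\frac{\pi}{2},0)$ and $\rho(\frac{\pi}{2},\frac{\pi}{2})$ are discrete and faithful; moreover, by items (1) and (2) of the same theorem, $\rho(\frac\pi2,0)$ is simultaneously a Falbel--Parker representation \cite{Falbelparker:2003} and $\rho(\frac\pi2,\frac\pi2)$ a Falbel--Koseleff representation \cite{FalbelKoseleff:2002}, so at each corner there is already an explicit invariant $\mathbf{H}^2_{\mathbb C}$ together with a concrete fundamental domain for the restricted action.

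Second, I would build a fundamental domain for the ambient action in $\mathbf{H}^3_{\mathbb C}$. Using the free-product structure $\mathbf{PSL}(2,\mathbb Z)\cong\langle A_2\rangle*\langle A_1\rangle\cong\mathbb Z_2*\mathbb Z_3$, the natural candidate is a polyhedron bounded by bisectors (and the corresponding spinal/isometric hypersurfaces) separating the ping-pong regions of the order-two and order-three factors, the two regions being tangent exactly at the fixed point of the parabolic $A_0$. At the corner the $\mathbb C$-Fuchsian picture, thickened in the two transverse complex directions, produces such a polyhedron whose side-pairings are $A_2$, $A_1$ and their powers; verifying the Poincar\'e polyhedron conditions (side-pairing compatibility, the cycle/tessellation conditions, and completeness at the cusp) reproves discreteness at the corner and, crucially, provides data that can be tracked under deformation.

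Third, I would check openness and the preservation of type. The bounding hypersurfaces, the side-pairing maps, and their intersection pattern depend real-analytically on $(\alpha,\beta)$, and the strict inclusions together with the dihedral-angle (cycle) conditions of the Poincar\'e polyhedron theorem are open; because the entire moduli space is type-preserving, $A_0$ remains parabolic throughout, so the tangency modelling the cusp persists and completeness at the cusp is maintained. Consequently the hypotheses continue to hold on a neighborhood $\mathcal N\subset[0,\frac\pi2]^2$ of the corner, and there $\Gamma(\alpha,\beta)$ is discrete and isomorphic to $\mathbb Z_2*\mathbb Z_3$, i.e.\ $\rho(\alpha,\beta)$ is discrete and faithful. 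One may also invoke \cite{CooperLongThistlethwaite:2007} to see that the discrete faithful locus is closed, which is consistent with, though not needed for, this openness statement.

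The main obstacle is the cusp. The delicate point is the tangency of the two ping-pong regions at the parabolic fixed point of $A_0$: under a two-parameter deformation one must ensure that the regions continue to meet \emph{only} there, that no new tangencies or intersections of bisectors appear, and that a horospherical cusp neighborhood remains precisely invariant under the deformed $A_0$. Along the edges $\beta=0$, $\beta=\frac\pi2$ and $\alpha=\frac\pi2$ this is governed by the explicit one-parameter geometry, but controlling it uniformly as one moves into the interior --- precisely where the invariant totally geodesic subspace is lost and genuinely three-dimensional complex hyperbolic estimates on bisector intersections are required --- is the crux of the argument, and is what forces the conclusion to be local, a neighborhood of the corner, rather than global.
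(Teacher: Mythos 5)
Your outline follows the same broad strategy as the paper --- an explicit fundamental domain in $\partial{\bf H}^{3}_{\mathbb C}$ whose faces meet only at parabolic fixed points, the Poincar\'e polyhedron theorem, and perturbation away from the corner --- but it has a genuine gap at exactly the step you yourself call ``the crux,'' and so it remains a program rather than a proof. The openness claim in your third paragraph is not valid as stated: tangency of two hypersurfaces at a single point is a degenerate, non-open condition, and the fact that $A_0$ stays parabolic throughout the moduli space does not imply that an unspecified real-analytic deformation of the corner's ping-pong regions keeps them meeting \emph{only} at the fixed point of $A_0$. To make the tangency persist one must say how the bounding hypersurfaces vary with $(\alpha,\beta)$, and this is precisely the paper's central construction: it first computes the axis of the ellipto-parabolic $A_0$ (Proposition \ref{prop:A0elliptopara}, producing real numbers $y_{1,A_0}(\alpha,\beta)$ and $x_{2,A_0}(\alpha,\beta)$; the reality of $x_{2,A_0}$ is a nontrivial and essential point), and then defines the cusp-adjacent face $S_{1,+}$ as the union of $\mathbb{C}^2$-chains whose polar vectors $(\lambda,\,{\rm i}\sqrt{2}\,y_{1,A_0},\,\sqrt{2}\,x_{2,A_0},\,1)^{\top}$, $\lambda\in[2,\infty]$, lie on that axis. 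Because of this anchoring, $S_{1,+}$ and $S_{2,-}=\iota_2(S_{1,+})$ sit inside the parallel Heisenberg hyperplanes (\ref{equation:S1plushyperplane}) and (\ref{equation:S2minushyperplane}), separated by a constant $t^{*}$ which equals $1$ at $\alpha=\frac{\pi}{2}$ and stays at least $\frac12$ nearby; hence the two faces meet only at $\infty$ for \emph{every} $(\alpha,\beta)$ in the neighborhood simultaneously. Only after the tangency has been built in this way does openness enter legitimately: all remaining interference conditions are strict inequalities $|\langle p_\lambda,q_\mu\rangle|>1$ over compact families of unit polar vectors (Propositions \ref{prop:S1minus}, \ref{prop:S1embed} and \ref{prop:disjoint}), verified in closed form at $\alpha=\frac{\pi}{2}$ (where $y_{1,A_0}=x_{2,A_0}=0$) and propagated by continuity.

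Two further points any completed version of your plan must address. First, the construction requires $A_0$ to have an invariant complex \emph{line} rather than an invariant complex plane, i.e.\ condition (\ref{equation:A0threeeigenvalue}); the neighborhood $\mathcal{N}$ must avoid the curve (\ref{equation:A0twoeigenvalue}), which meets the edge $\alpha=\frac{\pi}{2}$ at $\beta_0$ with $\cos(2\beta_0)=-\frac{1}{\sqrt{3}}$. This constraint is invisible to a soft openness argument, but it is exactly why the paper's conclusion is local to the corners (extended along the edge away from $\beta_0$ in Corollary \ref{cor:modulardiscrete}) and why the proof breaks down near the dashed curve in Figure \ref{figure:modularmoduli}. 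Second, the paper does not run ping-pong on $\mathbb{Z}_2 * \mathbb{Z}_3$ with side-pairings $A_1,A_2$ as you propose: it passes to the index-three subgroup $\langle c_0,c_1,c_2\rangle$ with $c_1=A_2$, $c_0=A_1A_2A_1^{-1}$, $c_2=A_1^{-1}A_2A_1$, a free product of three involutions, so that $S_0,S_1,S_2$ are $A_1^{\pm1}$-translates of a single $\mathbb{C}$-sphere and the $\mathbb{Z}_3$-symmetry reduces the disjointness verification to the short list in Proposition \ref{prop:disjoint}. Relatedly, the faces are not spinal spheres of bisectors as your sketch suggests: only the middle band $S_{1,0}$ is a bisector portion (via Proposition \ref{prop:paracsliceofbisector}), while the cusp-adjacent parts are unions of concentric chains adapted to $A_0$. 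These choices are not forced, but without them --- or an equally explicit substitute that pins the faces to the parabolic data --- the persistence of the cusp tangency, and hence the whole theorem, is left unestablished.
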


\begin{figure}
	\begin{center}
		\begin{tikzpicture}
		\node at (0,0) {\includegraphics[width=5cm,height=5cm]{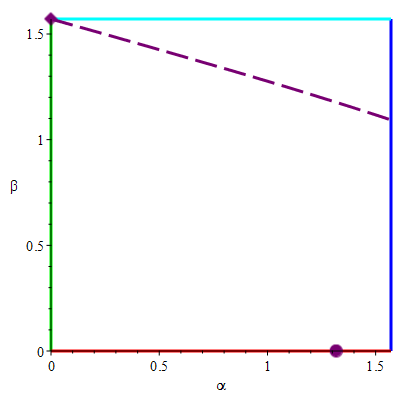}};
		\end{tikzpicture}
	\end{center}
	\caption{The moduli space $\mathcal{M}(0,\frac{2\pi}{3},\frac{4\pi}{3})$ of  representations of  $\mathbf{PSL}(2,\mathbb{Z})$ into $\mathbf{PU}(3,1)$. The cyan colored arc (that is,  $\beta=\frac{\pi}{2}$) and red colored arc (that is,  $\beta=0$)  correspond to the groups studied in \cite{FalbelKoseleff:2002} and  \cite{Falbelparker:2003} respectively. The  representations in the green colored arc (that is, $\alpha=0$) degenerate to ${\bf H}^{3}_{\mathbb R}$-geometry. There is only one point in the  green arc, that is, the purple diamond marked point, which corresponds to a discrete and faithful representation of $\mathbf{PSL}(2,\mathbb{Z})$.  The  representations in the blue colored arc (that is, $\alpha=\frac{\pi}{2}$) degenerate to ${\bf H}^{1}_{\mathbb C}$-geometry. So, in fact, there is only one representation in this arc. 
		The purple solid circle marked point  has an accidental parabolic element  as showed in  \cite{Falbelparker:2003}. The dash purple curve is the locus where the parabolic element $A_0$ has two eigenvalues.}
	\label{figure:modularmoduli}
\end{figure}


To our knowledge, Theorem \ref{thm:modulardiscrete} is the first  non-trivial  2-dimensional moduli space in   ${\bf H}^{3}_{\mathbb C}$-geometry such that its discreteness  has been studied.

See  Figure 	\ref{figure:modularmoduli} for the sub moduli space $\mathcal{M}(0,\frac{2\pi}{3},\frac{4\pi}{3})$:
	\begin{itemize}
	\item
	There is a  dash purple curve in Figure \ref{figure:modularmoduli}, which is the locus where the parabolic element $A_0$ has two eigenvalues (one has multiplicity three). One end point of this curve is $(\alpha,\beta)=(0, \frac{\pi}{2})$, and the other end point is  $(\alpha,\beta)=( \frac{\pi}{2}, \beta_0)$ with  $\cos(2 \beta_0)=-\frac{1}{\sqrt{3}}$. Due to technique reasons, our proof of discreteness in Theorem \ref{thm:modulardiscrete} does not  work  when  a representation lies in this curve (or near this curve), see Proposition \ref{prop:A0elliptopara}; 
	\item
	 In fact, we can prove the discreteness and faithfulness of representations in a bigger subset  in  $\mathcal{M}(0,\frac{2\pi}{3},\frac{4\pi}{3})$ which keeping away from this curve, see Corollary \ref{cor:modulardiscrete};
\item  We   believe that when $(\alpha,\beta)$ lies in this dash purple curve and  $\alpha$ is near to $\frac{\pi}{2}$, other methods will prove the discreteness and faithfulness of the corresponding representation.
	\end{itemize}


In Figure 	\ref{figure:modularmoduli}, there is a  purple diamond marked point with $(\alpha,\beta)=(0, \frac{\pi}{2})$ and   a purple solid circle marked point with $(\alpha,\beta)=(\arccos(\frac{1}{4}),0)$, both of them correspond to discrete and faithful representations of  $\mathbf{PSL}(2,\mathbb{Z})$ into $\mathbf{PU}(3,1)$.
It is reasonable to guess there should be a (topological) path $\mathcal{C}$ from  the purple diamond marked point to  the purple solid circle marked point, such that any  point in  $\mathcal{C}$  corresponds to an accidental  parabolic representation or geometrically infinite representation. See Subsection \ref{subsec:critical} for more details.


The proof of Theorem \ref{thm:modulardiscrete} via $\mathbb{C}$-spheres in ${\bf H}^{3}_{\mathbb C}$ is inspired by Falbel-Parker \cite{Falbelparker:2003}, but it is much more  involved. For example, the axis of the ellipto-parabolic element $A_0$ is very complicated, see Proposition \ref{prop:A0elliptopara}. On the other hand, the $\mathbb{C}$-spheres in the proof of Theorem \ref{thm:modulardiscrete} are very flexible, the ones chosen here seems  are  the simplest. For example the $\mathbb{C}$-sphere $S_{1}$ in Section \ref{sec:modulardiscrete} is $\iota_0$-invariant and  $\iota_1$-invariant. But in fact,  $\iota_0\iota_1$-invariance of $S_{1}$  is enough. 
The author even hopes that via  $\mathbb{C}$-spheres, one can prove discreteness and faithfulness of some groups in the moduli space with accidental parabolic elements (such as the case of  ${\bf H}^{2}_{\mathbb C}$-geometry in \cite{Falbelparker:2003} when $(\alpha,\beta)=(\arccos(\frac{1}{4}),0)$). 

\subsection{Questions on critical representations}  \label{subsec:critical}

For the sub moduli space  $\mathcal{M}(0,\frac{2\pi}{3},\frac{4\pi}{3})$, we define  $\mathcal{D}=\mathcal{D}(0,\frac{2\pi}{3},\frac{4\pi}{3})$ by $$\mathcal{D}= \left\{(\alpha,\beta) \in [0, \frac{\pi}{2}]^2 ~~~|~~~  \rho(\alpha,\beta) \text{~~~is ~~discrete and ~~ faithful} \right\}.$$
For a path from  a discrete and faithful representation to a non-discrete or non-faithful representation, there should be a point in this path such that some new  phenomenon occurs. Comparing to the well-understood  case of  ${\bf H}^{3}_{\mathbb R}$-geometry, accidental parabolicity or   geometrically infiniteness are two of the possible  new phenomena which can occur, see 	\cite{ASWY:2019}. Recall that for a representation $\rho$, if there is a loxodromic element $w \in \mathbf{PSL}(2,\mathbb{Z})$, such that $\rho(w) \in \mathbf{PU}(3,1)$ is a parabolic (or elliptic), then $\rho$ is called {\it accidental parabolic (or  accidental elliptic)}. 

So we  also define 
$\mathcal{C}=\mathcal{C}(0,\frac{2\pi}{3},\frac{4\pi}{3})$ by
 $$\mathcal{C}= \left\{(\alpha,\beta) \in \mathcal{D} ~~~|~~~  \rho(\alpha,\beta) \text{ is  accidental parabolic or  is geometrically infinite} \right\}.$$ 
 What in the author's mind is that a representation in $\mathcal{C}$ is discrete and faithful, but is critical in any reasonable sense. A priori, there may be new exotic phenomena in group representations into $\mathbf{PU}(3,1)$ the author does not know.
We will not give the definition of geometrical infiniteness here, the reader may refer to  \cite{Bowditch:1993}.  We have similar definitions of $\mathcal{D}$ and $\mathcal{C}$ for $\mathcal{M}(\frac{2\pi}{3},\frac{4\pi}{3},\frac{4\pi}{3})$.


We have proved that $\mathcal{D}$ contains 2-dimensional subset in Theorem \ref{thm:modulardiscrete} for the  moduli space  $\mathcal{M}(0,\frac{2\pi}{3},\frac{4\pi}{3})$.

\begin{question}\label{ques:critical} Study $\mathcal{D}$ and $\mathcal{C}$. More precisely,
	
	\begin{enumerate}
	
	\item [(a).] Is $\mathcal{D}$  connected?   Is $\mathcal{C}$  the whole frontier of $\mathcal{D}$?
	
			\item [(b).]  If (a) is true, then the most optimistic/simplest guess  is that  $\mathcal{C}$ is Jordan path asymptotic to $(0, \frac{\pi}{2})$ at one end, and 
			asymptotic to $(\arccos(\frac{1}{4}),0)$ at the other end;

			\item [(c).] If both (a) and  (b) are true, is $\mathcal{C}$ piece-wisely smooth?
		\end{enumerate}

\end{question}

In Question (a), by the  whole frontier of $\mathcal{D}$, we mean for any  $(\alpha_0, \beta_0) \in \mathcal{C}$,
is there  2-parameter of  $(\alpha, \beta) \in \mathcal{D}$ which converge to  $(\alpha_0, \beta_0)$?



There are some reasons that in  Question \ref {ques:critical} (b) we use the term  ``asymptotic" but not the term   ``connecting". Recall $A_{i}=\rho(a_{i})$ for $i=0,1,2$. Then both the words $A_0=A_2A_1$ and $A_2A_1A_2A^{-1}_1$
are Not responsible for the possible accidental parabolicity  phenomenon, as they were in ${\bf H}^{3}_{\mathbb R}$-geometry and ${\bf H}^{2}_{\mathbb C}$-geometry.  See Subsections \ref{subsection:012}  and  \ref{subsec:accidentalelliptic} respectively.
	 So even if the locus $\mathcal{C}$ is a Jordan path, we guess 
	 \begin{itemize}
	 	\item either it is fractal (the complicated guess);
	 	
	 	\item or it is  piece-wisely smooth (the optimistic guess). 
 	\end{itemize}
	 		 But even in the  optimistic guess,  $\mathcal{C}$  should be    locally infinite at least  near $(0, \frac{\pi}{2})$ and  $(\arccos(\frac{1}{4}),0)$. By this we mean that for example at any  neighborhood $\mathcal{N}$ of  $(\arccos(\frac{1}{4}),0)$ in $\mathcal{D}$, there should be infinitely many words $w_{i}$ in $\mathbf{PSL}(2,\mathbb{Z})$, such that $\rho(w_{i})$ is accidental parabolic at some point in $\mathcal{N}$.

\begin{question}\label{ques:approximation} 
	 It is also interesting even only a numerical  description/approximation  of  locus $\mathcal{C}$ in the moduli space $\mathcal{M}(0,\frac{2\pi}{3},\frac{4\pi}{3})$. 
	 
	 \end{question}
 
For similar approximation  questions in 3-dimensional hyperbolic geometry, see   \cite{EMartinS:2021}.
	 	In 	Figure \ref{figure:A2A1A2A1A2a1}, we have a rough approximation of the locus in $\mathcal{M}(0,\frac{2\pi}{3},\frac{4\pi}{3})$ where $(A_2A_1)^2A_2A^{-1}_1$ has a repeated eigenvalue, that is, the locus $\mathcal{H}((A_2A_1)^2A_2A^{-1}_1)=0$. Here $\mathcal{H}$ is the holy grail function, see Subsection \ref{subsection:holygrail}. If we  have good   approximations of the loci where $\mathcal{H}(\rho(w))=0$ for many $w \in \mathbf{PSL}(2,\mathbb{Z})$, then we shall have a  good   approximation of the locus  $\mathcal{C}$. But the locus $\mathcal{H}(\rho(w))=0$ is difficult to solve rigorous (or even approximate)  for  general word $w \in  \mathbf{PSL}(2,\mathbb{Z})$. 
	 	 So in the author's opinion, there are many exotic phenomena in the moduli spaces $\mathcal{M}(0,\frac{2\pi}{3},\frac{4\pi}{3})$ and $\mathcal{M}(\frac{2\pi}{3},\frac{4\pi}{3},\frac{4\pi}{3})$ need to understand.

\begin{question}\label{ques:122} For the moduli space $\mathcal{M}(\frac{2\pi}{3},\frac{4\pi}{3},\frac{4\pi}{3})$, we guess similar behavior occurs. 
	\end{question}

A first step should show there is a   2-dimensional subset of $\mathcal{M}(\frac{2\pi}{3},\frac{4\pi}{3},\frac{4\pi}{3})$ such that any representation in it 
is  discrete and faithful.


 {\bf Notation.} In this paper, inspired by \cite{Go}, by an $\mathbb{R}^3$-chain $\mathcal{L}$, we mean the boundary  of a totally geodesic $\mathbb{R}^3$-subspace  ${\bf H}^{3}_{\mathbb R} \hookrightarrow  {\bf H}^{3}_{\mathbb C}$. Topologically $\mathcal{L}$ is a $2$-sphere. Similarly, by a $\mathbb{C}^2$-chain $\mathcal{L}$, we mean the boundary  of a totally geodesic $\mathbb{C}^2$-subspace  ${\bf H}^{2}_{\mathbb C} \hookrightarrow  {\bf H}^{3}_{\mathbb C}$. Topologically $\mathcal{L}$ is a $3$-sphere. But by a  $\mathbb{C}$-sphere $S$, we mean $S$ is foliated by  1-parametrization of  $\mathbb{C}^2$-chains in  ${\bf H}^{3}_{\mathbb C}$ (with possibly that a $\mathbb{C}^2$-chain may degenerate to a point). So  topologically the $\mathbb{C}$-sphere $S$ is a $4$-sphere.


 {\bf The paper is organized as follows.} In Section \ref{sec:background}  we give well known background
 material on complex hyperbolic geometry. 
 In Section \ref{sec:modulispace}, we give the matrix representations of $\mathbf{PSL}(2,\mathbb{Z})$ into $\mathbf{PU}(3,1)$. We also prove Theorems \ref{thm:modularfmoduli} and \ref{thm:modularmoduli2dim} in Section 
  \ref{sec:modulispace}. Section \ref{sec:modulardiscrete} is devoted to the proof of Theorem  \ref{thm:modulardiscrete} via $\mathbb{C}$-spheres in   ${\bf H}^{3}_{\mathbb C}$.



\textbf{Acknowledgement}: The symbolic calculations in this paper were guided by massive 
numerical calculations on Maple. 


 \section{Background}\label{sec:background}

 The purpose of this section is to introduce briefly complex hyperbolic geometry. One can refer to Goldman's book \cite{Go} for more details.


\subsection{Complex hyperbolic space}  \label{subsec:chs}
Let ${\mathbb C}^{n,1}$  denote the vector space ${\mathbb C}^{n+1}$ equipped with the Hermitian
form  of signature $(n,1)$:
 $$\langle {\bf{z}}, {\bf{w}} \rangle= {\bf w}^* \cdot  H \cdot {\bf z},$$ where $ \bf{w}^*$ is the  Hermitian transpose  of  $\bf{w}$. We will take the Hermitian matrix $H$ be 
$$H_{s}=\begin{pmatrix}
0&0 & 1\\

0&I_{n-1} & 0\\
1&0 & 0\\
\end{pmatrix}, $$ 

or $$H_{b}=\begin{pmatrix}
I_{n} & 0\\
0 & -1\\
\end{pmatrix}, $$ and $I_{n-1}$ ($I_{n}$ resp. ) is the $(n-1) \times (n-1)$ ($n \times n$ resp. ) identity matrix.
Then
the Hermitian form divides ${\mathbb C}^{n,1}$ into three parts $V_{-}, V_{0}$ and $V_{+}$. Which are
\begin{eqnarray*}
  V_{-} &=& \{{\bf z}\in {\mathbb C}^{n+1}-\{0\} : \langle {\bf z}, {\bf z} \rangle <0 \}, \\
  V_{0} &=& \{{\bf z}\in {\mathbb C}^{n+1}-\{0\} : \langle {\bf z}, {\bf z} \rangle =0 \}, \\
  V_{+} &=& \{{\bf z}\in {\mathbb C}^{n+1}-\{0\} : \langle {\bf z}, {\bf z} \rangle >0 \}.
\end{eqnarray*}

Let $$[~~]: {\mathbb C}^{n+1}-\{0\}\longrightarrow {\mathbb C}{\mathbf P}^{n}$$ be  the canonical projection onto the  complex projective space. 
Then the {\it complex hyperbolic space} ${\bf H}^{n}_{\mathbb C}$ is the image of $V_{-}$ in ${\mathbb C}{\mathbf P}^{n}$
by the  map  $[~~ ]$. The  {\it ideal boundary} of ${\bf H}^{n}_{\mathbb C}$, or {\it  boundary at infinity}, is  the image of $V_{0}$ in
 ${\mathbb C}{\mathbf P}^{n}$, we denote it by $\partial {\bf H}^{n}_{\mathbb C}$.  In this paper, we will denote by   $$\mathbf{q}=(z_1,z_2, \cdots, z_{n+1})^{\top}$$ a vector in ${\mathbb C}^{n,1}$ (note that we use the boldface $\mathbf{q}$), and 
by $$q=[z_1,z_2, \cdots, z_{n+1}]^{\top}$$ the corresponding point in  ${\mathbb C}{\mathbf P}^{n}$. Here the 
superscript  $``\top"$  means the transpose of a vector.

In this paper, we only consider ${\bf H}^{3}_{\mathbb C}$, and its various sub-spaces. 
We will mainly use $H_{s}$ for the definition of  ${\bf H}^{n}_{\mathbb C}$ in this paper, it is also called the {\it Siegel model} of ${\bf H}^{n}_{\mathbb C}$. When we use  $H_{b}$, it is called the {\it ball model} of ${\bf H}^{n}_{\mathbb C}$.



There is a typical anti-holomorphic isometry $\iota$ of ${\bf H}^{n}_{\mathbb C}$. $\iota$ is given on the level of homogeneous coordinates by complex conjugate

\begin{equation}\label{antiholo}
\iota:\left[\begin{matrix} z_1 \\ z_2\\ \vdots \\ z_{n+1} \end{matrix}\right]
\longmapsto \left[\begin{matrix} \overline{z_1} \\
\overline{z_2} \\ \vdots \\\overline{z_{n+1}} \end{matrix}\right].
\end{equation}


\subsection{Siegel model of ${\bf H}^{n}_{\mathbb C}$}
When using the Hermitian matrix $H_{s}$, the standard lift $(z_1,z_2,\cdots, z_{n},1)^{\top}$ of $z=(z_1,z_2, \cdots, z_{n})\in \mathbb {C}^{n}$ is negative if and only if
$$z_1+|z_2|^2+\cdots +|z_{n}|^2+\overline{z}_1=2 \Re (z_1)+|z_2|^2+\cdots +|z_{n}|^2<0.$$  Thus ${\bf H}^{n}_{\mathbb C}$  is a paraboloid in ${\mathbb C}^{n}$, called the {\it Siegel domain}.
Its boundary $ \partial {\bf H}^{n}_{\mathbb C}$ satisfies
$$2{\rm Re}(z_1)+|z_2|^2+\cdots +|z_{n}|^2=0.$$
Therefore, the Siegel domain has an  analogue construction of the upper half space model for the  real hyperbolic space ${\bf H}^{n}_{\mathbb R}$.

Let $\mathfrak{N}=\mathbb{C}^{n-1}\times \mathbb{R}$ be the Heisenberg group with product
$$
[z,t]\cdot [w,s]=[z+w,t+s + 2 \cdot \Im\langle \langle  z, w\rangle\rangle],
$$
where $z=(z_1, z_2, \cdots, z_{n-1}) \in \mathbb{C}^{n-1}$, $w=(w_1, w_2, \cdots, w_{n-1}) \in \mathbb{C}^{n-1}$, and 
$$\langle \langle  z, w \rangle \rangle=w^{*}\cdot z$$ is the standard positive Hermitian form  on $\mathbb{C}^{n-1}$.
Then the boundary $\partial {\bf H}^{n}_{\mathbb C}$  of complex hyperbolic space can be identified to the union $\mathfrak{N} \cup \{\infty\}$, where $\infty$ is the point at infinity.
The \emph{standard lift} of $\infty$ and $q=[z_1,z_2, \cdots, z_{n-1},t]\in \mathfrak{N}$ in $\mathbb{C}^{n+1}$ are
\begin{equation}\label{eq:lift}
{\bf {\infty}}=\left[
\begin{array}{c}
1 \\
0 \\
\vdots  \\
0 \\
0 \\
\end{array}
\right]  \quad ~~ \text{and} ~~ \quad
{\bf{q}}=\left[
\begin{array}{c}
-(|z_1|^2+|z_2|^2+\cdots+|z_{n-1}|^2)+it \\
\sqrt{2}z_1 \\
\vdots  \\
\sqrt{2} z_{n-1} \\
1 \\
\end{array}
\right].
\end{equation}
So complex hyperbolic space with  its boundary ${\bf H}^{n}_{\mathbb C} \cup \partial {\bf H}^{n}_{\mathbb C}$ can be identified to $\mathfrak{N}\times{\mathbb{R}_{\geq 0}}\cup \infty$.
Any point $q=(z,t,u)\in{\mathfrak{N}}\times{\mathbb{R}_{\geq 0}}$ has the standard lift
$$
\quad
{\bf{q}}=\left[
\begin{array}{c}
-(|z_1|^2+|z_2|^2+\cdots+|z_{n-1}|^2)-u+it\\
\sqrt{2} z_1 \\
\vdots  \\
\sqrt{2} z_{n-1} \\
1 \\
\end{array}
\right].
$$
Here $(z_1,z_2,\cdots, z_{n-1},t,u)$ is called the \emph{horospherical coordinates} of $\overline{{\bf H}^{n}_{\mathbb C}} ={\bf H}^{n}_{\mathbb{C}} \cup \partial {\bf H}^{n}_{\mathbb{C}}$. The natural projection $$ \Pi _{V}: \mathfrak{N}=\mathbb{C}^{n-1} \times \mathbb{R} \rightarrow \mathbb{C}^{n-1}$$ is called the {\it vertical projection}.

\subsection{Totally geodesic submanifolds, reflections, rotations and Lagrangian inversions}

There are two kinds of totally geodesic submanifolds  in ${\bf H}^{n}_{\mathbb C}$:

\begin{itemize}
	
	\item Given any point $x \in {\bf H}^{n}_{\mathbb C}$, and a complex linear subspace $F$ of dimension $k$ in the tangent space $T_{x}{\bf H}^{n}_{\mathbb C}$, there is a unique complete holomorphic totally geodesic     submanifold contains $x$ and is tangent  to $F$. Such a holomorphic submanifold is called a \emph{$\mathbb{C}^{k}$-subspace} of ${\bf H}^{n}_{\mathbb C}$.  A $\mathbb{C}^{k}$-subspace is the intersection of a complex $k$-dimensional projective subspace in  $\mathbb{C}{\bf P}^{n}$ with ${\bf H}^{n}_{\mathbb C}$, and it is isometric to ${\bf H}^{k}_{\mathbb C}$.  A $\mathbb{C}^{2}$-subspace is also called a \emph{complex hyperbolic  plane}.  A $\mathbb{C}^{1}$-subspace is also called a \emph{complex geodesic} or a {$\mathbb{C}$-line}.

	
	\item  Corresponding to the compatible real structures on $\mathbb{C}^{n,1}$ are the real forms of ${\bf H}^{n}_{\mathbb C}$; that is, the maximal totally real totally geodesic subspaces of ${\bf H}^{n}_{\mathbb C}$, which has real dimension $n$.  A maximal totally real totally geodesic subspace of ${\bf H}^{n}_{\mathbb C}$  is the  fixed-point set  of an  anti-holomorphic isometry of ${\bf H}^{n}_{\mathbb C}$.  
	For the usual real structure, this submanifold is the real hyperbolic space ${\bf H}^{n}_{\mathbb R}$ with curvature $-\frac{1}{4}$. Any  totally geodesic subspace of a maximal totally real totally geodesic subspace is a  totally real totally geodesic subspace,  which  is the real hyperbolic space ${\bf H}^{k}_{\mathbb R}$ for some $k$, and it is  called a \emph{$\mathbb{R}^{k}$-subspace} of ${\bf H}^{n}_{\mathbb C}$.

\end{itemize}


Since the Riemannian sectional curvature of the  complex hyperbolic space
is non-constant, there are no totally geodesic hyperplanes in  ${\bf H}^{n}_{\mathbb C}$ when $n \geq 2$.



Let $\mathcal{L}$ be a  $\mathbb{C}^{n-1}$-subspace in ${\bf H}^{n}_{\mathbb C}$,  a {\it polar vector} of $\mathcal{L}$ is the unique vector (up to scaling) perpendicular to this $\mathbb{C}^{n-1}$-subspace with
respect to the Hermitian form. A polar vector of a $\mathbb{C}^{n-1}$-subspace belongs to  $V_{+}$ and each vector in $V_{+}$ corresponds to a $\mathbb{C}^{n-1}$-subspace of ${\bf H}^{n}_{\mathbb C}$.

Moreover, let $\mathcal{L}$ be a $\mathbb{C}^{n-1}$-subspace of ${\bf H}^{n}_{\mathbb C}$ with polar vector
${\bf n}\in V_{+}$,
then the order two  {\it complex reflection} fixing $\mathcal{L}$  is given by
$$
\iota_{\bf n}({\bf z}) = -{\bf z}+2\frac{\langle {\bf z}, {\bf n} \rangle}{\langle {\bf n},{\bf n}\rangle}{\bf n}.
$$
The $\mathbb{C}^{n-1}$-subspace $\mathcal{L}$, or its boundary at infinity, is also called the \emph{mirror} of $\iota_{\bf n}$. 

Similarly, let $\mathcal{L}$ be a  complex line in  ${\bf H}^{n}_{\mathbb C}$. For any $k$ at least $2$, there is an order $k$ holomorphic isometry $\iota$ with fixed point set exactly  $\mathcal{L}$ and it rotates $\frac{2 \pi}{k}$ in the normal bundle of  $\mathcal{L}$.  The isometry $\iota$ is called the $\frac{2 \pi}{k}$-rotation along  $\mathcal{L}$. Using ball model of ${\bf H}^{n}_{\mathbb C}$, a typical $\frac{2 \pi}{k}$-rotation is given by 
$$ \iota ([z_1,z_2, \cdots, z_n,1]^{\top})=[z_1, {\rm e}^{\frac{2 \pi \rm{i}}{k}}z_2, \cdots, {\rm e}^{\frac{2 \pi \rm{i}}{k}} z_n,1]^{\top}.$$

Now let  $\mathcal{L}$ be a totally geodesic     ${\bf H}^{n}_{\mathbb R} \hookrightarrow {\bf H}^{n}_{\mathbb C}$. There is an anti-holomorphic isometry $\iota$ of ${\bf H}^{n}_{\mathbb C}$ with fixed point set exactly $\mathcal{L}$. The isometry $\iota$ is called the  {\it 
Lagrangian inversion} or {\it $\mathbb{R}$-reflection} along $\mathcal{L}$. We have given an example of Lagrangian inversion in  (\ref{antiholo}). Other Lagrangian inversions can be obtained from this one by conjugacy 
with a  holomorphic isometry of ${\bf H}^{n}_{\mathbb C}$.

 Inspired by \cite{Go}, by an { \it $\mathbb{R}^k$-chain $\mathcal{L}$}, we mean the boundary  of a totally geodesic $\mathbb{R}^k$-subspace  ${\bf H}^{k}_{\mathbb R} \hookrightarrow  {\bf H}^{n}_{\mathbb C}$. Topologically $\mathcal{L}$ is a $(k-1)$-sphere. Similarly, by a { \it $\mathbb{C}^k$-chain $\mathcal{L}$}, we mean the boundary  of a totally geodesic $\mathbb{C}^k$-subspace  ${\bf H}^{k}_{\mathbb C} \hookrightarrow  {\bf H}^{n}_{\mathbb C}$. Topologically $\mathcal{L}$ is a $(2k-1)$-sphere. Using Siegel model of  ${\bf H}^{n}_{\mathbb C}$, that is in the Heisenberg group $\mathfrak{N}$,  a $\mathbb{C}^{n-1}$-chain $\mathcal{L}$  is \emph{vertical} if $\infty \in \mathcal{L}$. Otherwise, it is \emph{finite}. Let  $\iota$ be the order two $\mathbb{C}$-reflection with mirror a finite $\mathbb{C}^{n-1}$-chain $\mathcal{L}$, then $\iota(q _{\infty})$ is the \emph{center} of  $\mathcal{L}$.


\begin{thm} (Page 128 of   \cite{Go}) \label{thm:cform}In the Heisenberg group $\mathfrak{N}=\mathbb{C}^{n-1} \times \mathbb{R}$:
	\begin{itemize}
		\item If the  $\mathbb{C}^{n-1}$-chain $\mathcal{L}$ is vertical, then  $\Pi _{V}(\mathcal{L}- \infty)$ is a $\mathbb{C}$-affine subspace of  $\mathbb{C}^{n-1}$ with complex dimension $n-2$;

		\item If the  $\mathbb{C}^{n-1}$-chain $\mathcal{L}$ is finite, then  $\Pi _{V}$ maps $\mathcal{L}$ bijectively onto a Euclidean sphere in   $\mathbb{C}^{n-1}$ of real dimension $2n-3$  with radius  $R$.

	\end{itemize}
\end{thm}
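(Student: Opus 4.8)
The plan is to represent the $\mathbb{C}^{n-1}$-chain $\mathcal{L}$ by a polar vector and read its defining equation directly in horospherical coordinates. A $\mathbb{C}^{n-1}$-subspace is cut out of $\mathbb{C}{\bf P}^{n}$ by a single complex hyperplane, so there is a polar vector $\mathbf{n}=(n_1,\ldots,n_{n+1})^{\top}\in V_{+}$ with $\mathcal{L}=\{q\in\partial{\bf H}^{n}_{\mathbb C}:\langle\mathbf{q},\mathbf{n}\rangle=0\}$. First I would substitute the standard lift (\ref{eq:lift}) of a finite point $q=[z_1,\ldots,z_{n-1},t]$, writing $z=(z_1,\ldots,z_{n-1})$, into $\langle\mathbf{q},\mathbf{n}\rangle=\mathbf{n}^{*}H_{s}\mathbf{q}$ and expand using the explicit shape of $H_{s}$, obtaining the single scalar equation
\begin{equation}
\bar{n}_{n+1}\bigl(-|z|^{2}+it\bigr)+\sqrt{2}\sum_{k=1}^{n-1}\bar{n}_{k+1}z_{k}+\bar{n}_{1}=0. \tag{$\ast$}
\end{equation}
Pairing the lift of $\infty$ against $\mathbf{n}$ gives $\langle\infty,\mathbf{n}\rangle=\bar{n}_{n+1}$, so $\infty\in\mathcal{L}$ exactly when $n_{n+1}=0$; this is precisely the vertical/finite dichotomy. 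I would also record once, for later use, that $\langle\mathbf{n},\mathbf{n}\rangle=2\Re(\bar{n}_{n+1}n_{1})+\sum_{k=1}^{n-1}|n_{k+1}|^{2}>0$.

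In the vertical case $n_{n+1}=0$, equation ($\ast$) loses its dependence on $t$ and becomes the complex-affine equation $\sqrt{2}\sum_{k}\bar{n}_{k+1}z_{k}+\bar{n}_{1}=0$ in the $z$-coordinates alone. The coefficient vector $(n_2,\ldots,n_n)$ cannot vanish, for otherwise the formula for $\langle\mathbf{n},\mathbf{n}\rangle$ would give $0$, contradicting $\mathbf{n}\in V_{+}$; hence this is a genuine affine hyperplane equation and its solution set is a $\mathbb{C}$-affine subspace of complex dimension $n-2$. Since ($\ast$) imposes no condition on $t$, the vertical projection $\Pi_{V}\colon(z,t)\mapsto z$ carries $\mathcal{L}-\infty$ onto exactly this subspace, which proves the first bullet.

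In the finite case $n_{n+1}\neq 0$, I would divide ($\ast$) by $\bar{n}_{n+1}$ and separate real and imaginary parts in the $2(n-1)$ real coordinates of $z$. Completing the square, the real part becomes the equation of a Euclidean sphere centered at the point with coordinates $z_{k}=\tfrac{1}{\sqrt{2}}\,n_{k+1}/n_{n+1}$; a short computation identifies its squared radius as $R^{2}=\langle\mathbf{n},\mathbf{n}\rangle/(2|n_{n+1}|^{2})$, which is positive precisely because $\mathbf{n}\in V_{+}$, so the sphere is nondegenerate of real dimension $2n-3$. The imaginary part of ($\ast$) is linear in $t$ with a nonzero coefficient, hence determines $t$ uniquely as a function of $z$; this exhibits $\mathcal{L}$ as a graph over the sphere, so $\Pi_{V}$ restricts to a bijection of $\mathcal{L}$ onto it, which is the second bullet.

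The computations throughout are elementary linear algebra in the Siegel model; the only delicate bookkeeping is the passage from the hypothesis $\mathbf{n}\in V_{+}$ to the non-degeneracy statements, namely the positivity of $R^{2}$ in the finite case and the non-vanishing of the affine coefficients in the vertical case. Recording the single identity for $\langle\mathbf{n},\mathbf{n}\rangle$ at the outset makes both of these immediate, so I do not anticipate any genuine obstacle beyond careful tracking of the factors of $\sqrt{2}$ coming from the horospherical lift.
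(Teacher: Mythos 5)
Your proposal is correct. Note that the paper itself gives no proof of Theorem \ref{thm:cform}: it is quoted as a known result (Page 128 of \cite{Go}), so there is no internal argument to compare against. Your derivation — representing the chain by a polar vector $\mathbf{n}\in V_{+}$, expanding $\langle\mathbf{q},\mathbf{n}\rangle=0$ against the standard lifts in the Siegel model, and splitting into the cases $n_{n+1}=0$ (vertical) and $n_{n+1}\neq 0$ (finite) — is the standard one, and all the key identities check out: the pairing with the lift of $\infty$ is $\bar{n}_{n+1}$, giving the dichotomy; in the finite case the real part of your equation $(\ast)$ completes the square to a sphere centered at $z_{k}=n_{k+1}/(\sqrt{2}\,n_{n+1})$ with $R^{2}=\langle\mathbf{n},\mathbf{n}\rangle/(2|n_{n+1}|^{2})>0$, and the imaginary part determines $t$ uniquely, so $\Pi_{V}$ is a bijection onto the sphere. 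As a consistency check, specializing your formulas to $n=3$ and normalizing $n_{n+1}=1$ recovers exactly the paper's Propositions \ref{prop:center} and \ref{prop:cplane} (center $(a+b{\rm i},c+d{\rm i},f)$, polar $(R^{2}-(a^{2}+b^{2}+c^{2}+d^{2})+f{\rm i},\sqrt{2}(a+b{\rm i}),\sqrt{2}(c+d{\rm i}),1)^{\top}$, and the cylinder-plus-hyperplane description), which the paper likewise attributes to ``direct calculations'' of this same kind.
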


By direct calculations, we have the following propositions. 
\begin{prop} \label{prop:center} If a finite  $\mathbb{C}^{2}$-chain $\mathcal{L}$ in  the Heisenberg group $\mathbb{C}^{2} \times \mathbb{R}$   has center $$(a+b {\rm i},~~c+d {\rm i},~~f)$$ and radius  $R$. Here all $a, b,c,d,f$ are real numbers. Then the polar vector of $\mathcal{L}$ is $$(R^2-(a^2+b^2+c^2+d^2)+f \rm{i},~~\sqrt{2}(a+b {\rm i}),~~\sqrt{2}( c+d {\rm i}),~~1)^{\top}.$$
\end{prop}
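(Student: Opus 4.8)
The plan is to verify the claimed vector by a direct computation and then invoke uniqueness, rather than to derive it from scratch. Write the candidate as
$$\mathbf{n}=\left(R^2-(a^2+b^2+c^2+d^2)+f\mathrm{i},\ \sqrt2(a+b\mathrm{i}),\ \sqrt2(c+d\mathrm{i}),\ 1\right)^{\top}.$$
The observation that makes everything short is that $\mathbf{n}$ differs from the standard lift, via \eqref{eq:lift}, of the Heisenberg point $(a+b\mathrm{i},\,c+d\mathrm{i},\,f)$ only by the addition of $R^2$ in the top entry. Since the polar vector of a $\mathbb{C}^2$-chain is unique up to scaling, and a finite $\mathbb{C}^2$-chain is determined by its center and radius, it suffices to show that $\mathbf{n}\in V_+$ and that the $\mathbb{C}^2$-chain $\mathcal{L}'$ to which $\mathbf{n}$ is polar has center $(a+b\mathrm{i},\,c+d\mathrm{i},\,f)$ and radius $R$. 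Then $\mathcal{L}'=\mathcal{L}$ and $\mathbf{n}$ is the asserted polar vector.

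First I would compute $\langle\mathbf{n},\mathbf{n}\rangle$ using the form $\langle\cdot,\cdot\rangle=\mathbf{w}^{*}H_s\mathbf{z}$ with $H_s$ in the case $n=3$. Keeping the conjugations straight, the top-bottom cross terms give $n_1+\overline{n_1}=2\Re(n_1)$ and the middle terms give $|n_2|^2+|n_3|^2$, so that $\langle\mathbf{n},\mathbf{n}\rangle=2\Re(n_1)+|n_2|^2+|n_3|^2=2R^2>0$; hence $\mathbf{n}\in V_+$ and is genuinely polar to a $\mathbb{C}^2$-chain. For the center I would apply the $\mathbb{C}$-reflection $\iota_{\mathbf{n}}(\mathbf{z})=-\mathbf{z}+2\frac{\langle\mathbf{z},\mathbf{n}\rangle}{\langle\mathbf{n},\mathbf{n}\rangle}\mathbf{n}$ to $\mathbf{z}=q_\infty=(1,0,0,0)^{\top}$. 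One gets $\langle q_\infty,\mathbf{n}\rangle=1$, so $\iota_{\mathbf{n}}(q_\infty)=-q_\infty+\frac{1}{R^2}\mathbf{n}$; rescaling projectively by $R^2$, the top entry $R^2$ is exactly cancelled by $-q_\infty$, leaving $\left(-(a^2+b^2+c^2+d^2)+f\mathrm{i},\ \sqrt2(a+b\mathrm{i}),\ \sqrt2(c+d\mathrm{i}),\ 1\right)^{\top}$, which by \eqref{eq:lift} is precisely the standard lift of $(a+b\mathrm{i},\,c+d\mathrm{i},\,f)$. This confirms the center.

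For the radius, a boundary point with Heisenberg coordinates $(w_1,w_2,t)$, where $w_1=x_1+y_1\mathrm{i}$ and $w_2=x_2+y_2\mathrm{i}$, lies on $\mathcal{L}'$ iff $\langle\mathbf{q},\mathbf{n}\rangle=0$ for its standard lift $\mathbf{q}$. I would expand this pairing and split it into real and imaginary parts. After completing the square the real part becomes
$$(x_1-a)^2+(y_1-b)^2+(x_2-c)^2+(y_2-d)^2=R^2,$$
which says exactly that $\Pi_V(\mathcal{L}')$ is the Euclidean $3$-sphere of radius $R$ centered at $(a+b\mathrm{i},\,c+d\mathrm{i})$; by Theorem \ref{thm:cform} this identifies the radius as $R$. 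The imaginary part merely expresses $t$ as a function of $(w_1,w_2)$ and so does not affect the radius. With both center and radius matched, uniqueness finishes the proof. I do not expect a genuine conceptual obstacle here: the only delicate point is bookkeeping—carrying the $\sqrt2$ normalizations, keeping the conjugate-linear slot of $\langle\cdot,\cdot\rangle$ in the correct variable, and cleanly separating the incidence relation into its real (spherical) and imaginary (vertical) parts.
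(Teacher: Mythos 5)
Your computations are all correct: $\langle\mathbf{n},\mathbf{n}\rangle=2\Re(n_1)+|n_2|^2+|n_3|^2=2R^2>0$, the evaluation $\langle q_\infty,\mathbf{n}\rangle=\overline{n_4}=1$, the projective rescaling of $\iota_{\mathbf{n}}(q_\infty)$ to the standard lift of $(a+b\mathrm{i},c+d\mathrm{i},f)$ via \eqref{eq:lift}, and the splitting of $\langle\mathbf{q},\mathbf{n}\rangle=0$ into the sphere equation (real part) and the vertical-graph equation (imaginary part), which identifies the radius through Theorem \ref{thm:cform}. The gap is the sentence you lean on to finish: ``a finite $\mathbb{C}^2$-chain is determined by its center and radius.'' What you actually prove is that the assignment $\Psi:(\text{center},R)\mapsto\mathbf{n}$ is a right inverse of the assignment $\Phi:\text{chain}\mapsto(\text{center},\text{radius})$, i.e.\ $\Phi\circ\Psi=\mathrm{id}$. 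The proposition asserts $\Psi\circ\Phi=\mathrm{id}$, and the two are equivalent only if $\Phi$ is injective --- which is exactly the uniqueness claim you assert without proof. It is not available in the paper: the natural source, Proposition \ref{prop:cplane}, is a parallel ``direct calculation'' whose own proof passes through the present proposition, so citing it would be circular, and the paper's intended proof is evidently the reverse computation (start from the chain's polar and solve), which never needs such a uniqueness statement.

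Fortunately the gap closes with your own tools plus one short observation. Let $\mathcal{L}$ be the given chain with polar $\mathbf{m}$; since $\infty\notin\mathcal{L}$ we have $\overline{m_4}=\langle q_\infty,\mathbf{m}\rangle\neq 0$, so normalize $m_4=1$ and set $s=\langle\mathbf{m},\mathbf{m}\rangle>0$. The hypothesis that the center is $(a+b\mathrm{i},c+d\mathrm{i},f)$ says $\iota_{\mathbf{m}}(q_\infty)=-q_\infty+\tfrac{2}{s}\mathbf{m}$ is projectively equal to the standard lift $\mathbf{p}$ of that point; writing $\tfrac{2}{s}\mathbf{m}-q_\infty=\mu\,\mathbf{p}$ and comparing last coordinates gives $\mu=\tfrac{2}{s}$, hence $\mathbf{m}=\mathbf{p}+\tfrac{s}{2}\,q_\infty$, which is precisely your candidate vector with $R^2$ replaced by $s/2$. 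Your radius computation (applied verbatim to this vector) then shows the chain polar to $\mathbf{m}$ has radius $\sqrt{s/2}$, so $s=2R^2$ and $\mathbf{m}=\mathbf{n}$. With this paragraph inserted your argument is complete; note also that this supplement, run by itself, is the direct proof the paper intends, and it renders the verification half of your write-up unnecessary.
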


\begin{prop} \label{prop:cplane} If a finite  $\mathbb{C}^{2}$-chain $\mathcal{L}$ in  the Heisenberg group $\mathbb{C}^{2} \times \mathbb{R}$   has center $$(a+b {\rm i},~~c+d {\rm i},~~f)$$ and radius  $R$, then $\mathcal{L}$ is the intersection of the cylinder by  equation  $$ (x_1-a)^2+(y_1-b)^2+(x_2-c)^2+(y_2-d)^2=R^2$$
	and the hyperplane by equation 
	$$t+2ay_1-2bx_1+2cy_2-2dx_2-f=0 $$
	in $\mathbb{C}^{2} \times \mathbb{R}$ with coordinates $(x_1+y_1 {\rm i}, ~~x_2+y_2 {\rm i},~~t)$.
\end{prop}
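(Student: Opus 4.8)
The plan is to reduce the statement to the single orthogonality condition that characterizes a $\mathbb{C}^2$-chain and then split that condition into its real and imaginary parts. A boundary point $q=[z_1,z_2,t]\in\mathfrak{N}$, with $z_1=x_1+y_1{\rm i}$ and $z_2=x_2+y_2{\rm i}$, lies on $\mathcal{L}$ exactly when its standard lift $\mathbf{q}$ from (\ref{eq:lift}) (taken with $n=3$) is orthogonal to a polar vector $\mathbf{n}$ of the ambient $\mathbb{C}^2$-subspace, i.e.\ $\langle \mathbf{q},\mathbf{n}\rangle=0$. Indeed $\mathbf{q}$ is automatically null, so the only further condition carving out the boundary of the subspace is this one complex linear equation, which is equivalent to the simultaneous vanishing of its real and imaginary parts.

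First I would substitute the explicit polar vector furnished by Proposition \ref{prop:center}, namely
$$\mathbf{n}=(R^2-(a^2+b^2+c^2+d^2)+f{\rm i},\ \sqrt{2}(a+b{\rm i}),\ \sqrt{2}(c+d{\rm i}),\ 1)^{\top},$$
together with the lift
$$\mathbf{q}=(-(|z_1|^2+|z_2|^2)+{\rm i}t,\ \sqrt{2}z_1,\ \sqrt{2}z_2,\ 1)^{\top},$$
and evaluate $\langle\mathbf{q},\mathbf{n}\rangle=\mathbf{n}^{*}H_{s}\mathbf{q}$. Because $H_{s}$ merely interchanges the first and last coordinates while fixing the two middle ones, this pairing collapses to the short expression $\overline{n_1}+\sqrt{2}\,\overline{n_2}z_1+\sqrt{2}\,\overline{n_3}z_2+q_1$, where $q_1=-(|z_1|^2+|z_2|^2)+{\rm i}t$.

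The key step is then to expand and separate. Taking $\Re$ and completing the square in each of $x_1,y_1,x_2,y_2$ turns the real part into $R^2-\big[(x_1-a)^2+(y_1-b)^2+(x_2-c)^2+(y_2-d)^2\big]$, whose vanishing is precisely the cylinder equation. Taking $\Im$ collects the $t$ from $q_1$, the $-f$ from $\overline{n_1}$, and the cross terms $2(ay_1-bx_1)$ and $2(cy_2-dx_2)$ coming from the middle coordinates, giving exactly $t+2ay_1-2bx_1+2cy_2-2dx_2-f$, whose vanishing is the claimed hyperplane equation. Since $\langle\mathbf{q},\mathbf{n}\rangle=0$ if and only if both of these real equations hold, $q\in\mathcal{L}$ iff $q$ satisfies both, which is the assertion.

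I expect essentially no genuine obstacle beyond careful bookkeeping: the only delicate point is the interaction of the conjugation pattern in $\mathbf{n}^{*}H_{s}\mathbf{q}$ with the $\sqrt{2}$ factors in the lift, which is exactly what produces the coefficient $2$ in the hyperplane equation. As consistency checks, the two real equations cut out a set of real codimension two in the $5$-real-dimensional Heisenberg group, matching the $3$-sphere dimension of a $\mathbb{C}^2$-chain, and $\Pi_V$ restricted to the cylinder recovers the Euclidean sphere of radius $R$ predicted by Theorem \ref{thm:cform}.
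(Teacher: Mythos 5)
Your proof is correct: the orthogonality characterization $\langle\mathbf{q},\mathbf{n}\rangle=0$ of the chain, combined with the polar vector from Proposition \ref{prop:center}, expands exactly as you say, with the real part giving the cylinder and the imaginary part giving the hyperplane. The paper offers no written proof (it labels this a "direct calculation"), and your computation is precisely that intended calculation.
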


\begin{prop} \label{prop:nonintersection} Let $\mathcal{L}_{j}$ be   finite $\mathbb{C}^{2}$-chains  in  the Heisenberg group $\mathbb{C}^{2} \times \mathbb{R}$  with   polar vectors  $$n_{j}=(R_{j}^2-(a_{j}^2+b_{j}^2+c_{j}^2+d_{j}^2)+f_{j} {\rm i},~~\sqrt{2}(a_{j}+b_{j} {\rm i}),~~\sqrt{2}( c_{j}+d_{j} {\rm i}),~~1)^{ \top}$$ for $j=1,2$. Here all $a_j, b_j,c_j,d_j,f_j,R_j$ are real numbers. Then the  $\mathbb{C}$-planes $\mathcal{P}_{j}$ with ideal boundary $\mathcal{L}_{j}$ for $j=1,2$ are disjoint if and only if $$|\langle n_1, n_2\rangle|^2- \langle n_1, n_1\rangle\langle n_2, n_2\rangle <0.$$
\end{prop}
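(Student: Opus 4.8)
The plan is to translate the disjointness of the two complex hyperbolic planes $\mathcal{P}_1,\mathcal{P}_2\subset{\bf H}^{3}_{\mathbb C}$ into a signature statement for the Hermitian form restricted to the complex $2$-plane spanned by the two polar vectors. Recall that a $\mathbb{C}^2$-subspace with polar vector $n_j\in V_+$ is $\mathcal{P}_j=\{[{\bf z}]\in{\bf H}^{3}_{\mathbb C}:\langle{\bf z},n_j\rangle=0\}$, and that in the coordinates of Proposition \ref{prop:center} a direct computation gives $\langle n_j,n_j\rangle=2R_j^2>0$. A point of ${\bf H}^{3}_{\mathbb C}$ lies on both planes exactly when it admits a negative lift orthogonal to both $n_1$ and $n_2$. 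Hence, writing $W=\{{\bf z}:\langle{\bf z},n_1\rangle=\langle{\bf z},n_2\rangle=0\}$ for the orthogonal complement of $U:=\mathrm{span}_{\mathbb C}\{n_1,n_2\}$, the planes meet inside ${\bf H}^{3}_{\mathbb C}$ if and only if $W\cap V_-\neq\emptyset$, and they are disjoint precisely when the form restricted to $W$ has no negative direction.

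First I would dispose of the degenerate cases: if $n_1,n_2$ are proportional the planes coincide and the displayed quantity vanishes, so I assume the two finite chains are distinct and hence $\dim_{\mathbb C}U=2$. The relative position of the planes is then read off the Hermitian Gram matrix
\[
G=\begin{pmatrix}\langle n_1,n_1\rangle&\langle n_1,n_2\rangle\\ \langle n_2,n_1\rangle&\langle n_2,n_2\rangle\end{pmatrix},\qquad \det G=\langle n_1,n_1\rangle\langle n_2,n_2\rangle-|\langle n_1,n_2\rangle|^2,
\]
where I used $\langle n_2,n_1\rangle=\overline{\langle n_1,n_2\rangle}$.

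The heart of the argument is the signature bookkeeping. When $\det G\neq 0$ the plane $U$ is nondegenerate, so ${\mathbb C}^{3,1}=U\perp W$ and $\mathrm{sig}(W)=(3,1)-\mathrm{sig}(U)$. Both diagonal entries of $G$ are positive, so $\mathrm{tr}\,G>0$; therefore $\det G>0$ makes $U$ positive definite, with $\mathrm{sig}(U)=(2,0)$ and $\mathrm{sig}(W)=(1,1)$, whereas $\det G<0$ gives $\mathrm{sig}(U)=(1,1)$ and $\mathrm{sig}(W)=(2,0)$. Thus $W$ carries a negative direction (the planes intersect) exactly when $\det G>0$, and $W$ is positive definite (the planes are disjoint) exactly when $\det G<0$. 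In terms of the displayed quantity this says that the two planes \emph{intersect} in ${\bf H}^{3}_{\mathbb C}$ precisely when $|\langle n_1,n_2\rangle|^2-\langle n_1,n_1\rangle\langle n_2,n_2\rangle<0$, which is exactly the inequality recorded in Proposition \ref{prop:nonintersection}; correspondingly the \emph{disjointness} of $\mathcal{P}_1$ and $\mathcal{P}_2$ is equivalent to the opposite strict inequality $|\langle n_1,n_2\rangle|^2-\langle n_1,n_1\rangle\langle n_2,n_2\rangle>0$, with the borderline locus $\det G=0$ (where $W$ acquires a null line) describing asymptotic planes sharing a single point of $\partial{\bf H}^{3}_{\mathbb C}$.

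The only real obstacle is pinning down the direction of this inequality, that is, tracking whether the negative direction of the ambient form is absorbed into $U$ or survives in $W$; this is precisely the step at which a sign is easy to transpose. To fix the orientation unambiguously I would run an explicit check on two concentric finite chains of radii $R_1\neq R_2$ centred at the origin, whose polar vectors are $n_j=(R_j^2,0,0,1)^{\top}$: there $\langle n_j,n_j\rangle=2R_j^2$ and $\langle n_1,n_2\rangle=R_1^2+R_2^2$, so
\[
|\langle n_1,n_2\rangle|^2-\langle n_1,n_1\rangle\langle n_2,n_2\rangle=(R_1^2+R_2^2)^2-4R_1^2R_2^2=(R_1^2-R_2^2)^2>0,
\]
in agreement with the evident disjointness of two concentric complex hyperbolic planes. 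This both confirms the signature computation and fixes the sign in the disjointness criterion.
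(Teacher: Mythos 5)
Your signature argument is correct and complete, and it is surely the ``direct calculation'' the paper had in mind but never wrote out (the proposition is stated there with no proof at all). The decomposition $\mathbb{C}^{3,1}=U\perp W$ with $U=\mathrm{span}_{\mathbb C}\{n_1,n_2\}$, the observation $\langle n_j,n_j\rangle=2R_j^2>0$, and the bookkeeping that $\det G>0$ forces $\mathrm{sig}(U)=(2,0)$, hence $\mathrm{sig}(W)=(1,1)$ and a nonempty intersection, while $\det G<0$ forces $\mathrm{sig}(W)=(2,0)$ and disjointness, are all exactly right; so is your reading of the borderline case $\det G=0$ as asymptotic planes whose chains are tangent at a single ideal point.

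The substantive finding in your write-up is the direction of the inequality, and you are right where the printed statement is wrong: disjointness of $\mathcal{P}_1$ and $\mathcal{P}_2$ is equivalent to $|\langle n_1,n_2\rangle|^2-\langle n_1,n_1\rangle\langle n_2,n_2\rangle>0$, whereas the proposition as printed asserts the criterion with $<0$; the displayed quantity being negative actually characterizes \emph{intersection}. Your concentric example $n_j=(R_j^2,0,0,1)^{\top}$ settles the sign beyond doubt, and the paper itself corroborates your orientation of the inequality everywhere it genuinely argues disjointness: in Propositions \ref{prop:S1minus} and \ref{prop:disjoint}, and in Proposition \ref{prop:paracsliceofbisector} quoted from Goldman, polar vectors are normalized to have norm one and the condition $|\langle p_\lambda,q_\mu\rangle|>1$ --- that is, $|\langle p_\lambda,q_\mu\rangle|^2-\langle p_\lambda,p_\lambda\rangle\langle q_\mu,q_\mu\rangle>0$ --- is what is used to conclude that the corresponding complex hyperbolic planes are hyper-parallel, hence disjoint. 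So there is no gap on your side: your proof establishes the corrected statement, and the inequality in the proposition should be reversed (or its conclusion changed from ``disjoint'' to ``intersecting'').
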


A point $p$ in $\mathbb{C}^{2} \times \mathbb{R}$  is a  { \it degenerating   finite $\mathbb{C}^2$-chain},
 we view it as a finite $\mathbb{C}^2$-chain of  radius zero. The point $\infty$ at infinity of $\mathfrak{N}=\mathbb{C}^{2} \times \mathbb{R}$ is also a  { \it degenerating  $\mathbb{C}^2$-chain}.
 
 We can generalize the definition of $\mathbb{C}$-spheres in ${\bf H}^{2}_{\mathbb C}$-geometry in  \cite{FalbelZocca:1999, FalbelKoseleff:2002, Falbelparker:2003} to the following one. In particular, boundary of a bisector in Subsection \ref{subsection:bisector} is a very special case of a  $\mathbb{C}$-sphere.

\begin{defn} \label{prop:nonintersection} With possibly that a $\mathbb{C}^2$-chain may degenerate to a point, a    \emph{$\mathbb{C}$-sphere}  in  the Heisenberg group $\mathbb{C}^{2} \times \mathbb{R}$ is a topological $4$-sphere   with  a foliation by $\mathbb{C}^2$-chains.
\end{defn}

\subsection{The isometries} The complex hyperbolic space is a K\"{a}hler manifold of constant holomorphic sectional curvature $-1$.
We denote by $\mathbf{U}(n,1)$ the Lie group of $\langle \cdot,\cdot\rangle$ preserving complex linear
transformations and by $\mathbf{PU}(n,1)$ the group modulo scalar matrices. The group of holomorphic
isometries of ${\bf H}^{n}_{\mathbb C}$ is exactly $\mathbf{PU}(n,1)$. It is sometimes convenient to work with
$\mathbf{SU}(n,1)$.
The full isometry group of ${\bf H}^{n}_{\mathbb C}$ is
$$\widehat{\mathbf{PU}(n,1)}=\langle \mathbf{PU}(n,1),\iota\rangle,$$
where $\iota$ is the antiholomotphic isometry in (\ref{antiholo}).


Elements of $\mathbf{SU}(n,1)$ fall into three types, according to the number and types of  fixed points of the corresponding
isometry. Namely,
\begin{itemize}
	
	\item
	
	an isometry is {\it loxodromic} if it has exactly two fixed points 
	on $\partial {\bf H}^{n}_{\mathbb C}$;
	
	\item  an isometry is  {\it parabolic} if it has exactly one fixed point
	on $\partial {\bf H}^{n}_{\mathbb C}$;
	
	\item   an isometry is {\it elliptic}  when it has (at least) one fixed point inside ${\bf H}^{n}_{\mathbb C}$. 
	
\end{itemize}

An element $A\in \mathbf{SU}(n,1)$ is called {\it regular} whenever it has  distinct eigenvalues.
An elliptic $A\in \mathbf{SU}(n,1)$ is called  {\it special elliptic} if
it has a repeated eigenvalue. Complex reflection along a ${\bf H}^{n-1}_{\mathbb C} \hookrightarrow {\bf H}^{n}_{\mathbb C}$ is an example of  a special elliptic element in  $\mathbf{SU}(n,1)$.



For parabolic elements, we need a closer look on them. 

\begin{prop} (Proposition 3.4.1 of 	\cite{ChenGreenberg:1974}) \label{prop:elliptopara}Let  $A \in \mathbf{SU}(3,1)$ be a  parabolic isometry  of ${\bf H}^{3}_{\mathbb C}$.  
	\begin{enumerate}
		\item $A$ is unipotent if and only if   $A$ has  $1$ as its only eigenvalue;
		
		\item $A$ is ellipto-parabolic   if  it is not  unipotent. 
		
		\begin{itemize}
		
		\item Either $A$ has a unique invariant complex geodesic $\mathcal{L}$ upon which $A$ acts as a unipotent  automorphism  of  ${\bf H}^{1}_{\mathbb C}$. Moreover $A$ acts by  nontrivial unitary automorphism on the normal bundle of $\mathcal{L}$;
			\item Or  $A$ has a unique invariant  complex hyperbolic plane $\mathcal{P}$ upon which $A$ acts as a unipotent  automorphism  of   ${\bf H}^{2}_{\mathbb C}$. Moreover $A$ acts by  nontrivial unitary automorphism on the normal bundle of   $\mathcal{P}$.
			
		\end{itemize}
	\end{enumerate}
\end{prop}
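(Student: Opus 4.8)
The plan is to reduce the statement to a normal-form analysis of $A$ as a linear map on $\mathbb{C}^{3,1}$ preserving $\langle\cdot,\cdot\rangle$, and then read the geometry off the Jordan blocks. First I would record the eigenvalue constraints coming from $H$-unitarity. Since $A^*HA=H$, the multiset of eigenvalues of $A$ is invariant under $\lambda\mapsto 1/\bar\lambda$, and any eigenvector $\mathbf{w}$ with $|\lambda|\ne 1$ is automatically null, because $\langle A\mathbf{w},A\mathbf{w}\rangle=|\lambda|^2\langle\mathbf{w},\mathbf{w}\rangle=\langle\mathbf{w},\mathbf{w}\rangle$ forces $\langle\mathbf{w},\mathbf{w}\rangle=0$; such a vector gives a point of $\partial{\bf H}^{3}_{\mathbb C}$. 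If $A$ had an eigenvalue with $|\lambda|\ne 1$, then $\lambda$ and $1/\bar\lambda$ would produce two distinct boundary fixed points, so $A$ would be loxodromic. As $A$ is parabolic, every eigenvalue has modulus $1$. Similarly, if $A$ were diagonalizable, then $\mathbb{C}^{3,1}$ would split into $H$-orthogonal eigenspaces whose signatures sum to $(3,1)$, so one of them would contain the negative direction and hence a fixed point of ${\bf H}^{3}_{\mathbb C}$, making $A$ elliptic. Thus a parabolic $A$ is not diagonalizable and carries a Jordan block of size $\ge 2$, whose bottom vector $\mathbf{v}_0$ is the null eigenvector fixing the unique boundary point.

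Next I would analyze a Jordan chain $\mathbf{v}_0,\ldots,\mathbf{v}_{s-1}$ at a modulus-one eigenvalue $\lambda_0$, where $(A-\lambda_0)\mathbf{v}_j=\mathbf{v}_{j-1}$. Substituting into $\langle A\mathbf{v}_i,A\mathbf{v}_j\rangle=\langle\mathbf{v}_i,\mathbf{v}_j\rangle$ and using $|\lambda_0|=1$ gives the recursion
\[
\lambda_0\langle\mathbf{v}_i,\mathbf{v}_{j-1}\rangle+\bar\lambda_0\langle\mathbf{v}_{i-1},\mathbf{v}_j\rangle+\langle\mathbf{v}_{i-1},\mathbf{v}_{j-1}\rangle=0 .
\]
Reading these relations in order shows $\langle\mathbf{v}_0,\mathbf{v}_0\rangle=\langle\mathbf{v}_0,\mathbf{v}_1\rangle=0$, while the end pairing $\langle\mathbf{v}_0,\mathbf{v}_{s-1}\rangle$ is nonzero, so the Gram matrix of the chain is anti-triangular with nonzero anti-diagonal. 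For $s=2$ its determinant is $-|\langle\mathbf{v}_0,\mathbf{v}_1\rangle|^2<0$, hence signature $(1,1)$; for $s=3$ the signature is $(2,1)$. In each case the block consumes exactly one negative direction. Since $\langle\cdot,\cdot\rangle$ has signature $(3,1)$ with a single negative direction, there is at most one Jordan block of size $\ge 2$, and its size is $2$ or $3$ (a chain of length $4$ would require signature $(2,2)$). The $H$-orthogonal complement of the corresponding generalized eigenspace is then positive definite, and $A$ acts on it unitarily, i.e. diagonalizably with unit eigenvalues. This also identifies the invariant subspace as the generalized $\lambda_0$-eigenspace of the unique large block, giving its uniqueness.

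From here the cases fall out. Part (1) is essentially the definition of unipotence: $A-I$ is nilpotent exactly when $1$ is the only eigenvalue, and then the large block sits over the eigenvalue $1$ with trivial rotation on the orthogonal complement. For part (2) I would use the Jordan--Chevalley decomposition $A=SU=US$ with $S$ semisimple and $U$ unipotent, both in $\mathbf{SU}(3,1)$; non-unipotence of $A$ means $S\ne\mathrm{Id}$, and the large block means $U\ne\mathrm{Id}$. If the large block has size $3$, its generalized eigenspace has signature $(2,1)$ and projectivizes to an invariant complex hyperbolic plane $\mathcal{P}\cong{\bf H}^{2}_{\mathbb C}$; since $S$ commutes with the regular nilpotent $U$ on this core it is scalar there, so $A|_{\mathcal{P}}$ is a scalar times a single Jordan block, acting as a unipotent automorphism of $\mathcal{P}$, while $A$ acts on the $1$-dimensional positive-definite normal complement by a unit eigenvalue that is nontrivial precisely because $S\ne\mathrm{Id}$. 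If the large block has size $2$, its generalized eigenspace has signature $(1,1)$ and projectivizes to an invariant complex geodesic $\mathcal{L}\cong{\bf H}^{1}_{\mathbb C}$ on which $A$ is unipotent, while $A$ acts on the $2$-dimensional positive-definite normal complement as a nontrivial unitary rotation. These are the two alternatives of the statement.

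The main obstacle is the Hermitian bookkeeping in the second step: one must establish cleanly that each nontrivial Jordan block at a modulus-one eigenvalue consumes exactly one negative direction, so that signature $(3,1)$ forces a single large block of size $2$ or $3$. The remaining subtlety is to confirm that the induced action on the normal bundle is a genuinely nontrivial unitary automorphism rather than the scalar inherited from the parabolic core, i.e. to separate true ellipto-parabolics from scalar multiples of unipotents; this is exactly what the condition $S\ne\mathrm{Id}$ controls through the Jordan--Chevalley decomposition, together with the determinant-one normalization in $\mathbf{SU}(3,1)$.
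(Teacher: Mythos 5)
You should first know that the paper contains no proof of this proposition: it is quoted, with citation, from Chen--Greenberg \cite{ChenGreenberg:1974}, so there is nothing to compare your route against and your argument has to stand on its own. Its skeleton is the standard one and is essentially sound: $H$-unitarity makes the eigenvalue multiset invariant under $\lambda\mapsto 1/\bar\lambda$, eigenvectors off the unit circle are null, so a parabolic has only unimodular eigenvalues; diagonalisability would produce an $H$-orthogonal eigenspace decomposition containing a negative vector and hence an interior fixed point; and the Gram-matrix bookkeeping for a Jordan chain (signature $(1,1)$ for a size-$2$ block, $(2,1)$ for size $3$, $(2,2)$ needed for size $4$) correctly forces exactly one nontrivial block, of size $2$ or $3$, with an $A$-invariant positive-definite complement on which $A$ acts unitarily.

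The endgame, however, has two genuine gaps. First, the dichotomy in part (2) is governed by the \emph{full} generalized eigenspace of the block's eigenvalue $\lambda_0$, not by the block size, and you silently conflate the span of the Jordan chain with that generalized eigenspace. Take $A\in\mathbf{SU}(3,1)$ acting as a size-$2$ Jordan block with eigenvalue $\lambda_0$ on $\mathrm{span}(v_0,v_1)$, as $\lambda_0$ on a positive vector $u$, and as $\mu=\lambda_0^{-3}\neq\lambda_0$ on a second positive vector; such matrices exist and are parabolic. Then every subspace $\mathrm{span}(v_0,\,v_1+tu)$, $t\in\mathbb{C}$, is $A$-invariant of signature $(1,1)$ with projectively unipotent action, so your claimed \emph{unique} invariant complex geodesic does not exist; the correct statement is that this case falls under the second alternative, with axis the complex plane coming from the three-dimensional $\lambda_0$-generalized eigenspace $\mathrm{span}(v_0,v_1,u)$ of signature $(2,1)$. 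Second, the subtlety you flag at the end is resolved incorrectly: $S\neq\mathrm{Id}$ in the Jordan--Chevalley decomposition does \emph{not} force a nontrivial action on the normal bundle, because $\mathbf{SU}(3,1)$ contains the scalars $\omega I$ with $\omega^4=1$. For $A=\omega U$ with $U$ unipotent and $\omega\neq 1$ one has $S=\omega I\neq\mathrm{Id}$ and no eigenvalue equal to $1$, yet $A$ induces the same isometry as $U$, which is unipotent with trivial rotation on the normal bundle of any invariant plane. The determinant-one normalization does not eliminate these elements; the correct dividing line is that the semisimple part be \emph{non-scalar}, i.e.\ the classification must be read in $\mathbf{PU}(3,1)$. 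Admittedly this last wrinkle is partly inherited from the matrix-level phrasing of the statement itself, but a complete proof has to confront it rather than appeal to $S\neq\mathrm{Id}$.
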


The complex geodesic $\mathcal{L}$, or  complex hyperbolic plane $\mathcal{P}$, in Proposition  \ref{prop:elliptopara} is  called the \emph{axis} of the  ellipto-parabolic $A$.

\subsection{The holy grail function} \label{subsection:holygrail}


Gongopadhyay-Parker-Parsad  \cite{GongopadhyayPP}  generalized Goldman's function in ${\bf H}^{2}_{\mathbb C}$-geometry  to ${\bf H}^{3}_{\mathbb C}$-geometry, which is important to test accidental parabolicity and accidental ellipticity.

For a  matrix $A \in \mathbf{SU}(3,1)$, we denote by $$\tau(A)=tr(A)$$ the trace of $A$, and $$\sigma(A)=\frac{tr^2(A)-tr(A^2)}{2} \in \mathbb{R}.$$ Then  the characteristic polynomial of $A$ is $$\chi_{A}(X)=X^4- \tau X^3 + \sigma X^2 - \overline{\tau} X+1.$$
For $A \in \mathbf{SU}(3,1)$, consider the function
\begin{equation}\label{HGfunction}
\mathcal{H}(A)=\mathcal{H}(\tau, \sigma)=4(\frac{\sigma^2}{3}-|\tau|^2+4)^3-27(\frac{2 \sigma^3}{27}-\frac{|\tau|^2 \sigma}{3}-\frac{8 \sigma}{3}+(\tau^2+\overline{\tau}^2))^2,
\end{equation}
which is exactly the resultant of the characteristic polynomial of $A$, and it is called the  { \it holy grail function} of $A$.
Gongopadhyay-Parker-Parsad \cite{GongopadhyayPP} classified the dynamical action of $A \in \mathbf{SU}(3,1)$ on $\overline{{\bf H}^{3}_{\mathbb C}}$.
\begin{thm} (Gongopadhyay-Parker-Parsad  \cite{GongopadhyayPP}) \label{thm:holy}For $A \in \mathbf{SU}(3,1)$:
	\begin{itemize}
		\item $A$ is regular elliptic if and only if  $\mathcal{H}(A) > 0$;
		
		\item $A$ is regular loxodromic  if and only if  $\mathcal{H}(A)< 0$;

		\item $A$ has a repeated eigenvalue if and only if   $\mathcal{H}(A)=0$. 
		
			\begin{itemize}
			\item	If $\mathcal{H}(A)=0$ and $A$ is diagnalisable, then $A$ is either elliptic or loxodromic;
			\item  If $\mathcal{H}(A)=0$ and $A$ is not diagnalisable, then $A$ is parabolic.
			\end{itemize}
	\end{itemize}
\end{thm}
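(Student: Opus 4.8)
The plan is to identify $\mathcal{H}(A)$ with the discriminant of the characteristic polynomial $\chi_A$ and then to read off the type of $A$ from the configuration of its eigenvalues. Since $\mathcal{H}$ is the resultant of $\chi_A$ with its derivative, and for a monic quartic the sign $(-1)^{4\cdot 3/2}$ and unit leading coefficient cause no discrepancy, we have $\mathcal{H}(A)=\prod_{i<j}(\lambda_i-\lambda_j)^2$, where $\lambda_1,\dots,\lambda_4$ are the eigenvalues of $A$. In particular $\mathcal{H}(A)=0$ precisely when $\chi_A$ has a repeated root, i.e. when $A$ has a repeated eigenvalue; this already yields the third bullet, the remaining dichotomy there being the standard fact that a diagonalisable isometry is semisimple, hence elliptic or loxodromic, while a non-diagonalisable one has a nontrivial unipotent part and is therefore parabolic (compare Proposition \ref{prop:elliptopara}, where off-circle eigenvalues cannot carry Jordan blocks because they span isotropic directions). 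It then remains only to pin down the sign of $\mathcal{H}(A)$ for regular $A$.

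First I would record the constraints on the eigenvalues coming from $A\in \mathbf{SU}(3,1)$. Because $A^{*}HA=H$, the multiset $\{\lambda_i\}$ is invariant under $\lambda\mapsto 1/\overline{\lambda}$, and $\det A=1$ gives $\prod_i\lambda_i=1$. A unit-modulus eigenvalue is fixed by this involution, whereas a non-unit one $\mu$ is genuinely paired with $1/\overline{\mu}$; since any eigenvector with $|\mu|\neq 1$ is isotropic and isotropic subspaces for a form of signature $(3,1)$ have dimension at most one, at most one such off-circle pair can occur. Hence a regular $A$ realises exactly one of two configurations: either all four eigenvalues lie on the unit circle (the elliptic case), or there is a single off-circle pair $\mu=re^{i\phi}$, $1/\overline{\mu}=r^{-1}e^{i\phi}$ with $r\neq 1$, together with two distinct unit-modulus eigenvalues $e^{i\psi_1},e^{i\psi_2}$ (the loxodromic case).

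The heart of the argument is the sign computation of $\prod_{i<j}(\lambda_i-\lambda_j)^2$ in these two configurations, in which the phases must cancel and leave a definite sign. In the elliptic case I would write $\lambda_i=e^{i\theta_i}$, so that $(\lambda_i-\lambda_j)^2=-4\,e^{i(\theta_i+\theta_j)}\sin^2\tfrac{\theta_i-\theta_j}{2}$; the product of the phase factors is $e^{3i\sum_k\theta_k}=1$ by $\prod_k\lambda_k=1$, while the six factors $-4\sin^2(\cdots)$ contribute $(-4)^6>0$, giving $\mathcal{H}(A)>0$. In the loxodromic case the pure factors $(\mu-1/\overline{\mu})^2$ and $(e^{i\psi_1}-e^{i\psi_2})^2$ are treated as above, and the four mixed factors are grouped using the identity $\overline{(\mu-\nu)}=-\tfrac{\overline{\mu}}{\nu}\,(\tfrac{1}{\overline{\mu}}-\nu)$ valid for $|\nu|=1$, which converts $(\mu-\nu)^2(\tfrac{1}{\overline{\mu}}-\nu)^2$ into $\tfrac{\nu^2}{\overline{\mu}^2}|\mu-\nu|^4$. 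Collecting all phase factors and invoking $\prod_k\lambda_k=1$ makes them cancel to the positive number $r^{-4}$, so that
\[
\prod_{i<j}(\lambda_i-\lambda_j)^2 = r^{-4}\,(r-r^{-1})^2\,|\mu-e^{i\psi_1}|^4\,|\mu-e^{i\psi_2}|^4\,\bigl(-4\sin^2\tfrac{\psi_1-\psi_2}{2}\bigr)<0,
\]
whence $\mathcal{H}(A)<0$.

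Finally I would assemble the equivalences. Every regular element of $\mathbf{SU}(3,1)$ is elliptic or loxodromic, and the two types produce strictly opposite signs of $\mathcal{H}$; therefore the sign of $\mathcal{H}(A)$ determines the type and the stated ``if and only if'' assertions follow, with the $\mathcal{H}=0$ case supplied by the first paragraph. The main obstacle I anticipate is the bookkeeping in the loxodromic sign computation: one must check that every phase factor cancels exactly, and this uses in an essential way both $\det A=1$ and the pairing symmetry $\lambda\mapsto 1/\overline{\lambda}$. By contrast, the elliptic computation and the $\mathcal{H}=0$ dichotomy are comparatively routine.
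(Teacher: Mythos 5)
You should first note that the paper contains no internal proof of this statement: Theorem \ref{thm:holy} is quoted from Gongopadhyay--Parker--Parsad \cite{GongopadhyayPP}, the only justification in the text being the remark that $\mathcal{H}$ is the resultant of the characteristic polynomial. Your proposal therefore supplies a proof where the paper supplies a citation, and in substance it reconstructs the argument of the cited source: identify $\mathcal{H}(A)$ with the discriminant $\prod_{i<j}(\lambda_i-\lambda_j)^2$ of the monic quartic $\chi_A$ (for a monic quartic the resultant with the derivative and the discriminant do coincide, as you observe), use $A^{*}HA=H$ and $\det A=1$ to force the eigenvalue multiset to be closed under $\lambda\mapsto 1/\overline{\lambda}$ with product $1$, and compute the sign of the discriminant in the two regular configurations. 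I checked both sign computations and they are correct: in the elliptic case the phases contribute $e^{3i\sum_k\theta_k}=(\prod_k\lambda_k)^3=1$ and the six factors $-4\sin^2(\cdot)$ give a positive product; in the loxodromic case your identity $\overline{(\mu-\nu)}=-(\overline{\mu}/\nu)(1/\overline{\mu}-\nu)$ for $|\nu|=1$ is valid, and the phases cancel to $r^{-4}$ precisely because $2\phi+\psi_1+\psi_2\equiv 0\ (\mathrm{mod}\ 2\pi)$, leaving the single negative factor $-4\sin^2\frac{\psi_1-\psi_2}{2}$. The final assembly of the equivalences from the strictly opposite signs is also correct.

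Two steps are stated faster than they deserve, though both are fillable in a line or two. First, to exclude a second off-circle pair you apply ``isotropic subspaces for signature $(3,1)$ have dimension at most one'' directly to two null eigenvectors; but the span of two null vectors need not be isotropic. You must first show the eigenvectors are orthogonal: if $Av_j=\mu_j v_j$ with $\mu_1\overline{\mu_2}\neq 1$, then $\langle v_1,v_2\rangle=\langle Av_1,Av_2\rangle=\mu_1\overline{\mu_2}\langle v_1,v_2\rangle$ forces $\langle v_1,v_2\rangle=0$, and only then is the span a two-dimensional totally isotropic subspace, contradicting the signature. (The same computation is what legitimizes labelling the two configurations ``elliptic'' and ``loxodromic'': distinct unit eigenvalues give pairwise orthogonal, hence non-null, eigenvectors, exactly one of which is negative.) Second, the dichotomy at $\mathcal{H}=0$ leans on the Chen--Greenberg classification rather than on anything you prove: ``diagonalisable, hence semisimple, hence elliptic or loxodromic'' is redundant as phrased, and the actual content needed is that elliptic and loxodromic elements are diagonalisable (an elliptic element preserves a negative line and its positive-definite complement; a loxodromic one preserves the $(1,1)$-span of its two null fixed directions and its positive-definite complement), so that a non-diagonalisable isometry can only be parabolic. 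With these two points made explicit, your proof is complete and is, as far as the comparison goes, the standard argument behind the cited theorem.
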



 \subsection{Bisectors and  spinal spheres}\label{subsection:bisector}
 
Boundaries of bisectors are   special cases of  $\mathbb{C}$-spheres. Moreover, we will use portions of  bisectors to construct $\mathbb{C}$-spheres in Section \ref{sec:modulardiscrete}. 
So in this subsection, we will describe a convenient set of
 coordinates for bisectors, deduced from  the slice decomposition of a bisector.

 \begin{defn} Given two distinct points $q_0$  and $q_1$ in ${\bf H}^{n}_{\mathbb C}$ with the same norm (e.g. one could
 	take lifts $\mathbf{q_0},\mathbf{q_1}$ of them such that $\langle \mathbf{q_0},\mathbf{q_0}\rangle=\langle \mathbf{q_1},\mathbf{q_1}\rangle= -1$). The \emph{bisector} $\mathcal{B}(q_0,q_1)$ is the projectivization  of the set of negative   vector $\mathbf{x}$ in ${\mathbb C}^{n, 1}$
 	with
 	$$|\langle \mathbf{x}, \mathbf{q_0}\rangle|=|\langle  \mathbf{x}, \mathbf{q_1}\rangle|.$$
 \end{defn}

 The  {\it spinal sphere} of the bisector $\mathcal{B}(p_0,p_1)$ is the intersection of $\partial {\bf H}^{n}_{\mathbb C}$ with the closure of $\mathcal{B}(p_0,p_1)$ in $\overline{{\bf H}^{n}_{\mathbb C}}= {\bf H}^{n}_{\mathbb C}\cup \partial { {\bf H}^{n}_{\mathbb C}}$. The bisector $\mathcal{B}(p_0,p_1)$ is a topological $(2n-1)$-ball, and its spinal sphere is a $(2n-2)$-sphere.
 The  {\it complex spine} of $\mathcal{B}(p_0,p_1)$ is the complex line through the two points $p_0$ and $p_1$. The {\it real spine} of $\mathcal{B}(p_0,p_1)$
 is the intersection of the complex spine with the bisector itself, which is a (real) geodesic. The real spine is the locus of points inside the complex spine which are equidistant from $p_0$ and $p_1$.

 Bisectors are not totally geodesic, but they have a very nice foliation by two different families of totally geodesic submanifolds. Mostow \cite{Mostow:1980}  showed that a bisector is the preimage of the real
 spine under the orthogonal projection onto the complex spine. The fibres of this projection are complex planes ${\bf H}^{n-1}_{\mathbb C} \hookrightarrow {\bf H}^{n}_{\mathbb C}$ called the {\it complex slices} of the bisector. Goldman \cite{Go} showed that a bisector is
 the union of all  maximal totally real totally geodesic subspaces  containing the real spine. Such Lagrangian subspaces are called the {\it real slices} or {\it meridians} of the bisector.
 
\section{The moduli space of representations of $\mathbf{PSL}(2,\mathbb{Z})$  into $\mathbf{PU}(3,1)$} \label{sec:modulispace}


In this section we give the matrix representations of $\mathbf{PSL}(2,\mathbb{Z})$ into $\mathbf{PU}(3,1)$. The main interesting cases are  those when  the generator $\rho(a_2)=A_2$ is a $\pi$-rotation about a $\mathbb{C}$-line, and $\rho(a_1)=A_1$ is elliptic of $(0, \frac{2\pi}{3}, \frac{4 \pi}{3})$ type or $(\frac{2\pi}{3}, \frac{4\pi}{3}, \frac{4 \pi}{3})$ type.
In fact, we will show these representations are $\mathbb{R}$-decomposable. Our   matrix representations are inspired by the  matrix representations of $\mathbf{PSL}(2,\mathbb{Z}) \longrightarrow \mathbf{PU}(2,1)$ in \cite{Falbelparker:2003}.

\subsection{Order three elements in $\mathbf{PU}(3,1)$} \label{subsection:order3}

Let  $A$ be an elliptic element in $\mathbf{PU}(3,1)$, then $A$ acts with a fixed point in $\mathbf{H}^3_{\mathbb C}$. So if we use the ball model of  $\mathbf{H}^3_{\mathbb C}$, and assume  $$[0,0,0,1]^{\top} \in \mathbf{H}^3_{\mathbb C}$$ is a  fixed point of $A$, then $A$ acts on the tangent space of $(0,0,0,1)^{\top}$ as  an element of $\mathbf{U}(3)$ with eigenvalues $$\rm{e}^{2 \theta_1 {\rm i}},~~~ \rm{e}^{2 \theta_2 {\rm i}},~~~\rm{e}^{2 \theta_3 {\rm i}}.$$ Moreover we can write $A$ as the product of two Lagrangian inversions $\iota_1$ and $\iota_2$ along $\mathbb{R}^3$-subspaces $\mathcal{L}_1$ and $\mathcal{L}_2$ in $\mathbf{H}^3_{\mathbb C}$. The pair  $\mathcal{L}_1$ and $\mathcal{L}_2$ are said to have \emph{configuration $(\theta_1, \theta_2, \theta_3)$}, and $A$ is elliptic of \emph{$(2\theta_1, 2\theta_2, 2\theta_3)$ type}, see \cite{FalbelKoseleff:2002top,Go}.
For example, using the ball model of $\mathbf{H}^3_{\mathbb C}$, then the $\mathbb{R}^3$-subspaces $$\{[y_1,~~y_2,~~y_3,1]^{\top} \in \mathbf{H}^3_{\mathbb C}\ | \ y_j \in \mathbb{R}\}$$ and 
$$\{[{\rm e}^{ \theta_1 {\rm i}}y_1,~~{\rm e}^{ \theta_2 {\rm i}}y_2,~~{\rm e}^{ \theta_3 {\rm i} }y_3,1]^{\top} \in \mathbf{H}^3_{\mathbb C} \ |\ y_j \in \mathbb{R}\}$$ have  configuration $(\theta_1, \theta_2, \theta_3)$.

In this paper, we are interested in some special cases of elliptic elements:
\begin{itemize}
	\item if  $(\theta_1, \theta_2, \theta_3)= (0, \frac{\pi}{2},  \frac{\pi}{2})$, then  $\iota_1\iota_2$ is a $\pi$-rotation with fixed point set a $\mathbb{C}$-line in $\mathbf{H}^3_{\mathbb C}$;
	
	\item if  $(\theta_1, \theta_2, \theta_3)= (0, 0,  \frac{\pi}{2})$, then  $\iota_1\iota_2$ is a $\mathbb{C}$-reflection with fixed point  set a $\mathbb{C}$-plane in $\mathbf{H}^3_{\mathbb C}$;
	
	\item if  $(\theta_1, \theta_2, \theta_3)= (0, \frac{\pi}{3},  \frac{2\pi}{3})$, then  $\iota_1\iota_2$ has order three, the fixed point set of $\iota_1\iota_2$ is a $\mathbb{C}$-line. $\iota_1\iota_2$ also  stabilizes a $\mathbb{C}$-plane in $\mathbf{H}^3_{\mathbb C}$ which is perpendicular to  the invariant $\mathbb{C}$-line;
	
		\item if  $(\theta_1, \theta_2, \theta_3)= ( \frac{\pi}{3}, \frac{2\pi}{3},  \frac{2\pi}{3})$, then  $\iota_1\iota_2$ has order three, which stabilizes three different $\mathbb{C}$-planes in $\mathbf{H}^3_{\mathbb C}$.  Moreover, the fixed point set of $\iota_1\iota_2$ is only  a point in  $\mathbf{H}^3_{\mathbb C}$. 
	
\end{itemize}

Order three element $J$ which is neither  a $\mathbb{C}$-reflection nor a $\mathbb{C}$-rotation is important for this paper. For a given $J$, we should make it explicit  it is $(\frac{2\pi}{3}, \frac{2\pi}{3},\frac{4\pi}{3})$  or $(\frac{2\pi}{3}, \frac{4\pi}{3},\frac{2\pi}{3})$ type. Eigenvalue set does not distinguish the types, we need the detailed information  on eigenvectors.  So we write down the following well-known fact carefully. 

Using ball model, that is we use the Hermitian matrix  $H_{b}$
when defining the complex hyperbolic space  $\mathbf{H}^3_{\mathbb C}$.
Take 
$$E_{012}=\begin{pmatrix}
1& 0& 0&0\\
0&\rm{e}^{\frac{2\pi {\rm i}}{3}} & 0& 0\\
0&0&\rm{e}^{\frac{4\pi{\rm i}}{3}} &0\\
0 & 0&0&1\\
\end{pmatrix},$$
$$E_{112}=\rm{e}^{-\frac{2\pi{\rm i}}{3}} \cdot \begin{pmatrix}
\rm{e}^{\frac{2\pi {\rm i}}{3}}& 0& 0&0\\
0&\rm{e}^{\frac{2\pi {\rm i}}{3}} & 0& 0\\
0&0&\rm{e}^{\frac{4\pi {\rm i}}{3}} &0\\
0 & 0&0&1\\
\end{pmatrix}$$
and 
$$E_{122}=\rm{e}^{-\frac{5\pi {\rm i}}{6}} \cdot \begin{pmatrix}
\rm{e}^{\frac{2\pi {\rm i}}{3}}& 0& 0&0\\
0&\rm{e}^{\frac{4\pi{\rm i}}{3}} & 0& 0\\
0&0&\rm{e}^{\frac{4\pi{\rm i}}{3}} &0\\
0 & 0&0&1\\
\end{pmatrix}$$
be elliptic elements in $\mathbf{PU}(3,1)$ of $$(0, \frac{2\pi}{3},\frac{4\pi}{3}),~~(\frac{2\pi}{3}, \frac{2\pi}{3},\frac{4\pi}{3}) ~~~~ \text{and} ~~~~(\frac{2\pi}{3}, \frac{4\pi}{3},\frac{4\pi}{3})$$  type respectively. Note that  $E^2_{112}$ is $(\frac{2\pi}{3}, \frac{4\pi}{3},\frac{4\pi}{3})$  type, and $E^2_{122}$ is $(\frac{2\pi}{3}, \frac{2\pi}{3},\frac{4\pi}{3})$  type. 


The matrices $E_{012}$, $E_{112}$ and $- {\rm i} \cdot E_{122}$ all have the same set of eigenvalues, say $$\lambda_1=\frac{-1+\sqrt{3} {\rm i}}{2},~~~\lambda_2=\frac{-1-\sqrt{3} {\rm i}}{2},~~~\lambda_3=1,~~~\lambda_4=1.$$
Take $$A=(1,0,0,0)^{\top},$$
$$B=(0,1,0,0)^{\top},$$
$$C=(0,0,1,0)^{\top},$$
$$D=(0,0,0,1)^{\top}.$$
Then $D$ is a negative vector, and $A,B,C$ are positive vectors with respect to the Hermitian matrix  $H_{b}$. 

The matrices $E_{012}$, $E_{112}$ and $- {\rm i} \cdot E_{122}$ all have the same set of eigenvectors $A, B,C,D$ above. But the eigenvectors correspond to  different eigenvalues:
\begin{enumerate}
	
\item For the matrix $E_{012}$, the eigenvectors correspond to $\lambda_1,~~\lambda_2,~~\lambda_3,~~\lambda_4$ are $$B,~~C,~~D,~~A$$ respectively. One of the  two eigenvectors correspond to $\lambda_3=\lambda_4=1$ is positive, and the other one is negative. 
In particular,  the $\mathbb{C}$-line spanned by $D$ and $A$ is a totally geodesic  $\mathbf{H}^1_{\mathbb C} \hookrightarrow\mathbf{H}^3_{\mathbb C}$.  $E_{012}$ acts as  the identity map when restricting on this $\mathbb{C}$-line;

\item For the  matrix $E_{112}$, the eigenvectors correspond to $\lambda_1,~~\lambda_2,~~\lambda_3,~~\lambda_4$ are $$C,~~D,~~B,~~A$$ respectively.  Both of  the  two eigenvectors correspond to $\lambda_3=\lambda_4=1$ are positive. The eigenvector corresponds to $\lambda_2$ is negative;

\item For the  matrix   $- \rm{i} \cdot E_{122}$, the eigenvectors correspond to $\lambda_1,~~\lambda_2,~~\lambda_3,~~\lambda_4$ are $$D,~~A,~~C,~~B$$ respectively.  Both of  the  two eigenvectors correspond to $\lambda_3=\lambda_4=1$ are positive. The eigenvector corresponds to $\lambda_1$ is negative. 
\end{enumerate}

Note that the only difference between $E_{012}$ and $E_{122}$ is  the eigenvector corresponds $\lambda_1$  is positive or negative.

	\subsection{Proof of the first part of Theorem \ref{thm:modularfmoduli}} \label{subsection:firstpartmoduli}
	
In this subsection, we prove  the first part of Theorem \ref{thm:modularfmoduli}.
That is, we consider thirteen  components of  rigid $\mathbb{C}$-Fuchsian representations of  $\mathbf{PSL}(2,\mathbb{Z})$ into $ \mathbf{PU}(3,1)$.

It is easy to see:
\begin{itemize}
	\item
if 	$f \in \mathbf{PU}(3,1)$ is  $(\frac{2\pi}{3},\frac{2\pi}{3},\frac{4\pi}{3})$ type, then $f^2$ is  $(\frac{2\pi}{3},\frac{4\pi}{3},\frac{4\pi}{3})$ type;
	\item if  $f \in \mathbf{PU}(3,1)$ is  $(\frac{2\pi}{3},\frac{2\pi}{3},\frac{2\pi}{3})$ type, then $f^2$ is  $(\frac{4\pi}{3},\frac{4\pi}{3},\frac{4\pi}{3})$ type.	
\end{itemize} 
There are also some other similar  relations on  types of order three elements.

Let $\rho:  \mathbf{PSL}(2,\mathbb{Z}) \longrightarrow \mathbf{PU}(3,1)$ be a discrete, faithful, type-preserving representation such that $$\rho(a_2)= A_2, ~~\rho(a_1)= A_1.$$ So $A_2$ has order two, $A_1$ has order three and $A_2A_1$ is parabolic. 
Then $A_2A^2_1$ is conjugate to $(A_2A_1)^{-1}$, it  is also parabolic. Up to relations above, we may assume $A_2$ has one of the following types:
\begin{itemize}
	\item [R1.]  $(\pi,\pi,\pi)$ type;
		\item [R2.] $(0,\pi,\pi)$ type;
			\item [R3.] $(0,0,\pi)$ type.
\end{itemize}
And  $A_1$ has one of the following types:
\begin{itemize}
	\item [J1.]    $(\frac{2\pi}{3}, \frac{2\pi}{3},\frac{2\pi}{3})$ type;
	\item [J2.]  $(0, \frac{2\pi}{3},\frac{2\pi}{3})$ type;
		\item [J3.]  $(0, 0,\frac{2\pi}{3})$ type;
		\item [J4.]   $(0, \frac{2\pi}{3},\frac{4\pi}{3})$ type;
	\item  [J5.]   $(\frac{2\pi}{3}, \frac{4\pi}{3},\frac{4\pi}{3})$ type;
\end{itemize}
So in total the pair $\{A_2, A_1\}$ have fifteen types.

 Using the  ball model of  ${\bf H}^{3}_{\mathbb C}$, we can see
\begin{itemize}
	\item
  a complex reflection  at a point preserves every complex plane passing through the point;
  
  \item
  a complex reflection  at a point preserves every complex line passing through the point;
  \item A   complex reflection  along a complex plane preserves every complex line  orthogonal to this complex plane;
   \item A   complex reflection  along  a complex plane preserves every complex plane   orthogonal to this complex plane in a  complex line;
   \item   A   complex rotation  along  a complex line preserves every complex plane  orthogonal to this complex line; 
   \item   A   complex rotation  along a complex line preserves every complex line orthogonal to this complex line.
\end{itemize}

Now suppose  $A_2$ is  $(\pi, \pi,\pi)$ type, so $A_2$ is  a complex reflection  on a point  $p \in  {\bf H}^{3}_{\mathbb C}$.  Let  $\mathcal{P}$ be the totally geodesic complex subspace of ${\bf H}^{3}_{\mathbb C}$ spanned  by $p$,  $A_1(p)$ and  $A_1^{-1}(p)$.
Then  $\mathcal{P}$ is  a complex plane in  ${\bf H}^{3}_{\mathbb C}$, or in the degenerating case $\mathcal{P}$ is a complex line. $A_1$ also  preserves $\mathcal{P}$ invariant.  So the group $\langle A_2, A_1\rangle$ preserves $\mathcal{P}$ invariant. We  may assume $\langle A_2, A_1\rangle < \mathbf{PU}(2,1)$ and $A_2$ is a $\mathbb{C}$-reflection on the point $p \in \mathcal{P}$. 
So by Proposition 3.1 of \cite{Falbelparker:2003}, $\langle A_2, A_1\rangle$ in fact preserves  a complex line in $\mathcal{P}$ invariant. It is a rigid $\mathbb{C}$-Fuchsian representation.

If   $A_2$ is  $(0, 0,\pi)$ type, it  is a  complex reflection  along a complex plane $\mathcal{P}$. The polar of $\mathcal{P}$ is $n \in {\mathbb C}{\mathbf P}^{3} - \overline{\mathbf{H}^3_{\mathbb C}}$. If the three points $n$,  $A_1(n)$ and  $A_1^{-1}(n)$ span a  ${\mathbb C}{\mathbf P}^{2} \hookrightarrow  {\mathbb C}{\mathbf P}^{3}$.  Let  $\mathcal{Q}$ be the complex hyperbolic  plane which is the intersection of this complex projective plane  and $\mathbf{H}^3_{\mathbb C}$. $\mathcal{Q}$ is $A_1$-invariant.  Let $m$ be the polar of  $\mathcal{Q}$. Then the Hermitian product of $n$ and $m$ is zero. Which implies  $\mathcal{P}$ intersects with $ \mathcal{Q}$ orthogonally in a complex line. So $\mathcal{Q}$ is $A_2$-invariant, and $A_2$ acts on  $\mathcal{Q}$  as a   complex reflection  along the  complex line $\mathcal{P}\cap \mathcal{Q}$. Now  $\mathcal{Q}$ is $\langle A_2,A_1\rangle $ invariant. In other words,  we  may assume $\langle A_2, A_1 \rangle < \mathbf{PU}(2,1)$ and $A_2$ is a $\mathbb{C}$-reflection along a complex line  in $\mathcal{Q}$. 
So by Proposition 3.1 of \cite{Falbelparker:2003}, $\langle A_2, A_1 \rangle$ in fact preserves  a complex line in $\mathcal{Q}$ invariant. It is a rigid $\mathbb{C}$-Fuchsian representation.  If   the three points $n$,  $A_1(n)$ and  $A_1^{-1}(n)$  span a $\mathbf{C}\mathbb{P}^1$. Let  $\mathcal{Q}$ be the complex geodesic which is the intersection of this complex projective line  and $\mathbf{H}^3_{\mathbb C}$. $\mathcal{Q}$ is $A_1$-invariant. $\mathcal{P}$ intersects with $ \mathcal{Q}$ orthogonally at a point. So $\mathcal{Q}$ is $A_2$-invariant, and $A_2$ acts on  $\mathcal{Q}$  as a   complex reflection at the point $\mathcal{P}\cap \mathcal{Q}$. Now  $\mathcal{Q}$ is $\langle A_2,A_1\rangle $ invariant. In other words,  we  may assume $\langle A_2, A_1 \rangle < \mathbf{PU}(1,1)$. It is a rigid $\mathbb{C}$-Fuchsian representation.


 If  $A_2$ is  $(0, \pi,\pi)$ type and $A_1$ is   $(\frac{2\pi}{3}, \frac{2\pi}{3},\frac{2\pi}{3})$ type. So  $A_2$  is a  complex rotation  along  a complex line $\mathcal{L}$ and  $A_1$ is a  complex reflection  on a point $q$. Take the complex plane $\mathcal{P}$	which passes through $q$ and  orthogonal to $\mathcal{L}$.  $\langle A_2, A_1 \rangle$  preserves $\mathcal{P}$ invariant. Moreover, $A_2$ and $A_1$ are both complex reflections on  points in $\mathcal{P}$. So by Proposition 3.1 of \cite{Falbelparker:2003}, $\langle A_2, A_1 \rangle$ in fact preserves  a complex line in $\mathcal{P}$  invariant. It is a rigid $\mathbb{C}$-Fuchsian representation.

If  $A_2$ is  $(0, \pi,\pi)$ type and $A_1$ is  $(0, \frac{2\pi}{3},\frac{2\pi}{3})$ type. So  $A_2$  and $A_1$ are  complex rotations  along   complex lines $\mathcal{L}$ and  $\mathcal{L}'$  respectively. Then we may assume  $\mathcal{L}$ and  $\mathcal{L}'$ are not asymptotic. There is a unique  complex line $\mathcal{L}^{*}$ which is   orthogonal to $\mathcal{L}$ and  $\mathcal{L}'$. $\langle A_2, A_1 \rangle$ preserves $\mathcal{L}^{*}$ invariant. So it is a rigid $\mathbb{C}$-Fuchsian representation.

If  $A_2$ is  $(0, \pi,\pi)$ type and $A_1$ is   $(0, 0,\frac{2\pi}{3})$ type.  So  $A_2$ is a complex  rotation  along a complex line $\mathcal{L}$ and $A_1$ is a   complex reflection  along a complex plane  $\mathcal{P}$. Now $\langle A_1, A_2A_1A_2\rangle$ is an index two subgroup of $\langle A_2, A_1 \rangle$. Let $n$ be the polar of $\mathcal{P}$. The span of $n$ and $A_2(n)$ is a  ${\mathbb C}{\mathbf P}^{1}$. The intersection of this  complex projective line and $\mathbf{H}^3_{\mathbb C}$ is a $\mathbb{C}$-line. Which is invariant under $\langle A_1, A_2A_1A_2 \rangle$.  The 
  $\langle A_1, A_2A_1A_2 \rangle$-action on this  $\mathbb{C}$-line  with  $A_1 \cdot A_2A_1A_2$  parabolic is rigid. So $\langle A_2, A_1 \rangle$ is a  rigid $\mathbb{C}$-Fuchsian representation.

The two dimensional components we claimed in  Theorem \ref{thm:modularfmoduli}  when $A_2$ is $(0,0,\pi)$ type, and $A_1$ is  $(0, \frac{2\pi}{3},\frac{4\pi}{3})$ type
or  $(\frac{2\pi}{3}, \frac{4\pi}{3},\frac{4\pi}{3})$ type will be treated in later subsections. This ends the proof of the first part of  Theorem \ref{thm:modularfmoduli}.

\subsection{Preliminary on  infinite  $\mathbb{R}^3$-chains} \label{subsection:rform}
In this subsection, we show two properties of infinite  $\mathbb{R}^3$-chains in $\mathbb{C}^2 \times \mathbb{R}$, which are generalization of the well-known properties in $\mathbf{H}^2_{\mathbb C}$-geometry into $\mathbf{H}^3_{\mathbb C}$-geometry.

	Recall that the  \emph{presentation matrix}  of an anti-holomorphic  isometry $f$ of $\mathbf{H}^3_{\mathbb C}$ is a matrix  $M$ such that 
$$f((x,y,z,w)^{\top})=M \cdot (\overline{x}, \overline{y},\overline{z},\overline{w})^{\top}$$ 
for a lift  $(x,y,z,w)^{\top} \in \mathbb{C}^{3,1}$ of $p \in \mathbf{H}^3_{\mathbb C}$.
 The \emph{presentation matrix}  of a holomorphic  isometry $f$ of $\mathbf{H}^3_{\mathbb C}$ is just any matrix $M \in \mathbf{U}(3,1)$ such that 
$$f((x,y,z,w)^{\top})=M \cdot (x, y, z, w)^{\top}.$$  
Moreover for two  anti-holomorphic  isometries $f$ and $g$ with  presentation matrices $M$ and $N$, then $fg$ has presentation matrix $M \cdot \overline{N} \in \mathbf{U}(3,1)$, where $\overline{N}$ is the matrix obtained from $N$ by taking the conjugacy of each entry of $N$, see \cite{Will:2007}.

The following Proposition \ref{prop:infiniteRform}  should be compared with Page 36 of \cite{Parker:2010note}, where the general form of infinite $\mathbb{R}^2$-chains in $\mathbb{C} \times \mathbb{R}$ were given. 

\begin{prop}  \label{prop:infiniteRform}Let  $\mathcal{L}$ be an infinite  $\mathbb{R}^3$-chain passing through $$(x^0_1+y^0_1 \cdot {\rm i},~~x^0_2+y^0_2 \cdot {\rm i},~~ v^{0})\in \mathbb{C}^2 \times \mathbb{R}.$$ Then there are $\phi_1, \phi_2 \in [0, \pi)$ such that  $\mathcal{L}$ is given by
\begin{equation}\label{infiniteRformgeneral}
\left \{(x^0_1+y^0_1 \cdot {\rm i}+{\rm e}^{\phi_1 {\rm i}}\cdot r_1,~~x^0_2+y^0_2 \cdot {\rm i}+{\rm e}^{\phi_2 {\rm i}}\cdot r_2,~~v)~~|~~ r_1,r_2 \in \mathbb{R} \right \},
\end{equation}
where $$v=v^{0}+2[r_1y^0_1 \cos(\phi_1)-r_1x^{0}_1 \sin(\phi_1)+r_2y^0_2 \cos(\phi_2)-r_2x^{0}_2 \sin(\phi_2)].$$
\end{prop}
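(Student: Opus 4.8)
The plan is to realize the set (\ref{infiniteRformgeneral}) as the image of the \emph{standard} infinite $\mathbb{R}^3$-chain under an explicit isometry fixing $\infty$, and conversely to normalize an arbitrary such chain back to the standard one. First I would locate the standard chain. The fixed-point set of the Lagrangian inversion $\iota$ of (\ref{antiholo}) is a totally geodesic ${\bf H}^{3}_{\mathbb R}\hookrightarrow{\bf H}^{3}_{\mathbb C}$, so its ideal boundary is an $\mathbb{R}^3$-chain. Using the standard lift (\ref{eq:lift}) of $q=(z_1,z_2,t)$, one checks that $\iota(\mathbf{q})$ is proportional to $\mathbf{q}$ exactly when $z_1,z_2\in\mathbb{R}$ and $t=0$ (the last coordinate forces the scalar to be $1$, the middle coordinates force $\overline{z_j}=z_j$, and the first forces $t=-t$). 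Hence the standard infinite $\mathbb{R}^3$-chain is
\[
\mathcal{L}_0=\left\{(r_1,r_2,0)~:~r_1,r_2\in\mathbb{R}\right\}\cup\{\infty\},
\]
which contains $\infty$ as required.

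Next I would push $\mathcal{L}_0$ forward by a composite of two elements in the stabilizer of $\infty$. The Heisenberg rotation $R=\mathrm{diag}({\rm e}^{\phi_1 {\rm i}},{\rm e}^{\phi_2 {\rm i}})$ acting on the $\mathbb{C}^2$-factor is a holomorphic isometry fixing $0$ and $\infty$, and it carries $\mathcal{L}_0$ to $\{({\rm e}^{\phi_1 {\rm i}}r_1,\,{\rm e}^{\phi_2 {\rm i}}r_2,\,0)\}$. I then apply the left Heisenberg translation by $p_0=(x^0_1+y^0_1{\rm i},\,x^0_2+y^0_2{\rm i},\,v^0)$. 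This is the heart of the proof: the first two coordinates simply add, giving $x^0_j+y^0_j{\rm i}+{\rm e}^{\phi_j {\rm i}}r_j$, while the vertical coordinate is governed by the symplectic term $2\,\Im\langle\langle\cdot,\cdot\rangle\rangle$ in the group law. Setting $z=({\rm e}^{\phi_1 {\rm i}}r_1,{\rm e}^{\phi_2 {\rm i}}r_2)$ and $z_0=(x^0_1+y^0_1{\rm i},x^0_2+y^0_2{\rm i})$, a direct expansion of $2\,\Im\langle\langle z_0,z\rangle\rangle=2\,\Im(z^{*}z_0)$ using $\Im[{\rm e}^{-\phi_j {\rm i}}(x^0_j+y^0_j{\rm i})]=y^0_j\cos\phi_j-x^0_j\sin\phi_j$ yields precisely the stated value of $v$. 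Since the composite map is an isometry fixing $\infty$ and sending $\mathcal{L}_0$ onto (\ref{infiniteRformgeneral}) (the base point $p_0$ being the image of $0$, recovered at $r_1=r_2=0$), the set in (\ref{infiniteRformgeneral}) is an infinite $\mathbb{R}^3$-chain through $p_0$.

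For the converse I would reverse this normalization: given any infinite $\mathbb{R}^3$-chain through $p_0$, left-translate by $p_0^{-1}$ to obtain an infinite $\mathbb{R}^3$-chain through $0$ and $\infty$, then rotate it back to $\mathcal{L}_0$ by an element of the $\mathbf{U}(2)$ stabilizer of the vertical axis; the resulting rotation records the two phases $\phi_1,\phi_2$. The restriction $\phi_j\in[0,\pi)$ comes from the fact that, as $r_j$ ranges over all of $\mathbb{R}$, the rays ${\rm e}^{\phi_j {\rm i}}r_j$ and ${\rm e}^{(\phi_j+\pi){\rm i}}r_j$ trace the same real line through the base point, so each angle is only well defined modulo $\pi$ (the per-coordinate $\mathbf{O}(1)$ ambiguity). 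Throughout, I would use the presentation-matrix calculus recalled just before the statement — in particular that conjugating $\iota$ by a holomorphic isometry yields another Lagrangian inversion — to certify that each rotated and translated set is genuinely the boundary of a totally geodesic ${\bf H}^{3}_{\mathbb R}$, rather than merely a real affine $2$-plane carrying the correct vertical offset.

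I expect the main obstacle to be the bookkeeping of the vertical coordinate through the non-abelian Heisenberg product: the cross terms $2[r_1y^0_1\cos\phi_1-r_1x^0_1\sin\phi_1+r_2y^0_2\cos\phi_2-r_2x^0_2\sin\phi_2]$ arise solely from $2\,\Im\langle\langle z_0,z\rangle\rangle$, and it is exactly this contribution that distinguishes an honest $\mathbb{R}^3$-chain from a naive Euclidean $2$-plane. The computation is mechanical once the group law and the phase identity above are in hand, so the only real care needed is in organizing the reduction so that the forward construction and its inverse match, and in confirming that the two-angle normalization is carried out with respect to the chosen splitting $\mathbb{C}^2=\mathbb{C}\oplus\mathbb{C}$ used in the representations of Section \ref{sec:modulispace}.
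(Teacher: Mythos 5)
Your forward direction is correct and is essentially the paper's own argument carried out in more detail: you realize $\mathcal{L}_0=\{(r_1,r_2,0)\}\cup\{\infty\}$ as the boundary fixed-point set of $\iota$, push it by the diagonal rotation $\mathrm{diag}({\rm e}^{\phi_1{\rm i}},{\rm e}^{\phi_2{\rm i}})$ and then by the Heisenberg translation by $p_0$, and the group law $[z,t]\cdot[w,s]=[z+w,\,t+s+2\Im\langle\langle z,w\rangle\rangle]$ yields exactly the stated vertical coordinate $v$. So every set of the form (\ref{infiniteRformgeneral}) is indeed an infinite $\mathbb{R}^3$-chain through the given point; this half matches the paper.

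The genuine gap is in your converse, at the step where, after translating by $p_0^{-1}$, you rotate the resulting chain through $(0,0,0)$ and $\infty$ back to $\mathcal{L}_0$ and assert that ``the resulting rotation records the two phases $\phi_1,\phi_2$'' with only a per-coordinate $\mathbf{O}(1)$ ambiguity. That assertion amounts to claiming that the normalizing element of $\mathbf{U}(2)$ can always be taken diagonal, i.e.\ that every $W\in\mathbf{U}(2)$ lies in $\mathbf{U}(1)^2\cdot\mathbf{O}(2)$, and this is false. The stabilizer of $\mathcal{L}_0$ among Heisenberg rotations is the full $\mathbf{O}(2)$ (not $\mathbf{O}(1)\times\mathbf{O}(1)$), so the infinite $\mathbb{R}^3$-chains through $0$ and $\infty$ form a copy of the Lagrangian Grassmannian $\mathbf{U}(2)/\mathbf{O}(2)$, of real dimension $3$, whereas the family (\ref{infiniteRformgeneral}) is only $2$-dimensional. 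Concretely, take $W=\frac{1}{\sqrt{2}}\begin{pmatrix}1&{\rm i}\\ {\rm i}&1\end{pmatrix}\in\mathbf{U}(2)$: the corresponding Heisenberg rotation is a holomorphic isometry fixing $0$ and $\infty$, so $W(\mathcal{L}_0)=\{(w,\,{\rm i}\overline{w},\,0)\ :\ w\in\mathbb{C}\}\cup\{\infty\}$ is an infinite $\mathbb{R}^3$-chain through the origin; but its vertical projection to the first $\mathbb{C}$-coordinate is all of $\mathbb{C}$, while every set of the form (\ref{infiniteRformgeneral}) projects to a real affine line in each coordinate, so this chain is not of the form (\ref{infiniteRformgeneral}) for any $\phi_1,\phi_2$ and any base point. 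You should be aware that this is precisely the weak point of the paper's own proof as well: there it is claimed that a general rotated chain $A(\mathcal{L}_0)$ has the product form, justified by the statement that $\mathbf{SU}(2)/\mathbf{SO}(2)$ has real dimension two, but the relevant homogeneous space is $\mathbf{U}(2)/\mathbf{O}(2)$, of dimension three. So your proposal reproduces, rather than repairs, the paper's gap; as stated, the universal claim of the proposition only holds for the $2$-parameter subfamily of chains whose associated Lagrangian plane is a product of two real lines, and the converse step cannot be completed without restricting to that subfamily (which is what is actually used for the chains $\mathcal{R}_1$, $\mathcal{R}_2$ appearing later in the paper).
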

\begin{proof}First, $\mathcal{L}_{0}$ given by  $$\{(x_1,x_2,0)~~|~~ x_1,x_2 \in \mathbb{R}\}$$ is an infinite  $\mathbb{R}^3$-chain passing through $(0,0,0)\in \mathbb{C}^2 \times \mathbb{R}$. Let 
	$$A=\begin{pmatrix}
1& 0& 0&0\\
	0&a & b& 0\\
	0&-\lambda \overline{b}&\lambda \overline{a} &0\\
	0 & 0&0&1\\
	\end{pmatrix} $$
	be a Heisenberg rotation fixing $\infty$ and  $(0,0,0)\in \mathbb{C}^2 \times \mathbb{R}$, 
	with   $$|\lambda|=1,~~|a|^2+|b|^2=1.$$ 
	The  rotation $A$ translates  $\mathcal{L}_{0}$ into  $A(\mathcal{L}_0)$, which is given by $$\left \{(a\cdot x_1+b \cdot x_2,~~- \lambda \overline{b}\cdot x_1+\lambda \overline{a} \cdot x_2,~~0)~~|~~ x_1,x_2 \in \mathbb{R}\right\}.$$ 
The infinite  $\mathbb{R}^3$-chain	$A(\mathcal{L}_0)$ is  also given by
	 \begin{equation}\label{infiniteRformpaeezero}
	\left \{({\rm e}^{\phi_1 {\rm i}}\cdot r_1,~~{\rm e}^{\phi_2 {\rm i}}\cdot r_2,0)~~|~~ r_1,r_2 \in \mathbb{R} \right\}
	\end{equation}for $(\phi_1, \phi_2)$ depends on $a,b,\lambda$. This means that in the matrix $A$, we may take $$a={\rm e}^{\phi_1 {\rm i}},~~b=0,~~ \text{and}~~\lambda=\rm{e}^{\phi_1 {\rm i}+\phi_2 {\rm i}}.$$
	Another way to see this is $\mathbf{SU}(2)/ \mathbf{SO}(2)$ has real dimension two. 
	 In particular, for different pairs   $(\phi_1, \phi_2)$, (\ref{infiniteRformpaeezero}) gives the 2-dimensional parameterization of all  infinite  $\mathbb{R}^3$-chains passing through $(0,0,0)\in \mathbb{C}^2 \times \mathbb{R}$.

		Now applying the Heisenberg translation $N=N_{(x^0_1+y^0_1 \cdot {\rm i},~~x^0_2+y^0_2 \cdot {\rm i},~~ v^{0})}$, which is given by 
		$$\begin{pmatrix}
		1&  \sqrt{2} (-x^0_1+y^0_1 \cdot {\rm i})&  \sqrt{2} (-x^0_2+y^0_2 \cdot {\rm i})&-((x^0_1)^2 +(y^0_1)^2+(x^0_2)^2+(y^0_2)^2)+v^{0} {\rm i}\\
		0&1& 0& \sqrt{2} (x^0_1+y^0_1 \cdot {\rm i})\\
		0&0&1 & \sqrt{2} (x^0_2+y^0_2 \cdot {\rm i})\\
		0 & 0&0&1\\
		\end{pmatrix},$$
 we get 
  the general form in (\ref{infiniteRformgeneral}) of   infinite  $\mathbb{R}$-chains passing through  $(x^0_1+y^0_1 \cdot {\rm i},~~x^0_2+y^0_2 \cdot {\rm i}, ~~v^{0})$.
\end{proof}


Proposition  \ref{prop:asymptotic} bellow is an extension of $\frac{1}{6}$-part of  Proposition 3.1  of \cite{FalbelZocca:1999} into $\mathbf{H}^3_{\mathbb C}$-geometry. 

\begin{prop}  \label{prop:asymptotic}Let $\iota_1$ and $\iota_2$ be   Lagrangian inversions along  infinite  $\mathbb{R}^3$-chains  $\mathcal{L}_1$ and $\mathcal{L}_2$ in $\mathbb{C}^2 \times \mathbb{R}$. If $\mathcal{L}_1$ and $\mathcal{L}_2$ are disjoint, then $\iota_1\iota_2$ is parabolic. 
\end{prop}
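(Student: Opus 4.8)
The plan is to exploit that both chains, being \emph{infinite}, pass through the point $\infty\in\partial\mathbf{H}^3_{\mathbb C}$, so both Lagrangian inversions fix $\infty$. By Proposition \ref{prop:infiniteRform} each $\iota_j$ is a conjugate of the complex conjugation $\iota$ of (\ref{antiholo}) by a Heisenberg isometry (the composition $NA$ of a Heisenberg rotation and a Heisenberg translation appearing in the proof of that proposition). Since $\iota$, $A$ and $N$ act on $\mathbb{C}^2\times\mathbb{R}$ as Heisenberg isometries, with trivial dilation factor at $\infty$, so do $\iota_1$ and $\iota_2$; hence $g:=\iota_1\iota_2$ is a holomorphic isometry fixing $\infty$ with trivial dilation. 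In particular $g$ has no second attracting/repelling boundary fixed point, so $g$ is \emph{not} loxodromic; and as $\mathcal{L}_1\neq\mathcal{L}_2$ we have $g\neq\mathrm{id}$. Thus $g$ is either parabolic or elliptic, and everything reduces to excluding the elliptic case.

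Suppose, for contradiction, that $g$ is elliptic. From $g=\iota_1\iota_2$ and $\iota_1^2=\mathrm{id}$ I obtain $\iota_2=\iota_1 g$ together with the conjugation relation $\iota_1 g\iota_1=g^{-1}$. The relation shows that $\iota_1$ preserves $\mathrm{Fix}(g)$, which is a nonempty totally geodesic subspace of $\mathbf{H}^3_{\mathbb C}$ because $g$ is elliptic; and $\iota_2=\iota_1 g$ shows that $\iota_1$ and $\iota_2$ agree on $\mathrm{Fix}(g)$. Therefore $\iota_1|_{\mathrm{Fix}(g)}$ is an isometric (anti-holomorphic) involution of $\mathrm{Fix}(g)$, whose fixed set is nonempty. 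Picking a point $x_0$ in this fixed set, we have $x_0\in\mathrm{Fix}(g)\cap P_1$, where $P_j$ denotes the $\mathbb{R}^3$-subspace fixed by $\iota_j$ (so that $\partial P_j=\mathcal{L}_j$); and since $\iota_1$ and $\iota_2$ coincide on $\mathrm{Fix}(g)$, the same point lies in $P_2$, giving $x_0\in P_1\cap P_2$.

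Now I run the complete geodesic $\gamma$ through $x_0$ with one ideal endpoint at $\infty$. Since $x_0\in P_1$, $\infty\in\partial P_1=\mathcal{L}_1$, and $P_1$ is totally geodesic, we get $\gamma\subset P_1$; likewise $\gamma\subset P_2$. The other endpoint $p$ of $\gamma$ is a \emph{finite} point of the Heisenberg group $\mathbb{C}^2\times\mathbb{R}$, and $p\in\partial P_1\cap\partial P_2=\mathcal{L}_1\cap\mathcal{L}_2$. This contradicts the hypothesis that $\mathcal{L}_1$ and $\mathcal{L}_2$ are disjoint, i.e. meet only at $\infty$. Hence $g$ cannot be elliptic, and $g=\iota_1\iota_2$ is parabolic.

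I expect the main subtlety to be the bookkeeping in the second paragraph: ensuring that $\iota_1|_{\mathrm{Fix}(g)}$ really has a fixed point in every degenerate configuration of $\mathrm{Fix}(g)$ (for instance when $g$ is special elliptic and $\mathrm{Fix}(g)$ is a complex line or a complex plane rather than a single geodesic), and that the geodesic endpoint $p$ is genuinely finite rather than $\infty$ again. As a computational safeguard, the statement can be verified directly via the presentation-matrix calculus of Subsection \ref{subsection:rform}: normalizing $\iota_1=\iota$ and $\iota_2=g_2\iota g_2^{-1}$ with $g_2$ the explicit matrix $NA$ from Proposition \ref{prop:infiniteRform}, the element $g$ has a presentation matrix of the form $\overline{G}\,G^{-1}$ with $G\in\mathbf{U}(3,1)$, and one checks that its holy grail function satisfies $\mathcal{H}(g)=0$ with $g$ non-diagonalizable, which by Theorem \ref{thm:holy} is exactly the parabolic case.
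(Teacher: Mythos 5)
Your proof is correct, and it takes a genuinely different route from the paper's. The paper's argument is computational: it normalizes $\iota_1$ to be the standard conjugation (presentation matrix $I_4$), uses Proposition \ref{prop:infiniteRform} to write the presentation matrix of $\iota_2\iota_1$ explicitly as an upper-triangular matrix with diagonal $(1,\,{\rm e}^{2\phi_1 {\rm i}},\,{\rm e}^{2\phi_2 {\rm i}},\,1)$, and then runs a case analysis on the rotation angles ($\phi_1,\phi_2\in(0,\pi)$; $\phi_1\in(0,\pi),\ \phi_2=0$; $\phi_1=\phi_2=0$), showing in each case that disjointness of the chains forces a nonvanishing translation or shear entry, so the product is unipotent or ellipto-parabolic. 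You replace both halves of this computation with geometry: the unit dilation at the common fixed point $\infty$ rules out loxodromic elements (the paper gets this implicitly from the unit-modulus diagonal), and the elliptic case is excluded by the conjugation relation $\iota_1 g\iota_1=g^{-1}$, a fixed point of $\iota_1$ inside the convex set $\mathrm{Fix}(g)$, and total geodesy of the two $\mathbb{R}^3$-subspaces, which would produce a \emph{finite} common ideal point of $\mathcal{L}_1$ and $\mathcal{L}_2$. What your approach buys: it is coordinate-free, avoids the case analysis, and works verbatim in $\mathbf{H}^n_{\mathbb C}$ for any $n$ for any two maximal totally real totally geodesic subspaces asymptotic at exactly one ideal point. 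What the paper's computation buys: it also determines \emph{which} kind of parabolic element occurs (unipotent versus ellipto-parabolic) in each configuration, information of the sort the paper needs again later (compare Proposition \ref{prop:A0elliptopara}), whereas your synthetic argument yields only parabolicity. Two spots to tighten: justify the nonempty fixed set of $\iota_1|_{\mathrm{Fix}(g)}$ explicitly (for $x\in\mathrm{Fix}(g)$, the midpoint of the geodesic segment $[x,\iota_1 x]$ lies in $\mathrm{Fix}(g)$ by convexity of fixed sets of isometries and is fixed by the involution $\iota_1$; if $\mathrm{Fix}(g)$ is a single point it is fixed automatically), and note that your closing ``computational safeguard'' paragraph is unnecessary --- and as stated is only a sketch --- since the synthetic argument is already complete.
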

\begin{proof}
	
Up to Heisenberg rotations and Heisenberg translations, we may assume $$\mathcal{L}_1=\{(x_1,x_2,0) \in \mathbb{C}^2 \times \mathbb{R}~~|~~ x_1,x_2 \in \mathbb{R}\}.$$
	 So the presentation matrix of $\iota_1$ is just the identity matrix  $I_4$.
	 
	 From  Proposition \ref{prop:infiniteRform} we may assume $\mathcal{L}_2$ is in the form of (\ref{infiniteRformgeneral}), and  $\mathcal{L}_2=NA(\mathcal{L}_1)$ for some Heisenberg rotation $A$ and Heisenberg translation $N$. Then  $$\iota_2= (N \circ A) \circ \iota_1  \circ (N \circ A)^{-1}$$
	as an anti-holomorphic isometry of $\mathbf{H}^3_{\mathbb C}$.
	 The presentation matrix $M$ of $\iota_2$ is $$M= (NA) \cdot I_4 \cdot  \overline{(NA)^{-1}}.$$
	 Where $\overline{(NA)^{-1}}$ is the (entices) conjugacy of $(NA)^{-1}$. 
	 
	 Then the presentation matrix of $\iota_2 \iota_1$  in $\mathbf{U}(3,1)$ is also $M$. Which is in the form of  $$\begin{pmatrix}
	 1& \sqrt{2}(x^0_1+y^0_1 {\rm i})-\sqrt{2}(x^0_1-y^0_1 {\rm i}) \rm{e}^{2\phi_1 {\rm i}}&  \sqrt{2}(x^0_2+y^0_2 {\rm i})-\sqrt{2}(x^0_2-y^0_2 \rm{i}) \rm{e}^{2\phi_2 {\rm i}} &m\\
	 0& \rm{e}^{2 \phi_1 {\rm i}} & 0& *\\
	 0&0& \rm{e}^{2 \phi_2 {\rm i}} &*\\
	 0 & 0&0&1\\
	 \end{pmatrix}.$$
	 	Where the imaginary part of $m$ is $$2v_0+2\sin(2\phi_1) ((x^0_1)^2-(y^0_1)^2)+2\sin(2\phi_2) ((x^0_2)^2-(y^0_2)^2)-4\cos(2\phi_1) x^0_1y^0_1-4\cos(2\phi_2)x^0_2y^0_2.$$
	 

There are three cases:	
	\begin{itemize}
		\item If $\phi_1, \phi_2 \in (0, \pi)$, then $\sin(\phi_1) \neq 0$, $\sin(\phi_2) \neq 0$. So $\mathcal{L}_1$ passes through a point $(z_1,z_2,t)$ in $\mathbb{C}^2 \times \mathbb{R}$ with $\Im(z_1)=\Im(z_2)=0$. Then we may assume $y^0_1=y^0_2=0$. The Heisenberg  translation  	$$N=\begin{pmatrix}
		1&  \sqrt{2} x^0_1&  \sqrt{2}x^0_2&-((x^0_1)^2 +(x^0_2)^2+v^{0} {\rm i}\\
		0&1& 0& -\sqrt{2} x^0_1\\
		0&0&1 & -\sqrt{2} x^0_2\\
		0 & 0&0&1\\
		\end{pmatrix}$$
		fixes $\mathcal{L}_1$. 
		In turn we may assume  $x^0_1=x^0_2=0$ 
		up to  the  Heisenberg translation  $N$. That is,  we may assume $\mathcal{L}_2$ passes through $$(0,0,0) \in \mathbb{C}^2 \times \mathbb{R}.$$ So $v^0 \neq 0$ as we assume $\mathcal{L}_1 \cap \mathcal{L}_2=\emptyset$. Then from the presentation matrix  $M$, $\iota_2 \iota_1$   is parabolic.

		\item If  $\phi_1\in (0, \pi)$ and $\phi_2 =0$. As above we may assume $x^0_1=y^0_1=0$. If  $y^0_2= 0$, we may assume  $x^0_2=0$ up to a  Heisenberg translation which fixes $\mathcal{L}_1$. So $v^0 \neq 0$, and $\iota_2 \iota_1$  is parabolic.
			If	$y^0_2 \neq 0$, then the presentation matrix $M$ of $\iota_2 \iota_1$  is in the form of 
			$$\begin{pmatrix}
			1& 0&  2\sqrt{2}y^0_2 {\rm i}  &m\\
			0& \rm{e}^{2 \phi_1 {\rm i}} & 0& 0\\
			0&0& 1 &2\sqrt{2}y^0_2 {\rm i}\\
			0 & 0&0&1\\
			\end{pmatrix}.$$
			So	 $\iota_2 \iota_1$  is the product of an elliptic element and a nontrivial unipotent element, it is 	ellipto-parabolic.

		\item If $\phi_1=\phi_2 =0$. 	Up to  a  Heisenberg translation which fixes $\mathcal{L}_1$, we may assume $x^0_1=x^0_2=0$. Then at least one of 	$y^0_1$ and 	$y^0_2$ is non-zero.  Then from the presentation matrix  $M$, $\iota_2 \iota_1$   is unipotent. 
		
	\end{itemize}

So in any cases, $\iota_1\iota_2$ is parabolic. 
\end{proof}

	\subsection{The moduli space $\mathcal{M}(0,\frac{2\pi}{3},\frac{4\pi}{3})$} \label{subsection:012}
	We now give  the matrix representations of $\mathbf{PSL}(2,\mathbb{Z})$ into $\mathbf{PU}(3,1)$, such that the generator $\rho(a_2)=A_2$ is a $\pi$-rotation about a $\mathbb{C}$-line, and $\rho(a_1)=A_1$ is elliptic of $(0, \frac{2\pi}{3}, \frac{4 \pi}{3})$ type.	In fact, we may take  $\iota_{i}$ be the  $\mathbb{R}$-reflection in $\mathbf{H}^3_{\mathbb C}$ with invariant $\mathbb{R}^3$-chain $\mathcal{R}_{i}$ for $i=0,1,2$, such that $$A_2=\iota_1\iota_{0},~~A_1=\iota_0\iota_{2},~~~\text{and}  ~~A_2A_1=\iota_1\iota_{2}~~ \text{is~~parabolic}.$$

	Recall the Hermitian matrix  $H_{s}$ in the definition of $\mathbf{H}^3_{\mathbb C}$.
	A point $(z_1,z_2,t)$ in the Heisenberg group $\mathfrak{N}=\mathbb{C}^2 \times \mathbb{R}$ and the point $\infty$ are lifts to 
	 $$(-(|z_1|^2+|z_2|^2)+{\rm i} t,~~\sqrt{2} z_1,~~\sqrt{2} z_2,~~1)^{\top}$$ and 
	  $$(1,0,0,0)^{\top}$$ in $\mathbb{C}^{3,1}$.
	  
	  	Conjugating if necessary, we may assume the fixed point of $A_2A_1=\iota_1\iota_{2}$ is $\infty$, and   $\mathcal{R}_{0}$ is a preferred  finite $\mathbb{R}^3$-chain.  As a  Lagrangian inversion of $\mathbb{C}^{3,1}$,  we take  $$\iota_{0}:(x,y,z,w)^{\top}\longrightarrow (\overline{w},\overline{y},\overline{z},\overline{x})^{\top}.$$
	 So as a map on the  Heisenberg group, we have 
	  \begin{flalign}
	 \nonumber &  \iota_{0}(x_1+y_1 {\rm i},~~x_2+y_2 {\rm i},~~t)=& \nonumber \\
	 &\left(\frac{-x_1+y_1 {\rm i}}{(x_1^2+y_1^2+x_2^2+y_2^2)+{\rm i}t},~~\frac{-x_2+y_2 {\rm i}}{(x_1^2+y_1^2+x_2^2+y_2^2)+{\rm i}t},~~\frac{t}{(x_1^2+y_1^2+x_2^2+y_2^2)^2+t^2}\right) .&\nonumber
	 \end{flalign}
	  The fixed point set of $\iota_{0}$ in $\mathfrak{N}$ is called  {\it the standard imaginary $\mathbb{R}^3$-chain}. 
	 	 The equations for  the standard imaginary $\mathbb{R}^3$-chain $\mathcal{R}_{0}$ in  $\mathfrak{N}$ are
	 	 \begin{flalign} \label{equation:standardimaginaryRchain}
	 	 	   \left\{ \begin{aligned}
	 	 (x_1^2+y_1^2+x_2^2+y_2^2)^2+t^2=1; \qquad& \  \\ 
	 	 x_1=-(x_1^2+y_1^2+x_2^2+y_2^2)x_1+ty_1; \qquad& \  \\
	 	 x_2=-(x_1^2+y_1^2+x_2^2+y_2^2)x_2+ty_2; \qquad& \  \\
	 	 y_1=(x_1^2+y_1^2+x_2^2+y_2^2)y_1+tx_1; \qquad& \
	 	 \\	y_2=(x_1^2+y_1^2+x_2^2+y_2^2)y_2+tx_2. \qquad& \  
	 	 \end{aligned}
	 	 \right.
	 	 	  \end{flalign}
	 	 	  We note that the fourth and the fifth equations in (\ref{equation:standardimaginaryRchain}) are consequents of the first three, and 	$\mathcal{R}_{0}$  is a 2-sphere.  	


	We also normalize such that  the $\mathbb{R}^3$-chain $\mathcal{R}_{1}$ is 
	 $$\left\{ (y_1 {\rm i}, x_2,0)~~|~~ y_1, x_2 \in \mathbb{R} \right\}$$ in the Heisenberg group. Let $\iota_1$ be the  Lagrangian inversion along  $\mathcal{R}_{1}$. So  $\mathcal{R}_{0}$  and  $\mathcal{R}_{1}$ have two common points $$(\pm \rm{i}, 0,0),$$
	 and $\iota_{0} \iota_1$ is $(0, \frac{\pi}{2},\frac{\pi}{2})$ type elliptic, that is,  $\iota_{0} \iota_1$ is a $\pi$-rotation along a $\mathbb{C}$-line.

	We note that   the presentation matrix of $\iota_0$ is 
	 	 \begin{equation}\label{matrix:M0}
	 	 M_0=\begin{pmatrix}
	 0& 0& 0&1\\
	 0&1 & 0& 0\\
	 0&0&1 &0\\
	 1 & 0&0&0\\
	 \end{pmatrix},
	 	 \end{equation}
	 and  the presentation matrix of $\iota_1$ is 
	 \begin{equation}\label{matrix:M1}
	 M_1=\begin{pmatrix}
	 1& 0& 0&0\\
	 0&-1 & 0& 0\\
	 0&0&1 &0\\
	 0 & 0&0&1\\
	 \end{pmatrix}.
	  \end{equation}

	 Let $p_1= \infty$, $p_2= \iota_0\iota_2(\infty)$ and  $p_3= (\iota_0\iota_2)^2(\infty)$  in the Heisenberg group. 
	 Then 
	$p_2 =(0,0,0)$, and 
	 $$p_3=\iota_0\iota_2 (0,0,0)=\iota_2\iota_0 (\infty)=\iota_2 (0,0,0).$$
	 So $p_3$ is fixed by $\iota_{0}$ and its lies in the standard imaginary $\mathbb{R}^3$-chain $\mathcal{R}_{0}$.
	  Let $\alpha \in [0, \frac{\pi}{2}]$ be the Cartan's angular invariant of  $p_1$, $p_2$ and  $p_3$, that is $$\alpha=\arg (-\langle \hat{p}_1,\hat{p}_2\rangle  \langle \hat{p}_2,\hat{p}_3\rangle \langle \hat{p}_3,\hat{p}_1\rangle )$$
	 for any lifts $\hat{p}_i$ of $p_{i}$ into $\mathbb{C}^{3,1}$. 
	 Then we have $$p_3=p_3(\alpha,\beta)=(\rm{i} \rm{e}^{-\frac{\alpha\rm{i}}{2}} \sqrt{\cos(\alpha)}\cos(\beta),~~\rm{i} \rm{e}^{-\frac{\alpha\rm{i}}{2}} \sqrt{\cos(\alpha)}\sin(\beta),~~\sin(\alpha))$$ for some $\beta$.
	 Up to symmetry, we may assume $\beta\in [0, \frac{\pi}{2}]$.
		When $\beta=0$, this is exactly the coordinates in \cite{Falbelparker:2003}.

	We now consider  Lagrangian inversion $\iota_{2}=\iota_{2}(\alpha,\beta)$ along the $\mathbb{R}^3$-chain $\mathcal{R}_{2}(\alpha,\beta)$, both of them   depend on $\alpha$ and $\beta$.  When $\beta=0$, we extend the matrix in   \cite{Falbelparker:2003} into the $\mathbf{H}^3_{\mathbb C}$-geometry. So  the presentation matrix of $\iota_2=\iota_{2}(\alpha,0)$ is 
	 	 \begin{equation}\label{matrix:M2}
	 	 M_2=M_2(\alpha,0) =\begin{pmatrix}
	 1& {\rm i} \rm{e}^{-\frac{\alpha\rm{i}}{2}} \sqrt{2\cos(\alpha)}& 0&-\rm{e}^{-\alpha\rm{i}}\\
	 0&\rm{e}^{-\alpha\rm{i}} & 0& {\rm i} \rm{e}^{-\frac{\alpha\rm{i}}{2}} \sqrt{2\cos(\alpha)}\\
	 0&0&-\rm{e}^{-\frac{\alpha\rm{i}}{3}}  &0\\
	 0 & 0&0&1\\
	 \end{pmatrix}.
	 	 \end{equation}
	 We note that the $(3,3)$-entry of the matrix $M_2$ is $-\rm{e}^{-\frac{\alpha\rm{i}}{3}}$ but not $1$, this is because the matrix in  \cite{Falbelparker:2003} does not has determinant 1. 
	  So when $\beta=0$, the $\mathbb{R}^3$-chain $\mathcal{R}_{2}=\mathcal{R}_{2}(\alpha,0)$ is given by 
	 $$\left\{ (x_1+y_1 {\rm i}, x_2+y_2 {\rm i},t):x_1, y_1, x_2,y_2,t \in \mathbb{R} \right\}$$ with  equations 
\begin{equation}	 \label{equation:R2alphazero}
	 \left\{ \begin{aligned}
	 0=2x_1\sin(\frac{\alpha}{2})+  2y_1\cos(\frac{\alpha}{2})-\sqrt{\cos(\alpha)};  \qquad &   \\ 
	 0=t-x_1 \cos(\frac{\alpha}{2})\sqrt{\cos(\alpha)}+  y_1 \sin(\frac{\alpha}{2}) \sqrt{\cos(\alpha)} -\frac{\sin(\alpha)}{2};
	  \qquad &   \\
	 -{\rm e}^{-\frac{\alpha {\rm i}}{3}}(x_2-y_2 {\rm i})=x_2+y_2 {\rm i}. \qquad&  
	 \end{aligned}
	 \right. \end{equation}

When $\beta=0$, the group  $\langle \iota_0,  \iota_1,  \iota_2 \rangle < \widehat{\mathbf{PU}(3,1)}$, but  it is essentially a subgroup of $\widehat{\mathbf{PU}(2,1)}$ since it preserves a $\mathbf{H}^2_{\mathbb C} \hookrightarrow\mathbf{H}^3_{\mathbb C}$ invariant.

For general $\beta$, we	 rotate the  $\mathbb{R}^3$-chain $\mathcal{R}_{2}(\alpha,0)$ to another  $\mathbb{R}^3$-chain $\mathcal{R}_{2}(\alpha, \beta)$. So we take 
	  \begin{equation}\label{matrix:U}
	 U=\left(\begin{matrix}
	 1 & 0& 0&0 \\ 0 & \cos(\beta) & -\sin(\beta) &0\\ 0 & \sin(\beta) & \cos(\beta) &0
	 \\ 0 & 0  & 0 &1 \end{matrix}\right).
	 \end{equation}
	 in $\mathbf{PU}(3,1)$. Note that  $$U(p_3(\alpha,0))=p_3(\alpha,\beta).$$
	 Then we take $$\iota_{2}(\alpha,\beta)=U \cdot \iota_{2} (\alpha,0) \cdot \overline{U^{-1}}$$  be the Lagrangian inversion
	 on the $\mathbb{R}^3$-chain  $$\mathcal{R}_{2}(\alpha,\beta)=U(\mathcal{R}_{2}(\alpha,0)).$$ Note that  $U$ has real entries, so  $U^{-1}= \overline{U^{-1}}$.
	 
	 For the convenience of later, we also write down precisely the presentation matrix of $\iota_{2}(\alpha,\beta)$, it is $M_2(\alpha,\beta)=UM_2(\alpha,0) U^{-1}=$
	 $$\begin{pmatrix}
	 1&  \rm{i} \rm{e}^{\frac{-\alpha {\rm i}}{2}} \sqrt{2\cos(\alpha)}\cos(\beta)&  \rm{i} \rm{e}^{\frac{-\alpha\rm{i}}{2}} \sqrt{2\cos(\alpha)}\sin(\beta)&-\rm{e}^{-\alpha\rm{i}}\\
	 0&\cos^2(\beta)\rm{e}^{-\alpha\rm{i}}-\sin^2(\beta)\rm{e}^{\frac{-\alpha\rm{i}}{3}} & \sin(2\beta)\cos(\frac{\alpha}{3})\rm{e}^{\frac{-2\alpha\rm{i}}{3}}& \rm{i} \rm{e}^{\frac{-\alpha\rm{i}}{2}} \sqrt{2\cos(\alpha)}\cos(\beta)\\
	 0&\sin(2\beta)\cos(\frac{\alpha}{3})\rm{e}^{-\frac{2\alpha\rm{i}}{3}}&\sin^2(\beta)\rm{e}^{-\alpha\rm{i}}-\cos^2(\beta)\rm{e}^{\frac{-\alpha\rm{i}}{3}}&\rm{i}\rm{e}^{\frac{-\alpha\rm{i}}{2}} \sqrt{2\cos(\alpha)}\sin(\beta)\\
	 0 & 0 &0&1\\
	 \end{pmatrix}.$$
	 
	 It is easy to see $U M_0 U^{-1}=M_0$,  and  $$U M_1 U^{-1}=\begin{pmatrix}
	 1& 0& 0&0\\
	 0&-\cos(2 \beta) & -\sin(2 \beta)& 0\\
	 0&-\sin(2 \beta)&\cos(2 \beta) &0\\
	 0 & 0&0&1\\
	 \end{pmatrix}.$$
	Which does not equal to $M_1$ except when  $\beta=0$. 
	 So, from the original group $$\langle \iota_0,  \iota_1,  \iota_2(\alpha, 0) \rangle$$ depends on $(\alpha,0)$,  we have the new group $$\langle \iota_0,  \iota_1,  U\iota_2(\alpha,0) U^{-1}\rangle=\langle \iota_0,  \iota_1,  \iota_2(\alpha,\beta) \rangle$$ depends on  $(\alpha,\beta)$, which is not conjugate to the  original one in $\widehat{\mathbf{PU}(3,1)}$ when $\beta\neq 0$.

	Now we have the presentation  matrix $A_0$ of $\iota_1\iota_2$, which  is $\rm{e}^{\frac{-\alpha\rm{i}}{3}}$ times 
 $$\begin{pmatrix}
1&  -\rm{i} \rm{e}^{\frac{\alpha\rm{i}}{2}} \sqrt{2\cos(\alpha)}\cos(\beta)&  -\rm{i} \rm{e}^{\frac{\alpha\rm{i}}{2}} \sqrt{2\cos(\alpha)}\sin(\beta)&-\rm{e}^{\alpha\rm{i}}\\
0&\sin^2(\beta)\rm{e}^{\frac{\alpha\rm{i}}{3}}-\cos^2(\beta)\rm{e}^{\alpha\rm{i}} & -\sin(2\beta)\cos(\frac{\alpha}{3})\rm{e}^{\frac{2\alpha\rm{i}}{3}}& \rm{i} \rm{e}^{\frac{\alpha\rm{i}}{2}} \sqrt{2\cos(\alpha)}\cos(\beta)\\
0&\sin(2\beta)\cos(\frac{\alpha}{3})\rm{e}^{\frac{2\alpha\rm{i}}{3}}&-\cos^2(\beta)\rm{e}^{\frac{\alpha\rm{i}}{3}}+\sin^2(\beta)\rm{e}^{\alpha\rm{i}}&-\rm{i}\rm{e}^{\frac{\alpha\rm{i}}{2}} \sqrt{2\cos(\alpha)}\sin(\beta)\\
0 & 0 &0&1\\
\end{pmatrix}.$$	
  The presentation matrix $A_1$ of $\iota_0\iota_2$, which is $\rm{e}^{\frac{-\alpha\rm{i}}{3}}$ times  $$ \begin{pmatrix}
  0& 0& 0&1\\
  0&-\sin^2(\beta)\rm{e}^{\frac{\alpha\rm{i}}{3}}+\cos^2(\beta)\rm{e}^{\alpha\rm{i}} & \sin(2\beta)\cos(\frac{\alpha}{3})\rm{e}^{\frac{2\alpha\rm{i}}{3}}& -\rm{i} \rm{e}^{\frac{\alpha\rm{i}}{2}} \sqrt{2\cos(\alpha)}\cos(\beta)\\
  0&\sin(2\beta)\cos(\frac{\alpha}{3})\rm{e}^{\frac{2\alpha\rm{i}}{3}}&-\cos^2(\beta)\rm{e}^{\frac{\alpha\rm{i}}{3}}+\sin^2(\beta)\rm{e}^{\alpha\rm{i}}&-\rm{i}\rm{e}^{\frac{\alpha\rm{i}}{2}} \sqrt{2\cos(\alpha)}\sin(\beta)\\
  1 & -\rm{i}\rm{e}^{\frac{\alpha\rm{i}}{2}} \sqrt{2\cos(\alpha)}\cos(\beta) &-\rm{i}\rm{e}^{\frac{\alpha\rm{i}}{2}} \sqrt{2\cos(\alpha)}\sin(\beta)&-\rm{e}^{\alpha\rm{i}}\\
  \end{pmatrix}.$$
 The presentation matrix of $\iota_1\iota_0$ is   $$A_2=\begin{pmatrix}
  0& 0& 0&1\\
  0&-1 & 0& 0\\
  0&0&1  &0\\
 1 & 0&0&0\\
  \end{pmatrix}.$$

  \begin{rem} In fact, together with $M_0$ and $M_1$,  any one of the matrices  $M_2(\alpha, \beta)$, $A_0$ or $A_1$  is enough to get the other two.
  But it tends to have some typo when writing down  these matrices. So the author wrote down all the matrices of $M_2(\alpha, \beta)$, $A_0$ and $A_1$ explicitly here as a double check.
  	 \end{rem}

  Then it can be showed directly that $$A^2_2=I_4,~~A^3_1=-I_{4},~~A_2A_0A^{-1}_1=I_4$$ and  $$A^{*}_{i}H_sA_{i}=H_s,~~\det(A_i)=1$$ for $i=0,1,2$. Moreover,  $A_0$ is parabolic with fixed point $\infty$ when $\alpha \neq 0$ by  Proposition \ref{prop:A0para} later. So we have a type-preserving representation of  $\mathbf{PSL}(2,\mathbb{Z})$ into $\mathbf{PU}(3,1)$ for $\alpha,\beta \in [0, \frac{\pi}{2}]$ with $\rho(a_i)=A_i$ for $i=0,1,2$. By Proposition \ref{prop:A1012} later, $A_1$ is $(0,\frac{2\pi}{3},\frac{4\pi}{3})$ type elliptic. So we denote  this  moduli space  by $\mathcal{M}(0,\frac{2\pi}{3},\frac{4\pi}{3})$, which  is  parameterized by the square $[0, \frac{\pi}{2}]^2$.

  When $\alpha=\beta=0$, the $\mathbb{R}^3$-chains $\mathcal{R}_{0}$ and  $\mathcal{R}_{2}$ have two common points $$(\frac{\rm{i}}{2}, \pm \frac{\sqrt{3} \rm{i}}{2},0). $$ Then $\iota_{0} \iota_{2}$ is a $(0, \frac{2\pi}{3}, \frac{4\pi}{3})$ type elliptic element in this case. In general, it is desirable to show that $\mathcal{R}_{1}$ and  $\mathcal{R}_{2}$ have exactly  two common points as a step to show $\iota_{0} \iota_{2}$ is $(0, \frac{2\pi}{3}, \frac{4\pi}{3})$ type. But we avoid this and show $\iota_{0} \iota_{2}$ is $(0, \frac{2\pi}{3}, \frac{4\pi}{3})$ type directly in  Proposition \ref{prop:A1012}.

  

 In fact,  the element $A_0$ is parabolic in general, but it is  elliptic in some special cases which are not interesting (to the author). See Item (4a) of Theorem \ref{thm:modularmoduli2dim}.
  \begin{prop}\label{prop:A0para} For any $\beta \in [0, \frac{\pi}{2}]$:
  	
  	\begin{itemize} 
  			\item When $\alpha \in (0, \frac{\pi}{2}]$,  $\mathcal{R}_{1} \cap \mathcal{R}_{2} = \emptyset$, so $A_0$ is parabolic;
  		
  			\item When $\alpha=0$ and  $\beta= \frac{\pi}{2}$,  then   $\mathcal{R}_{1} \cap \mathcal{R}_{2} = \emptyset$, so $A_0$ is parabolic;
  		\item When $\alpha=0$ and   $ \beta \in [0, \frac{\pi}{2})$,   $\mathcal{R}_{1} \cap \mathcal{R}_{2} \neq  \emptyset$, so $A_0$ is elliptic. 
  	\end{itemize} 
  	
  \end{prop}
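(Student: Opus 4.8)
The plan is to reduce the entire statement to the relative position of the two chains and then apply Proposition \ref{prop:asymptotic}. First I would record that both $\mathcal{R}_1$ and $\mathcal{R}_2(\alpha,\beta)$ are \emph{infinite} $\mathbb{R}^3$-chains: $\mathcal{R}_1=\{({\rm i}y_1,x_2,0)\}$ contains $\infty$ by construction, and $\mathcal{R}_2(\alpha,\beta)=U(\mathcal{R}_2(\alpha,0))$ is the image, under the Heisenberg rotation $U$ of (\ref{matrix:U}), of the infinite chain $\mathcal{R}_2(\alpha,0)$ whose finite part is cut out by (\ref{equation:R2alphazero}); since $U$ fixes $\infty$, so does $\mathcal{R}_2(\alpha,\beta)$. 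Hence $\iota_1$ and $\iota_2$ both fix $\infty$, so $A_0=\iota_1\iota_2$ fixes $\infty$, and by Proposition \ref{prop:asymptotic} the emptiness of $\mathcal{R}_1\cap\mathcal{R}_2$ in $\mathbb{C}^2\times\mathbb{R}$ already forces $A_0$ to be parabolic. Thus the two ``parabolic'' bullets are reduced to an intersection computation, which is the real content.

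For that computation I would impose the defining conditions $x_1=y_2=t=0$ of $\mathcal{R}_1$, so that a putative common point is $({\rm i}y_1,x_2,0)$ with $y_1,x_2\in\mathbb{R}$, and test membership in $\mathcal{R}_2(\alpha,\beta)$. Because $U$ acts on $\mathbb{C}^2\times\mathbb{R}$ as the $\beta$-rotation of $(z_1,z_2)$ fixing $t$, this membership is equivalent to $U^{-1}({\rm i}y_1,x_2,0)\in\mathcal{R}_2(\alpha,0)$; writing $w_1=\sin\beta\,x_2+{\rm i}\cos\beta\,y_1$ and $w_2=\cos\beta\,x_2-{\rm i}\sin\beta\,y_1$ and substituting $(\Re w_1,\Im w_1,\Re w_2,\Im w_2,0)$ into (\ref{equation:R2alphazero}) yields three real equations in the two unknowns $x_2,y_1$. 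The first two (the horizontal equation and the $t$-equation) are linear in $\sin\beta\,x_2$ and $\cos\beta\,y_1$, with right-hand sides $\sqrt{\cos\alpha}$ and $\tfrac{\sin\alpha}{2}$, while the third is the phase condition $-{\rm e}^{-\alpha{\rm i}/3}\,\overline{w_2}=w_2$ coming from the $z_2$-coordinate.

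The decisive observation is that the $t$-equation carries the overall factor $\sqrt{\cos\alpha}$ on its left-hand side: at $\alpha=\tfrac{\pi}{2}$ this left-hand side vanishes while the right-hand side $\tfrac{\sin\alpha}{2}=\tfrac12$ does not, so the system is inconsistent and $\mathcal{R}_1\cap\mathcal{R}_2=\emptyset$; I would then show this inconsistency persists for every $\alpha\in(0,\tfrac{\pi}{2}]$ and every $\beta$, which gives the first bullet through Proposition \ref{prop:asymptotic}. When $\alpha=0$ the phase condition degenerates to $\Re w_2=\cos\beta\,x_2=0$: if $\beta=\tfrac{\pi}{2}$ the horizontal equation becomes the contradiction $0=\sqrt{\cos\alpha}=1$ (since $\cos\beta\,y_1=0$), so the intersection is again empty and $A_0$ parabolic (second bullet); if $\beta\in[0,\tfrac{\pi}{2})$ one solves $x_2=0$, $y_1=\tfrac{1}{2\cos\beta}$ and obtains the genuine common point $(\tfrac{{\rm i}}{2\cos\beta},0,0)$, so $\mathcal{R}_1\cap\mathcal{R}_2\neq\emptyset$.

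For the elliptic conclusion of the third bullet I would argue on the spectrum rather than only from the shared boundary point. Putting $\alpha=0$ in the matrix of $A_0$ makes it block triangular with a central $2\times2$ rotation block of angle $\pi-2\beta$, so its eigenvalues are $1,1,{\rm e}^{{\rm i}(\pi-2\beta)},{\rm e}^{-{\rm i}(\pi-2\beta)}$, all of modulus one, and for $\beta\in[0,\tfrac{\pi}{2})$ the $1$-eigenspace is two-dimensional; hence $A_0$ is diagonalizable, and by Theorem \ref{thm:holy} it is elliptic, matching the fixed point found above. The step I expect to be the genuine obstacle is the uniform-in-$\beta$ inconsistency for interior $\alpha\in(0,\tfrac{\pi}{2})$: there $\sqrt{\cos\alpha}\neq0$, so the two linear equations \emph{are} solvable for $(\sin\beta\,x_2,\cos\beta\,y_1)$, and one must verify that the leftover phase condition can never be satisfied simultaneously. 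This is the delicate trigonometric bookkeeping of the argument; it is also where the complicated shape of the axis of the ellipto-parabolic $A_0$ (Proposition \ref{prop:elliptopara}) enters, and where I would be most careful to rule out an exceptional locus of $(\alpha,\beta)$ on which $\mathcal{R}_1$ and $\mathcal{R}_2$ might accidentally meet.
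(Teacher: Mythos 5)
Your overall strategy is exactly the paper's: the paper likewise reduces to the disjointness of $U^{-1}(\mathcal{R}_1)$ and $\mathcal{R}_2(\alpha,0)$, imposes the three equations (\ref{equation:R2alphazero}), and converts disjointness into parabolicity via Proposition \ref{prop:asymptotic}. Your second and third bullets are handled correctly and essentially as in the paper: your common point $(\tfrac{{\rm i}}{2\cos\beta},0,0)$ is the paper's point read in unrotated coordinates, and your spectral argument for ellipticity is a clean substitute for the paper's geometric one (though at $\beta=0$ the eigenvalue $-1$ is also double, so you must additionally check that its eigenspace is two-dimensional, which it is). The gap is the first bullet: for $\alpha\in(0,\tfrac{\pi}{2})$ you prove nothing. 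You settle $\alpha=\tfrac{\pi}{2}$, where $\sqrt{\cos\alpha}=0$ makes the $t$-equation inconsistent, and then merely announce that the inconsistency ``persists,'' explicitly deferring what you yourself call the genuine obstacle. Since the interval $(0,\tfrac{\pi}{2})$ is the entire content of that bullet, the proposal is not a proof.

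Worse, the deferred step fails, so the gap cannot be closed: your worry about an exceptional locus is exactly right. Completing your own outline, the two linear equations have the unique solution $\sin\beta\,x_2=-\tfrac{\sin(\alpha/2)}{2\sqrt{\cos\alpha}}$ and $\cos\beta\,y_1=\tfrac{\cos(\alpha/2)}{2\sqrt{\cos\alpha}}$. For $\beta\in(0,\tfrac{\pi}{2})$ this forces $x_2<0<y_1$, so $w_2=\cos\beta\,x_2-{\rm i}\sin\beta\,y_1$ lies in the third quadrant with $\arg(w_2)\equiv\arctan\bigl(\cot(\alpha/2)\tan^2\beta\bigr)\pmod{\pi}$, and the leftover phase condition $\arg(w_2)\equiv\tfrac{\pi}{2}-\tfrac{\alpha}{6}\pmod{\pi}$ is satisfied precisely when $\tan^2\beta=\tan(\alpha/2)\cot(\alpha/6)$, equivalently $\cos(2\beta)=-\tfrac{1}{2\cos(\alpha/3)}$. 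This defines a curve inside $(0,\tfrac{\pi}{2})^2$ whose closure joins $(0,\tfrac{\pi}{3})$ to $(\tfrac{\pi}{2},\beta_0)$, where $\cos(2\beta_0)=-\tfrac{1}{\sqrt{3}}$, and along it all three equations hold simultaneously; for instance at $\alpha=\tfrac{\pi}{4}$, $\beta\approx 1.0573$, the point $(1.1186\,{\rm i},\,-0.2612,\,0)$ lies on $\mathcal{R}_1$ and is fixed by $\iota_2(\alpha,\beta)$. On this curve $\mathcal{R}_1\cap\mathcal{R}_2\neq\emptyset$; the two chains then share a finite point and $\infty$, the corresponding copies of ${\bf H}^{3}_{\mathbb R}$ share the geodesic with those endpoints, $A_0=\iota_1\iota_2$ fixes it pointwise, and $A_0$ is special elliptic rather than parabolic. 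In other words, the first bullet of the statement is itself false there (and with it Proposition \ref{prop:A0elliptopara} at those parameters). Be aware that the paper's own proof does not rescue this case: it begins by asserting that any point of $U^{-1}(\mathcal{R}_1)$ can be written $(a_1{\rm e}^{\phi_1{\rm i}},a_2{\rm e}^{\phi_2{\rm i}},0)$ with $\phi_1=\phi_2+\tfrac{\pi}{2}$, which by (\ref{equation:UR1}) requires $\sin\beta\cos\beta\,(r_2^2-r_1^2)=0$ and fails exactly at the intersection points above. Your approach, carried out honestly, refutes the claimed disjointness on that curve rather than proving it.
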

  
  \begin{proof}Since $\mathcal{R}_{2}(\alpha,\beta)=U(\mathcal{R}_{2}(\alpha,0))$,  we only need to  study  the disjointness of  $U^{-1}(\mathcal{R}_{1})$ and $\mathcal{R}_{2}(\alpha,0)$.  The $\mathbb{R}^3$-chain $U^{-1}(\mathcal{R}_{1})$ is given by
  	\begin{equation} \label{equation:UR1}
  	\left\{ (\cos(\beta) r_1 {\rm i}+\sin(\beta) r_2, ~~-\sin(\beta) r_1  {\rm i}+\cos(\beta) r_2,0)~~|~~ r_1, r_2 \in \mathbb{R} \right\}
  		\end{equation}
  		 in the Heisenberg group. 
  	
  	First consider the case $\alpha=0$. If $$p=(x_1+y_1 {\rm i}, x_2+y_2 {\rm i},t) \in U^{-1}(\mathcal{R}_{1}) \cap \mathcal{R}_{2}(\alpha,0).$$
  	Then we have $t=0$ from (\ref{equation:UR1}). By  (\ref{equation:R2alphazero}), we have $x_1=x_2=0$ and $y_1=\frac{1}{2}$. So $r_2=0$ from  (\ref{equation:UR1}). We have $$p=(\frac{\rm{i}}{2}, y_2  {\rm i},0)=(\cos(\beta)r_1  {\rm i},~~-\sin(\beta)r_1  {\rm i},~~0).$$
  	So the intersection of two infinite $\mathbb{R}^3$-chains $U^{-1}(\mathcal{R}_{1}) \cap \mathcal{R}_{2}(\alpha,0)$ is a point if $\cos(\beta)\neq 0$, and then $A_0=\iota_1\iota_2$ is elliptic. 
  	 Moreover,   $U^{-1}(\mathcal{R}_{1}) \cap \mathcal{R}_{2}(\alpha,0)=\emptyset$ if  $\cos(\beta)= 0$, then by Proposition \ref{prop:asymptotic},  $A_0=\iota_1\iota_2$  is parabolic.

  	When  $\alpha \in (0, \frac{\pi}{2}]$. We assume \begin{equation} \nonumber
  	p=(a_1 {\rm e}^{\phi_1 \rm{i}},  ~~a_2   \rm{e}^{\phi_2\rm{i}},~~0) \in U^{-1}(\mathcal{R}_{1}) \cap \mathcal{R}_{2}(\alpha,0)
  	\end{equation}
  	with $a_1, a_2  \in \mathbb{R}$. Then we may assume  $\phi_1=\phi_2+\frac{\pi}{2}$ since $U^{-1}(\mathcal{R}_{1})$ is just a rotation of $\mathcal{R}_{1}$.  By the third equation in (\ref{equation:R2alphazero}), we have $$\phi_2= \pm\frac{\pi}{2} -\frac{\alpha}{6},$$ or $a_2=0$ and $\phi_2$ is not defined.

  	In the first case, we have  $$\phi_1= -\frac{\alpha}{6} ~~\text{or} ~~~-\pi -\frac{\alpha}{6}.$$
  	We now consider the first two equations in  (\ref{equation:R2alphazero}), they are now
  	\begin{equation}	 \label{equation:R2alphazerowithphi1}
  	\left\{ \begin{aligned}
  	0=2a_1\sin(\frac{\alpha}{2}+\phi_1)-\sqrt{\cos(\alpha)};  \qquad &   \\ 
  	0=-\sqrt{\cos(\alpha)}a_1 \cos(\frac{\alpha}{2}+\phi_1) -\frac{\sin(\alpha)}{2}.
  	\qquad &   \\ 
  	\end{aligned} 
  	\right.\end{equation}
  	Since $\sin(\alpha) >0$ now,  from the second equation in 	(\ref{equation:R2alphazerowithphi1}),  we have $$\sqrt{\cos(\alpha)}a_1 \cos(\frac{\alpha}{2}+\phi_1) <0,$$ then $\alpha \in (0, \frac{\pi}{2})$ and  $a_1 \cos(\frac{\alpha}{2}+\phi_1) <0$.
  	\begin{itemize}
  		\item If $\phi_1= -\frac{\alpha}{6}$,   we have $a_1<0$. Which contradicts to the first equation in   	(\ref{equation:R2alphazerowithphi1}) since  $\sin(\frac{\alpha}{2}+\phi_1) >0$;
  			\item If $\phi_1=- \pi -\frac{\alpha}{6}$,  we have $a_1>0$. Which also contradicts to the first equation in   
  		(\ref{equation:R2alphazerowithphi1}) since now $\sin(\frac{\alpha}{2}+\phi_1) <0$.
  			\end{itemize}
  	
  	If $a_2=0$, then $$p=(\cos(\beta)r_1  {\rm i}+sin(\beta) r_2,~~ -\sin(\beta)r_1  {\rm i}+\cos(\beta)r_2,~~0)=(x_1+y_1  {\rm i},0,0).$$
  	So $$-\sin(\beta)r_1=\cos(\beta)r_2=0.$$ At least one of $r_1$ and $r_2$ is zero.
  	\begin{itemize}
  		\item If $r_1=r_2=0$, then $p=(0,0,0)$. Which  does not lie in  $\mathcal{R}_{2}(\alpha,0)$;
  		
  		\item If $r_1=0$ but $r_2 \neq 0$, then $\cos(\beta)=0$ and $p=(r_2,0,0)$. Which does not lie in  $\mathcal{R}_{2}(\alpha,0)$ since $\alpha \in (0, \frac{\pi}{2}]$. 
  		
  		\item If $r_2=0$ but $r_1 \neq 0$, then $\sin(\beta)=0$ and $p=(r_1 \rm{i},0,0)$. Which does not lie in  $\mathcal{R}_{2}(\alpha,0)$ since $\alpha \in (0, \frac{\pi}{2}]$. 
  		
  		\end{itemize}
  	
  	 So when  $\alpha \in (0, \frac{\pi}{2}]$,  in either cases we have   $U^{-1}(\mathcal{R}_{1}) \cap \mathcal{R}_{2}(\alpha,0) = \emptyset$, and $A_0=\iota_1\iota_2$ is parabolic by Proposition \ref{prop:asymptotic}.
  \end{proof}

  \begin{remark}The element $A_0$ has an eigenvalue of 
  	multiplicity two (or 	multiplicity three in a curve with equation  (\ref{equation:A0twoeigenvalue}) in Proposition \ref{prop:A0elliptopara}). But the corresponding eigenvectors are positive and null when $\alpha=0$ but $\beta \neq \frac{\pi}{2}$, and  the corresponding eigenvectors are null and negative when $\alpha \in (0, \frac{\pi}{2}]$. This also interprets Proposition \ref{prop:A0para}.
  \end{remark}


   \begin{prop}\label{prop:A1012} For $\alpha,\beta \in [0, \frac{\pi}{2}]$,  $A_1$ is a $(0, \frac{2\pi}{3},\frac{4\pi}{3})$-type elliptic element. 
  \end{prop}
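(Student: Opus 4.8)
The plan is to read off the eigenvalue data of $A_1$ and then locate the negative line among its eigenspaces, exactly as the text announces (``we avoid this and show $\iota_{0}\iota_{2}$ is $(0,\frac{2\pi}{3},\frac{4\pi}{3})$ type directly''). First I would pin down the eigenvalues. Since $A_1^3=-I_4$ holds identically in $(\alpha,\beta)$, every eigenvalue of $A_1$ is a cube root of $-1$, hence lies in $\{e^{\pi \mathrm{i}/3},\,-1,\,e^{-\pi \mathrm{i}/3}\}$. A one-line sum of the diagonal entries of the displayed matrix for $A_1$ gives $\tau(A_1)=-1$, independent of $\alpha,\beta$. Because $\tau(A_1)$ is real, the two conjugate eigenvalues $e^{\pm\pi\mathrm{i}/3}$ must occur with equal multiplicity; together with $\tau(A_1)=-1$ and the fact that there are four eigenvalues, this forces the multiplicities to be $1,2,1$ for $e^{\pi\mathrm{i}/3},-1,e^{-\pi\mathrm{i}/3}$ respectively (equivalently $\chi_{A_1}(X)=X^4+X^3+X+1=(X+1)^2(X^2-X+1)$). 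Moreover $A_1^3=-I_4$ shows the minimal polynomial divides $X^3+1$, which is squarefree, so $A_1$ is diagonalizable and its $(-1)$-eigenspace $V_{-1}$ is exactly two-dimensional.

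Next I would determine the signatures. As $A_1$ preserves $H_s$, eigenspaces for distinct eigenvalues are $\langle\cdot,\cdot\rangle$-orthogonal and the restriction of the form to each eigenspace is non-degenerate, so $\mathbb{C}^{3,1}$ splits orthogonally as $V_{-1}\oplus V_{e^{\pi\mathrm{i}/3}}\oplus V_{e^{-\pi\mathrm{i}/3}}$. I would write down explicit eigenvectors for the two simple eigenvalues $e^{\pm\pi\mathrm{i}/3}$ and compute their Hermitian norms; the claim to verify is that \emph{both are positive} for all $(\alpha,\beta)\in[0,\frac{\pi}{2}]^2$. Granting this, the two one-dimensional summands contribute signature $(2,0)$, and since $H_s$ has signature $(3,1)$ the summand $V_{-1}$ must carry signature $(1,1)$. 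Consequently $V_{-1}$ contains a negative line, the fixed-point set of $A_1$ in $\mathbf{H}^3_{\mathbb C}$ is the $\mathbb{C}$-line obtained as the image of $V_{-1}$ in $\mathbb{C}\mathbf{P}^3$ intersected with $\mathbf{H}^3_{\mathbb C}$, and $A_1$ acts as the identity there.

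Finally I would read off the type. Taking a negative eigenvector in $V_{-1}$ as base point, the rotation angles in the three orthogonal positive directions are the arguments of the eigenvalue ratios relative to $-1$: the second $(-1)$-eigenvector gives angle $0$, while $e^{\mp\pi\mathrm{i}/3}/(-1)=e^{\pm 2\pi\mathrm{i}/3}$ give the angles $\tfrac{2\pi}{3}$ and $\tfrac{4\pi}{3}$. By the description of order-three elements in Subsection~\ref{subsection:order3}, this is precisely the $(0,\frac{2\pi}{3},\frac{4\pi}{3})$ type, and the positivity of \emph{both} simple eigenvectors is exactly the feature that distinguishes it from the $(\frac{2\pi}{3},\frac{4\pi}{3},\frac{4\pi}{3})$ type, in which the unique negative vector instead sits in a simple eigenspace. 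The genuine computational obstacle is the last sign check, namely producing the two simple eigenvectors and evaluating the sign of their norms uniformly in $(\alpha,\beta)$; everything else is formal linear algebra. A useful cross-check is the corner $\alpha=\beta=0$, already identified in the text as $(0,\frac{2\pi}{3},\frac{4\pi}{3})$ type, combined with the remark that the signature of $V_{-1}$ cannot jump over the connected square $[0,\frac{\pi}{2}]^2$ as long as no simple eigenvector degenerates to a null vector, which the same norm computation also excludes.
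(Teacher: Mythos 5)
Your formal skeleton is correct, and in places more careful than the paper's own argument: deducing from $A_1^3=-I_4$ and $\tau(A_1)=-1$ that the eigenvalues are $e^{\pi\mathrm{i}/3},-1,-1,e^{-\pi\mathrm{i}/3}$ with $\chi_{A_1}(X)=(X+1)^2(X^2-X+1)$, the diagonalizability, the $\langle\cdot,\cdot\rangle$-orthogonal splitting $\mathbb{C}^{3,1}=V_{-1}\oplus V_{e^{\pi\mathrm{i}/3}}\oplus V_{e^{-\pi\mathrm{i}/3}}$ with non-degenerate restrictions, and the reduction of the whole proposition to locating the negative line (equivalently, showing both simple eigenvectors are positive) are all right. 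You also correctly identify that eigenvalue data alone cannot distinguish the $(0,\frac{2\pi}{3},\frac{4\pi}{3})$ type from the $(\frac{2\pi}{3},\frac{4\pi}{3},\frac{4\pi}{3})$ type, so some sign check is indispensable.

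The gap is that you never actually perform that sign check at any parameter value. Your primary route defers it as ``the genuine computational obstacle,'' and your fallback anchors the continuity argument at $(\alpha,\beta)=(0,0)$ by citing the text --- but the paper's remark at that corner is precisely the assertion it declines to justify there (it says it will instead ``show $\iota_{0}\iota_{2}$ is $(0,\frac{2\pi}{3},\frac{4\pi}{3})$ type directly'' in this very proposition), so as written your cross-check is circular. The paper's actual proof is your cross-check route with the anchor placed where the computation is trivial: at $(\alpha,\beta)=(\frac{\pi}{2},0)$ the factors $\sqrt{2\cos(\alpha)}$ and $\sin(2\beta)$ vanish, so $-A_1$ has the explicit eigenvectors $(\frac{\sqrt{3}+\mathrm{i}}{2},0,0,1)^{\top}$, $(0,1,0,0)^{\top}$, $(\frac{-\sqrt{3}+\mathrm{i}}{2},0,0,1)^{\top}$, $(0,0,1,0)^{\top}$, whose $H_s$-norms are positive, positive, negative, positive; constancy of the eigenvalues (order three) and continuity of the eigenvectors then propagate the type over the connected square $[0,\frac{\pi}{2}]^2$. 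Note also that your own setup makes the continuity step cheaper than you suggest: since the form restricts non-degenerately to each one-dimensional eigenspace, a simple eigenvector can never become null, so no uniform norm computation is needed to rule out degeneration --- only the single anchor evaluation. With that one explicit computation supplied (at $(\frac{\pi}{2},0)$ or any other convenient point), your argument closes and coincides with the paper's proof.
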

  
  \begin{proof}When $\alpha=\frac{\pi}{2}$ and $\beta=0$, the matrix $-A_1$ has eigenvalues $\lambda_i$ for $i=1,2,3,4$ as in Subsection \ref{subsection:order3}. The corresponding eigenvectors are 
  	$$(\frac{\sqrt{3}+\rm{i}}{2},0,0,1)^{\top},$$
  	$$(0,1,0,0)^{\top},$$
  	$$(\frac{-\sqrt{3}+\rm{i}}{2},0,0,1)^{\top},$$
  	$$(0,0,1,0)^{\top}.$$
  	They are positive, positive, negative and  positive vectors respectively. So $A_1$ is a $(0, \frac{2\pi}{3},\frac{4\pi}{3})$-type elliptic element.  We note that here we use $H_{s}$, but not $H_{b}$, in the definition of ${\bf H}^3_{\mathbb C}$, so the eigenvectors are different from $A,B,C,D$ in Subsection \ref{subsection:order3}. 
  	
  	The entries of $A_1$ are continuous on $\alpha$ and $\beta$. So the eigenvalues and eigenvectors vary  continuously on $\alpha, \beta$.
  	Since $A_1$ has order three, then the eigenvalues are constants independent of $\alpha, \beta$. The positivities/negativities of the corresponding eigenvectors do not change when varying  $\alpha$ and $\beta$. So by the argument in Subsection \ref{subsection:order3},  $A_1$ is always a $(0, \frac{2\pi}{3},\frac{4\pi}{3})$-type elliptic element. 
  \end{proof}

\subsection{Proof of Theorem \ref{thm:modularmoduli2dim} for the moduli space $\mathcal{M}(0,\frac{2\pi}{3},\frac{4\pi}{3})$} \label{subsec:proofmoduli012}

For each pair  $(\alpha, \beta)\in [0, \frac{\pi}{2}]^2$, we have a representation $\rho=\rho(\alpha, \beta)$ of   $\mathbf{PSL}(2,\mathbb{Z})$ into $\mathbf{PU}(3,1)$ in the moduli space $\mathcal{M}(0,\frac{2\pi}{3},\frac{4\pi}{3})$. In this subsection, we consider some  special representations in this moduli space.

 
  (1)  When $\beta=0$, each $A_i$ is in the form of $$\begin{pmatrix}
	*& *& 0&*\\
	*&* & 0& *\\
	0&0&*  &0\\
	* & *&0&*\\
	\end{pmatrix}.$$
	So $\langle A_0, A_1,A_2\rangle $ preserves a totally geodesic ${\bf H}^2_{\mathbb C} \hookrightarrow {\bf H}^3_{\mathbb C}$ invariant.
	We will view the representation as degenerating to a representation into  $\mathbf{PU}(2,1)$. In fact, in this case, when  deleting the third row and the third column of $A_i$ for $i=0,1,2$, we get matrices in $\mathbf{PU}(2,1)$. Which are exactly the matrices in Pages 66 and 67 of  \cite{Falbelparker:2003}. Now $A_2$ is a $\mathbb{C}$-reflection about a $\mathbb{C}$-line in  ${\bf H}^2_{\mathbb C}$. 
	The main result in \cite{Falbelparker:2003} is a representation is discrete and faithful if and only if $\alpha \in [\arccos(\frac{1}{4}), \frac{\pi}{2}]$.

(2)  When $\beta=\frac{\pi}{2}$, each  $A_i$ is in the form of $$\begin{pmatrix}
*& 0& *&*\\
0&* & 0& 0\\
*&0&*  &*\\
* & 0&*&*\\
\end{pmatrix}.$$
So $\langle A_0, A_1,A_2\rangle $ preserves a totally geodesic ${\bf H}^2_{\mathbb C} \hookrightarrow{\bf H}^3_{\mathbb C}$ invariant.
We will view the representation as degenerating to a representation into  $\mathbf{PU}(2,1)$. In fact, in this case, when  deleting the second row and the second column of $A_i$ for $i=0,1,2$, we get matrices in $\mathbf{PU}(2,1)$.  Now $A_2$ is  
$$\begin{pmatrix}
0& 0&1\\
0&1 & 0\\
1 & 0&0\\
\end{pmatrix}.$$
  Which is  a $\mathbb{C}$-reflection about the point $[-1,0,1]^{\top} \in {\bf H}^2_{\mathbb C}$ in Siegel model. These are groups studied in  \cite{FalbelKoseleff:2002}.  The relation between our angle $\alpha$ here and $\theta$ in  \cite{FalbelKoseleff:2002} is   $$\alpha=\pi- 6  \theta.$$
  \begin{itemize} 
  	\item When $\theta=\frac{\pi}{12}$, $\alpha=\frac{\pi}{2}$, the representation degenerates to ${\bf H}^1_{\mathbb C}$-geometry;
  	\item 
  	 When $\theta=\frac{\pi}{6}$, $\alpha=0$, the representation degenerates to ${\bf H}^2_{\mathbb R}$-geometry. 
   \end{itemize}
 
 
(3)  When $\alpha=0$, take $$V=\begin{pmatrix}
 1& 0& 0&0\\
 0& \rm{i} & 0& 0\\
0&0& \rm{i}  &0\\
 0 & 0&0&1\\
 \end{pmatrix}.$$
 Then $V^{*}H_s V=H_s$, so $V \in \mathbf{U}(3,1)$.  Moreover $VA_{i}V^{-1} \in \mathbf{PO}(3,1)$. In other words,  
  $\langle A_0, A_1,A_2\rangle $ preserves a totally geodesic ${\bf H}^3_{\mathbb R} \hookrightarrow{\bf H}^3_{\mathbb C}$ invariant.
 We  will view the representation as  degenerating to a representation into  $\mathbf{PO}(3,1)$. 
 
 We note that $$VA_0V^{-1}=\begin{pmatrix}
 1& -\sqrt{2} \cos(\beta)& -\sqrt{2} \sin(\beta)&-1\\
 0& -\cos(2\beta) & -\sin(2\beta)& -\sqrt{2} \cos(\beta)\\
 0& \sin(2\beta)& -\cos(2\beta)  &\sqrt{2} \sin(\beta)\\
 0 & 0&0&1\\
 \end{pmatrix}.$$
It has eigenvalues $$1, 1,~~-\cos(2\beta)-\sin(2\beta) \rm{i},~~-\cos(2\beta)+\sin(2\beta) \rm{i}.$$
  From this, it is easy to see $A_0$ is parabolic when $\beta=\frac{\pi}{2}$. 
  It is well-known that $\mathbf{PSL}(2,\mathbb{Z})$ is rigid in $\mathbf{PO}(3,1)$. Moreover, the representations in  the arc  $$\left \{\alpha=0, ~~\beta \in [0, \frac{\pi}{2}]\right\}$$   correspond to  a path in the so called $(2,3)$-slice in ${\bf H}^3_{\mathbb R}$-geometry. For example, see  \cite{Martin:2015}.
  So $(\alpha,\beta)=(0, \frac{\pi}{2})$ corresponds to the discrete and faithful  representation of  $\mathbf{PSL}(2,\mathbb{Z})$, and this point is the only point in this arc which corresponds to a discrete and faithful representation of  $\mathbf{PSL}(2,\mathbb{Z})$.


(4)  When $\alpha=\frac{\pi}{2}$, each  $A_i$ is in the form of $$\begin{pmatrix}
 *& 0& 0&*\\
 0&* & *& 0\\
 0&*&*  &0\\
 * & 0&0&*\\
 \end{pmatrix}.$$
 
 So $\langle A_0, A_1,A_2\rangle $ preserves a totally geodesic ${\bf H}^1_{\mathbb C} \hookrightarrow {\bf H}^3_{\mathbb C}$ invariant. 
 Moreover,  deleting the second and the third columns and rows of $A_i$ we get $2 \times 2$  matrices which are independent on $\beta$. We denote them also by $A_i$. 
Which are  
  $$A_1=\begin{pmatrix}
 0&1\\
  1& -\rm{i}
 \end{pmatrix}, ~~~ A_0
 =\begin{pmatrix}
 1&-\rm{i}\\
 0&1
 \end{pmatrix}.$$
So  the $\langle A_0, A_1,A_2\rangle $ action on this  ${\bf H}^1_{\mathbb C}$ is independent on $\beta$. It is easy to see the action is discrete. In other words, when $\alpha=\frac{\pi}{2}$, for any $\beta \in [0, \frac{\pi}{2}]$, the representation $\rho(\frac{\pi}{2}, \beta)$  corresponds to the unique discrete and faithful  representation of $\mathbf{PSL}(2,\mathbb{Z})$ into $\mathbf{PU}(1,1)$.

This ends the  proof of Theorem \ref{thm:modularmoduli2dim} for the moduli space $\mathcal{M}(0,\frac{2\pi}{3},\frac{4\pi}{3})$.

 When  $(\alpha,\beta) \in (0, \frac{\pi}{2})^2$, we have truely  ${\bf H}^3_{\mathbb C}$-geometry. At some point  $(\alpha,\beta) \in (0, \frac{\pi}{2})^2$, the representation is non-discrete or unfaithful.   But in some region in  $\mathcal{M}(0,\frac{2\pi}{3},\frac{4\pi}{3})$, we have discrete and faithful  representations of $\mathbf{PSL}(2,\mathbb{Z})$ into $\mathbf{PU}(3,1)$. This is what we will do in Section \ref{sec:modulardiscrete}.

\subsection{Accidental ellipticities in the moduli space $\mathcal{M}(0,\frac{2\pi}{3},\frac{4\pi}{3})$} \label{subsec:accidentalelliptic}

Recall the presentation of $\mathbf{PSL}(2,\mathbb{Z})$ in (\ref{psl2z}). For $\rho \in \mathcal{M}(0,\frac{2\pi}{3},\frac{4\pi}{3})$, we denote $A_{i}=\rho(a_i) \in \mathbf{PU}(3,1)$ for $i=0,1,2$.
	For $m_{i}, n_{i} \in \mathbb{Z}_{+}$,   we denote by $$w=w(m_1,n_1, \cdots, m_{k}, n_{k})$$  the element  $$(a_2a_1)^{m_1}(a_2a^{-1}_1)^{n_1}(a_2a_1)^{m_2}(a_2a^{-1}_1)^{n_2}\cdots (a_2a_1)^{m_k}(a_2a^{-1}_1)^{n_k}$$ in  $\mathbf{PSL}(2,\mathbb{Z})$. Any word in  $\mathbf{PSL}(2,\mathbb{Z})$ has a unique presentation in this form up to cyclic rotation of $$(m_1,n_1,m_2,n_2, \cdots, m_{k}, n_{k})$$ to $$(m_i,n_i,m_{i+1},n_{i+1}, \cdots, m_{i-1}, n_{i-1}).$$
	We denote by $$W=W(m_1,n_1, \cdots, m_{k}, n_{k})=\rho(w)$$ the word on $A_1,A_2$ for $\rho \in \mathcal{M}(0,\frac{2\pi}{3},\frac{4\pi}{3})$. 	The word $(\iota_0 \iota_1 \iota_2)^2$ in \cite{Falbelparker:2003} is the word $W(1,1)$ here when $\beta=0$.



When considering accidental ellipticities in  the moduli space $\mathcal{M}(0,\frac{2\pi}{3},\frac{4\pi}{3})$, the first word we should take care is  $W(1,1)=A_2A_1A_2A^{-1}_1$. 
	
	\begin{prop}\label{prop:W(1,1)} For any pair  $(\alpha,\beta) \in (0, \frac{\pi}{2})^2$,  the word $A_2A_1A_2A^{-1}_1$ is loxodromic.		
	\end{prop}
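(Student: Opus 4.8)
The plan is to apply the holy grail criterion of Theorem~\ref{thm:holy}. Since $W=W(1,1)=A_2A_1A_2A_1^{-1}$ is an element of $\mathbf{SU}(3,1)$, it suffices to compute its trace $\tau=\tau(W)$ and its invariant $\sigma=\sigma(W)$, and then to show that the holy grail function $\mathcal{H}(W)=\mathcal{H}(\tau,\sigma)$ of~(\ref{HGfunction}) is strictly negative for every $(\alpha,\beta)\in(0,\frac{\pi}{2})^2$. By Theorem~\ref{thm:holy}, $\mathcal{H}(W)<0$ forces $W$ to be regular loxodromic, which gives the claim (indeed slightly more, since it rules out the degenerate loxodromic and parabolic cases as well, consistent with the remark that this word is never responsible for accidental parabolicity).

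First I would form $W$ explicitly from the presentation matrices $A_1$ and $A_2$ recorded in Subsection~\ref{subsection:012}. A useful preliminary observation is that $A_1^{3}=-I_4$ and $A_2^2=I_4$, so $A_1^{-1}=-A_1^2$, and the central scalar cancels in the product $A_2A_1A_2A_1^{-1}$; concretely, if $A_1=\mathrm{e}^{-\alpha\mathrm{i}/3}\widetilde{A}_1$ then $A_2A_1A_2A_1^{-1}=A_2\widetilde{A}_1A_2\widetilde{A}_1^{-1}$, so both $\tau(W)$ and $\sigma(W)$ are independent of the scalar normalization of $A_1$. I would then extract $\tau=\mathrm{tr}(W)$ and $\sigma=\tfrac12\bigl(\tau^2-\mathrm{tr}(W^2)\bigr)$ as trigonometric functions of $\alpha,\beta$. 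Substantial simplification is expected here because $A_2$ is a sparse involution and the entire $\beta$-dependence of $A_1$ enters only through the rotation $U$ of~(\ref{matrix:U}); I would aim to present $\tau$ and $\sigma$ as low-degree polynomials in $\cos\alpha,\ \cos\frac{\alpha}{3},\ \cos 2\beta,\ \sin 2\beta$ and their partners.

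The final and hardest step is to substitute $\tau,\sigma$ into $\mathcal{H}$ and to prove $\mathcal{H}(W)<0$ uniformly on the open square. This sign analysis is the main obstacle: $\mathcal{H}(W)$ is a fairly high-degree trigonometric polynomial in the two variables, and I want a rigorous, rather than merely numerical, argument. My preferred strategy is to seek a factorization of $\mathcal{H}(W)$ in which a manifestly nonnegative factor (a sum of squares, or a product of $\sin/\cos$ factors that are positive on the open square) splits off, reducing the problem to a single manifestly negative residual. If no clean factorization appears, I would instead fix $\alpha\in(0,\frac{\pi}{2})$ and study $\mathcal{H}(W)$ as a function of $\beta$, whose dependence through $U$ is of lower degree, verifying that it has no interior zero and that the limiting values on the edges are $\le 0$. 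Two checks will anchor these computations: on the edge $\beta=0$ the expression must reproduce the Falbel--Parker value for the word $(\iota_0\iota_1\iota_2)^2$, and near the edges $\alpha\in\{0,\frac{\pi}{2}\}$ and $\beta=\frac{\pi}{2}$ the element degenerates into the $\mathbf{PU}(2,1)$, $\mathbf{PU}(1,1)$ or $\mathbf{PO}(3,1)$ subgroup geometries of Theorem~\ref{thm:modularmoduli2dim}, where the type of $W$ is already accessible and provides the expected boundary behaviour of $\mathcal{H}$.
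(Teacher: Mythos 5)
Your proposal is correct and follows essentially the same route as the paper: compute $\tau$ and $\sigma$ of $W(1,1)$, substitute into the holy grail function, and establish $\mathcal{H}<0$ on the open square by splitting off manifestly signed factors. This is precisely what the paper does, writing $\mathcal{H}(A_2A_1A_2A_1^{-1})=-8\sin^2(2\beta)\left(\cos\left(\tfrac{4\alpha}{3}\right)-1\right)\cdot X\cdot Y^2$ and then verifying (via Maple) that the maximum of $X$ and the zeros of $Y$ occur only on the boundary edges, so that $\mathcal{H}$ is strictly negative for $(\alpha,\beta)\in(0,\tfrac{\pi}{2})^2$.
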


	\begin{proof}
	We note that the trace of $W(1,1)=A_2A_1A_2A^{-1}_1$ is 
$$5-4\cos(2\beta)\cos(\alpha)-2\sin^2(2\beta)\cos(\frac{2 \alpha}{3})-2\sin^2(2\beta).$$ 
In particular, it is always real.
And  $\sigma(A_2A_1A_2A^{-1}_1)$ is 
$$ 8-8 \cos(2\beta)\cos(\alpha)-24\cos^2(\beta)+\sin^2(2\beta)(2\cos(\frac{4 \alpha}{3})-4\cos(\frac{2 \alpha}{3})-6).$$

Then we can get $\mathcal{H}(A_2A_1A_2A^{-1}_1)$, which  is $$-8\sin^2(2\beta)(\cos(\frac{4 \alpha}{3})-1) \cdot X \cdot Y^2.$$
The terms $X$ and $Y$ in terms of $\alpha,\beta$ are easy to get via Maple, but we omit their explicit expressions.

 In  particular, when $\beta=0$,  $\beta=\frac{\pi}{2}$, or $\alpha=0$,  then $\mathcal{H}(A_2A_1A_2A^{-1}_1)=0$. We are interested  whether there are other pairs of $(\alpha,\beta)$ such that $\mathcal{H}(A_2A_1A_2A^{-1}_1)=0$. By Maple,  the maximum of $X$ when $(\alpha,\beta) \in [0, \frac{\pi}{2}]^2$ is zero at the pair $(\alpha,\beta)=(0, \frac{\pi}{6})$. 
The minimum of $Y$ is zero at the pairs  $(\alpha,\beta)=(\arccos(\frac{1}{4}),0)$ or $(0,\frac{\pi}{3})$. See Figure 	\ref{figure:A2A1A2a1hzero} for an illustration  of these facts. 
Then  by Theorem \ref{thm:holy}, for any pair  $(\alpha,\beta) \in (0, \frac{\pi}{2})^2$,  $A_2A_1A_2A^{-1}_1$ is loxodromic. 
		\end{proof}

	\begin{figure}
		\begin{center}
			\begin{tikzpicture}
			\node at (0,0) {\includegraphics[width=6cm,height=6cm]{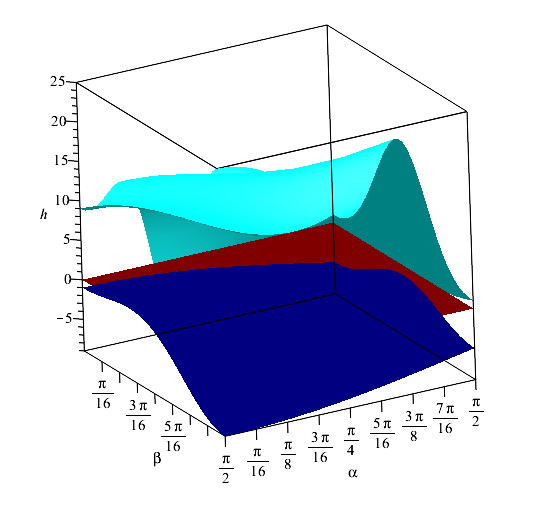}};
			\end{tikzpicture}
		\end{center}
		\caption{The blue surface (the below one) is the graphic of $X$ for $\mathcal{H}(A_2A_1A_2A^{-1}_1)$. The cyan surface (the top one) is the graphic of $Y$ for $\mathcal{H}(A_2A_1A_2A^{-1}_1)$. The red surface (the plane) is when $h=0$. There are exactly three tangency points between the  red surface and the union of the blue and cyan surfaces. This illustrates the fact that  $A_2A_1A_2A^{-1}_1$ is always loxodromic when $\alpha, \beta \in (0, \frac{\pi}{2})$.}
		\label{figure:A2A1A2a1hzero}
	\end{figure}

	Proposition \ref{prop:W(1,1)} implies that the word $A_2A_1A_2A^{-1}_1$ is NOT  responsible for the accidental parabolicities of group representations in $ (0, \frac{\pi}{2})^2$ (if there are). So the behaviors of the representations  near $(\arccos(\frac{1}{4}),0)$ are very mysterious.

	The author also notes that for $W(2,1)=(A_2A_1)^2A_2A^{-1}_1$, $\tau(W(2,1))$ and $\sigma(W(2,1))$ are not very complicated (each of them occupies two lines). Then we can get $\mathcal{H}(W(2,1))$, in turn we have Figure \ref{figure:A2A1A2A1A2a1}, which is a very rough  approximation of the locus  $\mathcal{H}((A_2A_1)^2A_2A^{-1}_1)=0$.  But for other word $W \in \rho(\mathbf{PSL}(2,\mathbb{Z}))$, the author has difficulties when   approximating  the locus  $\mathcal{H}(W)=0$.

	At some pair $(\alpha,\beta) \in (0, \frac{\pi}{2})^{2}$, some elements $\mathbf{PSL}(2,\mathbb{Z})$ are accidental elliptic in $\rho(\mathbf{PSL}(2,\mathbb{Z}))$. 
	For example, when $\alpha=\beta=0.2$, then $\mathcal{H}(W)>0$ for $W$ be any of $W(2,1)$, $W(4,1)$, $W(6,1)$,  $W(13,1)$,  $W(1,2,3,2)$,  $W(1,2,3,3)$,  $W(1,2,3,4)$, $W(1,2,3,1,2,1)$, $W(1,2,3,1,2,1)$ and  $W(1,2,3,1,2,2)$. For other pairs of $(\alpha,\beta)$, $W(k,1)$ is accidental elliptic for many  $k \in \{2,3, \cdots, 30\}$. 
	It seems to the author there are infinitely many words in  $\mathbf{PSL}(2,\mathbb{Z})$ which is  accidental elliptic at some pair $(\alpha,\beta) \in (0, \frac{\pi}{2})^{2}$.  The accidental ellipticities/parabolicities   in  the moduli space $\mathcal{M}(0,\frac{2\pi}{3},\frac{4\pi}{3})$ deserve further study, see Question \ref{ques:critical}.

	\begin{figure}
		\begin{center}
			\begin{tikzpicture}
			\node at (0,0) {\includegraphics[width=6.5cm,height=6cm]{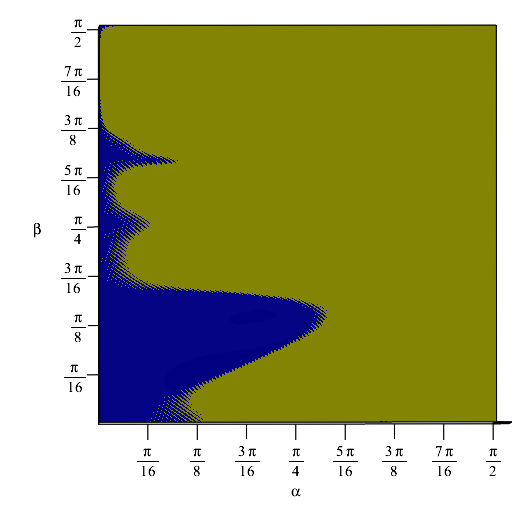}};
			\end{tikzpicture}
		\end{center}
		\caption{An approximation of the locus where $\mathcal{H}((A_2A_1)^2A_2A^{-1}_1)=0$, which is the intersection of the blue region and the yellow region. In the blue region,  $(A_2A_1)^2A_2A^{-1}_1$ is regular elliptic. In the yellow region, $(A_2A_1)^2A_2A^{-1}_1$ is regular loxodromic.}
		\label{figure:A2A1A2A1A2a1}
	\end{figure}


	

\subsection{The moduli space $\mathcal{M}(\frac{2\pi}{3},\frac{4\pi}{3},\frac{4\pi}{3})$ } \label{subsec:122}

We also  have 2-dimensional  representations of $\mathbf{PSL}(2,\mathbb{Z})$ into $\mathbf{PU}(3,1)$, such that the generator $A'_2=\rho(a_2)$ is a $\pi$-rotation about a $\mathbb{C}$-line, and $A'_1=\rho(a_1)$ is elliptic of $(\frac{2\pi}{3}, \frac{4\pi}{3}, \frac{4 \pi}{3})$ type.

	Let $\iota'_{i}$ be the  $\mathbb{R}$-reflection in $\mathbf{H}^3_{\mathbb C}$ with invariant $\mathbb{R}^3$-chain $\mathcal{R}'_{i}$ for $i=0,1,2$. We take $$A'_2=\iota'_1\iota'_{0}, ~~~~  A'_1=\iota'_0\iota'_{2}, ~~\text{and ~~then} ~~ A'_2A'_1=\iota'_1\iota'_{2} ~~\text {is parabolic}.$$  With the same  $\iota_{0}$,  $\iota_{1}$,  $\mathcal{R}_{0}$ and $\mathcal{R}_{1}$ as in Subsection \ref{subsection:012}. That is, we take $\iota'_{0}=\iota_{0}$ and   $\iota'_{1}=\iota_{1}$. We now rotate the $\mathbb{R}^3$-chain $\mathcal{R}_{2}$ in certain way  to get a new $\mathbb{R}^3$-chain $\mathcal{R}'_{2}$. This corresponds to conjugating  $\iota_{2}$ into $\iota'_{2}$. In turn we have   $\iota'_{0} \iota'_{2}$ is $(\frac{2\pi}{3}, \frac{4\pi}{3},\frac{4\pi}{3})$ type elliptic.

The Lagrangian inversion $\iota'_{2}=\iota'_{2}(\alpha,\beta)$ depends $\alpha$ and $\beta$.  When $\beta=0$, we 
take $$X=\begin{pmatrix}
	1& 0& 0&0\\
	0&1 & 0&0\\
	0&0& \rm{e}^{\frac{\pi\rm{i}}{3}} &0\\
	0 & 0&0&1\\
\end{pmatrix}.$$ 
Then $X^* H_s X=H_s$, so $X \in \mathbf{U}(3,1)$.
 Take $\mathcal{R}'_2(\alpha,0)=X (\mathcal{R}_2(\alpha,0))$, and $$\iota'_2(\alpha,0)= X \circ  \iota _2(\alpha,0) \circ X^{-1}$$ as an isometry of  $\mathbf{H}^3_{\mathbb C}$.
The presentation matrix of $\iota'_2(\alpha,0)$ is just $$M'_2(\alpha,0)=X \cdot   M_2(\alpha,0) \cdot \overline{X^{-1}}.$$
We normalize  the presentation matrix of $\iota'_2=\iota'_{2}(\alpha,0)$ such that $M_2(\alpha,0)$ and $M'_2(\alpha,0)$ have the same determinant, so we have 
$$M'_2(\alpha,0)=\rm{e}^{-\frac{\pi\rm{i}}{6}}\begin{pmatrix}
1& \rm{i} \rm{e}^{-\frac{\alpha\rm{i}}{2}} \sqrt{2\cos(\alpha)}& 0&-\rm{e}^{-\alpha\rm{i}}\\
0&\rm{e}^{-\alpha\rm{i}} & 0& \rm{i} \rm{e}^{-\frac{\alpha\rm{i}}{2}} \sqrt{2\cos(\alpha)}\\
0&0&-\rm{e}^{-\frac{\alpha\rm{i}}{3}} \rm{e}^{\frac{2\pi\rm{i}}{3}} &0\\
0 & 0&0&1\\
\end{pmatrix}.$$
The  $\mathbb{R}^3$-chain $\mathcal{R}'_{2}(\alpha,0)$ is given by 
$$\left\{ (x_1+y_1 {\rm i}, x_2+y_2 {\rm i},t)~~|~~~x_1, y_1, x_2,y_2,t \in \mathbb{R} \right\}$$ with  equations 
\begin{equation}  \label{equation:R2primebetizero}
\left\{ \begin{aligned}
0=2x_1\sin(\frac{\alpha}{2})+  2y_1\cos(\frac{\alpha}{2})-\sqrt{\cos(\alpha)};  \qquad &   \\ 
0=t-x_1 \cos(\frac{\alpha}{2})\sqrt{\cos(\alpha)}+  y_1\sin(\frac{\alpha}{2})\sqrt{\cos(\alpha)} -\frac{\sin(\alpha)}{2};
\qquad &   \\
-{\rm e}^{-\frac{\alpha {\rm i}}{3}}{\rm e}^{\frac{2\pi{\rm i}}{3}}(x_2-y_2 {\rm i})=x_2+y_2 {\rm i}. \qquad&  
\end{aligned}
\right.\end{equation}





For general $\beta$, we take $$\iota'_{2}(\alpha,\beta)=U \cdot \iota'_{2} (\alpha,0) \cdot U^{-1}, $$ where $U$ is in (\ref{matrix:U}). 
	Now we have the presentation  matrix of $\iota'_1\iota'_2$ is $A'_0$, which is   $\rm{e}^{\frac{-\alpha\rm{i}}{3}}$ times
$$\begin{pmatrix}
\rm{e}^{\frac{\pi\rm{i}}{6}}& \rm{e}^{(\frac{\alpha}{2}-\frac{\pi}{3}) \rm{i}} \sqrt{2\cos(\alpha)}\cos(\beta)& \rm{e}^{(\frac{\alpha}{2}-\frac{\pi}{3}) \rm{i}} \sqrt{2\cos(\alpha)}\sin(\beta)&-\rm{e}^{(\alpha+\frac{\pi}{6})\rm{i}}\\
0&-\rm{i}\sin^2(\beta)\rm{e}^{\frac{\alpha\rm{i}}{3}}-\cos^2(\beta)\rm{e}^{(\alpha+\frac{\pi}{6})\rm{i}}& \frac{\sin(2\beta)}{2}(\rm{i}\rm{e}^{\frac{\alpha \rm{i}}{3}}-\rm{e}^{(\alpha+\frac{\pi }{6})\rm{i}})& -\rm{e}^{(\frac{\alpha}{2}-\frac{\pi}{3}) \rm{i}} \sqrt{2\cos(\alpha)}\cos(\beta)\\
0&\frac{\sin(2\beta)}{2}(\rm{e}^{(\alpha+\frac{\pi }{6})\rm{i}}-\rm{i}\rm{e}^{\frac{\alpha \rm{i}}{3}})&\rm{i}\cos^2(\beta)\rm{e}^{\frac{\alpha\rm{i}}{3}}+\sin^2(\beta)\rm{e}^{(\alpha+\frac{\pi}{6})\rm{i}}& \rm{e}^{(\frac{\alpha}{2}-\frac{\pi}{3}) \rm{i}} \sqrt{2\cos(\alpha)}\sin(\beta)\\
0 & 0 &0&\rm{e}^{\frac{\pi\rm{i}}{6}}\\
\end{pmatrix}.$$
 The presentation matrix of $\iota'_0\iota'_2$ is $A'_1$, which is   $\rm{e}^{\frac{-\alpha\rm{i}}{3}}$ times $$ \begin{pmatrix}
0& 0& 0&\rm{e}^{\frac{\pi\rm{i}}{6}}\\
0&\rm{i}\sin^2(\beta)\rm{e}^{\frac{\alpha\rm{i}}{3}}+\cos^2(\beta)\rm{e}^{(\alpha+\frac{\pi}{6})\rm{i}}&\frac{\sin(2\beta)}{2}(\rm{e}^{(\alpha+\frac{\pi }{6})\rm{i}}-\rm{i}\rm{e}^{\frac{\alpha \rm{i}}{3}})&   \rm{e}^{(\frac{\alpha}{2}-\frac{\pi}{3}) \rm{i}} \sqrt{2\cos(\alpha)}\cos(\beta)\\
0&\frac{\sin(2\beta)}{2}(\rm{e}^{(\alpha+\frac{\pi }{6})\rm{i}}-\rm{i}\rm{e}^{\frac{\alpha \rm{i}}{3}})& \rm{i}\cos^2(\beta)\rm{e}^{\frac{\alpha\rm{i}}{3}}+\sin^2(\beta)\rm{e}^{(\alpha+\frac{\pi}{6})\rm{i}}&\rm{e}^{(\frac{\alpha}{2}-\frac{\pi}{3}) \rm{i}} \sqrt{2\cos(\alpha)}\sin(\beta)\\
\rm{e}^{\frac{\pi\rm{i}}{6}} & \rm{e}^{(\frac{\alpha}{2}-\frac{\pi}{3}) \rm{i}} \sqrt{2\cos(\alpha)}\cos(\beta)&\rm{e}^{(\frac{\alpha}{2}-\frac{\pi}{3}) \rm{i}} \sqrt{2\cos(\alpha)}\sin(\beta)&-\rm{e}^{(\alpha+\frac{\pi}{6})\rm{i}}\\
\end{pmatrix}.$$
The presentation matrix of $\iota'_1\iota'_0$ is just   $A'_2=A_2$. 

Then it can be showed directly  that $$ ~~(A'_1)^2=I_4,~~ (A'_1)^3=- {\rm i} \cdot I_{4}, ~~A'_2A'_0(A'_1)^{-1}=I_4$$ and  $$(A'_{i})^{*}H_sA'_{i}=H_s,~~\det(A'_i)=1$$ for $i=0,1,2$. Moreover
$A'_0$ is parabolic with fixed point $\infty$ when $\alpha \neq 0$ by Proposition \ref{prop:A0primepara}. So we have a type-preserving representation of  $\mathbf{PSL}(2,\mathbb{Z})$ into $\mathbf{PU}(3,1)$ for $\alpha,\beta \in [0, \frac{\pi}{2}]$. From Proposition \ref{prop:typeA1122} later, $A'_1$ is $(\frac{2\pi}{3},\frac{4\pi}{3},\frac{4\pi}{3})$-type elliptic. We denote this moduli space  by $\mathcal{M}(\frac{2\pi}{3},\frac{4\pi}{3},\frac{4\pi}{3} )$, then it  is  parameterized by  $[0, \frac{\pi}{2}]^2$.

Proposition \ref{prop:A0primepara} is a little different from Proposition \ref{prop:A0para} for the moduli space  $\mathcal{M}(0,\frac{2\pi}{3},\frac{4\pi}{3} )$.
\begin{prop}\label{prop:A0primepara} For the moduli space $\mathcal{M}(\frac{2\pi}{3},\frac{4\pi}{3},\frac{4\pi}{3} )$:
		\begin{itemize} 
			\item When $\alpha \in (0, \frac{\pi}{2}]$, for any $\beta \in [0, \frac{\pi}{2}]$, then  $\mathcal{R}'_{1} \cap \mathcal{R}'_{2} = \emptyset$, so $A_0$ is parabolic;
			\item When $\alpha=0$ and  $\beta\in (0, \frac{\pi}{2}]$, then   $\mathcal{R}'_{1} \cap \mathcal{R}'_{2} = \emptyset$, so $A_0$ is parabolic;
			\item When $\alpha=0$ and   $ \beta =0$,   then $\mathcal{R}_{1} \cap \mathcal{R}_{2} \neq  \emptyset$, so $A_0$ elliptic. 
		\end{itemize} 
\end{prop}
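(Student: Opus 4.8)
The plan is to mirror the proof of Proposition \ref{prop:A0para}, reducing the statement to an intersection problem between two infinite $\mathbb{R}^3$-chains and then invoking Proposition \ref{prop:asymptotic}. Since $\iota'_0=\iota_0$, $\iota'_1=\iota_1$, and $\mathcal{R}'_2(\alpha,\beta)=U(\mathcal{R}'_2(\alpha,0))$ with the same $U$ from (\ref{matrix:U}), and since $A'_0=\iota'_1\iota'_2$, it suffices to decide when the chains $U^{-1}(\mathcal{R}'_1)=U^{-1}(\mathcal{R}_1)$ and $\mathcal{R}'_2(\alpha,0)$ are disjoint. The former is again given by (\ref{equation:UR1}), and $\mathcal{R}'_2(\alpha,0)$ is cut out by (\ref{equation:R2primebetizero}). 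The crucial observation is that the first two equations of (\ref{equation:R2primebetizero}) coincide verbatim with those of (\ref{equation:R2alphazero}); only the third equation changes, acquiring the extra factor $e^{2\pi i/3}$. Thus the entire phase bookkeeping of the unprimed proof transfers, with the single modification that the admissible arguments of $z_2$ are shifted.

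First I would treat $\alpha\in(0,\frac{\pi}{2}]$. Writing a candidate intersection point as $p=(a_1e^{\phi_1 i},a_2e^{\phi_2 i},0)$ with $\phi_1=\phi_2+\frac{\pi}{2}$ exactly as in Proposition \ref{prop:A0para}, the modified third equation of (\ref{equation:R2primebetizero}) forces (when $a_2\neq 0$) $\phi_2\in\{\frac{5\pi}{6}-\frac{\alpha}{6},\,-\frac{\pi}{6}-\frac{\alpha}{6}\}$, in place of the values $\pm\frac{\pi}{2}-\frac{\alpha}{6}$ of the unprimed case, so that $\frac{\alpha}{2}+\phi_1$ lands in a neighborhood of $\frac{\pi}{3}$ or of $\frac{4\pi}{3}$. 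Substituting into the unchanged first two equations and comparing signs yields a contradiction in each branch: for $\alpha\in(0,\frac{\pi}{2})$ the second equation forces $\sqrt{\cos\alpha}\,a_1\cos(\frac{\alpha}{2}+\phi_1)=-\frac{\sin\alpha}{2}<0$ while the first forces $2a_1\sin(\frac{\alpha}{2}+\phi_1)=\sqrt{\cos\alpha}>0$, and checking the two angle ranges shows these cannot hold simultaneously; at $\alpha=\frac{\pi}{2}$ the second equation degenerates to $-\frac{\sin\alpha}{2}=-\frac12\neq 0$, killing the branch outright. The degenerate branch $a_2=0$ only involves the first two equations and is therefore literally identical to the unprimed argument. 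Hence the chains are disjoint and $A'_0=\iota'_1\iota'_2$ is parabolic by Proposition \ref{prop:asymptotic}.

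Next I would treat $\alpha=0$, where the phase shift reverses the conclusion. Here (\ref{equation:R2primebetizero}) gives $y_1=\frac{1}{2}$ and $t=x_1$, while (\ref{equation:UR1}) forces $t=0$, hence $x_1=0$ and $z_1=\frac{i}{2}$; matching against (\ref{equation:UR1}) yields $\sin\beta\,r_2=0$ and $\cos\beta\,r_1=\frac{1}{2}$. The modified third equation now demands that $z_2$ have argument $\frac{5\pi}{6}$ or $-\frac{\pi}{6}$ whenever $z_2\neq 0$. For $\beta\in(0,\frac{\pi}{2}]$ one gets $r_2=0$, so $z_2=-\sin\beta\,r_1 i$ is purely imaginary or zero, incompatible with those arguments, and $\cos\beta\,r_1=\frac{1}{2}$ already fails at $\beta=\frac{\pi}{2}$; so the chains are disjoint and $A'_0$ is parabolic. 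For $\beta=0$, instead $r_2$ is free, and the point $(\frac{i}{2},0,0)$ lies on both chains, so they meet and $A'_0$ is elliptic.

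The main obstacle is the bookkeeping in the $a_2\neq 0$ branch for $\alpha\in(0,\frac{\pi}{2})$: one must verify that both shifted values of $\phi_2$ — equivalently, the two ranges of $\frac{\alpha}{2}+\phi_1$, which now sit near $\frac{\pi}{3}$ and near $\frac{4\pi}{3}$ rather than near $0$ and $-\pi$ — still produce the sign clash between the first and second equations. This is exactly where the $(\frac{2\pi}{3},\frac{4\pi}{3},\frac{4\pi}{3})$-geometry genuinely diverges from the $(0,\frac{2\pi}{3},\frac{4\pi}{3})$ case, and the sign analysis must be checked term by term rather than simply quoted.
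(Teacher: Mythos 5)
Your proposal is correct and follows essentially the same route as the paper's own proof: the same reduction to the disjointness of $U^{-1}(\mathcal{R}_{1})$ and $\mathcal{R}'_{2}(\alpha,0)$, the same shifted phases $\phi_2\in\{\frac{5\pi}{6}-\frac{\alpha}{6},\,-\frac{\pi}{6}-\frac{\alpha}{6}\}$ with the identical sign contradiction in the first two equations of (\ref{equation:R2primebetizero}), the same treatment of the $a_2=0$ branch, and the same appeal to Proposition \ref{prop:asymptotic}. The sign check you flag as the remaining obstacle is exactly what the paper writes out, and it goes through as you describe.
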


\begin{proof}Since $\mathcal{R}'_{2}(\alpha,\beta)=U(\mathcal{R}'_{2}(\alpha,0))$,  we only need to study  the disjointness of  $U^{-1}(\mathcal{R}_{1})$ and $\mathcal{R}'_{2}(\alpha,0)$. The $\mathbb{R}^3$-chain  $U^{-1}(\mathcal{R}_{1})$ is given by
	\begin{equation} \label{equation:UR1new}
	\left\{ (\cos(\beta)r_1 {\rm i}+sin(\beta) r_2, -\sin(\beta)r_1 {\rm i}+\cos(\beta)r_2,0)~~|~~ r_1, r_2 \in \mathbb{R} \right\}
	\end{equation}
	in the Heisenberg group.

		First consider the case $\alpha=0$. If $$p=(x_1+y_1 {\rm i}, x_2+y_2 {\rm i},~~t) \in U^{-1}(\mathcal{R}_{1}) \cap \mathcal{R}'_{2}(\alpha,0).$$
		Then we have $t=0$ from (\ref{equation:UR1new}).  By  (\ref{equation:R2primebetizero}), we have $x_1=0$ and $y_1=\frac{1}{2}$. In turn  $$p=(\frac{{\rm i}}{2}, r \cdot (\cos(\frac{\pi}{6})-\sin(\frac{\pi}{6}) {\rm i}),0)= (\cos(\beta)r_1 {\rm i}+sin(\beta) r_2, -\sin(\beta)r_1 {\rm i}+\cos(\beta)r_2,0)$$
	 for some $r \in \mathbb{R}$. 
	 \begin{itemize}
	 	
	 	\item If $\beta=0$, then $(\frac{\rm{i}}{2},0,0)$ is the common point of $U^{-1}(\mathcal{R}_{1})$ and $\mathcal{R}'_{2}(\alpha,0)$, so $A_0$ is elliptic;
	 	\item If $\beta \in (0, \frac{\pi}{2}]$, then $r_2=0$ and $U^{-1}(\mathcal{R}_{1}) \cap \mathcal{R}'_{2}(\alpha,0)=\emptyset$. So $A_0$ is parabolic.
		 \end{itemize}
		
		When  $\alpha \in (0, \frac{\pi}{2}]$. We assume $$p=(a_1  {\rm e}^{\phi_1\rm{i}}, ~~a_2 \rm{e}^{\phi_2\rm{i}},~~0) \in U^{-1}(\mathcal{R}_{1}) \cap \mathcal{R}'_{2}(\alpha,0)$$
		with $a_1, a_2  \in \mathbb{R}$. Then we may assume  $\phi_1=\phi_2+\frac{\pi}{2}$ since $U^{-1}(\mathcal{R}_{1})$ is just a rotation of $\mathcal{R}_{1}$.  By the third equation in (\ref{equation:R2primebetizero}), we have $$\phi_2= \frac{5\pi}{6} -\frac{\alpha}{6}~~ \text{or} ~~ -\frac{\pi}{6} -\frac{\alpha}{6}$$ if  $a_2\neq 0$. When  $a_2=0$, $\phi_2$ is not defined. 	
		
	If $a_2\neq 0$, we have  $$\phi_1= \frac{4\pi}{3}  -\frac{\alpha}{6} ~~\text{or} ~~~ \frac{\pi}{3} -\frac{\alpha}{6}.$$
		We now consider the first two equations in  (\ref{equation:R2primebetizero}), they are now
		\begin{equation}	 \label{equation:R2primealphazerowithphi1}
		\left\{ \begin{aligned}
		0=2a_1\sin(\frac{\alpha}{2}+\phi_1)-\sqrt{\cos(\alpha)};  \qquad &   \\ 
		0=-\sqrt{\cos(\alpha)}a_1 \cos(\frac{\alpha}{2}+\phi_1) -\frac{\sin(\alpha)}{2}.
		\qquad &   \\ 
		\end{aligned} 
		\right.\end{equation}
		Since $\sin(\alpha) >0$,  from the second equation in 	(\ref{equation:R2primealphazerowithphi1}),  we have $$\sqrt{\cos(\alpha)}a_1 \cos(\frac{\alpha}{2}+\phi_1) <0,$$  then $\alpha \in (0, \frac{\pi}{2})$ and  $a_1 \cos(\frac{\alpha}{2}+\phi_1) <0$.
		\begin{itemize}
			\item If $\phi_1= \frac{4\pi}{3}  -\frac{\alpha}{6}$,   we have $a_1>0$ and $\sin(\frac{\alpha}{2}+\phi_1) <0$. Which contradicts to the first equation in   
			(\ref{equation:R2primealphazerowithphi1});
			\item If $\phi_1=\frac{\pi}{3}  -\frac{\alpha}{6}$,  we have $a_1<0$ and $\sin(\frac{\alpha}{2}+\phi_1) >0$. Which also contradicts to the first equation in   
			(\ref{equation:R2primealphazerowithphi1}).
		\end{itemize}  	
	
		If $a_2=0$, then $$p=(\cos(\beta)r_1 {\rm i}+\sin(\beta) r_2, ~~-\sin(\beta)r_1 {\rm i}+\cos(\beta)r_2,~~0)=(x_1+y_1 {\rm i},~~0,0).$$
		So $$-\sin(\beta)r_1=\cos(\beta)r_2=0.$$ At least one of $r_1$ and $r_2$ is zero.
		\begin{itemize}
			\item If $r_1=r_2=0$, then $p=(0,0,0)$, which does not lie in  $\mathcal{R}'_{2}(\alpha,0)$;
			\item If $r_1=0$ but $r_2 \neq 0$, then $\cos(\beta)=0$ and  $p=(r_2,0,0)$. Which does  not lie in  $\mathcal{R}'_{2}(\alpha,0)$ since $\alpha \in (0, \frac{\pi}{2}]$;
			\item If $r_2=0$ but $r_1 \neq 0$, then $\sin(\beta)=0$ and $p=(r_1 \rm{i},0,0)$. Which does not lie in  $\mathcal{R}'_{2}(\alpha,0)$ since $\alpha \in (0, \frac{\pi}{2}]$. 
		\end{itemize}
		So when  $\alpha \in (0, \frac{\pi}{2}]$,  in either cases we have   $U^{-1}(\mathcal{R}'_{1}) \cap \mathcal{R}'_{2}(\alpha,0) = \emptyset$, and $A'_0$ is parabolic  by Proposition \ref{prop:asymptotic}.
\end{proof}

\begin{prop}\label{prop:R0primeR2primenointersection} When $\alpha=\beta=0$, then $\mathbb{R}^3$-chains $\mathcal{R}'_{0}$ and  $\mathcal{R}'_{2}$ do not intersect.
	
\end{prop}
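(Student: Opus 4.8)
The plan is to exhibit both chains by explicit equations in the Heisenberg group $\mathbb{C}^2 \times \mathbb{R}$ and then check that the combined system is inconsistent. Since $\iota'_0 = \iota_0$, the chain $\mathcal{R}'_0$ is the standard imaginary $\mathbb{R}^3$-chain $\mathcal{R}_0$ cut out by the equations (\ref{equation:standardimaginaryRchain}); in particular it is a finite chain (a compact $2$-sphere), so $\infty \notin \mathcal{R}'_0$ and it suffices to rule out common points lying in $\mathbb{C}^2 \times \mathbb{R}$. Because $\beta = 0$ the matrix $U$ of (\ref{matrix:U}) is the identity, so $\mathcal{R}'_2$ is exactly the chain $\mathcal{R}'_2(0,0)$ defined by (\ref{equation:R2primebetizero}) with $\alpha = 0$.

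First I would specialize (\ref{equation:R2primebetizero}) to $\alpha = 0$. Using $\sin 0 = 0$ and $\cos 0 = 1$, the first two equations reduce to $y_1 = \tfrac{1}{2}$ and $t = x_1$, while the third equation $-e^{\frac{2\pi {\rm i}}{3}}(x_2 - y_2{\rm i}) = x_2 + y_2{\rm i}$ simplifies, after separating real and imaginary parts, to the single relation $x_2 = -\sqrt{3}\,y_2$. Writing $s := x_1^2+y_1^2+x_2^2+y_2^2 \ge 0$ for the squared Heisenberg norm and substituting these three constraints into the second equation of (\ref{equation:standardimaginaryRchain}), namely $x_1 = -s\,x_1 + t\,y_1$ with $t = x_1$ and $y_1 = \tfrac12$, gives $x_1(\tfrac12 + s) = 0$; since $s \ge 0$ this forces $x_1 = 0$ and hence $t = 0$. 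Feeding $t = 0$ into the third equation of (\ref{equation:standardimaginaryRchain}) yields $x_2(1 + s) = 0$, so $x_2 = 0$, and then $x_2 = -\sqrt{3}\,y_2$ gives $y_2 = 0$. Thus the only candidate common point is $(\tfrac{{\rm i}}{2},0,0)$, whose squared norm is $\tfrac14$; plugging this into the first equation of (\ref{equation:standardimaginaryRchain}) gives $\bigl(\tfrac14\bigr)^2 + 0 = \tfrac{1}{16} \neq 1$, a contradiction. Hence $\mathcal{R}'_0 \cap \mathcal{R}'_2 = \emptyset$.

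The computation is entirely elementary, so there is no serious obstacle; the only two points demanding care are the correct simplification of the third equation of (\ref{equation:R2primebetizero}) at $\alpha = 0$, which hinges on the factor $e^{\frac{2\pi {\rm i}}{3}}$ that distinguishes this moduli space from $\mathcal{M}(0,\frac{2\pi}{3},\frac{4\pi}{3})$, and the observation that $\mathcal{R}'_0 = \mathcal{R}_0$ is finite, so that $\infty$ — which does lie on the infinite chain $\mathcal{R}'_2$ — is automatically excluded from the intersection. It is worth emphasizing the contrast with Proposition \ref{prop:A0primepara}: at the very same parameter value $\alpha = \beta = 0$, the forced candidate point $(\tfrac{{\rm i}}{2},0,0)$ is precisely the common point of $\mathcal{R}'_1$ and $\mathcal{R}'_2$, yet it fails the norm equation of $\mathcal{R}_0$. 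So the disjointness proved here is a genuine feature of the pair $(\mathcal{R}'_0, \mathcal{R}'_2)$ and not a general phenomenon.
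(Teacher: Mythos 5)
Your proof is correct and follows essentially the same route as the paper's: impose the equations of $\mathcal{R}'_2(0,0)$ (giving $y_1=\tfrac12$, $t=x_1$, and the relation $x_2=-\sqrt{3}\,y_2$, which is just a restatement of the paper's parametrization $x_2+y_2{\rm i}=r(\cos\tfrac{\pi}{6}-{\rm i}\sin\tfrac{\pi}{6})$), then use the second and third equations of (\ref{equation:standardimaginaryRchain}) to force $x_1=t=x_2=y_2=0$, and finally observe that the lone candidate $(\tfrac{{\rm i}}{2},0,0)$ violates the norm equation of $\mathcal{R}_0$. Your two extra touches — explicitly checking $\tfrac{1}{16}\neq 1$ and noting that $\infty\in\mathcal{R}'_2$ is excluded because $\mathcal{R}'_0$ is a finite chain — only make explicit what the paper leaves implicit.
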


\begin{proof}Recall $\mathcal{R}'_0=\mathcal{R}_0$. We assume $$p=(x_1+y_1 {\rm i}, x_2+y_2 {\rm i},t) \in \mathcal{R}'_{0} \cap \mathcal{R}'_{2}(0,0).$$
	By 	(\ref{equation:R2primebetizero}), we have $y_1=\frac{1}{2}$,  $t=x_1$, so $x_1=0$
	by the second   equation of  (\ref{equation:standardimaginaryRchain}). And then $x_2=0$  by the third equation of  (\ref{equation:standardimaginaryRchain}). 
	On the other hand, we have  $x_2+y_2 \rm{i}=r \cdot (\cos(\frac{\pi}{6})-\sin(\frac{\pi}{6}) \rm{i})$ for some $r \in \mathbb{R}$ by the third equation of (\ref{equation:R2primebetizero}). 
	So $r=0$, and $p=(\frac{\rm{i}}{2}, 0,0)$,  but it does not lie in $\mathcal{R}'_0$.
	
\end{proof}

From Proposition \ref{prop:R0primeR2primenointersection}, if  $\iota'_0 \iota'_2$ has order three, then it is not the type of $(0,\frac{a\pi}{3},\frac{b\pi}{3})$ for any $a, b \in \{0,2,4\}$. But we still need to show its type carefully. 
\begin{prop}\label{prop:typeA1122}For $(\alpha, \beta) \in [0, \frac{\pi}{2}]^2$, $A'_1$ is a $(\frac{2\pi}{3}, \frac{4\pi}{3},\frac{4\pi}{3})$-type elliptic element.
\end{prop}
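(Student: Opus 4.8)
The plan is to follow the strategy of Proposition \ref{prop:A1012} essentially verbatim: pin down the type at a single convenient parameter value by an explicit eigenvector computation, and then propagate the answer over the whole square $[0,\frac{\pi}{2}]^2$ by a continuity/connectedness argument. The point is that, by Subsection \ref{subsection:order3}, the three types $(0,\frac{2\pi}{3},\frac{4\pi}{3})$, $(\frac{2\pi}{3},\frac{2\pi}{3},\frac{4\pi}{3})$ and $(\frac{2\pi}{3},\frac{4\pi}{3},\frac{4\pi}{3})$ share the \emph{same} eigenvalue multiset, so the type can only be read off from the $H_s$-signatures of the eigenvectors.

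First I would normalize the representative. Since $(A'_1)^3 = -{\rm i}\cdot I_4$, to match the bookkeeping of Subsection \ref{subsection:order3} I would pass to $\widehat{A}'_1 := -{\rm i}\,A'_1$, which is the analogue of $-{\rm i}\cdot E_{122}$. At the base point $(\alpha,\beta)=(\frac{\pi}{2},0)$ the matrix $A'_1$ degenerates ($\sqrt{2\cos\alpha}=0$, $\sin(2\beta)=0$) into a direct sum: coordinates $2$ and $3$ decouple as eigendirections $(0,1,0,0)^{\top}$, $(0,0,1,0)^{\top}$, while the remaining $(1,4)$-block acts on the hyperbolic pair carried by $H_s$. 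A direct diagonalization of that $2\times 2$ block (which is $\left(\begin{smallmatrix}0&1\\1&-{\rm i}\end{smallmatrix}\right)$ after clearing the scalar) shows $\widehat{A}'_1(\frac{\pi}{2},0)$ has eigenvalues $\{\lambda_1,\lambda_2,\lambda_3,\lambda_4\}=\{e^{2\pi{\rm i}/3},e^{-2\pi{\rm i}/3},1,1\}$, with $e^{\pm 2\pi{\rm i}/3}$ carried by the $(1,4)$-block and the doubled eigenvalue $1$ carried by $(0,1,0,0)^{\top}$, $(0,0,1,0)^{\top}$.

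Next I would compute the signatures. The two eigenvectors for the repeated eigenvalue $1$ are positive (each has $H_s$-norm $1$), and a block eigenvector $(1,0,0,\mu)^{\top}$ satisfies $\langle v,v\rangle = 2\,\Re(\mu)$; I would check that this is negative precisely for the eigenvector attached to $\lambda_1=e^{2\pi{\rm i}/3}$ and positive for the one attached to $\lambda_2=e^{-2\pi{\rm i}/3}$. This is exactly the signature pattern recorded for $-{\rm i}\cdot E_{122}$ in Subsection \ref{subsection:order3} (negative vector at $\lambda_1$, the other three positive), which identifies $A'_1(\frac{\pi}{2},0)$ as a $(\frac{2\pi}{3},\frac{4\pi}{3},\frac{4\pi}{3})$-type element. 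Deciding which of $E_{012},E_{112},E_{122}$ actually occurs is the crux of the argument: the eigenvalues are useless here, and only the sign of $\langle v,v\rangle$ on the simple eigenlines settles it. Proposition \ref{prop:R0primeR2primenointersection} already excludes the $(0,\cdot,\cdot)$ possibility, so in fact only the $E_{112}$ versus $E_{122}$ dichotomy needs to be resolved.

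Finally I would run the continuity argument. Being of order three, $A'_1$ is elliptic for every $(\alpha,\beta)$, hence diagonalizable with a non-degenerate eigenbasis whose unique negative line is its fixed point inside $\mathbf{H}^3_{\mathbb C}$; moreover its order forces the eigenvalue multiset to be the constant $\{\lambda_1,\lambda_2,\lambda_3,\lambda_4\}$. The simple eigenvalues $e^{\pm 2\pi{\rm i}/3}$ have eigenlines depending continuously on $(\alpha,\beta)$, so with $v_1$ the $e^{2\pi{\rm i}/3}$-eigenvector the function $\langle v_1,v_1\rangle$ is continuous and nowhere vanishing on the connected square $[0,\frac{\pi}{2}]^2$ (it cannot vanish, for a null eigenline would force a boundary fixed point, impossible for an elliptic element). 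Being negative at $(\frac{\pi}{2},0)$, it is negative throughout, and the two unit eigenvectors for $\lambda_3=\lambda_4=1$ stay positive for the same reason; hence the signature pattern, and so the type, is constant on the whole square. The main obstacle is exactly this last point: ruling out that the negative eigendirection drifts from the $e^{2\pi{\rm i}/3}$-eigenline to the $e^{-2\pi{\rm i}/3}$-eigenline during the deformation, which is what the non-vanishing of $\langle v_1,v_1\rangle$ guarantees.
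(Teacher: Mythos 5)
Your proposal is correct and is essentially the paper's own proof: evaluate at the anchor $(\alpha,\beta)=(\frac{\pi}{2},0)$, compute the four eigenvectors of $-{\rm i}A'_1$ explicitly, observe that the unique negative one (with respect to $H_s$) sits at the simple eigenvalue $\lambda_1=e^{2\pi{\rm i}/3}$ — the signature pattern of $-{\rm i}\cdot E_{122}$ from Subsection \ref{subsection:order3}, which distinguishes $(\frac{2\pi}{3},\frac{4\pi}{3},\frac{4\pi}{3})$ from $(\frac{2\pi}{3},\frac{2\pi}{3},\frac{4\pi}{3})$ — and then transport this pattern over the connected square using constancy of the eigenvalues (order three) and continuity of the eigenvectors. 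The one slip is your parenthetical reason why $\langle v_1,v_1\rangle$ cannot vanish: elliptic elements \emph{can} fix boundary points (complex reflections do); the correct argument is that a null eigenvector of an elliptic element must share its eigenvalue with a negative eigenvector, which is impossible at a simple eigenvalue — but since the paper itself merely asserts that the signs cannot change, your continuity step is, if anything, more careful than the original.
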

\begin{proof}
	When $\alpha=\frac{\pi}{2}$ and $\beta=0$, the matrix $-\rm{i} A'_1$ has eigenvalues $\lambda_i$ for $i=1,2,3,4$ as in Subsection \ref{subsection:order3}. The eigenvectors are 
	$$(\frac{-\sqrt{3}+\rm{i}}{2},0,0,1)^{\top},$$
	$$(\frac{\sqrt{3}+\rm{i}}{2},0,0,1)^{\top},$$
	$$(0,1,0,0)^{\top}$$
	and 
	$$(0,0,1,0)^{\top}.$$
	They are negative, positive, positive and  positive vectors respectively with respect to the Hermitian matrix  $H_s$. So $-\rm{i}A'_1$ is $(\frac{2\pi}{3}, \frac{4\pi}{3},\frac{4\pi}{3})$-type elliptic element, but not  $(\frac{2\pi}{3}, \frac{2\pi}{3},\frac{4\pi}{3})$-type elliptic element, as the argument  in Subsection  \ref{subsection:order3}. 
Then as the proof of Proposition 	\ref{prop:A1012}, for general  $(\alpha, \beta) \in [0, \frac{\pi}{2}]^2$, $A'_1$ is a  $(\frac{2\pi}{3}, \frac{4\pi}{3},\frac{4\pi}{3})$-type elliptic element.
\end{proof}

\subsection{Proof of Theorem \ref{thm:modularmoduli2dim} for the moduli space $\mathcal{M}(\frac{2\pi}{3},\frac{4\pi}{3},\frac{4\pi}{3})$} \label{subsec:proofmoduli122}


In this subsection, we consider some  special representations in the moduli space  $\mathcal{M}(\frac{2\pi}{3},\frac{4\pi}{3},\frac{4\pi}{3})$.

(1) When $\beta=0$, each  $A'_i$ is in the form of $$\begin{pmatrix}
	*& *& 0&*\\
	*&* & 0& *\\
	0&0&*  &0\\
	* & *&0&*\\
	\end{pmatrix}.$$
	So $\langle A'_0, A'_1,A'_2\rangle $ preserves a totally geodesic ${\bf H}^2_{\mathbb C} \hookrightarrow {\bf H}^3_{\mathbb C}$ invariant. 	Moreover,  when  deleting the third row and the third column of $A'_i$ for $i=0,1,2$, we get matrices in $\mathbf{PU}(2,1)$. Which  equal to the matrices obtained from  $A_i$ in Subsection  \ref{subsection:012} by deleting the third row and the third column of them for $i=0,1,2$ (up to scaling matrices).
	So $\langle A'_0, A'_1,A'_2\rangle $ is essentially  the group  studied in   \cite{Falbelparker:2003}.

(2).  When $\beta=\frac{\pi}{2}$, arguments as above, we can see that $\langle A'_0, A'_1,A'_2\rangle $ is essentially  the group  studied in   \cite{FalbelKoseleff:2002}. 

(3). When $\alpha=\frac{\pi}{2}$, each  $A'_i$ is in the form of $$\begin{pmatrix}
*& 0& 0&*\\
0&* & *& 0\\
0&*&*  &0\\
* & 0&0&*\\
\end{pmatrix}.$$
So $\langle A'_0, A'_1,A'_2\rangle $ preserves a totally geodesic ${\bf H}^1_{\mathbb C} \hookrightarrow {\bf H}^3_{\mathbb C}$ invariant.
Moreover,   when  deleting   the second and the third columns and rows of $A'_i$ and $A_i$,  we get the same matrix for $i=0,1,2$.
So $\langle A'_0, A'_1,A'_2\rangle $ is the same as the  group  $\langle A_0, A_1,A_2\rangle$ in Subsection  \ref{subsection:012}
when acting on this  totally geodesic ${\bf H}^1_{\mathbb C} \hookrightarrow {\bf H}^3_{\mathbb C}$.

(4). When $\alpha=0$, the trace of $A'_0$ is 
$$\frac{3\sqrt{3}}{2}+\frac{\rm{i}}{2}-\sqrt{3}\cos^2(\beta)+\rm{i}\cos^2(\beta).$$
 Which does not lie in $\mathbb{R} \cup \rm{i}\mathbb{R}$. So there is no matrix $V$ with $V^{*} H_s V=H_s$ such that  $VA_{i}V^{-1} \in \mathbf{PO}(3,1)$ for $i=0,1,2$. In other words,  
$\langle A_0, A_1,A_2\rangle $ does not preserves  a totally geodesic ${\bf H}^3_{\mathbb R} \hookrightarrow {\bf H}^3_{\mathbb C}$ invariant. The author notes that  when 
$$(\alpha,\beta)=(0,0) ~~ \text{or} ~~(0, \frac{\pi}{2}),$$ then $\rho(\mathbf{PSL}(2,\mathbb{Z}))$
is a subgroup of $\mathbf{U}(2,1) \times \mathbf{U}(1)$. So they both stabilize a $${\bf H}^2_{\mathbb R}\hookrightarrow {\bf H}^2_{\mathbb C}\hookrightarrow {\bf H}^3_{\mathbb C}$$ invariant. When deleting
\begin{itemize}
	
	\item the second row and the second column of $\rho(a_i)=A'_i$ for $i=0,1,2$ if $(\alpha,\beta)=(0,0)$;
\item the third row and the third column of $\rho(a_i)=A'_i$ for $i=0,1,2$ if  $(\alpha,\beta)=(0,\frac{\pi}{2})$, 
\end{itemize}
we get subgroups of  $\mathbf{U}(2,1)$ with real entries. 
This interprets   why when $(\alpha,\beta)=(0,0) ~~ \text{or} ~~(0, \frac{\pi}{2})$, we have groups which  have  ${\bf H}^2_{\mathbb R}$-geometry essentially.

\section{The proof of Theorem \ref{thm:modulardiscrete} via $\mathbb{C}$-spheres} \label{sec:modulardiscrete}

In  this section, we prove Theorem \ref{thm:modulardiscrete}  for the moduli space $\mathcal{M}(0,\frac{2\pi}{3},\frac{4\pi}{3})$.
 We will construct a  region  $D \subset\partial {\bf H}^3_{\mathbb C}$ co-bounded by  $\mathbb{C}$-spheres. Then by the  Poincar\'e polyhedron theorem,   $D \subset\partial {\bf H}^3_{\mathbb C}$ is  a fundamental domain of the group. We refer to \cite{FalbelZocca:1999} for the precise statement of this
version of Poincar\'e polyhedron theorem we need.  

For each $\rho \in \mathcal{M}(0,\frac{2\pi}{3},\frac{4\pi}{3})$, $\rho(\mathbf{PSL}(2,\mathbb{Z}))$ is generated by $A_1,A_2$.  Consider $$c_1=A_2=\iota_1\iota_0,~~ c_0=A_1A_2A_1^{-1},~~c_2=A_1^{-1}A_2A_1.$$ Each $c_i$ is a $\pi$-rotation along a $\mathbb{C}$-line in ${\bf H}^{3}_{\mathbb C}$, and   $\langle c_0,c_1,c_2\rangle$ is an index three subgroup of $\langle A_1,A_2\rangle$. We will show  $\langle c_0,c_1,c_2\rangle$  is a discrete group for a 2-dimensional subset $\mathcal{N}$ of $\mathcal{M}(0,\frac{2\pi}{3},\frac{4\pi}{3})$, which in turn implies $\langle A_1,A_2\rangle$ is discrete.



For any   $(\alpha, \beta) \in [0,\frac{\pi}{2}]^2$, the corresponding representation in $\mathcal{M}(0,\frac{2\pi}{3},\frac{4\pi}{3})$ is denoted by $\rho=\rho(\alpha,\beta)$. The  main technical result in this section is 
\begin{thm} \label{thm:cspheres} There is a neighborhood  $\mathcal{N}$ of  $(\alpha, \beta)=(\frac{\pi}{2},0)$ or $(\frac{\pi}{2},\frac{\pi}{2})$ in $[0, \frac{\pi}{2}]^2$. For any representation  $\rho$ corresponds to a point in $\mathcal{N}$ with $c_0,c_1,c_2 \in \rho(\mathbf{PSL}(2,\mathbb{Z}))$, there are embedded  $\mathbb{C}$-spheres $S_0$, $S_1$ and $S_2$ in $\mathbb{C}^2 \times \mathbb{R}$ such that
	\begin{itemize}
		
\item  $c_{j}(S_{j})=S_{j}$;
\item  $S_{j}$ and   $S_{k}$ are disjoint except for the point of tangency at a parabolic element fixed point when $j \neq k$;
\item $S_0$, $S_1$ and $S_2$ are the faces of a fundamental domain $D$ for $\langle c_0,c_1,c_2\rangle$;
\item  $\langle c_0,c_1,c_2\rangle$ is a discrete subgroup of $\mathbf{PU}(3,1)$ which is isomorphic to $*^3 \mathbb{Z}_2$ abstractly.

			\end{itemize}	
		
In particular,  $\rho$  	is a discrete and faithful representation of  $\mathbf{PSL}(2,\mathbb{Z})$  into $\mathbf{PU}(3,1)$ when $(\alpha, \beta) \in \mathcal{N}$.
	
\end{thm}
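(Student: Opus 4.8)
The plan is to establish Theorem \ref{thm:cspheres} by applying the Poincar\'e polyhedron theorem for $\mathbb{C}$-spheres in the form used by Falbel and Zocca \cite{FalbelZocca:1999}, to the index three subgroup $\Gamma'=\langle c_0,c_1,c_2\rangle$. Because each $c_j$ is an involution, I would look for a fundamental domain $D\subset\partial{\bf H}^3_{\mathbb C}$ with three faces, the $\mathbb{C}$-spheres $S_0,S_1,S_2$, where $c_j$ is the self-pairing of $S_j$ (it preserves $S_j$ and interchanges the two complementary regions). Once discreteness of $\Gamma'$ and its presentation $\Gamma'\cong *^3\mathbb{Z}_2$ are in hand, the final assertion follows formally: $\Gamma'$ has index three in $\rho(\mathbf{PSL}(2,\mathbb{Z}))$, so discreteness propagates to the whole group, and $\Gamma'$ is the image of the index three subgroup $\langle a_2,a_1a_2a_1^{-1},a_1^{-1}a_2a_1\rangle\cong *^3\mathbb{Z}_2$ of $\mathbf{PSL}(2,\mathbb{Z})$. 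Hence if $\rho$ were not faithful, its kernel would meet $\Gamma'$ trivially and would therefore be a nontrivial finite normal subgroup of $\mathbf{PSL}(2,\mathbb{Z})\cong\mathbb{Z}_2*\mathbb{Z}_3$; since a nontrivial free product has no such subgroup, $\rho$ is injective.

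The next step is to pin down the cusp combinatorics from the explicit matrices of Subsection \ref{subsection:012}. Writing $c_0=A_1c_1A_1^{-1}$, $c_2=A_1^{-1}c_1A_1$ with $c_1=A_2$ and using $A_1^3=\mathrm{id}$, one finds $c_1c_0c_2=(A_2A_1)^3=A_0^3$, the parabolic generating the stabilizer of the single cusp $\infty$ inside $\Gamma'$. Crucially, the short products are \emph{not} parabolic: $c_1c_0=A_2A_1A_2A_1^{-1}$ is the word $W(1,1)$, which is loxodromic by Proposition \ref{prop:W(1,1)}, and likewise for $c_1c_2$. This forces the correct picture in which all three spheres are mutually tangent at the \emph{single} parabolic fixed point $\infty$, with cusp cycle transformation $c_1c_0c_2=A_0^3$, and there are no finite ridge identifications at all ($S_j\cap S_k=\{\infty\}$). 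Consequently the only relations imposed by the theorem are $c_j^2=\mathrm{id}$ together with the parabolic cusp condition, yielding exactly the presentation $*^3\mathbb{Z}_2$.

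I would then construct the spheres themselves. Each $S_j$ is $c_j$-invariant and passes through $\infty$ and through $c_j(\infty)$ (for $j=1$ this is the origin); I would assemble it from portions of bisectors as in Subsection \ref{subsection:bisector}, using that a bisector between two $c_j$-swapped foci is $c_j$-invariant and has a $\mathbb{C}$-sphere as spinal sphere, the $\mathbb{C}^2$-chain slices providing the foliation. The polar vectors of these slices are read off from Proposition \ref{prop:center}, their defining loci from Proposition \ref{prop:cplane}, and the disjointness of slices of distinct spheres is governed by the sign of $|\langle n_1,n_2\rangle|^2-\langle n_1,n_1\rangle\langle n_2,n_2\rangle$ (Proposition \ref{prop:nonintersection}). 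The verification proceeds first at the two corners. At $(\frac{\pi}{2},0)$ and $(\frac{\pi}{2},\frac{\pi}{2})$ the representation is discrete and faithful and preserves a totally geodesic ${\bf H}^2_{\mathbb C}$ (indeed a ${\bf H}^1_{\mathbb C}$), so the $S_j$ can be chosen to restrict to the $\mathbb{C}$-spheres of the fundamental domains of \cite{Falbelparker:2003} and \cite{FalbelKoseleff:2002} on the invariant subspace and to extend across the normal directions; there the incidences are exactly the prescribed tangency at $\infty$ with strict disjointness elsewhere. As all of these are strict (open) inequalities depending continuously on $(\alpha,\beta)$, they persist on a neighbourhood $\mathcal{N}$ of each corner.

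Finally I would feed the verified data into the Poincar\'e polyhedron theorem: the side-pairings are the involutions $c_j$, the single cycle is the cusp cycle $c_1c_0c_2=A_0^3$ at $\infty$, and the horoball consistency there is checked against the parabolic $A_0^3$. The theorem then gives that $D$ is a fundamental domain, that $\Gamma'$ is discrete with presentation $*^3\mathbb{Z}_2$, and hence that $\rho$ is discrete and faithful on $\mathcal{N}$. I expect the main obstacle to be the global geometric control of the $\mathbb{C}$-spheres: since bisectors are not totally geodesic, proving that the assembled $S_j$ are embedded $4$-spheres and that distinct faces meet only at the tangency point $\infty$ requires delicate coordinate estimates, and the most technical point is verifying the cusp consistency for the ellipto-parabolic $A_0$, whose axis is genuinely complicated (cf.\ Proposition \ref{prop:A0elliptopara}).
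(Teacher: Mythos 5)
You follow the paper's overall strategy --- $c_j$-invariant $\mathbb{C}$-spheres, the Falbel--Zocca Poincar\'e polyhedron theorem applied to $\langle c_0,c_1,c_2\rangle$, verification at the corner $(\frac{\pi}{2},0)$ or $(\frac{\pi}{2},\frac{\pi}{2})$ followed by a continuity argument, and the index-three reduction for discreteness and faithfulness (that last step is fine, and is in fact spelled out more explicitly than in the paper). But there is a genuine error in your combinatorial model of the fundamental domain, and the rest of the argument cannot survive it. You assert that loxodromy of $c_1c_0=W(1,1)$ ``forces'' all three spheres to be mutually tangent at the single point $\infty$, with $S_j\cap S_k=\{\infty\}$ for all $j\neq k$. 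The correct conclusion is the opposite: loxodromy of the two-letter products only rules out ridge cycles of length two, so the cusp cycle has length three and runs through three \emph{distinct} tangency points. In the paper's construction $S_0=A_1(S_1)$, $S_2=A_1^{-1}(S_1)$, and the tangencies are $S_1\cap S_2=\{\infty\}$ (fixed by $A_0=\iota_1\iota_2$), $S_0\cap S_1=\{(0,0,0)\}=\{A_1(\infty)\}$ (fixed by $\iota_0\iota_2\iota_1\iota_0$), and $S_2\cap S_0=\{p_3\}=\{A_1^{-1}(\infty)\}$ (fixed by $\iota_2\iota_0\iota_1\iota_2\iota_0\iota_2$); the cycle is $\infty\xrightarrow{\ c_2\ }p_3\xrightarrow{\ c_0\ }(0,0,0)\xrightarrow{\ c_1\ }\infty$, with cycle transformation $c_1c_0c_2=A_0^3$ as you say. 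Your single-point picture is incompatible with the group itself, independently of how the spheres are built: from the matrices, $c_2(\infty)=A_1^{-1}A_2A_1(\infty)=\iota_2(0,0,0)=p_3\neq\infty$, so the side pairing $c_2$ moves your unique ridge point off itself and the cusp cycle you propose cannot close up at $\infty$ alone. Equivalently, each $S_j$ must carry \emph{two} tangency points; for $S_1$ these are $\infty$ and the origin $=c_1(\infty)$, a point you yourself place on $S_1$ but which in your picture is not a tangency point of anything.

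Beyond the combinatorics, the substance of the paper's proof is the explicit construction of $S_1$, which your outline defers to ``portions of bisectors'' plus ``delicate coordinate estimates''. A single spinal sphere will not do; the paper builds $S_{1,+}$ as the union of finite $\mathbb{C}^2$-chains $\mathcal{L}_\lambda$, $\lambda\in[2,\infty]$, whose polar vectors $(\lambda,\ \mathrm{i}\sqrt{2}\,y_{1,A_0},\ \sqrt{2}\,x_{2,A_0},\ 1)^{\top}$ run along the axis of the ellipto-parabolic $A_0$ (Proposition \ref{prop:A0elliptopara}, where it is crucial that $y_{1,A_0}$ and $x_{2,A_0}$ are real and vanish at $\alpha=\frac{\pi}{2}$, $\beta\neq\beta_0$), sets $S_{1,-}=\iota_0(S_{1,+})$, and joins them by a bisector portion $S_{1,0}$ between $\mathcal{L}_2$ and $\iota_0(\mathcal{L}_2)$ via Proposition \ref{prop:paracsliceofbisector}; embeddedness and all mutual disjointness statements are then explicit Hermitian-product inequalities computed at $\alpha=\frac{\pi}{2}$ (Propositions \ref{prop:S1plus}, \ref{prop:S1minus}, \ref{prop:S1embed} and \ref{prop:disjoint}) and propagated to a neighborhood $\mathcal{N}$ by continuity. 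Without the correct tangency pattern and without this construction, the hypotheses of the Poincar\'e polyhedron theorem are never verified, so the proposal does not yet prove the theorem.
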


The neighborhood $\mathcal{N}$ of $(\alpha,\beta)=(\frac{\pi}{2},0)$ or $(\frac{\pi}{2},\frac{\pi}{2})$ in $[0,\frac{\pi}{2}]^2$ shall satisfy several conditions, 
so we will determine  $\mathcal{N}$ only in the end of the proof.  

By the $\mathbb{Z}_3$-symmetry of the group  $\langle c_0,c_1,c_2\rangle$, Theorem  \ref{thm:cspheres} can be obtained by the combination of Propositions  \ref {prop:S1} and \ref{prop:S1S2disjoint}. 
Recall that a  $\mathbb{C}^2$-chain is the boundary of a totally geodesic complex hyperbolic plane in ${\bf H}^3_{\mathbb C}$, so it is topologically a  3-sphere. 

\begin{prop} \label{prop:S1}  For $(\alpha, \beta) \in \mathcal{N}$, the $\mathbb{C}$-sphere $S_1$ can be decomposed into three parts $S_{1,-}$,  $S_{1,0}$ and  $S_{1,+}$ along $\mathbb{C}^2$-chains. Where $S_{1,\pm}$ are topologically 4-balls, and $S_{1,0}$ is topologically  $\mathbb{S}^3\times [-1,1]$. Moreover, $S_{1,0}$ is $c_1$-invariant, $c_1$ exchanges  $S_{1,+}$ and $S_{1,-}$.
	\end{prop}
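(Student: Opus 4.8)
The plan is to realize $S_1$ as the boundary (spinal sphere) of a single bisector $\mathcal{B}$ adapted to the $\pi$-rotation $c_1 = A_2$, and to read off the three-part decomposition from the slice foliation of $\mathcal{B}$. First I would pin down the fixed $\mathbb{C}$-line of $c_1$. Since $c_1 = A_2 = \iota_1\iota_0$ is $(0,\frac{\pi}{2},\frac{\pi}{2})$-type, it is a $\pi$-rotation about a $\mathbb{C}$-line $\mathcal{L}$; from the matrix $A_2$ in Subsection \ref{subsection:012} one computes $\mathcal{L}$ explicitly, and its boundary circle $\partial\mathcal{L}$ is the $\mathbb{C}^1$-chain through the two common points $(\pm\mathrm{i},0,0)$ of $\mathcal{R}_0$ and $\mathcal{R}_1$, which are fixed by $c_1$. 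The idea is then to choose a bisector $\mathcal{B} = \mathcal{B}(p_0,p_1)$ whose complex spine $\Sigma$ is a $\mathbb{C}$-line orthogonal to $\mathcal{L}$ at a point $x_0 \in \mathcal{L}$, and whose foci $p_0,p_1 \in \Sigma$ are interchanged by $c_1$; equivalently, and this is how I would guarantee invariance in practice, I would arrange $S_1 = \partial\mathcal{B}$ to be invariant under each Lagrangian inversion $\iota_0$ and $\iota_1$ separately, so that it is automatically invariant under $c_1 = \iota_1\iota_0$. The precise location of $\Sigma$, the foci, and the midpoint $x_0$ retain two real degrees of freedom, which I would leave undetermined here and fix only later to meet the tangency and disjointness requirements of Proposition \ref{prop:S1S2disjoint}.

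Second, I would invoke the slice decomposition of a bisector recalled in Subsection \ref{subsection:bisector}. The bisector $\mathcal{B}$ is foliated by its complex slices, each a $\mathbb{C}$-plane orthogonal to $\Sigma$ meeting the real spine $\gamma = \Sigma \cap \mathcal{B}$ in a single point; on the boundary this gives the foliation of $S_1 = \partial\mathcal{B}$ by $\mathbb{C}^2$-chains, parameterized by $\gamma$ compactified by its two endpoints, the vertices of $\mathcal{B}$, where the slices degenerate to points. Because $c_1$ interchanges $p_0$ and $p_1$ it fixes $x_0$ and acts on $\gamma$ as the geodesic symmetry $t \mapsto -t$: indeed $\gamma$ is orthogonal to $\mathcal{L}$ at $x_0$, and $c_1$ acts on the complex normal bundle of $\mathcal{L}$ by multiplication by $-1$. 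Hence the central slice at $t=0$ is the $\mathbb{C}$-plane through $x_0$ orthogonal to $\Sigma$; it contains $\mathcal{L}$, is $c_1$-invariant, and $c_1$ restricts on it to the $\pi$-rotation about $\mathcal{L}$, fixing the circle $\partial\mathcal{L}$ pointwise.

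Third, I would obtain the decomposition by cutting $S_1$ along the two $\mathbb{C}^2$-chains $\mathcal{C}_{\pm}$ sitting at symmetric parameters $t = \pm t_0$. The middle piece $S_{1,0} = \bigcup_{|t|\le t_0}(\text{slice boundary})$ is the sub-bundle of $3$-spheres over $[-t_0,t_0]$, hence topologically $\mathbb{S}^3\times[-1,1]$, and it is $c_1$-invariant since $c_1$ sends the leaf at $t$ to the leaf at $-t$. Each end piece $S_{1,\pm} = \bigcup_{\pm t \ge t_0}(\text{slice boundary})$ is a bundle of $3$-spheres over a half-line whose far end collapses to the vertex, hence a topological $4$-ball; and $c_1$ interchanges $S_{1,+}$ and $S_{1,-}$ because it interchanges the two vertices and the two cutting chains $\mathcal{C}_{+}$ and $\mathcal{C}_{-}$. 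This yields all the asserted properties.

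The main obstacle is not the topology, which is forced once $\mathcal{B}$ is in hand, but the rigidity constraints on $\mathcal{B}$: I must verify from the explicit matrices that a bisector invariant under both $\iota_0$ and $\iota_1$ exists, compute $\mathcal{L}$, $\Sigma$ and the vertices in Heisenberg coordinates, and check that $S_1$ is embedded for all $(\alpha,\beta)$ in a neighborhood $\mathcal{N}$ of $(\frac{\pi}{2},0)$ and of $(\frac{\pi}{2},\frac{\pi}{2})$. The embeddedness and the eventual placement of the cutting chains $\mathcal{C}_{\pm}$ are delicate, since they must be chosen consistently with the analogous chains on $S_0$ and $S_2$ so that the three $\mathbb{C}$-spheres meet only tangentially at parabolic fixed points, as required in Proposition \ref{prop:S1S2disjoint}; I expect this coupling, rather than the internal structure of $S_1$ alone, to be where the real work lies.
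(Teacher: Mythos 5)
Your topological analysis (the slice foliation of a bisector, cutting along two symmetric slices, $c_1$ exchanging the two ends) is fine and matches the paper's picture, but the geometric Ansatz underneath it --- that $S_1$ can be taken to be the boundary of a \emph{single} bisector $\mathcal{B}$ --- cannot work, and the flexibility you are counting on (``two real degrees of freedom \dots fixed only later'') does not exist. The sphere $S_1$ must contain the fixed point $\infty$ of the parabolic $A_0$, since Proposition \ref{prop:S1S2disjoint} requires $S_1\cap S_2=\{\infty\}$; in your description this point is a collapsing end of $S_{1,+}$, i.e.\ a vertex of $\mathcal{B}$. Any isometry preserving a bisector preserves its canonical slice decomposition and hence permutes its two vertices; since $c_1=\iota_1\iota_0$ sends $\infty$ to $\iota_1\iota_0(\infty)=(0,0,0)$, your $c_1$-invariance forces the other vertex of $\mathcal{B}$ to be the origin. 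A bisector is determined by its real spine, and there is a unique geodesic with endpoints $0$ and $\infty$; so $\mathcal{B}$ is forced to be the bisector whose spinal sphere is the union of all concentric $\mathbb{C}^2$-chains centered at the origin, namely $\{t=0\}\cup\{\infty\}$ by Proposition \ref{prop:cplane}. No freedom remains, and this forced sphere violates Proposition \ref{prop:S1S2disjoint} at every $(\alpha,\beta)\in\mathcal{N}$ with $\alpha\neq\frac{\pi}{2}$: there $S_2=A_1^{-1}(S_1)$ is the spinal sphere with vertices $A_1^{-1}(0)=\infty$ and $A_1^{-1}(\infty)=p_3$, i.e.\ the tilted hyperplane of concentric chains centered at $p_3$ together with $\infty$; since the horizontal part of $p_3=({\rm i}{\rm e}^{-\frac{\alpha{\rm i}}{2}}\sqrt{\cos\alpha}\cos\beta,\ {\rm i}{\rm e}^{-\frac{\alpha{\rm i}}{2}}\sqrt{\cos\alpha}\sin\beta,\ \sin\alpha)$ is nonzero when $\alpha<\frac{\pi}{2}$, that hyperplane is not parallel to $\{t=0\}$, so the two spheres meet in a three-dimensional affine set, not in a single point of tangency. (The only escape is to put $\infty$ in the interior of a slice of $\mathcal{B}$ rather than at a vertex; but then $S_1$, and by the same reasoning $S_2$, would contain entire vertical $\mathbb{C}^2$-chains through $\infty$, a configuration incompatible with your own ``collapsing end'' picture and for which you offer no control.)

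This rigidity is exactly why the paper does \emph{not} use one bisector: its $S_1$ is a hybrid of portions of three different bisector boundaries. The end $S_{1,+}$ is the union of the chains $\mathcal{L}_\lambda$, $\lambda\in[2,\infty]$, whose polar vectors lie on the axis of the ellipto-parabolic $A_0$ computed in Proposition \ref{prop:A0elliptopara}; it is precisely this adaptation to the $A_0$-axis that puts $S_{1,+}$ and $S_{2,-}=\iota_2(S_{1,+})$ into the \emph{parallel} hyperplanes (\ref{equation:S1plushyperplane}) and (\ref{equation:S2minushyperplane}) and produces the tangency at $\infty$. The opposite end is $S_{1,-}=\iota_0(S_{1,+})$, whose slices shrink to the origin and lie on a different bisector, and the two ends are bridged by $S_{1,0}$, a portion of yet a third bisector obtained from Goldman's parameterization (Proposition \ref{prop:paracsliceofbisector}), with embeddedness checked by the inner-product estimates of Proposition \ref{prop:S1embed}. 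The hybrid structure is unavoidable: the slice polars of a single bisector must lie on a common projective line, whereas the polars $\lambda(1,0,0,0)^{\top}+(0,{\rm i}\sqrt{2}\,y_{1,A_0},\sqrt{2}\,x_{2,A_0},1)^{\top}$ of the slices of $S_{1,+}$ and the polars $\mu(1,0,0,0)^{\top}+(0,-{\rm i}\sqrt{2}\,\mu\, y_{1,A_0},\sqrt{2}\,\mu\, x_{2,A_0},1)^{\top}$ of the slices of $S_{1,-}$ lie on one projective line only when $y_{1,A_0}=x_{2,A_0}=0$, which within $\mathcal{N}$ happens only on the edge $\alpha=\frac{\pi}{2}$. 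So your construction coincides with a correct one exactly on that edge and cannot be deformed into the interior of $\mathcal{N}$, which is where the content of Theorem \ref{thm:modulardiscrete} lies.
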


\begin{prop} \label{prop:S1S2disjoint} For $(\alpha, \beta) \in \mathcal{N}$, let  $S_2=A^{-1}_1(S_1)$. Then 	the  $\mathbb{C}$-spheres $S_1$ and   $S_2$ are disjoint except for point of tangency at the parabolic  fixed point of $A_0$. 
	
\end{prop}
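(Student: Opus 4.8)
The plan is to prove Proposition~\ref{prop:S1S2disjoint} by transporting the $\mathbb{C}^2$-chain decomposition of $S_1$ from Proposition~\ref{prop:S1} through $A_1^{-1}$, applying the disjointness criterion of Proposition~\ref{prop:nonintersection} chain-by-chain, and then closing the argument with a continuity/openness step anchored at the corner base points.

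First I would identify the tangency point together with the side-pairing map. Since $c_1(S_1)=S_1$ and $S_2=A_1^{-1}(S_1)$, the element $A_0=A_2A_1=c_1A_1$ satisfies $A_0(S_2)=c_1A_1A_1^{-1}(S_1)=c_1(S_1)=S_1$, so $A_0$ carries $S_2$ onto $S_1$. By Proposition~\ref{prop:A0para}, for $\alpha\in(0,\frac{\pi}{2}]$ the map $A_0$ is parabolic with fixed point $p_0=\infty$ in our normalization. A parabolic side-pairing can only produce a horospherical tangency, so the claim reduces to showing $S_1\cap S_2\subset\{p_0\}$ and that the contact at $p_0$ is a single tangential point.

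Next I would make both spheres explicit as unions of $\mathbb{C}^2$-chains. Writing $S_2=S_{2,-}\cup S_{2,0}\cup S_{2,+}$ with $S_{2,\bullet}=A_1^{-1}(S_{1,\bullet})$, and using the matrices of $A_1,A_2$ from Subsection~\ref{subsection:012} together with Propositions~\ref{prop:center} and~\ref{prop:cplane}, I would record the polar vectors $n_1(s)$ and $n_2(t)$ of the chains foliating $S_1$ and $S_2$ as explicit functions of the foliation parameters and of $(\alpha,\beta)$. For each pair of chains $\mathcal{L}_1\subset S_1$ and $\mathcal{L}_2\subset S_2$ I form
$$\Delta(s,t)=|\langle n_1(s),n_2(t)\rangle|^2-\langle n_1(s),n_1(s)\rangle\,\langle n_2(t),n_2(t)\rangle,$$
and aim to prove $\Delta\le 0$ throughout, with equality only for the unique limiting pair of chains that degenerate to $p_0$. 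By Proposition~\ref{prop:nonintersection}, $\Delta<0$ means the two $\mathbb{C}$-planes, and hence their bounding chains, are disjoint, while $\Delta=0$ is exactly asymptoticity, i.e.\ a single boundary tangency. Since $S_1\cap S_2=\bigcup_{s,t}(\mathcal{L}_1(s)\cap\mathcal{L}_2(t))$, this gives $S_1\cap S_2=\{p_0\}$ with the desired tangency. The nine combinations of pieces in $\{-,0,+\}\times\{-,0,+\}$ are cut down using the $c_1$-invariance of $S_{1,0}$ and the $A_0$-pairing: only the ``adjacent'' pieces meeting near the cusp require the delicate sign analysis of $\Delta$, while the remaining pairs are manifestly separated.

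Finally, the openness step pins down $\mathcal{N}$. At the corner base points $(\frac{\pi}{2},0)$ and $(\frac{\pi}{2},\frac{\pi}{2})$ the representation lies on both $\mathbf{PU}(2,1)$ edges, of \cite{Falbelparker:2003} and \cite{FalbelKoseleff:2002}, where the analogous spinal-sphere configuration is disjoint-except-tangent; there the inequalities $\Delta<0$ are strict on the compact set of non-adjacent chain pairs and the contact at $p_0$ is tangential of the expected order. Because $n_1,n_2$ and hence $\Delta$ depend real-analytically on $(\alpha,\beta)$, the strict inequalities persist on a neighborhood, which we take, after intersecting with the neighborhood of Proposition~\ref{prop:S1}, to be $\mathcal{N}$. \textbf{The main obstacle} is the analysis near the cusp $p_0$: one must show that the single asymptotic pair of $\mathbb{C}^2$-chains meets only at $p_0$ and that $S_1$ and $S_2$ are internally tangent there rather than crossing, which amounts to controlling the second-order contact along the horospherical direction of $A_0$. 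The complicated form of $A_0$ and of its axis, as in Proposition~\ref{prop:A0elliptopara}, makes the locus $\Delta=0$ awkward to handle, and one must verify that $\Delta<0$ degrades only at $p_0$ and nowhere else as $(\alpha,\beta)$ varies over $\mathcal{N}$.
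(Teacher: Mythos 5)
Your overall strategy is the same as the paper's: decompose $S_1$ and $S_2=A_1^{-1}(S_1)$ into the pieces $S_{i,\pm}$, $S_{i,0}$, certify disjointness chain-by-chain through Hermitian products of polar vectors, reduce the number of pairs by the $\mathbb{Z}_2$- and $\mathbb{Z}_3$-symmetries, and propagate strict inequalities from the anchor $\alpha=\frac{\pi}{2}$ (where $y_{1,A_0}=x_{2,A_0}=0$) by continuity. However, your central inequality points the wrong way. For two finite $\mathbb{C}^2$-chains with polar vectors normalized so that $\langle n_j,n_j\rangle=1$, the chains are disjoint exactly when $|\langle n_1,n_2\rangle|>1$ (the associated complex hyperbolic planes are then hyper-parallel); when $|\langle n_1,n_2\rangle|<1$ the planes meet inside ${\bf H}^{3}_{\mathbb C}$ in a complex geodesic whose boundary circle lies in both chains, so the chains intersect. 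In your notation, disjointness requires $\Delta(s,t)>0$, not $\Delta\le 0$. You inherited the reversed sign from the literal statement of Proposition~\ref{prop:nonintersection}, but that statement is inconsistent with how the criterion is actually used in Propositions~\ref{prop:S1minus}, \ref{prop:S1embed} and \ref{prop:disjoint}, where hyper-parallelism is certified by bounds such as $\langle p_\lambda,p_{\lambda^{-1}}\rangle\ge\frac{9}{8}>1$. Had you carried out your computation, you would have found, as the paper does at $\alpha=\frac{\pi}{2}$, quantities like $|\langle p_\lambda,q_\mu\rangle|^2=\frac{\lambda^2\mu^2+2\lambda\mu+\mu^2+1}{4\lambda\mu}>1$, i.e.\ $\Delta>0$ everywhere; the configuration "$\Delta\le 0$ with equality only at the cusp" that you aim for is precisely what must be ruled out.

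Second, the step you flag as the main obstacle --- controlling second-order contact of $S_1$ and $S_2$ at the cusp --- is not needed and is not how the paper argues. The only two pieces containing $\infty$ are $S_{1,+}$ and $S_{2,-}=\iota_2(S_{1,+})$, and Proposition~\ref{prop:S1plus} separates them by a linear observation: every chain of $S_{1,+}$ lies in the fixed Heisenberg hyperplane $t+2y_{1,A_0}y_1-2x_{2,A_0}x_2=0$, while every chain of $S_{2,-}$ lies in the parallel hyperplane offset by $t^{*}$, with $t^{*}\ge\frac{1}{2}$ on $\mathcal{N}$; two disjoint parallel hyperplanes can share only the compactification point $\infty$, which is exactly the tangency at the fixed point of $A_0$. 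No contact-order or horospherical estimate enters. Finally, your assertion that the non-adjacent pairs are "manifestly separated" is optimistic: after the symmetry reductions the paper must still verify Items (1)--(5) of Proposition~\ref{prop:disjoint}, each by an explicit computation showing the relevant Hermitian product exceeds $1$, so these pairs require the same quantitative work as the ones near the cusp.
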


See Figure 	\ref{figure:s0s1s2} for a  schematic picture of the $\mathbb{C}$-spheres $S_0$, $S_1$ and $S_2$.  Each of $S_{i}$ is decomposed into three parts  $S_{i,*}$ for $*=-,0,+$. 

\begin{figure}
	\begin{center}
		\begin{tikzpicture}
		\node at (0,0) {\includegraphics[width=6cm,height=6cm]{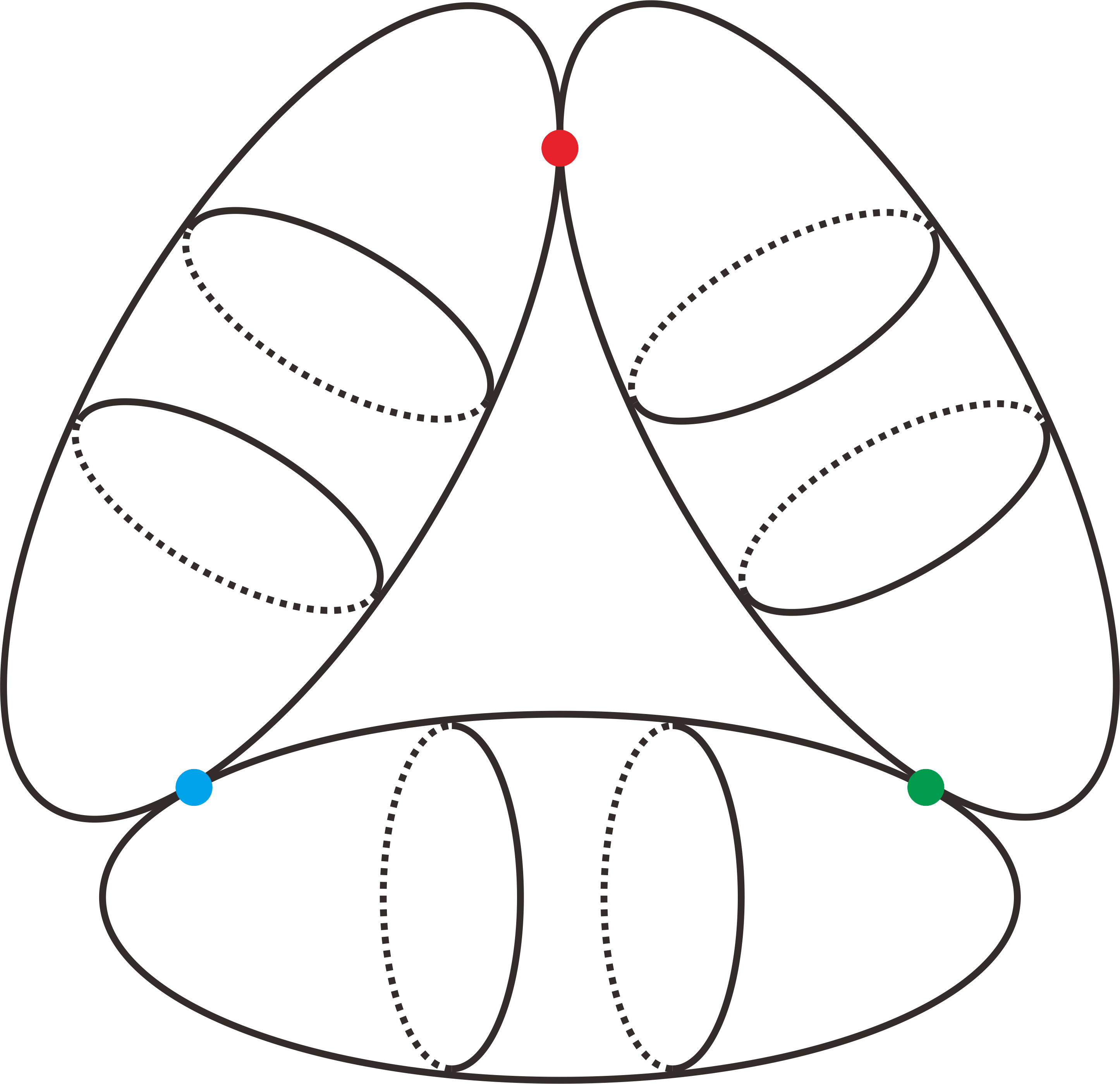}};
		
		\node at (-1.8,-2){\small $S_{0,+}$};
		\node at (0.1,-2){\small $S_{0,0}$};
		\node at (1.5,-2){\small $S_{0,-}$};
		
		\node at (-0.9,2.3){\small $S_{1,+}$};
		\node at (-1.9,0.9){\small $S_{1,0}$};
		\node at (-2.4,-0.5){\small $S_{1,-}$};
		
		\node at (2.4,-0.9){\small $S_{2,+}$};
		\node at (1.8,0.8){\small $S_{2,0}$};
		\node at (1.1,2.1){\small $S_{2,-}$};

		\end{tikzpicture}
	\end{center}
	\caption{A schematic picture of the $\mathbb{C}$-spheres $S_0$, $S_1$ and $S_2$. The red point is the  tangency point between  $S_1$ and $S_2$, which is the fixed point of $A_0=\iota_1 \iota_2$. The green point is the  tangency point between  $S_2$ and $S_0$, which is the fixed point of $\iota_2 \iota_0\iota_1\iota_2\iota_0 \iota_2$; The cyan point is the  tangency point between  $S_0$ and $S_1$, which is the fixed point of $\iota_0 \iota_2\iota_1 \iota_0$. The region outside the three spheres is the fundamental domain $D$.}
	\label{figure:s0s1s2}
\end{figure}

 Recall that a parabolic element $f$ acting on ${\bf H}^3_{\mathbb C}$ can be unipotent or ellipto-parabolic. In the later case, there is a $f$-invariant ${\bf H}^1_{\mathbb C} \hookrightarrow {\bf H}^3_{\mathbb C}$ or ${\bf H}^2_{\mathbb C} \hookrightarrow {\bf H}^3_{\mathbb C}$, which is the  axis of $f$. Proposition \ref{prop:A0elliptopara} bellow is a refinement of Proposition \ref{prop:A0para}, which  is crucial for the construction of the $\mathbb{C}$-sphere $S_1$. The author also notes the fact that  $z_{A_0,2}=\sqrt{2} x_{A_0,2}$ in  Proposition \ref{prop:A0elliptopara} is always real is very lucky. Without this fact, the construction of the $\mathbb{C}$-sphere $S_1$ would be  more difficult.

 
 
\begin{prop}  \label{prop:A0elliptopara}For  $\alpha \in (0, \frac{\pi}{2}]$ and $ \beta \in [0, \frac{\pi}{2}]$,  $A_0$ is ellipto-parabolic. Moreover, 
	\begin{itemize}
		\item If  \begin{equation}\label{equation:A0twoeigenvalue}
		\cos(2\beta)(\cos(\frac{2 \alpha}{3})+1)+\cos(\alpha)+\cos(\frac{\alpha}{3})=0,
		\end{equation}
		$A_0$ fixes a ${\bf H}^2_{\mathbb C} \hookrightarrow {\bf H}^3_{\mathbb C}$ invariant. The invariant  complex hyperbolic plane has polar vector 
		\begin{equation} \label{equation:A0cplane}
		(c, ~~~\rm{i} \sqrt{2},~~~\frac{\sqrt{-2\cos^2(\frac{2 \alpha}{3})+\cos(\frac{2 \alpha}{3})+1}}{\sin(\frac{\alpha}{3})(2\cos(\frac{\alpha}{3})-1)},~~~0)^{\top}.	\end{equation}
		Where $c$ is a  real number   depends on $\alpha$. It is not difficult to get the  explicit expression of $c$  (it occupies 2 lines),  but we omit it;
		\item If 
		 \begin{equation}\label{equation:A0threeeigenvalue}
		 \cos(2\beta)(\cos(\frac{2 \alpha}{3})+1)+\cos(\alpha)+\cos(\frac{\alpha}{3})\neq 0,
		  \end{equation}
		 $A_0$ fixes a ${\bf H}^1_{\mathbb C} \hookrightarrow {\bf H}^3_{\mathbb C}$ invariant. The invariant  complex hyperbolic line
		 has  $\mathbb{C}$-chain  
		\begin{equation}\label{equation:A0line}
		\left\{({\rm i} \sqrt{2} y_{1,A_0}, ~~~\sqrt{2} x_{2,A_0},~~t) \in \mathbb{C}^2 \times \mathbb{R}\right\}.
			\end{equation}
	Where 	$$y_{1,A_0}=\frac{ \sqrt{\cos(\alpha)} \cos(\beta) (\cos(\frac{\alpha}{2})+\cos(\frac{\alpha}{6}))}{\cos(2\beta)(\cos(\frac{2\alpha}{3})+1)+\cos(\alpha)+\cos(\frac{\alpha}{3})}$$
		and $$x_{2,A_0}=\frac{{\rm i} y_{1,A_0}\sin(2\beta) \rm{e}^{\frac{ \alpha \rm{i}}{3}}\cos(\frac{\alpha}{3})-\rm{i}\sqrt{\cos(\alpha)}\sin(\beta) \rm{e}^{\frac{\alpha \rm{i}}{6}} }{(1-\sin^2(\beta) \rm{e}^{\alpha \rm{i}})\rm{e}^{\frac{-\alpha \rm{i}}{3}}+\cos^2(\beta)}$$	are constants depend on  $\alpha,\beta$. Moreover, $x_{2,A_0} \in \mathbb{R}$.

\end{itemize}

\end{prop}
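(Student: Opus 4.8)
The plan is to read off all the eigendata of $A_0$ directly from its presentation matrix. By Proposition \ref{prop:A0para}, $A_0$ is parabolic for $\alpha\in(0,\frac{\pi}{2}]$, so by Proposition \ref{prop:elliptopara} it suffices to produce one eigenvalue different from the others to conclude that $A_0$ is ellipto-parabolic. Write $A_0=\mathrm{e}^{-\alpha\mathrm{i}/3}\tilde A_0$ with $\tilde A_0$ the displayed matrix. Since the first column of $\tilde A_0$ is $(1,0,0,0)^{\top}$ and its last row is $(0,0,0,1)$, the matrix $\tilde A_0$ is block upper-triangular for the splitting of coordinates $1\mid 2,3\mid 4$, so its eigenvalues are $1,1$ together with the two eigenvalues $\mu_1,\mu_2$ of the central $2\times 2$ block $B$ in rows and columns $2,3$. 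A short computation gives $\operatorname{tr}(B)=-\cos(2\beta)(\mathrm{e}^{\alpha\mathrm{i}/3}+\mathrm{e}^{\alpha\mathrm{i}})$ and $\det(B)=\mathrm{e}^{4\alpha\mathrm{i}/3}$, whence the eigenvalues of $A_0$ are $\mathrm{e}^{-\alpha\mathrm{i}/3}$ (of algebraic multiplicity at least two) and $\mathrm{e}^{-\alpha\mathrm{i}/3}\mu_1,\ \mathrm{e}^{-\alpha\mathrm{i}/3}\mu_2$, with product $1$ as required. As $\mathrm{e}^{-\alpha\mathrm{i}/3}\neq 1$ for $\alpha\in(0,\frac{\pi}{2}]$, $A_0$ is not unipotent, hence ellipto-parabolic.

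Next I would distinguish the two cases by the multiplicity of the repeated eigenvalue. By Proposition \ref{prop:elliptopara} an ellipto-parabolic acts unipotently on its axis and by a nontrivial unitary rotation on the normal bundle; the axis is either a complex geodesic (a signature-$(1,1)$ complex $2$-plane, with normal bundle of complex dimension $2$) or a complex hyperbolic plane (a signature-$(2,1)$ complex $3$-plane, with normal bundle of complex dimension $1$). In the first case the Jordan block of the unipotent part fills the two-dimensional axis slice and the two normal rotations supply the remaining eigenvalues, so $\mathrm{e}^{-\alpha\mathrm{i}/3}$ has multiplicity two; in the second case it fills the three-dimensional axis slice and has multiplicity three. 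The multiplicity jumps to three exactly when $1$ is an eigenvalue of $B$ (both $\mu_i=1$ is impossible since $\mu_1\mu_2=\mathrm{e}^{4\alpha\mathrm{i}/3}\neq 1$), i.e. when $1-\operatorname{tr}(B)+\det(B)=0$. Substituting the values above and separating real and imaginary parts, both parts factor as a nonzero multiple of $\cos(\tfrac{\alpha}{3})\cos(2\beta)+\cos(\tfrac{2\alpha}{3})$, which equals $\tfrac{1}{2\cos(\alpha/3)}$ times the left-hand side of \eqref{equation:A0twoeigenvalue}; since $\cos(\alpha/3)\neq 0$, the multiplicity-three locus is precisely \eqref{equation:A0twoeigenvalue}, and its complement \eqref{equation:A0threeeigenvalue} is the $\mathbb{C}$-line case.

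In the case \eqref{equation:A0twoeigenvalue} the invariant ${\bf H}^2_{\mathbb C}$ is cut out by its polar vector $n\in V_+$, which must be the eigenvector of $A_0$ spanning the complex-one-dimensional normal bundle. Here $\mu_1=1$ forces $\mu_2=\mathrm{e}^{4\alpha\mathrm{i}/3}$, so $n$ is the eigenvector of $\tilde A_0$ for $\mathrm{e}^{4\alpha\mathrm{i}/3}$, equivalently the $\mathrm{e}^{\alpha\mathrm{i}}$-eigenvector of $A_0$. Solving the linear system $(\tilde A_0-\mathrm{e}^{4\alpha\mathrm{i}/3})n=0$ and normalizing the second coordinate to $\mathrm{i}\sqrt 2$ yields \eqref{equation:A0cplane}; one then checks directly that $\langle n,n\rangle>0$, so $n\in V_+$, and the vanishing of its last coordinate records the fact that the plane contains the fixed point $\infty$ of $A_0$.

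In the case \eqref{equation:A0threeeigenvalue} the axis is the complex geodesic whose signature-$(1,1)$ slice is the two-dimensional generalized eigenspace $W=\ker(A_0-\mathrm{e}^{-\alpha\mathrm{i}/3})^2$. Since $W$ contains the null fixed vector $(1,0,0,0)^{\top}=\infty$, the geodesic is vertical and its $\mathbb{C}$-chain is the vertical line over $\Pi_V(v)$, where $v$ is any generalized eigenvector of $\tilde A_0$ normalized so that its last coordinate is $1$ (concretely any $v$ with $(\tilde A_0-I)v\in\operatorname{span}\{(1,0,0,0)^{\top}\}$); the second and third coordinates of $v$ are constant along the chain and produce \eqref{equation:A0line} with $y_{1,A_0}$ and $x_{2,A_0}$ as stated. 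The one nonroutine point, and the main obstacle, is the assertion $x_{2,A_0}\in\mathbb{R}$: the closed form for $x_{2,A_0}$ is manifestly complex, and I would establish its reality by clearing the denominator $(1-\sin^2(\beta)\,\mathrm{e}^{\alpha\mathrm{i}})\mathrm{e}^{-\alpha\mathrm{i}/3}+\cos^2(\beta)$ and verifying that the imaginary part of the resulting numerator vanishes identically in $\alpha,\beta$, a finite trigonometric identity checkable by hand or, as elsewhere in the paper, with Maple. The remaining assertions (the eigenvalue products, the positivity $\langle n,n\rangle>0$, and the explicit entries of \eqref{equation:A0cplane} and \eqref{equation:A0line}) are direct symbolic computations.
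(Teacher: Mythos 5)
Your proposal is correct, and at bottom it is the same method as the paper's: a spectral analysis of the presentation matrix of $A_0$. But the organization differs in ways worth comparing. The paper states the eigenvalues (\ref{equation:A0eigenvalue}) ``by a calculation'', exhibits the positive $\mathrm{e}^{\alpha\mathrm{i}}$-eigenvector in the multiplicity-three case, and in the generic case merely \emph{verifies} that the chain (\ref{equation:A0line}) with the stated constants is $A_0$-invariant, by checking that $A_0$ sends a lift of a point of the chain to a lift of another such point; finally it checks by hand that the denominator of $x_{2,A_0}$ has positive real part. You instead \emph{derive} everything: the block-triangular splitting $1\mid 2,3\mid 4$ with $\operatorname{tr}(B)=-\cos(2\beta)(\mathrm{e}^{\alpha\mathrm{i}/3}+\mathrm{e}^{\alpha\mathrm{i}})$ and $\det(B)=\mathrm{e}^{4\alpha\mathrm{i}/3}$ gives (\ref{equation:A0eigenvalue}) transparently and shows that the multiplicity-three locus $1-\operatorname{tr}(B)+\det(B)=0$ factors exactly as (\ref{equation:A0twoeigenvalue}); and in the generic case you solve for the generalized eigenspace $\ker(A_0-\mathrm{e}^{-\alpha\mathrm{i}/3})^2$, which explains where $y_{1,A_0}$ and $x_{2,A_0}$ come from and makes well-definedness automatic, since the relevant $2\times 2$ system has matrix $B-I$, invertible precisely off the locus (\ref{equation:A0twoeigenvalue}). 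What the paper's route buys is brevity; what yours buys is that the formulas are produced rather than pulled out of the air, at the cost of longer linear algebra.

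One step of yours is looser than you acknowledge: the claim that a complex-geodesic axis forces the repeated eigenvalue to have multiplicity exactly two does not follow from Proposition \ref{prop:elliptopara} as quoted. An element of Jordan type $J_2(\lambda)\oplus\mathrm{diag}(\lambda,\mu)$, $\mu\neq\lambda$, admits invariant complex geodesics on which it acts unipotently with nontrivial normal unitary action, yet its Chen--Greenberg axis is a complex \emph{plane} (those geodesics are non-unique, so the dichotomy's uniqueness clause places it in the second case), and its repeated eigenvalue has multiplicity three. So the correspondence ``multiplicity two $\Leftrightarrow$ geodesic axis'' is true but needs the uniqueness clause, not just the Jordan-block count. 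This does not damage your proof, because the proposition only asserts existence of the invariant subspaces, and your linear algebra delivers that directly: in the case (\ref{equation:A0twoeigenvalue}) the positivity of the $\mathrm{e}^{\alpha\mathrm{i}}$-eigenvector $n$ makes $n^{\perp}$ a signature-$(2,1)$ invariant subspace, hence an invariant ${\bf H}^2_{\mathbb C}$; in the case (\ref{equation:A0threeeigenvalue}) the generalized eigenspace contains $(1,0,0,0)^{\top}$ and a vector with last coordinate $1$, hence has signature $(1,1)$ and projectivizes to the invariant ${\bf H}^1_{\mathbb C}$ with vertical chain (\ref{equation:A0line}). Deferring the reality of $x_{2,A_0}$ and the explicit entries to symbolic computation matches the paper's own level of detail.
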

		

		

\begin{proof}
	By a   calculation, the eigenvalues of $A_0$ are $\rm{e}^{-\frac{\alpha\rm{i}}{3}}$ (a repeated eigenvalue) and 	\begin{equation} \label{equation:A0eigenvalue}
	-\rm{e}^{\frac{\alpha\rm{i}}{3}}[\cos(\frac{\alpha}{3})\cos(2\beta) \pm \rm{i} \sqrt{1-cos^2(2\beta)
		cos^2(\frac{\alpha}{3})}].	\end{equation}
	
	So if $\cos(2\beta)(\cos(\frac{2 \alpha}{3})+1)+\cos(\alpha)+\cos(\frac{\alpha}{3})=0$. Then  $A_0$ has  eigenvalues $\rm{e}^{-\frac{\alpha\rm{i}}{3}}$ (a multiplicity three eigenvalue) and $\rm{e}^{\alpha\rm{i}}$. Moreover, by a calculation, the eigenvector corresponds to  $\rm{e}^{\alpha\rm{i}}$ is a positive vector in the form of (\ref{equation:A0cplane}).
	
	 If $\cos(2\beta)(\cos(\frac{2 \alpha}{3})+1)+\cos(\alpha)+\cos(\frac{\alpha}{3})\neq 0$. Then  $A_0$ has three  eigenvalues, one of them has multiplicity two.
	Moreover, be directly calculation we   have  $$ {\rm e}^{\frac{\alpha \rm{i}}{3}} \cdot A_0  \cdot (-(y_{1,A_0}^2+x_{2,A_0}^2)+ {\rm i} t,~~ {\rm i} \sqrt{2} y_{1,A_0}, ~~\sqrt{2} x_{2,A_0},~~1)^{\top}$$ is  $$(-(y_{1,A_0}^2+x_{2,A_0}^2)+{\rm i}t', ~~{\rm i} \sqrt{2} y_{1,A_0}, ~~\sqrt{2} x_{2,A_0},~~1)^{\top}$$
	for some $t'$ depends on  $t, \alpha$ and $\beta$.
	In particular,  the $\mathbb{C}$-chain  in  (\ref{equation:A0line}) is $A_0$-invariant.
	
We note that the real part of $(\cos^2(\beta) \rm{e}^{\alpha \rm{i}}-\rm{e}^{\alpha \rm{i}}+1)\rm{e}^{\frac{-\alpha \rm{i}}{3}}+\cos^2(\beta)$ for $x_{2,A_0}$ is $$-\cos(\frac{2\alpha}{3})\sin^2(\beta)+\cos^2(\beta)+\cos(\frac{\alpha}{3}).$$ It is  always positive except when  $\alpha=0$ and $\beta=\frac{\pi}{2}$. So $x_{2,A_0}$ is always well-defined when (\ref{equation:A0threeeigenvalue}) holds. 
\end{proof}


\begin{rem}
	\begin{itemize}
		
	\item 
	 $A_0$ above  is type (iv) and  (i) parabolic element in  Proposition 5.2  of 	\cite{GongopadhyayPP} respectively;
	\item 
	 When $\beta=0$, we have $$y_{1,A_0}=\frac{\sqrt{\cos(\alpha)}}{2\cos(\frac{\alpha}{2})}, ~~~\text{and }~~x_{2,A_0}=0.$$ Which is exactly  the equation of the $\mathbb{C}$-chain in Page 74 of \cite{Falbelparker:2003};
	 
	 \item   $x_{2,A_0}$ is in fact always real, but its expression without  $y_{1,A_0}$ is more complicated (more than seven lines), so we express it in terms of  $y_{1,A_0}$;
	   
	   \item The point  $(\alpha,\beta)=(\frac{\pi}{2}, \beta_0)$ lies in 
	    (\ref{equation:A0twoeigenvalue}) has  $\cos(2 \beta_0)=-\frac{1}{\sqrt{3}}$.
	      When   $\alpha=\frac{\pi}{2}$ but $\beta \neq \beta_0$, then $y_{1,A_0}=x_{2,A_0}=0$. This fact is very important for our arguments later. 
	       
	\end{itemize}
	\end{rem}

		
		

\begin{prop} \label{prop:CchaincernterinR1} Let  $\mathcal{L}$ be a $\mathbb{C}^2$-chain with radius $R$ and  center $(b \rm{i},c,0)$. We assume the center $(b \rm{i},c,0)$ lies  in the $\mathbb{R}^3$-chain   $\mathcal{R}_1$, in particular  $b, c  \in \mathbb{R}$. Then $\mathcal{L}$ is invariant under the Lagrangian inversion $\iota_1$.

\end{prop}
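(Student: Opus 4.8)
The plan is to identify the $\mathbb{C}^2$-chain $\mathcal{L}$ with its polar vector and to track how that vector transforms under the anti-holomorphic isometry $\iota_1$. By Proposition \ref{prop:center}, the finite $\mathbb{C}^2$-chain with center $(b{\rm i},c,0)$ (so that $a=d=f=0$ in the notation there) and radius $R$ has polar vector
$$\mathbf{n}=(R^2-b^2-c^2,~~\sqrt{2}\,b{\rm i},~~\sqrt{2}\,c,~~1)^{\top}\in V_+.$$
Since invariance of $\mathcal{L}$ is equivalent to invariance of the complex hyperbolic plane $\mathcal{P}$ whose ideal boundary is $\mathcal{L}$, it suffices to show that $\iota_1$ fixes (projectively) the polar vector $\mathbf{n}$.

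First I would record the general transformation law for polar vectors under an anti-holomorphic isometry. If $f(\mathbf{z})=M\overline{\mathbf{z}}$ is the action associated to a presentation matrix $M$ (so that $M^{*}H_sM=H_s$), and $\mathcal{P}'$ is a $\mathbb{C}^2$-subspace with polar vector $\mathbf{m}$, then $f(\mathcal{P}')$ has polar vector $M\overline{\mathbf{m}}$. Indeed, for any $\mathbf{z}$ with $\langle \mathbf{z},\mathbf{m}\rangle=0$ one computes, using $M^{*}H_sM=H_s$ and $\overline{H_s}=H_s$,
$$\langle M\overline{\mathbf{z}},\,M\overline{\mathbf{m}}\rangle=\overline{\mathbf{m}}^{*}H_s\overline{\mathbf{z}}=\overline{\langle \mathbf{z},\mathbf{m}\rangle}=0,$$
and moreover $\langle M\overline{\mathbf{m}},M\overline{\mathbf{m}}\rangle=\overline{\langle \mathbf{m},\mathbf{m}\rangle}>0$, so $M\overline{\mathbf{m}}\in V_+$ is genuinely the polar vector of the image plane.

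Applying this to $\iota_1$, whose presentation matrix is $M_1=\mathrm{diag}(1,-1,1,1)$ from (\ref{matrix:M1}), the polar vector of $\iota_1(\mathcal{P})$ is $M_1\overline{\mathbf{n}}$. Because $R,b,c$ are real, the hypothesis that the center lies on $\mathcal{R}_1$ forces $\mathbf{n}$ to have purely imaginary second entry and real third entry, and a one-line computation gives
$$M_1\overline{\mathbf{n}}=M_1\cdot(R^2-b^2-c^2,~-\sqrt{2}\,b{\rm i},~\sqrt{2}\,c,~1)^{\top}=(R^2-b^2-c^2,~\sqrt{2}\,b{\rm i},~\sqrt{2}\,c,~1)^{\top}=\mathbf{n}.$$
Hence $\iota_1(\mathcal{P})=\mathcal{P}$, and therefore $\iota_1(\mathcal{L})=\mathcal{L}$.

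The calculation itself is immediate, so there is no serious obstacle; the only genuine content is the transformation law for polar vectors, which is the short Hermitian-form identity above, together with the observation that the condition ``center on $\mathcal{R}_1$'' is exactly what matches the sign pattern of $M_1$. As an independent consistency check I would also verify directly in Heisenberg coordinates that $\iota_1\colon(z_1,z_2,t)\mapsto(-\overline{z_1},\overline{z_2},-t)$ fixes the center $(b{\rm i},c,0)$ and acts on the $\mathbb{C}^2$-factor by a Euclidean isometry, hence preserves the radius $R$; since a finite $\mathbb{C}^2$-chain is determined by its center and radius (via Proposition \ref{prop:center}), this yields the same conclusion $\iota_1(\mathcal{L})=\mathcal{L}$.
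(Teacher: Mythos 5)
Your proof is correct and takes essentially the same approach as the paper: both identify $\mathcal{L}$ with its polar vector $(R^2-(b^2+c^2),\ \sqrt{2}\,b\,{\rm i},\ \sqrt{2}\,c,\ 1)^{\top}$ via Proposition \ref{prop:center} and then check that this vector is fixed under the action $\mathbf{n}\mapsto M_1\overline{\mathbf{n}}$ of $\iota_1$. The only difference is that you make explicit the transformation law for polar vectors under anti-holomorphic isometries (and add a Heisenberg-coordinate cross-check), details the paper compresses into the single assertion $\iota_1(n)=n$.
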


\begin{proof}The polar vector of $\mathcal{L}$ is  $$n=[R^2-(b^2 +c^2),~~ \sqrt{2}b \rm{i},~~ \sqrt{2}c,~~1]^{\top} \in  {\mathbb C}{\mathbf P}^{3}-  \overline{{\bf H}^3_{\mathbb C}}.$$ From the presentation matrix $M_1$ of $\iota_1$ in (\ref{matrix:M1}) of Section \ref{sec:modulispace}, then $\iota_1(n)=n$. So $\mathcal{L}$ is invariant under  $\iota_1$. 
\end{proof}

We now consider the $\iota_0$-action on $\mathcal{L}$ (equivalently, $\iota_0$-action   on the  polar vector  $n$). A  calculation with the presentation matrix $M_0$ of $\iota_0$  in (\ref{matrix:M0}) of  Section \ref{sec:modulispace} shows that $\iota_0(\mathcal{L})$ is  a $\mathbb{C}^2$-chain with radius $\frac{R}{|R^2-(b^2+c^2)|}$ and  center $$(\frac{-b \rm{i}}{R^2-(b^2+c^2)},~~\frac{c}{R^2-(b^2+c^2)},0) \in \mathcal{R}_1.$$
We shall take  a path $\sigma$ in the 3-space $$\left \{(R, b,c) \in \mathbb{R}_{>0} \times \mathbb{R}^2 \right \},$$ which is invariant under the involution $$(R, b,c) \longrightarrow (\frac{R}{|R^2-(b^2+c^2)|},~~\frac{-b }{R^2-(b^2+c^2)},~~\frac{c}{R^2-(b^2+c^2)}).$$
From  the path $\sigma$ we get a $\mathbb{C}$-sphere (which is a union of $\mathbb{C}^2$-chains) in $\mathbb{C}^2\times \mathbb{R}$ which is invariant under $\iota_0\iota_{1}$. 

For any pair $(\alpha, \beta)   \in [0, \frac{\pi}{2}]^2$  satisfies (\ref{equation:A0threeeigenvalue}),  let $\mathcal{L}_{\lambda}$ be the finite  $\mathbb{C}^2$-chain with polar vector 
$$p_{\lambda}=( \lambda, ~~{\rm i} \sqrt{2}  y_{1,A_0},~~ \sqrt{2}  x_{2,A_0}, ~~1)^{\top}.$$
Take  \begin{equation} \label{equation:S1plus}
S_{1,+}=\cup_{\lambda \in [2, \infty]} \mathcal{L}_{\lambda}.
\end{equation}
Here $\mathcal{L}_{\infty}$ is just the point $\infty$. 
Take  
\begin{equation} \label{equation:S2minus}
	S_{2,-}=\iota_2(S_{1,+}).
	\end{equation} 
	See Figure 	\ref{figure:s0s1s2} for a   schematic picture of  these kissing topological 4-balls.


	First note that the real numbers   $y_{1,A_0}$ and $x_{2,A_0}$ are continuous on $(\alpha, \beta)$ when $(\alpha, \beta)$  satisfies (\ref{equation:A0threeeigenvalue}). Moreover when 
 $\alpha=\frac{\pi}{2}$ and $\beta \neq \beta_0$, $y_{1,A_0}=x_{2,A_0}=0$. So we may take a neighborhood $\mathcal{N}$ of $(\frac{\pi}{2},0)$ or $(\frac{\pi}{2},\frac{\pi}{2})$ in $[0,\frac{\pi}{2}]^2$, such that  any pair $(\alpha, \beta)   \in \mathcal{N}$  satisfies (\ref{equation:A0threeeigenvalue}),  moreover we may assume 
  $$y^2_{1,A_0}+x^2_{2,A_0}<1$$ when  $(\alpha, \beta) \in \mathcal{N}$.

\begin{prop} \label{prop:S1plus} For any pair $(\alpha, \beta) \in \mathcal{N}$, both $S_{1,+}$ and  $S_{2,-}$ are  topologically   4-balls in $\mathbb{C}^2 \times \mathbb{R} \cup \infty$. Moreover,  $S_{1,+} \cap S_{2,-}$ is exactly the point $ \infty$.
\end{prop}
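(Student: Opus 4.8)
The plan is to make the chains $\mathcal{L}_\lambda$ completely explicit, read off the ball structure from Heisenberg geometry, and then reduce the intersection claim to a single sign computation. First I would identify $\mathcal{L}_\lambda$. Comparing the polar vector $p_\lambda=(\lambda,\,{\rm i}\sqrt2\,y_{1,A_0},\,\sqrt2\,x_{2,A_0},\,1)^\top$ with the normal form of Proposition \ref{prop:center}, and using that $y_{1,A_0}$ and $x_{2,A_0}$ are real (Proposition \ref{prop:A0elliptopara}), $\mathcal{L}_\lambda$ is the finite $\mathbb{C}^2$-chain with center $(y_{1,A_0}\,{\rm i},\,x_{2,A_0},\,0)$, \emph{independent of $\lambda$}, and radius $R(\lambda)=\sqrt{\lambda+s}$ where $s=y_{1,A_0}^2+x_{2,A_0}^2<1$ on $\mathcal{N}$. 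As $\lambda$ runs over $[2,\infty)$, $R(\lambda)$ is continuous and strictly increasing from $\sqrt{2+s}$ to $\infty$. Since the center lies in $\mathcal{R}_1$, each $\mathcal{L}_\lambda$ is moreover $\iota_1$-invariant by Proposition \ref{prop:CchaincernterinR1}.

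For the ball structure, I would use the vertical projection. By Theorem \ref{thm:cform}, $\Pi_V$ maps each $\mathcal{L}_\lambda$ homeomorphically onto the Euclidean $3$-sphere of radius $R(\lambda)$ centered at $\Pi_V$ of the common center in $\mathbb{C}^2=\mathbb{R}^4$. These spheres are concentric with pairwise distinct radii, hence nested and disjoint; consequently the $\mathcal{L}_\lambda$ are pairwise disjoint, and $\Pi_V$ restricts to a homeomorphism from $S_{1,+}\setminus\{\infty\}$ onto the closed exterior $\{w\in\mathbb{R}^4:|w-\Pi_V(\text{center})|\ge R(2)\}$, the inverse recovering $t$ from the hyperplane equation of Proposition \ref{prop:cplane}. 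Adjoining $\mathcal{L}_\infty=\{\infty\}$ one-point-compactifies this closed exterior, and the one-point compactification of the closed exterior of a round ball in $\mathbb{R}^4$ is a closed $4$-ball with boundary $\mathcal{L}_2$ and with $\infty$ an interior point. Thus $S_{1,+}$ is a topological $4$-ball, and since $\iota_2$ is a homeomorphism of $\overline{{\bf H}^3_{\mathbb C}}$, so is $S_{2,-}=\iota_2(S_{1,+})$.

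Next I would locate $\infty$ in both balls and reduce the rest to disjointness of chains. We have $\infty\in S_{1,+}$ as an interior point. Since $A_0=\iota_1\iota_2$ fixes $\infty$ (Proposition \ref{prop:A0para}) and $\iota_1$ fixes $\infty$ (its mirror $\mathcal{R}_1$ is an infinite $\mathbb{R}^3$-chain, hence contains $\infty$), we obtain $\iota_2(\infty)=\infty$, so $\infty=\iota_2(\infty)\in S_{2,-}$, again interior. It then remains to show $\mathcal{L}_{\lambda_1}\cap\iota_2(\mathcal{L}_{\lambda_2})=\emptyset$ for all finite $\lambda_1,\lambda_2\in[2,\infty)$. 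The chain $\iota_2(\mathcal{L}_{\lambda_2})$ has polar vector $q_{\lambda_2}=M_2(\alpha,\beta)\,\overline{p_{\lambda_2}}$ (anti-holomorphic transport of polar vectors), and because $\iota_2$ is an isometry, $\langle q_{\lambda_2},q_{\lambda_2}\rangle=\langle p_{\lambda_2},p_{\lambda_2}\rangle=2\lambda_2+2s$ while $\langle p_{\lambda_1},p_{\lambda_1}\rangle=2\lambda_1+2s$. By Proposition \ref{prop:nonintersection} applied to $p_{\lambda_1}$ and $q_{\lambda_2}$, disjointness of the two chains is equivalent to a definite-sign condition on the discriminant $\Delta(\lambda_1,\lambda_2)=|\langle p_{\lambda_1},q_{\lambda_2}\rangle|^2-\langle p_{\lambda_1},p_{\lambda_1}\rangle\langle q_{\lambda_2},q_{\lambda_2}\rangle$, the borderline (asymptotic) case corresponding to tangency.

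The core of the argument, and the step I expect to be the main obstacle, is controlling the sign of $\Delta(\lambda_1,\lambda_2)$ uniformly over the \emph{non-compact} range $(\lambda_1,\lambda_2)\in[2,\infty)^2$ and over $(\alpha,\beta)\in\mathcal{N}$. At the base points $\alpha=\tfrac{\pi}{2}$ one has $y_{1,A_0}=x_{2,A_0}=0$, so $p_\lambda=(\lambda,0,0,1)^\top$ and the computation is fully explicit; there one checks directly that $S_{1,+}$ and $S_{2,-}$ are externally tangent exactly at $\infty$. To extend this I would substitute $\mu_i=1/\lambda_i\in(0,\tfrac12]$ and clear denominators, converting $\Delta$ into a single continuous function on the compact cube $[0,\tfrac12]^2\times\overline{\mathcal{N}}$ whose only zero sits on the edge $\mu_1=\mu_2=0$ (the collapse of both chains to $\infty$); this disposes of the non-compactness that produces the tangency. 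A compactness argument then lets me shrink $\mathcal{N}$ about $(\tfrac{\pi}{2},0)$ and $(\tfrac{\pi}{2},\tfrac{\pi}{2})$ so that the disjoint sign persists, with $s<1$ and $\lambda\ge 2$ furnishing the quantitative room; this is also where $\mathcal{N}$ gets finally fixed. The reality of $x_{2,A_0}$ emphasized before Proposition \ref{prop:A0elliptopara} is precisely what keeps $\Delta$ tractable and is essential to this estimate.
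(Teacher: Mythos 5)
Your identification of the chains $\mathcal{L}_\lambda$ (common center $(y_{1,A_0}{\rm i},x_{2,A_0},0)$, radius $\sqrt{\lambda+s}$), the ball structure of $S_{1,+}$ via the vertical projection, the fact that $\iota_2$ fixes $\infty$, and the reduction of the tangency claim to pairwise disjointness of the finite chains $\mathcal{L}_{\lambda_1}$ and $\iota_2(\mathcal{L}_{\lambda_2})$ are all correct, and this part is essentially the paper's argument (the paper phrases the ball structure as the compactification of $\mathbb{S}^3\times[0,\infty)$). The genuine gap is in your last step, where the sign of the discriminant $\Delta(\lambda_1,\lambda_2;\alpha,\beta)$ is to be propagated from $\alpha=\frac{\pi}{2}$ by "clearing denominators and a compactness argument." After substituting $\mu_i=1/\lambda_i$ and clearing denominators, the resulting function vanishes on the edge $\mu_1=\mu_2=0$ for \emph{every} $(\alpha,\beta)$; this zero records the genuine tangency at $\infty$ and cannot be removed. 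Positivity off a zero set that touches the domain for all parameter values does \emph{not} persist under perturbation by continuity alone: for instance $F_\epsilon(\mu_1,\mu_2)=(\mu_1-\mu_2)^2+\mu_1^2\mu_2^2-\epsilon\mu_1\mu_2$ agrees with your base-point function at $\epsilon=0$ and is jointly continuous, yet for every $\epsilon>0$ it is negative at $\mu_1=\mu_2=t$ with $0<t<\sqrt{\epsilon}$. So "the disjoint sign persists after shrinking $\mathcal{N}$" is precisely what remains unproved; the margins $s<1$ and $\lambda\ge 2$ give no room near the corner, where the normalized discriminant tends to the asymptotic threshold $1$.

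What closes the gap, and is how the paper argues, is a structural fact you never invoke: since $\iota_2$ fixes $\infty$ and conjugates $A_0$ to $A_0^{-1}$, it preserves the vertical axis chain of $A_0$, so it sends the common center of the $\mathcal{L}_\lambda$ to a point $(y_{1,A_0}{\rm i},x_{2,A_0},t^{*})$ on the same vertical line, with
$t^{*}=2\sqrt{\cos(\alpha)}\bigl[-\sin(\tfrac{\alpha}{2})\cos(\beta)\,y_{1,A_0}+\cos(\tfrac{\alpha}{2})\sin(\beta)\,x_{2,A_0}\bigr]+\sin(\alpha)$,
which equals $1$ at $\alpha=\frac{\pi}{2}$ and hence is $\ge\frac12$ on $\mathcal{N}$. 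By Proposition \ref{prop:cplane}, \emph{all} chains of $S_{1,+}$ then lie in one fixed Heisenberg hyperplane and \emph{all} chains of $S_{2,-}$ lie in the parallel hyperplane at height $t^{*}$, so every pair of finite chains is disjoint at once, uniformly in $\lambda_1,\lambda_2$. Equivalently, in your discriminant language this structure gives the closed formula $\Delta=(\lambda_1-\lambda_2)^2+(t^{*})^2$ (the image chain has the same radius parameter because an anti-holomorphic isometry conjugates the Hermitian form, so $\langle q_{\lambda_2},q_{\lambda_2}\rangle=\langle p_{\lambda_2},p_{\lambda_2}\rangle$), and positivity reduces to the single inequality $t^{*}\neq 0$. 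Without that identity your $\Delta$ is an unstructured function of four variables and the compactness step fails; with it, your argument becomes complete and coincides with the paper's proof.
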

	\begin{proof}
		Consider $\mathbb{C}^{2} \times \mathbb{R}$ with coordinates $(x_1+y_1 {\rm i}, x_2+y_2 {\rm i},t)$. 
		By Proposition  \ref{prop:cplane}, for  various $\lambda \in [2,\infty)$,  the  $\mathbb{C}^2$-chain $\mathcal{L}_{\lambda}$ in  $S_{1,+}$ is exactly   the intersection of the fixed hyperplane with equation 
		\begin{equation} \label{equation:S1plushyperplane}
		t+2 \cdot y_{1,A_0}\cdot y_1-2 \cdot x_{2,A_0} \cdot x_2=0 
		\end{equation}
		and   the cylinder with equation $$x_1^2+(y_1- y_{1,A_0})^2+(x_2- x_{2,A_0})^2+y_2^2=R^2,$$
		here $$R^2-(y^2_{1,A_0}+x^2_{1,A_0})=\lambda.$$  In particular,  $\mathcal{L}_{\lambda}$ and  $\mathcal{L}_{\lambda'}$ are disjoint for different $\lambda, \lambda' \in [2, \infty)$. Then $S_{1,+}$ is the compactification of $\mathbb{S}^3 \times [0, \infty)$ by the infinite point $\mathcal{L}_{\infty}=\infty$. 
		So $S_{1,+}$ is topologically a  4-ball. 
		
		Moreover, since $(\sqrt{2}  y_{1,A_0} {\rm i},~~ \sqrt{2}  x_{2,A_0},~~ 0)$ in the Heisenberg group  is fixed by $\iota_{1}$. From the action of  $A^{-1}_{0}=\iota_{2} \iota_{1}$, consider the  $\mathbb{C}^2$-chain $\iota_2(\mathcal{L}_{\lambda})$ in  $\iota_{2}(S_{1,+})=S_{2,-}$, its   polar vector also lies in the axis of $A_0$. So the polar vector  of $\iota_2(\mathcal{L}_{\lambda})$ is 
		$$(\lambda', ~~{\rm i}\sqrt{2}  y_{1,A_0},~~ \sqrt{2}  x_{2,A_0}, ~~1)^{\top}$$
		for some $\lambda'$ depends on $\alpha, \beta$ and $\lambda$. We assume  $\iota_2(\mathcal{L}_{\lambda})$  has radius $R'$, then  $$\lambda'=(R')^2-( y^1_{2,A_0}+ x^2_{2,A_0})$$ by Proposition \ref{prop:center}. So  $\iota_2(\mathcal{L}_{\lambda})$
		is exactly  the intersection of the fixed hyperplane with equation 
			\begin{equation} \label{equation:S2minushyperplane}
			t+2 \cdot y_{1,A_0}\cdot y_1-2 \cdot x_{2,A_0} \cdot x_2-t^{*}=0
				\end{equation} 
and  the  cylinder $$x_1^2+(y_1- y_{1,A_0})^2+(x_2- x_{2,A_0})^2+y_2^2=(R')^2. $$ 
		Where from the presentation matrix of $\iota_{2}$,   
			$$t^{*}=2\sqrt{\cos(\alpha)}[-\sin(\frac{\alpha}{2})\cos(\beta) y_{1,A_{0}}+\cos(\frac{\alpha}{2})\sin(\beta) x_{2,A_{0}}]+\sin(\alpha).
			$$ 
			The constant $t^{*}$ depends continuous on $\alpha$ and $\beta$. But when $\alpha=\frac{\pi}{2}$, $t^{*}=1$. So we may take the small neighborhood $\mathcal{N}$ of $(\frac{\pi}{2},0)$ or $(\frac{\pi}{2},\frac{\pi}{2})$, such that  $t^{*} \geq \frac{1}{2}$ when $\alpha,\beta \in \mathcal{N}$.
	By  (\ref{equation:S1plushyperplane})  and (\ref{equation:S2minushyperplane}), except the point $\infty$,  $\iota_2(S_{1,+})$ and $S_{1,+}$ lie in disjoint parallel  hyperplanes, so they are disjoint. 
		 
\end{proof}

Take $$S_{1,-}=\iota_{0}(S_{1,+}).$$
 From Proposition \ref{prop:S1plus}, $S_{1,-}$ is a (finite) 4-ball in  $\mathbb{C}^2 \times \mathbb{R}$.  Proposition \ref{prop:S1minus}  shows that $S_{1,-}$ and $S_{1,+}$ are disjoint.


\begin{prop}\label{prop:S1minus} For any pair $(\alpha, \beta) \in \mathcal{N}$, and $\lambda \geq 2$,  $\mathcal{L}_{\lambda}$ and $\iota_0(\mathcal{L}_{\lambda})$ are disjoint finite  $\mathbb{C}^2$-chains  in $\mathbb{C}^2 \times \mathbb{R}$. Moreover, the complex hyperbolic planes  associated to them  in ${\bf H}^{3}_{\mathbb C}$ are hyper-parallel. 
\end{prop}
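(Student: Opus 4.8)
The plan is to test disjointness through polar vectors and the Hermitian-form criterion of Proposition \ref{prop:nonintersection}, which reduces the whole statement to determining the sign of a single explicit expression in $\lambda,\alpha,\beta$. First I would record the polar vector of $\mathcal{L}_\lambda$, namely $p_\lambda=(\lambda,\ {\rm i}\sqrt2\,y_{1,A_0},\ \sqrt2\,x_{2,A_0},\ 1)^{\top}$, and compute the polar vector of $\iota_0(\mathcal{L}_\lambda)$. Since $\iota_0$ is anti-holomorphic with presentation matrix $M_0$ acting by $(a,b,c,d)^{\top}\mapsto(\overline d,\overline b,\overline c,\overline a)^{\top}$, and since $y_{1,A_0}$ and $x_{2,A_0}$ are real by Proposition \ref{prop:A0elliptopara}, the image chain has polar vector $n_2=(1,\ -{\rm i}\sqrt2\,y_{1,A_0},\ \sqrt2\,x_{2,A_0},\ \lambda)^{\top}$ (equivalently, the center/radius formula recorded before Proposition \ref{prop:CchaincernterinR1} gives center $(-{\rm i}y_{1,A_0}/\lambda,\ x_{2,A_0}/\lambda,\ 0)$ and radius $R/\lambda$). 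The reality of $x_{2,A_0}$ is exactly the ``lucky'' fact that keeps $n_2$ in this clean form.

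Next I would evaluate the three Hermitian products with respect to $H_s$. Writing $y=y_{1,A_0}$ and $x=x_{2,A_0}$, one finds $\langle p_\lambda,p_\lambda\rangle=\langle n_2,n_2\rangle=2(\lambda+x^2+y^2)$ (equal because $\iota_0$ is an isometry) and $\langle p_\lambda,n_2\rangle=\lambda^2+2x^2-2y^2+1$, which is real. The discriminant governing the relative position of the two complex planes then becomes a difference of squares that factors completely:
\begin{equation*}
|\langle p_\lambda,n_2\rangle|^2-\langle p_\lambda,p_\lambda\rangle\langle n_2,n_2\rangle
=\bigl[(\lambda-1)^2-4y^2\bigr]\bigl[(\lambda+1)^2+4x^2\bigr].
\end{equation*}
By Proposition \ref{prop:nonintersection} the relative position is read off from the sign of this discriminant, disjointness (indeed ultraparallelism) corresponding to its strict positivity; so the problem reduces to showing the product above is strictly positive for every $\lambda\ge 2$.

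The second factor $(\lambda+1)^2+4x^2$ is manifestly positive, so everything hinges on the first factor $(\lambda-1)^2-4y^2$. This is where I would invoke the behaviour of the axis coordinate near the corners of the square: by the remark following Proposition \ref{prop:A0elliptopara}, $y_{1,A_0}=0$ when $\alpha=\tfrac{\pi}{2}$ (for $\beta\neq\beta_0$), and $y_{1,A_0}$ depends continuously on $(\alpha,\beta)$ on the locus (\ref{equation:A0threeeigenvalue}). Hence, after shrinking $\mathcal{N}$ around $(\tfrac{\pi}{2},0)$ or $(\tfrac{\pi}{2},\tfrac{\pi}{2})$, I may assume $|y_{1,A_0}|<\tfrac12$. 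Since $\lambda\ge 2$ gives $(\lambda-1)^2\ge 1>4y_{1,A_0}^2$, the first factor is strictly positive for all admissible $\lambda$, and the strict inequality yields not merely disjointness of the chains but hyper-parallelism of the associated complex hyperbolic planes (they would only become asymptotic at the boundary value where the factor vanishes).

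The main obstacle is the uniformity in $\lambda$: the factor $(\lambda-1)^2-4y^2$ is smallest at $\lambda=2$, where it equals $1-4y_{1,A_0}^2$, so the crude bound $y_{1,A_0}^2+x_{2,A_0}^2<1$ already built into $\mathcal{N}$ in Proposition \ref{prop:S1plus} is not sufficient by itself; one genuinely needs the sharper estimate $|y_{1,A_0}|<\tfrac12$. This is harmless because $y_{1,A_0}\to 0$ as $\alpha\to\tfrac{\pi}{2}$, but it does mean $\mathcal{N}$ must be chosen (possibly smaller) with this inequality in mind, and it is precisely the reality of $x_{2,A_0}$ from Proposition \ref{prop:A0elliptopara} that makes the cross term real and the factorization above exact.
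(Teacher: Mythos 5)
Your proof is correct and is essentially the paper's own argument: the paper likewise computes the Hermitian pairing between the polar vectors of $\mathcal{L}_\lambda$ and $\iota_0(\mathcal{L}_\lambda)$ (there written with polars normalized to norm one, so hyper-parallelism reads $\langle p_\lambda,p_{1/\lambda}\rangle>1$, which after clearing denominators is exactly your factored discriminant $\bigl[(\lambda-1)^2-4y_{1,A_0}^2\bigr]\bigl[(\lambda+1)^2+4x_{2,A_0}^2\bigr]>0$), observes that at $\alpha=\frac{\pi}{2}$, $\beta\neq\beta_0$ one has $y_{1,A_0}=x_{2,A_0}=0$ so the pairing is at least $\frac{5}{4}$ uniformly in $\lambda\geq 2$, and then shrinks $\mathcal{N}$ by continuity, just as you do via the sharper bound $|y_{1,A_0}|<\frac{1}{2}$. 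One caveat: Proposition \ref{prop:nonintersection} as printed in the paper asserts disjointness iff $|\langle n_1,n_2\rangle|^2-\langle n_1,n_1\rangle\langle n_2,n_2\rangle<0$, which is the reverse of the sign convention you use; your convention (strict positivity of the discriminant gives ultraparallelism, negativity gives intersection) is the correct and standard one, and it is the one the paper itself actually applies in this and the subsequent propositions, so you should note that sign discrepancy rather than cite that proposition verbatim.
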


\begin{proof}
	For $\lambda \geq 2$, we normalize the polar vector of $\mathcal{L}_{\lambda}$ into 
	\begin{equation}\label{polarS1+}
	p_\lambda=\frac{1}{\sqrt{2 (\lambda+y^2_{1,A_0}+x^2_{2,A_0})}}\left(
	\begin{array}{c}
	\lambda 
	\\   [2 ex]
	{\rm i} y_{1,A_0}\sqrt{2} \\   [2 ex]
	x_{2,A_0}\sqrt{2} \\
	1
	\\
	\end{array}
	\right),
	\end{equation} with $\langle p_{\lambda},p_{\lambda}\rangle=1$. Then for $0 < \lambda \leq \frac{1}{2}$, we take
	\begin{equation}\label{polarS1-}
	p_\lambda=\frac{1}{\sqrt{2 \lambda (1+\lambda y^2_{1,A_0}+\lambda 
			x^2_{2,A_0})}} \left(
	\begin{array}{c}
	\lambda 
	\\   [0 ex]
	-{\rm i} y_{1,A_0}\sqrt{2} \lambda\\   [0 ex]
	x_{2,A_0}\sqrt{2} \lambda \\
	1
	\\
	\end{array}
	\right),
	\end{equation}
	with $\langle p_{\lambda},p_{\lambda}\rangle=1$.
	It can be checked directly that $$[\iota_{0}(p_{\lambda})]=[p_{\frac{1}{\lambda}}] \in \mathbb{C}{\bf P}^{3}- \overline{{\bf H}^{3}_{\mathbb C}}$$ when $\lambda \geq 2$.
	For $\lambda \geq 2$, $$ \langle p_\lambda,   p_{\lambda^{-1}}\rangle=\displaystyle{\frac{\lambda^2+1-2y^2_{1,A_0}+2x^2_{2,A_0}}{  2(\lambda+y^2_{1,A_0}+x^2_{2,A_0})}}.$$
When 	$\alpha =\frac{\pi}{2}$, and $ \beta \neq \beta_0$,  $y_{1,A_0}=x_{2,A_0}=0$, and $ \langle p_\lambda,  p_{\lambda^{-1}}\rangle =\frac{\lambda^2+1}{2 \lambda} \geq \frac{5}{4}$.
So we may take a neighborhood $\mathcal{N}$ of $(\frac{\pi}{2}, 0)$ or $(\frac{\pi}{2}, \frac{\pi}{2})$ such that   $$\langle p_\lambda,  p_{\lambda^{-1}}\rangle  \geq \frac{9}{8}>1$$ when  $(\alpha, \beta) \in \mathcal{N}$. This implies the complex hyperbolic planes  in ${\bf H}^{3}_{\mathbb C}$  with polars $ p_{\lambda}$ and  $p_{\lambda^{-1}}$  are hyper-parallel for any $\lambda \geq 2$.
	
\end{proof}


 \begin{prop}\label{prop:paracsliceofbisector} (Exercise 5.1.9 and Theorem 5.2.4 of 	\cite{Go}) Given two hyper-parallel  complex hyperbolic planes  $H_1$ and $H_2$ in ${\bf H}^{3}_{\mathbb C}$  with polars $u_1$ and $u_2$ respectively. We may assume $$\langle u_1, u_1\rangle =1,~~~\langle u_2, u_2\rangle =1,~~~ \langle u_1, u_2\rangle =a >1.$$
	For $s\in (0, \infty)$, let $$n(s)=\frac{s (u_1-(a-\sqrt{a^2-1})u_2)-s^{-1} (u_1-(a+\sqrt{a^2+1})u_2)}{2 \sqrt{a^2-1}}.$$ Then there is a  bisector $B$ which contains  $H_1$ and $H_2$  as slices, such that
	 $n(s)$ is a polar of  a complex slice of  the bisector $B$. Moreover,
	\begin{itemize}

			\item $\langle n(s), n(s) \rangle =1$ for  $s\in (0, \infty)$;
		
		\item $n(1)=u_1$;
		
		\item $n(a+\sqrt{a^2-1})=u_2$.
			\end{itemize}  So the portion in the bisector $B$ co-bound by  $H_1$ and $H_2$ can be parameterzied by polars $n(s)$ with $s \in [1, a+\sqrt{a^2-1}]$. 
\end{prop}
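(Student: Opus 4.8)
The plan is to reduce everything to the complex $2$-plane $V=\mathrm{span}_{\mathbb C}(u_1,u_2)\subset\mathbb C^{3,1}$, since the entire statement involves only $u_1,u_2$ and their Hermitian products. The Gram matrix of $(u_1,u_2)$ is $\left(\begin{smallmatrix}1&a\\a&1\end{smallmatrix}\right)$, whose determinant $1-a^2<0$ shows that $V$ has signature $(1,1)$. Hence $P(V)\cap{\bf H}^3_{\mathbb C}$ is a complex geodesic $\Sigma$, and by Goldman's slice decomposition of a bisector (Theorem 5.2.4 of \cite{Go}) this $\Sigma$ is the natural candidate for the complex spine of the bisector $B$ we seek, with the crucial feature that the polars of all complex slices of a bisector with spine $\Sigma$ are exactly the unit positive vectors lying in $V$. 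Thus the whole task becomes: locate, inside $V$, the one-parameter family of unit positive vectors realizing the real spine through the feet of $H_1$ and $H_2$.

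First I would produce a null frame of $V$. Solving $\langle u_1-\xi u_2,\,u_1-\xi u_2\rangle=0$ for real $\xi$ gives $\xi^2-2a\xi+1=0$, hence $\xi=a\pm\sqrt{a^2-1}$, so the two null lines of $V$ are spanned by $P=u_1-(a-\sqrt{a^2-1})u_2$ and $Q=u_1-(a+\sqrt{a^2-1})u_2$ (these are the two factors, up to an evident misprint, in the numerator of $n(s)$), and their projectivizations $[P],[Q]$ are precisely the two ideal endpoints of $\Sigma$. Writing $b=\sqrt{a^2-1}$, a short computation records the only three numbers needed for the verification: $\langle P,P\rangle=\langle Q,Q\rangle=0$ and $\langle P,Q\rangle=\langle Q,P\rangle=1-a^2-b^2=-2b^2$.

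With $n(s)$ rewritten in the null frame as the real combination $n(s)=\frac{1}{2b}\bigl(sP-s^{-1}Q\bigr)$, the first bullet is immediate: because $P,Q$ are null, only the cross terms survive and the $s$-dependence cancels, giving $\langle n(s),n(s)\rangle=\frac{1}{4b^2}\bigl(-\langle P,Q\rangle-\langle Q,P\rangle\bigr)=\frac{4b^2}{4b^2}=1$ for every $s\in(0,\infty)$. For the remaining two bullets I would substitute $s=1$ and $s=a+\sqrt{a^2-1}$ and use the identity $(a+b)(a-b)=1$; in each case the coefficient of one of $u_1,u_2$ collapses to $0$ and the other to $1$, so the two boundary slices are exactly $H_1$ and $H_2$ (with the labelling of $u_1,u_2$ fixed by the sign conventions in the formula).

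Finally I would assemble the geometric conclusion. As $s\to\infty$ and $s\to0$ the projective point $[n(s)]$ tends to $[P]$ and $[Q]$, the endpoints of $\Sigma$, so the feet $H_{n(s)}\cap\Sigma$ sweep out the complete geodesic of $\Sigma$ joining $[P]$ to $[Q]$; by Theorem 5.2.4 of \cite{Go} this geodesic is the real spine of a bisector $B$ with complex spine $\Sigma$, and the family $\{H_{n(s)}\}$ is exactly its complex slices. Since $H_1,H_2$ occur at $s=1$ and $s=a+\sqrt{a^2-1}$, the portion of $B$ co-bounded by $H_1$ and $H_2$ is parametrized by $s\in[1,a+\sqrt{a^2-1}]$, which is the assertion. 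I expect the main obstacle to be conceptual rather than computational: one must confirm that it is the real one-parameter subfamily (real $s$), not the full complex family of unit positive vectors in $V$, that fills out a genuine bisector. This is precisely the content of the cited slice decomposition, and it is the step where I would invoke \cite{Go} rather than reprove the internal structure of bisectors.
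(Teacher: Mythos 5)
Your proof is correct, and in fact it supplies something the paper does not: the paper gives no argument for this proposition at all, but simply imports it from Goldman's book (Exercise 5.1.9 and Theorem 5.2.4 of \cite{Go}), adding only a remark that corrects a sign misprint there. Your null-frame computation is exactly the verification that the paper delegates to the reference: the Gram matrix shows $V=\mathrm{span}_{\mathbb C}(u_1,u_2)$ has signature $(1,1)$; the vectors $P=u_1-(a-\sqrt{a^2-1})u_2$ and $Q=u_1-(a+\sqrt{a^2-1})u_2$ are null with $\langle P,Q\rangle=\langle Q,P\rangle=-2(a^2-1)$; and writing $n(s)=\frac{1}{2\sqrt{a^2-1}}(sP-s^{-1}Q)$ with $s$ real, only the cross terms survive, giving $\langle n(s),n(s)\rangle=1$. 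Your appeal to \cite{Go} is correctly confined to the single structural fact (that the real one-parameter family of slices over a geodesic of the complex spine $\Sigma$ constitutes a bisector), which is precisely what the paper's citation of Theorem 5.2.4 covers, so the two routes agree on where the black box sits. One by-product of your computation deserves to be stated outright rather than hedged behind ``the labelling of $u_1,u_2$ fixed by the sign conventions'': with the formula as printed (after correcting the evident misprint $\sqrt{a^2+1}\mapsto\sqrt{a^2-1}$), one finds $n(1)=u_2$ and $n(a+\sqrt{a^2-1})=u_1$, so the second and third bullets of the statement have $u_1$ and $u_2$ interchanged; your labelling, not the statement's, is the one consistent with the paper's own use of the formula in \eqref{polarS1zero}, where $\phi=1$ is matched to $p_{\frac{1}{2}}$ and $\phi=a+\sqrt{a^2-1}$ to $p_{2}$. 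The only step you leave implicit --- that the feet of the slices sweep out exactly the geodesic of $\Sigma$ joining $[P]$ and $[Q]$ --- is the one-line check $\langle sP-s^{-1}Q,\,sP+s^{-1}Q\rangle=0$ together with $\langle sP+s^{-1}Q,\,sP+s^{-1}Q\rangle=-4(a^2-1)<0$ for $s>0$, and does not affect correctness.
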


\begin{rem} There is a typo in  Exercise 5.19 of Page 155 in \cite{Go},  it should be $\frac{t Q_{-}-t^{-1}Q_{+}}{2}$ instead of $\frac{t Q_{-}+t^{-1}Q_{+}}{2}$ over there. 
\end{rem}

Now let $$a:=\langle p_2,  p_{\frac{1}{2}}\rangle=
\displaystyle{\frac{5-2y^2_{1,A_0}+2x^2_{2,A_0}}{  2(2+y^2_{1,A_0}+x^2_{2,A_0})}}.$$
When $\alpha=\frac{\pi}{2}$ and $\beta\neq \beta_0$, $y_{1,A_0}=x_{2,A_0}=0$, and  $a=\frac{5}{4}$ in turn. So we may assume there is a neighborhood $\mathcal{N}$ of $(\frac{\pi}{2}, 0)$ or $(\frac{\pi}{2}, \frac{\pi}{2})$ such that $a \geq \frac{9}{8}$ when $(\alpha,\beta) \in \mathcal{N}$.
Take a  homeomorphism $$\phi:[\frac{1}{2},2]\longrightarrow[1, a+\sqrt{a^2-1}]$$
such that $\phi(\frac{1}{2})=1$ and  $\phi(2)=a+\sqrt{a^2-1}$. Here we take the linear  homeomorphism  $$\phi(x)=\frac{2(a-1+\sqrt{a^2-1})}{3} \cdot x-\frac{a-1+\sqrt{a^2-1}}{3}+1.$$
For
$\lambda \in [\frac{1}{2}, 2]$, let $p_\lambda$ be 
\begin{equation}\label{polarS1zero}
\displaystyle{\frac{\phi(\lambda) ( p_{2}  -(a-\sqrt{a^2-1})p_{\frac{1}{2}})-\phi(\lambda)^{-1} ( p_{2} -(a+\sqrt{a^2-1})p_{\frac{1}{2}} )}{2 \sqrt{a^2-1}}}, 
\end{equation}
which is a positive vector 
by Proposition \ref{prop:paracsliceofbisector}.
In particular, when $\lambda=\frac{1}{2}$, $p_{\frac{1}{2}}$ defined in (\ref{polarS1zero})  is the same as $p_{\frac{1}{2}}$ defined in (\ref{polarS1+}). Moreover, $p_{2}$ defined in (\ref{polarS1zero}) is the same as $p_{2}$ defined early in (\ref{polarS1-}).

From Propositions \ref{prop:S1minus}, \ref{prop:paracsliceofbisector} above,  and Theorem 5.2.4 of 	\cite{Go}, there is a unique bisector $B$ in ${\bf H}^{3}_{\mathbb C}$, such that $\mathcal{L}_{2}$ and $\iota_0(\mathcal{L}_{2})$ are $\mathbb{C}^2$-chains of two  complex slices of $B$.  $\partial B$ is a union of $\mathbb{C}^2$-chains by Subsection \ref{subsection:bisector}.
We denote by $S_{1,0}$ the part of $ \partial B$ co-bound by $\mathcal{L}_{2}$ and $\iota_0(\mathcal{L}_{2})=\mathcal{L}_{\frac{1}{2}}$.  The $\mathbb{C}^2$-chains of  $S_{1,0}$ are parameterized  in (\ref{polarS1zero}).  $S_{1,0}$  is topologically $\mathbb{S}^{3} \times [-1,1]$ with one common 3-sphere ($\mathbb{C}^2$-chain) with $S_{1,+}$ and $ S_{1,-}$ respectively. 
Let $$S_1=S_{1,+} \cup S_{1,0} \cup S_{1,-}$$ 
be a $\mathbb{C}$-sphere in $\mathbb{C}^2 \times \mathbb{R} \cup \infty$.


 \begin{prop}\label{prop:S1embed} The $\mathbb{C}$-sphere $S_1$ is an embedded $4$-sphere in $\mathbb{C}^2 \times \mathbb{R} \cup \infty$. $\iota_0(S_{1,0})=S_{1,0}$, $\iota_0$ exchanges $S_{1,+}$ and $S_{1,-}$. So  $S_1$ is  $c_1=A_2=\iota_1\iota_0$ invariant.
 
 \end{prop}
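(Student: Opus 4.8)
The plan is to present $S_1$ as a suspension-type sphere foliated by $\mathbb{C}^2$-chains and then verify the two symmetries separately. First I would record the topological assembly. By Proposition \ref{prop:S1plus} the piece $S_{1,+}$ is an embedded $4$-ball, so its homeomorphic image $S_{1,-}=\iota_0(S_{1,+})$ is again an embedded $4$-ball; and $S_{1,0}$, being the portion of $\partial B$ co-bounded by the two complex slices $\mathcal{L}_2$ and $\mathcal{L}_{1/2}$, is embedded and homeomorphic to $\mathbb{S}^3\times[-1,1]$ via the complex-slice foliation of the bisector recalled in Subsection \ref{subsection:bisector}. The three pieces are glued along the common boundary $3$-spheres $\mathcal{L}_2=S_{1,+}\cap S_{1,0}$ and $\mathcal{L}_{1/2}=\iota_0(\mathcal{L}_2)=S_{1,0}\cap S_{1,-}$, where the two parameterizations of the polar vectors agree by the endpoint identity preceding (\ref{polarS1zero}). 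Capping the two ends of $\mathbb{S}^3\times[-1,1]$ by the two $4$-balls yields $\mathbb{S}^4$, with the degenerate leaves $\mathcal{L}_\infty=\{\infty\}\subset S_{1,+}$ and $\iota_0(\infty)\subset S_{1,-}$ as the two poles.

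To upgrade this to an embedding it suffices to show that the foliating $\mathbb{C}^2$-chains $\mathcal{L}_\lambda$, $\lambda\in(0,\infty]$, are pairwise disjoint, so that the three pieces meet only along the two designated $3$-spheres. Disjointness inside each piece is essentially free: inside $S_{1,+}$ all the polars $p_\lambda=(\lambda,\ \mathrm{i}\sqrt{2}\,y_{1,A_0},\ \sqrt{2}\,x_{2,A_0},\ 1)^{\top}$ lie in the fixed $2$-complex-dimensional subspace $W=\mathrm{span}_{\mathbb{C}}\{(1,0,0,0)^{\top},(0,\mathrm{i}\sqrt{2}\,y_{1,A_0},\sqrt{2}\,x_{2,A_0},1)^{\top}\}$, on which $\langle\cdot,\cdot\rangle$ has signature $(1,1)$, so the planes $\mathcal{P}_\lambda$ form an ultraparallel pencil whose common polar line $[W^\perp]$ is positive and hence misses $\overline{{\bf H}^3_{\mathbb C}}$; their boundary chains are therefore pairwise disjoint, consistent with Proposition \ref{prop:S1plus}. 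The $S_{1,-}$ family is the $\iota_0$-image of this pencil, and the $S_{1,0}$ leaves are complex slices of the single bisector $B$, so they are disjoint as well. The remaining, and in my view hardest, point is cross-piece disjointness: that a leaf of one family meets a leaf of another only at the two gluing slices. I would handle this exactly as in Proposition \ref{prop:S1minus}, checking for $\lambda\neq\mu$ that the normalized polars satisfy $|\langle p_\lambda,p_\mu\rangle|^2>\langle p_\lambda,p_\lambda\rangle\langle p_\mu,p_\mu\rangle$, which forces $\mathcal{P}_\lambda$ and $\mathcal{P}_\mu$ to be hyper-parallel and the chains disjoint. Proposition \ref{prop:S1minus} already treats the antipodal pairs $(\lambda,\lambda^{-1})$; since all quantities reduce to the clean values $y_{1,A_0}=x_{2,A_0}=0$ and $a=\tfrac54$ at $\alpha=\tfrac{\pi}{2}$, the general estimate can be made uniform after shrinking $\mathcal{N}$. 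The pairs involving the bisector polars (\ref{polarS1zero}) against the $S_{1,\pm}$ families are the awkward ones and are where a Maple-assisted verification is likely to be needed.

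Finally I would verify the two symmetries. That $\iota_0$ interchanges $S_{1,+}$ and $S_{1,-}$ is immediate from the definition $S_{1,-}=\iota_0(S_{1,+})$ together with $\iota_0^2=\mathrm{id}$. To see $\iota_0(S_{1,0})=S_{1,0}$, note that $\iota_0$ is an anti-holomorphic isometry carrying complex planes to complex planes and, by Proposition \ref{prop:S1minus}, interchanging the two complex slices $\mathcal{L}_2\leftrightarrow\mathcal{L}_{1/2}$ of $B$; since a bisector is the unique one possessing two prescribed distinct complex slices, $\iota_0(B)=B$, and the portion between the boundary slices is preserved, so $\iota_0(S_1)=S_1$. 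For $\iota_1$ I would check directly that $\iota_1(p_\lambda)=p_\lambda$ in all three normalizations (\ref{polarS1+}), (\ref{polarS1-}), (\ref{polarS1zero}): using the presentation matrix $M_1=\mathrm{diag}(1,-1,1,1)$ of (\ref{matrix:M1}), the reality of $\lambda,\,y_{1,A_0},\,x_{2,A_0},\,a,\,\phi(\lambda)$, and the anti-linearity of $\iota_1$, each polar is fixed; this is Proposition \ref{prop:CchaincernterinR1} for the $S_{1,\pm}$ leaves (whose centers $(\mathrm{i}\,y_{1,A_0},x_{2,A_0},0)$ lie on $\mathcal{R}_1$) and extends to the bisector slices since they are real-coefficient combinations of the $\iota_1$-fixed vectors $p_2$ and $p_{1/2}$. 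Hence $\iota_1(\mathcal{L}_\lambda)=\mathcal{L}_\lambda$ for every $\lambda$ and $\iota_1(S_1)=S_1$; combining this with $\iota_0(S_1)=S_1$ gives $c_1(S_1)=\iota_1\iota_0(S_1)=S_1$, as claimed.
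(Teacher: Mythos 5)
Your plan follows the paper's own route: the same three-piece assembly of $S_1$, the same disjointness criterion (norm-one polars with $|\langle p_\lambda,p_\mu\rangle|>1$ give hyper-parallel planes, hence disjoint $\mathbb{C}^2$-chains), and the same symmetries. Your verification of $\iota_0(S_{1,0})=S_{1,0}$ via uniqueness of the bisector through the two slices, and of $\iota_1$-invariance via the matrix $M_1$ of (\ref{matrix:M1}) together with the reality of $\lambda$, $y_{1,A_0}$, $x_{2,A_0}$, $a$, $\phi(\lambda)$ (this is Proposition \ref{prop:CchaincernterinR1} extended to the slices (\ref{polarS1zero})), is correct and in fact more explicit than what the paper writes. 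Your signature-$(1,1)$ pencil argument for disjointness within $S_{1,+}$ is also a valid, clean substitute for the paper's concentric-spheres-in-a-hyperplane picture from Proposition \ref{prop:S1plus}.

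The gap is that the step you yourself flag as the hard one --- the estimate for the bisector polars (\ref{polarS1zero}) against the families (\ref{polarS1+}) and (\ref{polarS1-}) --- is essentially the entire content of the paper's proof, and you leave it as an unperformed ``Maple-assisted verification.'' The paper carries it out: for $\lambda\in[2,\infty]$ and $\mu\in[\tfrac12,2]$ it writes $\langle p_\lambda,p_\mu\rangle$ explicitly, shows that after shrinking $\mathcal{N}$ so that $a\in[\tfrac98,\tfrac32]$ this quantity is increasing in $\lambda$ and decreasing in $\mu$, and observes that it equals $1$ exactly at $\lambda=\mu=2$; hence $S_{1,0}\cap S_{1,+}=\mathcal{L}_2$, and the case of $S_{1,0}$ versus $S_{1,-}$ then costs nothing because $\iota_0$ preserves $S_{1,0}$ and carries $S_{1,+}$ to $S_{1,-}$ --- a reduction you could have invoked instead of proposing to estimate those pairs separately. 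Moreover, your fallback that ``the general estimate can be made uniform after shrinking $\mathcal{N}$'' cannot work as stated for these pairs: the inequality degenerates to equality at the shared slice $\lambda=\mu=2$, so there is no uniform gap to preserve under perturbation from $\alpha=\tfrac{\pi}{2}$. This is precisely why the paper proves monotonicity of $\langle p_\lambda,p_\mu\rangle$ valid on all of $\mathcal{N}$ (with equality only at the gluing chain), rather than perturbing a strict bound; without that argument, or the explicit computation replacing it, embeddedness of $S_1$ is not established.
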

 \begin{proof} Since all of these polars are normalized with norm one,  we need only to show that $|\langle p_\lambda, p_\mu \rangle| >1$ for $\lambda \in [0, \frac{1}{2}] \cup [2, \infty]$ and $\mu \in [\frac{1}{2},2]$. By a direct calculation, for $\lambda \in [2, \infty]$ and  $\mu \in [\frac{1}{2},2]$, we have $\langle p_\lambda, p_\mu \rangle$ is 
 	 	\begin{flalign}\label{equation:S1sphere} &   \frac{(2+2y^2_{1,A_0}+2x^2_{2,A_0}+\lambda)\cdot (\phi(\mu)-\frac{1}{\phi(\mu)})}{4\sqrt{(\lambda+y^2_{1,A_0}+x^2_{2,A_0})(2+y^2_{1,A_0}+x^2_{2,A_0})(a^2-1)}} & \nonumber \\
 	&+\frac{(1-2y^2_{1,A_0}+2x^2_{2,A_0}+2\lambda) \cdot (\frac{1}{\phi(\mu)}(a+\sqrt{a^2-1})-\phi(\mu)(a-\sqrt{a^2-1}))}{4\sqrt{(\lambda+y^2_{1,A_0}+x^2_{2,A_0})(1+y^2_{1,A_0}+x^2_{2,A_0})(a^2-1)}}.&\nonumber
 	\end{flalign}
 Take a neighborhood $\mathcal{N}$ of  $(\frac{\pi}{2}, 0)$ or $(\frac{\pi}{2}, \frac{\pi}{2})$, we may assume  $a \in [\frac{9}{8}, \frac{3}{2}]$ when $(\alpha,\beta)\in \mathcal{N}$.
 	Then  $\langle p_\lambda, p_\mu \rangle$ is increasing on  $\lambda \in [2, \infty]$, and  decreasing on   $\mu \in [\frac{1}{2},2]$ when $(\alpha,\beta) \in \mathcal{N}$.
 	Moreover, when $\lambda=\mu=2$, it can be checked directly  $\langle p_\lambda, p_\mu \rangle=1$.
 Then we have $S_{1,0}$ and $S_{1,+}$ intersect only at the common $\mathbb{C}^2$-chain $\mathcal{L}_{2}$. 
 
Moreover, since $S_{1,0}$ is $\iota_0$-invariant, and $S_{1,-}=\iota_{0}(S_{1,+})$. The parts $S_{1,0}$ and $S_{1,-}$ intersect only at the common slice $\mathcal{L}_{\frac{1}{2}}$. 

This ends the proof of Proposition \ref{prop:S1embed}. Which in turn implies Proposition \ref{prop:S1}.
 \end{proof}



Let $$S_2=A^{-1}_1(S_1),~~~ \text{and}~~S_0=A_1(S_1).$$ Both of them are embedded 4-spheres in $\mathbb{C}^2 \times \mathbb{R} \cup \infty$. In particular, let $$S_{2,*}=A^{-1}_1(S_{1,*}), ~~ \text{and}~~ S_{0,*}=A_1(S_{1,*})$$ for $*=+, -, 0$. Note that  $$S_{2,-}= \iota_{2} (S_{1,+})=\iota_{2}\iota_{0} (S_{1,-}),$$ and  $\infty$ is a tangency point of $S_{1,+}$ and $S_{2,-}$.

Since there is a  $\mathbb{Z}_3$-symmetry of the $\mathbb{C}$-spheres  $$\{S_0, ~~S_1,~~S_2\},$$ and  there is a  $\mathbb{Z}_2$-symmetry for each of $S_i$. The following proposition implies that $S_{i}$ and $S_{j}$ are disjoint except the  tangency points. For example, $S_{1,0}$ is invariant under $\iota_{0}$, and $$\iota_{0}(S_{2,+} \cap S_{1,0})=S_{0,-} \cap S_{1,0}.$$ So  $S_{2,+} \cap S_{1,0}=\emptyset$ implies $S_{0,-}\cap S_{1,0}=\emptyset$, and by $\mathbb{Z}_3$-symmetry, $$S_{1,-} \cap S_{2,0}=S_{2,-} \cap S_{0,0}=\emptyset.$$

\begin{prop}\label{prop:disjoint} There is a neighborhood $\mathcal{N}$ of $(\frac{\pi}{2},0)$ or $(\frac{\pi}{2},\frac{\pi}{2})$ in $[0,\frac{\pi}{2}]^2$, such that we  have the following disjoint properties of portions of  $\mathbb{C}$-spheres:	
	\begin{enumerate}
		
		\item \label{disjoint2+1-}  $S_{2,+}$ and $S_{1,-}$ are disjoint except the  tangency point $\infty$;
		
		\item \label{disjoint2+10}  $S_{2,+}$ and $S_{1,0}$ are disjoint;
		
			\item  \label{disjoint2+1+}  $S_{2,+}$ and $S_{1,+}$ are disjoint;
			
				\item \label{disjoint2010}   $S_{2,0}$ and $S_{1,0}$ are disjoint;
				
					\item \label{disjoint201+}   $S_{2,0}$ and $S_{1,+}$ are disjoint;
					
						\item \label{disjoint2-1+}  $S_{2,-}$ and $S_{1,+}$ are disjoint;

		\end{enumerate}
\end{prop}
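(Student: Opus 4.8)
The plan is to reduce all six claims to a single family of scalar inequalities between unit-normalized polar vectors, verify these at the two base points $(\frac{\pi}{2},0)$ and $(\frac{\pi}{2},\frac{\pi}{2})$, and then propagate them to a neighborhood $\mathcal{N}$ by continuity. Every portion $S_{i,*}$ is by construction a union of $\mathbb{C}^2$-chains, and two $\mathbb{C}^2$-chains are disjoint on $\partial{\bf H}^3_{\mathbb C}$ exactly when the complex hyperbolic planes they bound are hyper-parallel, i.e. $|\langle n_1,n_2\rangle|>1$ for unit-normalized polars (Proposition \ref{prop:nonintersection}, in the form used in Proposition \ref{prop:S1minus}). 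Thus to show that $S_{2,a}$ and $S_{1,b}$ are disjoint it suffices to check that every chain of $S_{2,a}$ is hyper-parallel to every chain of $S_{1,b}$. The polars of the chains of $S_{1,+}$, $S_{1,-}$, $S_{1,0}$ are the unit vectors $p_\lambda$ of (\ref{polarS1+}), (\ref{polarS1-}), (\ref{polarS1zero}); since $A_1\in\mathbf{U}(3,1)$, the polars of the chains of $S_{2,*}=A_1^{-1}(S_{1,*})$ are $A_1^{-1}p_\lambda$, again unit vectors. Hence the whole proposition is equivalent to the inequalities
\[
\bigl|\langle A_1^{-1}p_\lambda,\,p_\mu\rangle\bigr|=\bigl|\langle p_\lambda,\,A_1 p_\mu\rangle\bigr|>1
\]
with $\lambda,\mu$ ranging over the parameter intervals attached to the two portions in question.

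First I would compute at the base points. By the last bullet of the Remark following Proposition \ref{prop:A0elliptopara}, at $\alpha=\frac{\pi}{2}$ (and $\beta\neq\beta_0$) one has $y_{1,A_0}=x_{2,A_0}=0$, so $p_\lambda=\frac{1}{\sqrt{2\lambda}}(\lambda,0,0,1)^{\top}$ for $\lambda\in(0,\tfrac12]\cup[2,\infty]$ and the matrix $A_1$ collapses to a very simple unitary. For the two spherical portions this makes $\langle p_\lambda,A_1 p_\mu\rangle$ elementary, and $|\langle\cdot\rangle|^2-1$ factors as a manifest sum of squares; for instance $(S_{2,+},S_{1,-})$, and symmetrically $(S_{2,-},S_{1,+})$, reduces to $(1-\lambda\mu)^2+\lambda^2>0$, strict for all admissible $\lambda,\mu$, the only common point of the two portions being the cap $\infty$ they share as the fixed point of the parabolic $A_0$. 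The remaining items involving $S_{1,0}$ or $S_{2,0}$ use the bisector polars (\ref{polarS1zero}); here I would exploit, exactly as in the proof of Proposition \ref{prop:S1embed}, monotonicity of $\langle p_\lambda,A_1 p_\mu\rangle$ in $\lambda$ and in $\mu$ to reduce the verification to the finitely many endpoint values $\lambda,\mu\in\{\tfrac12,2,\infty\}$, where the inequality is checked directly.

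Finally, since every base-point inequality is strict and holds on compact $(\lambda,\mu)$-ranges with a uniform positive margin, and since $\langle p_\lambda,A_1p_\mu\rangle$ depends continuously on $(\alpha,\beta)$ (the entries of $A_1$, and the quantities $y_{1,A_0},x_{2,A_0},a,t^{\ast}$ entering the polars, are all continuous near $\alpha=\frac{\pi}{2}$), each inequality survives on an open neighborhood of the base point. Taking $\mathcal{N}$ to be the intersection of these neighborhoods together with those produced in Propositions \ref{prop:S1plus}--\ref{prop:S1embed} gives the asserted $\mathcal{N}$. The tangencies are not lost in this limit because the shared cap points are pinned group-theoretically: they are fixed points of parabolic words ($\infty$ for $A_0$, and its $\mathbb{Z}_3$-images), so two portions sharing a cap remain tangent there while all their finite chains stay strictly hyper-parallel. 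I would organize the six items using the stated $\mathbb{Z}_2$-symmetry of each $S_i$ and the $\mathbb{Z}_3$-symmetry permuting $\{S_0,S_1,S_2\}$, which cuts the independent cases down substantially.

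I expect the genuine difficulty to lie in items (2), (4) and (5), where at least one portion is the bisector piece $S_{1,0}$ or $S_{2,0}$: there the polars are the two-term combinations of (\ref{polarS1zero}) rather than the single vectors of the spherical caps, the slices are real (Lagrangian) rather than complex, and one must control $\langle A_1^{-1}p_\lambda,p_\mu\rangle$ over a genuinely two-dimensional $(\lambda,\mu)$-range while keeping the estimate uniform in $(\alpha,\beta)$. The base-point simplification $y_{1,A_0}=x_{2,A_0}=0$ and the monotonicity reduction are what make this tractable; without the lucky reality of $x_{2,A_0}$ noted before Proposition \ref{prop:A0elliptopara}, even writing these pairings in closed form would be far more painful.
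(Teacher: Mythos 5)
Your proposal follows essentially the same route as the paper's proof: reduce each pairwise disjointness claim to the hyper-parallelism inequality $|\langle A_1^{-1}p_\lambda,p_\mu\rangle|>1$ for the unit polars of (\ref{polarS1+}), (\ref{polarS1-}), (\ref{polarS1zero}), evaluate at $\alpha=\frac{\pi}{2}$ (where $y_{1,A_0}=x_{2,A_0}=0$ turns the pairings into explicit rational functions of $(\lambda,\mu)$), and then propagate the strict inequalities to a neighborhood $\mathcal{N}$ by continuity. The only divergence is tactical rather than structural: the paper verifies the resulting closed-form expressions directly on the full parameter ranges (and disposes of item (\ref{disjoint2-1+}) by citing Proposition \ref{prop:S1plus}'s parallel-hyperplane argument), whereas you propose monotonicity-plus-endpoint checks and a symmetry reduction to cut down the cases.
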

\begin{proof}To prove Item (\ref{disjoint2+1-}) of Proposition \ref{prop:disjoint}, we consider $p_{\lambda}$ for $\lambda\in [0,\frac{1}{2}]$ in (\ref{polarS1-}), which is the polar of a $\mathbb{C}^2$-chain in $S_{1,-}$, and $q_{\mu}=A^{-1}_1 (p_{\mu})$ for $\mu \in [2,\infty]$ as in (\ref{polarS1+}), 
	which is the polar of a $\mathbb{C}^2$-chain in $S_{2,+}$. We note that $p_{\lambda}$ and $q_{\mu}$ also depend on $(\alpha,\beta)$.
	Now $\langle p_{\lambda}, q _{\mu} \rangle $ is too complicated to write down. But when $\alpha=\frac{\pi}{2}$ and $\beta\neq \beta_0$, then  $$|\langle p_{\lambda}, q _{\mu} \rangle|^2= \frac{\lambda^2 \mu^2+2\lambda \mu+ \mu^2+1}{4 \lambda \mu}.$$
	Which is strictly  bigger than 1  when $\lambda\in [0,\frac{1}{2}]$ and  $\mu \in [2,\infty]$.  Recall that we have normalized the polars  such that  $p_{\lambda}$ and $q_{\mu}$ have norm 1. We can take a neighborhood $\mathcal{N}$ of $(\frac{\pi}{2},0)$ or $(\frac{\pi}{2},\frac{\pi}{2})$ in $[0,\frac{\pi}{2}]^2$ such that $\langle p_{\lambda}, q _{\mu} \rangle $   is strictly  bigger than 1 when $(\alpha,\beta) \in \mathcal{N}$. This implies the  complex hyperbolic planes  with polars  $p_{\lambda}$ and $q_{\mu}$  are hyper-parallel  when $(\alpha,\beta) \in \mathcal{N}$.  So the $\mathbb{C}^2$-chains correspond to them are disjoint.

	To prove Item (\ref{disjoint2+10}) of Proposition \ref{prop:disjoint}, we consider $p_{\lambda}$ for $\lambda\in [\frac{1}{2},2]$ in (\ref{polarS1zero}), which is the polar of a $\mathbb{C}^2$-chain in $S_{1,0}$, and $q_{\mu}=A^{-1}_1 (p_{\mu})$ for $\mu \in [2,\infty]$  in (\ref{polarS1+}), 
	which is the polar of a $\mathbb{C}^2$-chain in $S_{2,+}$.
	When $\alpha=\frac{\pi}{2}$ and $\beta\neq \beta_0$, then  $$|\langle p_{\lambda}, q _{\mu} \rangle|^2= \frac{81+(4\lambda^4+16\lambda^3+24\lambda^2+16\lambda+85) \mu^2+36(1+\lambda)^2\mu}{72 \mu(1+\lambda)^2}.$$
	Which is at least 2  when $\lambda\in [\frac{1}{2},2]$ and  $\mu \in [2,\infty]$.   We can take a  neighborhood $\mathcal{N}$ of $(\frac{\pi}{2},0)$ or $(\frac{\pi}{2},\frac{\pi}{2})$ in $[0,\frac{\pi}{2}]^2$ such that $\langle p_{\lambda}, q _{\mu} \rangle $   is strictly  bigger than 1 when $(\alpha,\beta) \in \mathcal{N}$. This implies the  complex hyperbolic planes  with polars  $p_{\lambda}$ and $q_{\mu}$  are hyper-parallel  when $(\alpha,\beta) \in \mathcal{N}$.

		To prove Item (\ref{disjoint2+1+}) of Proposition \ref{prop:disjoint}, we consider $p_{\lambda}$ for $\lambda\in [2, \infty]$ in (\ref{polarS1+}), which is the polar of a $\mathbb{C}^2$-chain in $S_{1,+}$, and $q_{\mu}=A^{-1}_1 (p_{\mu})$ for $\mu \in [2,\infty]$  in (\ref{polarS1+}), 
	which is the polar of a $\mathbb{C}^2$-chain in $S_{2,+}$.
	When $\alpha=\frac{\pi}{2}$ and $\beta\neq \beta_0$, then   $$|\langle p_{\lambda}, q _{\mu} \rangle|^2= \frac{\lambda^2 \mu^2+2\lambda \mu+ \mu^2+1}{4 \lambda \mu}.$$
	Which is at least $\frac{25}{16}$  when $\lambda\in [2, \infty]$ and  $\mu \in [2,\infty]$. 
	We can take a neighborhood $\mathcal{N}$ of $(\frac{\pi}{2},0)$ or $(\frac{\pi}{2},\frac{\pi}{2})$ in $[0,\frac{\pi}{2}]^2$ such that $\langle p_{\lambda}, q _{\mu} \rangle $   is strictly  bigger than 1 when $(\alpha,\beta) \in \mathcal{N}$. This implies the  complex hyperbolic planes  with polars  $p_{\lambda}$ and $q_{\mu}$  are hyper-parallel  when $(\alpha,\beta) \in \mathcal{N}$. 
	
		To prove Item (\ref{disjoint2010}) of Proposition \ref{prop:disjoint}, we consider $q_{\mu}=A^{-1}_1(p_{\mu})$ for $\mu\in [\frac{1}{2}, 2]$ in (\ref{polarS1zero}), which is the polar of a $\mathbb{C}^2$-chain in $S_{2,0}$, and $p_{\lambda}$ for $\lambda \in [\frac{1}{2},2]$  in (\ref{polarS1zero}), 
	which is the polar of a $\mathbb{C}^2$-chain in $S_{1,0}$.
When $\alpha=\frac{\pi}{2}$ and $\beta\neq \beta_0$, then   $|\langle p_{\lambda}, q _{\mu} \rangle|^2$ is
	\begin{flalign}\nonumber &  \frac{16\lambda^4\mu^4+64\lambda^4\mu^3+64\lambda^3\mu^4+96\lambda^4\mu^2+256\lambda^3\mu^3+96\lambda^2\mu^4+64\lambda^4\mu+384\lambda^3\mu^2}{1296(1+\lambda)^2(\mu+1)^2}+& \nonumber \\
	\nonumber & \frac{384\lambda^2\mu^3+64\lambda\mu^4+16\lambda^4+256\lambda^3\mu+1224\lambda^2 \mu^2+256\lambda\mu^3+340\mu^4+64\lambda^3}{1296(1+\lambda)^2(\mu+1)^2} +& \nonumber \\
&\frac{1680\lambda^2\mu+1680\lambda\mu^2+1360\mu^3+744\lambda^2+2848\lambda\mu+2688\mu^2+1360\lambda+2656\mu+7549}{1296(1+\lambda)^2(\mu+1)^2}.&\nonumber
\end{flalign}
	Which is strictly bigger than 1   when $\lambda, 
\mu \in [\frac{1}{2}, 2]$.
We can take a neighborhood $\mathcal{N}$ of $(\frac{\pi}{2},0)$ or $(\frac{\pi}{2},\frac{\pi}{2})$ in $[0,\frac{\pi}{2}]^2$ such that $\langle p_{\lambda}, q _{\mu} \rangle $   is strictly  bigger than 1 when $(\alpha,\beta) \in \mathcal{N}$. This implies the  complex hyperbolic planes  with polars  $p_{\lambda}$ and $q_{\mu}$  are hyper-parallel  when $(\alpha,\beta) \in \mathcal{N}$.


	To prove Item (\ref{disjoint201+}) of Proposition \ref{prop:disjoint}, we consider $q_{\mu}=A^{-1}_1(p_{\mu})$ for $\mu\in [\frac{1}{2}, 2]$ in (\ref{polarS1zero}), which is the polar of a $\mathbb{C}^2$-chain in $S_{2,0}$, and $p_{\lambda}$ for $\lambda \in [2,\infty]$ in (\ref{polarS1+}), 
	which is the polar of a $\mathbb{C}^2$-chain in $S_{1,+}$.
When $\alpha=\frac{\pi}{2}$ and $\beta\neq \beta_0$, then $|\langle p_{\lambda}, q _{\mu} \rangle|^2$ is 
\begin{equation}\nonumber
\frac{4(\lambda^2+1)\mu^4+16(\lambda^2+1)\mu^3+6(4\lambda^2+6
		\lambda+4)\mu^2+(16\lambda^2+72\lambda+16)\mu+4\lambda^2+36\lambda+85}{72\lambda(\mu+1)^2}.
\end{equation}
	Which is strictly bigger than 1   when $
	\mu \in [\frac{1}{2}, 2]$ and  $\lambda \in [2,\infty]$. We can take neighborhood $\mathcal{N}$ of $(\frac{\pi}{2},0)$ or $(\frac{\pi}{2},\frac{\pi}{2})$ in $[0,\frac{\pi}{2}]^2$ such that $\langle p_{\lambda}, q _{\mu} \rangle $   is strictly  bigger than 1 when $(\alpha,\beta) \in \mathcal{N}$. This implies the  complex hyperbolic planes  with polars  $p_{\lambda}$ and $q_{\mu}$  are hyper-parallel  when $(\alpha,\beta) \in \mathcal{N}$. 
	
We have proved  Item (\ref{disjoint2-1+}) in  Proposition \ref{prop:S1plus}.

\end{proof}

From Propositions \ref{prop:disjoint}, we end the proof of  Proposition \ref{prop:S1S2disjoint}. There is only one component of $$\partial {\bf H}^3_{\mathbb C}-(S_0\cup S_1 \cup S_2)$$ which is adjacent to all of $S_i$ for $i=0,1,2$, we denote the closure of this component by $D$. By  Poincar\'e polyhedron theorem, $D$ is a fundamental domain of $\langle \iota_0, \iota_2,\iota_2 \rangle $, we get Theorem \ref{thm:cspheres}. So we end   the proof of Theorem \ref{thm:modulardiscrete} via $\mathbb{C}$-spheres.

 We can show a result which is  a little stronger than  Theorem \ref{thm:modulardiscrete}.
 
\begin{cor}\label{cor:modulardiscrete} For the moduli space  $\mathcal{M}(0,\frac{2\pi}{3},\frac{4\pi}{3})$,  for any pair  $(\alpha,\beta)=( \frac{\pi}{2}, \beta)$, and $\beta \neq \beta_0$ with  $\cos(2 \beta_0)=-\frac{1}{\sqrt{3}}$, we have a neighborhood $\mathcal{N}$ of it in  $[0, \frac{\pi}{2}]^2$, such that the corresponding representation of any point in $\mathcal{N}$ is discrete and faithful.
\end{cor}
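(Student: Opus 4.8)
The plan is to observe that the proof of Theorem \ref{thm:cspheres} never genuinely used the special values $\beta=0$ or $\beta=\frac{\pi}{2}$: every disjointness estimate there was anchored at $\alpha=\frac{\pi}{2}$ and then propagated to a neighborhood by continuity. So I would re-run that same argument around an arbitrary base point $(\frac{\pi}{2},\beta_1)$ with $\beta_1\neq\beta_0$, and show that nothing in the estimates singles out $0$ or $\frac{\pi}{2}$.

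First I would record the key simplification. By Proposition \ref{prop:A0elliptopara}, as long as $(\alpha,\beta)$ satisfies (\ref{equation:A0threeeigenvalue})---that is, lies off the dash purple curve---the element $A_0$ is ellipto-parabolic with a $\mathbb{C}$-line axis (\ref{equation:A0line}), and the entire $\mathbb{C}$-sphere construction is attached to this axis. At $\alpha=\frac{\pi}{2}$ one has the crucial vanishing $y_{1,A_0}=x_{2,A_0}=0$ for every $\beta\neq\beta_0$, the point $(\frac{\pi}{2},\beta_0)$ being the endpoint of the curve (\ref{equation:A0twoeigenvalue}) where $A_0$ changes type and acquires a $\mathbb{C}$-plane axis instead. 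Consequently, evaluating the polar vectors $p_\lambda$ of (\ref{polarS1+})--(\ref{polarS1zero}) and $q_\mu=A_1^{-1}(p_\mu)$ at $\alpha=\frac{\pi}{2}$, every Hermitian pairing computed in Propositions \ref{prop:S1plus}, \ref{prop:S1minus}, \ref{prop:S1embed} and \ref{prop:disjoint} reduces to exactly the $\beta$-independent expressions already displayed there (for instance $\langle p_\lambda,p_{\lambda^{-1}}\rangle=\frac{\lambda^2+1}{2\lambda}$, and $t^{*}=\sin(\alpha)=1$). Thus all the strict inequalities used in Section \ref{sec:modulardiscrete}---the ``$>1$'' pairings yielding hyper-parallel $\mathbb{C}$-planes, $t^{*}\geq\frac12$, $a\in[\frac98,\frac32]$, and so on---already hold at $(\frac{\pi}{2},\beta_1)$, with no appeal to $\beta=0$ or $\beta=\frac{\pi}{2}$.

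Second, I would invoke continuity. The matrix entries of $A_0,A_1,A_2$ are continuous in $(\alpha,\beta)$, and on the region where (\ref{equation:A0threeeigenvalue}) holds the axis data $y_{1,A_0},x_{2,A_0}$ are continuous as well, since their denominators are precisely the left side of (\ref{equation:A0threeeigenvalue}). Hence the finitely many strict inequalities verified at $(\frac{\pi}{2},\beta_1)$ persist on some neighborhood $\mathcal{N}$ of that point, on which one may simultaneously arrange $y_{1,A_0}^2+x_{2,A_0}^2<1$ together with the normalizations of Propositions \ref{prop:S1plus}--\ref{prop:S1embed}. For such $\mathcal{N}$ the $\mathbb{C}$-spheres $S_0,S_1,S_2$ are embedded, are $c_j$-invariant, and are pairwise disjoint except at the parabolic tangency points, exactly as in Theorem \ref{thm:cspheres}; the Poincar\'e polyhedron theorem then produces a fundamental domain for $\langle c_0,c_1,c_2\rangle$ and yields the discreteness and faithfulness of $\rho(\alpha,\beta)$ on $\mathcal{N}$.

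The main obstacle I anticipate is controlling $\mathcal{N}$ near the excluded value $\beta_0$. As $\beta_1\to\beta_0$ along $\alpha=\frac{\pi}{2}$, the denominator in the expressions for $y_{1,A_0},x_{2,A_0}$ tends to zero, so the axis data blow up and the admissible neighborhood must shrink; this is exactly why the statement is pointwise and cannot be made uniform up to $\beta_0$. One must therefore choose $\mathcal{N}$ small enough to stay strictly off the curve (\ref{equation:A0twoeigenvalue}) and to keep $y_{1,A_0},x_{2,A_0}$ small---possible for each fixed $\beta_1\neq\beta_0$, but degenerating in the limit. Everything else is a routine repetition of the estimates already carried out in Section \ref{sec:modulardiscrete}.
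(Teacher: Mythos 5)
Your proposal is correct and is essentially the paper's own proof: the paper likewise observes that $y_{1,A_0}=x_{2,A_0}=0$ at $(\alpha,\beta)=(\frac{\pi}{2},\beta)$ for every $\beta\neq\beta_0$, so the $\mathbb{C}$-sphere construction and the disjointness estimates of Propositions \ref{prop:S1embed} and \ref{prop:disjoint} go through verbatim on a small neighborhood of any such base point by continuity. Your additional remarks about the $\beta$-independence of the pairings at $\alpha=\frac{\pi}{2}$ and the necessary shrinking of $\mathcal{N}$ as $\beta\to\beta_0$ only make explicit what the paper leaves implicit.
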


\begin{proof}This is the same as the proof of Theorem \ref{thm:cspheres} (and Theorem \ref{thm:modulardiscrete}). For  any pair  $(\alpha,\beta)=( \frac{\pi}{2}, \beta)$, and $\beta \neq \beta_0$ with  $\cos(2 \beta_0)=-\frac{1}{\sqrt{3}}$, we have $y_{1,A_0}=x_{2,A_0}=0$. So we can take  a neighborhood $\mathcal{N}$ of $(\frac{\pi}{2}, \beta)$ such that $y_{1,A_0}$ and $x_{2,A_0}$ are small enough. We then construct $\mathbb{C}$-spheres $S_0,S_1,S_2$ as  the proof of Theorem \ref{thm:cspheres}. Since $y_{1,A_0}$ and $x_{2,A_0}$ are small enough,   properties as in  Propositions \ref{prop:S1embed} and  \ref{prop:disjoint} hold.  
	 
\end{proof}


\bibliographystyle{amsplain}

\end{document}